\documentclass[11pt]{article}

\usepackage[a4paper,margin=1in]{geometry}
\usepackage{amsmath,amssymb,amsthm,mathtools,mathrsfs}
\usepackage{enumitem}
\usepackage{microtype}
\usepackage[hidelinks]{hyperref}
\usepackage[nameinlink,capitalise]{cleveref}
\usepackage{setspace}

\numberwithin{equation}{section}
\newtheorem{theorem}{Theorem}[section]
\newtheorem{proposition}[theorem]{Proposition}
\newtheorem{lemma}[theorem]{Lemma}
\newtheorem{corollary}[theorem]{Corollary}

\theoremstyle{definition}
\newtheorem{definition}[theorem]{Definition}

\theoremstyle{remark}
\newtheorem{remark}[theorem]{Remark}

\newcommand{\dd}{\,d}

\newcommand{\starprod}{\mathbin{\odot}}
\newcommand{\cH}{\mathcal H}
\newcommand{\cC}{\mathcal C}

\providecommand{\FiniteChainHardyProofLocation}%
  {Appendix~\ref{app:finite-chain-hardy}}
\providecommand{\PartitionApproximationProofLocation}%
  {Appendix~\ref{app:partition-approximation}}
\providecommand{\ExtendedRegularisationProofLocation}%
  {Appendix~\ref{app:extended-regularisation}}
\providecommand{\CompactLinearTransferProofLocation}%
  {Appendix~\ref{app:compact-linear-transfer}}
\providecommand{\StrictCopsonReversalProofLocation}%
  {Appendix~\ref{app:copson-reversal}}
\providecommand{\DirectedMeasureExtensionLocation}%
  {Appendix~\ref{app:partition-approximation}, Subsection~\ref{subsec:appC-directed-extension}}

\providecommand{\HardyCopsonSectionReference}%
  {Section~\ref{sec:hardy-copson}}
\providecommand{\LinearHardySectionReference}%
  {Section~\ref{sec:linear-hardy}}
\providecommand{\LocalCharacteristicsSectionReference}%
  {Section~\ref{sec:local-characteristics}}
\providecommand{\CompactMeasureHardyCopsonTheoremReference}%
  {Theorem~\ref{thm:compact-measure-hardy-copson}}

\title{Endpoint-Safe Direct Characterisations of Weighted
Bilinear Hardy Inequalities on Arbitrary Intervals}

\author{Saikat Kanjilal\\
\small Department of Mathematics, Faculty of Science, JIS University\\
\small Kolkata--700109, West Bengal, India\\
\small Email: \href{mailto: saikat.kanjilal.07@gmail.com}%
{\texttt{saikat.kanjilal.07@gmail.com}} \\
\small ORCID: \href{https://orcid.org/0000-0002-4359-8343}%
{0000-0002-4359-8343}}

\date{}

\hypersetup{
  pdftitle={Endpoint-Safe Direct Characterisations of Weighted Bilinear Hardy Inequalities on Arbitrary Intervals},
  pdfauthor={Saikat Kanjilal},
  pdfsubject={Weighted bilinear Hardy inequalities},
  pdflang={en-GB},
  pdfkeywords={weighted bilinear Hardy inequality, two-weight Hardy inequality, Hardy-Copson operator, Lebesgue-Stieltjes measure, endpoint exponent, compact exhaustion}
}

\begin{document}

\singlespacing

\maketitle
\begin{abstract}
We study the weighted bilinear Hardy inequality for the same-direction product
\(H_If\,H_Ig\) on an arbitrary real interval \(I\), with
\(0<q<\infty\), \(1\le p_1,p_2\le\infty\), and measurable weights allowed
to take the values \(0\) and \(+\infty\). The characterisation is formulated
through directed compact restrictions \(J\Subset I\). On each compact
interval, endpoint-safe input profiles retain the \(p_i=1\) boundary
contribution as a Stieltjes atom, while the lower-triangle reduction separates
a closed Hardy part from a strict Copson part so that common atoms are counted
exactly once. Upper and mixed regimes follow from exact freezing and compact
linear Hardy estimates. The lower regimes are obtained by power lifting and a
measure-valued Hardy--Copson theorem. After simultaneous regularisation of the
weights, the compact optimal constant is equivalent, with exponent-only
constants, to an operational local characteristic, and the global constant
satisfies
\(C_I\asymp\sup_{J\Subset I}\mathfrak A_J^{\mathrm{op}}\). In the regular
interior range, the resulting conditions recover the classical
\(A_1,\ldots,A_7\) characterisations.
\end{abstract}

\noindent\textbf{Keywords:}
weighted bilinear Hardy inequality; two-weight Hardy inequality;
Hardy--Copson operator; Lebesgue--Stieltjes measure; endpoint exponent;
compact exhaustion.

\medskip

\noindent\textbf{2020 Mathematics Subject Classification:}
Primary 26D10; Secondary 46E30, 47G10, 47H60.

\tableofcontents
\newpage

\section{Introduction}
\label{sec:introduction}

\subsection{The problem and its scope}

Let \(I\subset\mathbb R\) be an interval with nonempty interior. For a
nonnegative measurable function \(h\) on \(I\), write
\[
 H_Ih(x):=\int_{I\cap(-\infty,x]}h(t)\,\dd t,
 \qquad x\in I,
\]
and define the same-direction bilinear Hardy operator by
\begin{equation}
 \mathcal B_I(f,g)(x):=H_If(x)\starprod H_Ig(x),
 \label{eq:bilinear-hardy-operator}
\end{equation}
where the extended product \(\starprod\) is defined in
\eqref{eq:extended-product}. In particular,
\(0\starprod\infty=\infty\starprod0=0\).
Define \(C_I\) as the infimum of all finite constants \(C\ge0\) for which
\begin{equation}
 \|\mathcal B_I(f,g)\|_{L^q(u;I)}
 \le C
 \|f\|_{L^{p_1}(v_1;I)}
 \|g\|_{L^{p_2}(v_2;I)},
 \qquad f,g\ge0,
 \label{eq:global-bilinear-hardy-inequality}
\end{equation}
holds for every pair with finite input functionals, with
\(\inf\varnothing:=+\infty\). Thus \(C_I=+\infty\) means that no finite
constant is admissible and does not require evaluating a scalar product
\(+\infty\cdot0\). The parameter range throughout is
\begin{equation}
 0<q<\infty,
 \qquad 1\le p_1,p_2\le\infty,
 \label{eq:global-parameter-range}
\end{equation}
for measurable weights \(u,v_1,v_2:I\to[0,\infty]\).

Although \(\mathcal B_I\) is a product of two linear Hardy operators, the
boundedness problem does not in general split into two independent linear
inequalities. The upper region \(p_1,p_2\le q\) is governed by one product
condition, while a mixed region such as \(p_1\le q<p_2\) contains both a
frozen boundary term and a strict Stieltjes tail. When
\(q<\min\{p_1,p_2\}\), the lower triangle divides further according as
\(1/q\le1/p_1+1/p_2\) or \(1/q>1/p_1+1/p_2\). The latter branch introduces
an additional integrability exponent and naturally leads to
measure-valued Hardy--Copson operators.

Our aim is therefore not merely to list the separate regime conditions. We
seek one proof architecture that remains valid at \(p_i=1\) and
\(p_i=\infty\), in the quasi-Banach output range \(0<q<1\), on intervals
with arbitrary endpoint type, and for weights that may vanish or be
infinite on sets of positive measure.

\subsection{Position within the existing theory}

Weighted bilinear Hardy inequalities of the form
\eqref{eq:global-bilinear-hardy-inequality} are already well established.
Aguilar Cañestro, Ortega Salvador and Ramírez Torreblanca obtained a
complete characterisation in the classical interior range
\(1<p_1,p_2,q<\infty\), together with a multidimensional version, by a
discretisation method \cite{AguilarCanestroOrtegaRamirez2012}. K\v{r}epela
later gave a substantially shorter one-dimensional treatment on
\((a,b)\), with finite or infinite endpoints, based on freezing one input
and iterating linear two-weight Hardy inequalities \cite{Krepela2017}. The
freezing identity used below is an endpoint-safe implementation of this
iterative principle, not a priority claim for the principle itself.

Several subsequent works developed the classical theory further. In
particular, Kanjilal, Persson and Shambilova state the full five-region
\(A_1,\ldots,A_7\) theorem for \(0<q<\infty\) and
\(1<p_1,p_2<\infty\), and then develop large parameterised families of
equivalent integral conditions \cite{KanjilalPerssonShambilova2019}.
Stepanov and Shambilova treat the same-direction product
\(R_1f\,R_2g\) for forward Oinarov-kernel Hardy operators. The ordinary
Hardy product is contained in the case of unit kernels, and their lower
region is handled by direct freezing and integral reductions
\cite{StepanovShambilova2019}. Gogatishvili, Jain and Kanjilal obtained
further equivalent criteria in the convex region and applications to a
bilinear geometric-mean inequality \cite{GogatishviliJainKanjilal2022}.
Endpoint exponents and the range \(0<q<1\) are likewise not new in
isolation. Bilgiçli, Mustafayev and Ünver treated multidimensional bilinear
Hardy inequalities for \(0<q\le\infty\) and
\(1\le p_1,p_2\le\infty\), reducing the difficult regions to weighted
iterated Hardy-type inequalities \cite{BilgicliMustafayevUnver2020}. More
recent adjacent developments include bilinear Hardy inequalities on
metric-measure spaces \cite{RuzhanskyShriwastawaVerma2024}, weighted
weak-type bilinear Hardy inequalities
\cite{GarciaGarciaOrtegaSalvador2023}, and the mixed-orientation product
\(Hf\,H^*g\) in the interior exponent range \cite{MohantyJainJain2025}.
These results delimit the prior same-direction, endpoint, geometric and
target-space theory against which the present construction is compared.

Temirkhanova, Zhangabergenova and Oinarov
\cite{TemirkhanovaZhangabergenovaOinarov2026} characterise the product of
an ordinary Hardy operator and a Hardy--Volterra operator with kernels from
the classes \(\mathcal O_1^\pm\) on \((0,\infty)\) for
\(1<p_1,p_2,q<\infty\). For the unit kernel, their criteria reduce to the
classical five-region interior conditions recovered in
Section~\ref{sec:classical-comparison}. Their main theorem scope does not
include the input endpoints or the range \(0<q\le1\). The present
contribution concerns the combined endpoint and atomic formulation
described below, together with directed compact restart and the operational
treatment of extended-valued weights.

The linear and measure-valued inputs also belong to a substantial existing
theory. The two-weight linear Hardy inequality is classically described by
a supremum condition in the convex branch and an integral or
Lebesgue--Stieltjes condition in the non-convex branch. An elementary
half-line proof covering \(1\le p<\infty\), \(0<q<\infty\) is due to
Gogatishvili and Pick \cite{GogatishviliPick2025}. The critical \(p=1\)
case has also been developed in abstract/general-measure and metric-measure
settings \cite{SantacruzHidalgo2024,RuzhanskyShriwastawaTiwari2024}, while
a recent interval treatment is given by Gogatishvili, Pick, Turčinová and
Ünver \cite{GogatishviliPickTurcinovaUnver2025}. General-measure Hardy
inequalities and scales of equivalent conditions were developed, among
others, by Okpoti, Persson and Sinnamon
\cite{OkpotiPerssonSinnamon2007,OkpotiPerssonSinnamon2008}, and Sinnamon's
normal-form theory supplies a broad abstract framework
\cite{SinnamonNormalForm}. Related four-weight Hardy--Copson inequalities
have also been characterised \cite{GogatishviliPickUnver2022}. Our compact
linear and compact measure-valued results are therefore used as
endpoint-safe interfaces for the bilinear proof, not as priority claims for
the classical linear or general Hardy--Copson theories.

Against this background, we do not claim a first boundedness
characterisation, a first exponent range, or a first direct reduction. To the
best of our knowledge, however, we are not aware of an earlier continuous
same-direction strong-type theorem assembling, in one formulation,
arbitrary real-interval endpoint types, exact directed compact restart,
explicit \(p_i=1\) endpoint and atomic bookkeeping, the
closed-Hardy/strict-Copson allocation, all finite \(q>0\) with
\(p_i\in[1,\infty]\), and the present operational treatment of weights taking
values in \([0,\infty]\). The claim made here is restricted to this
endpoint-safe synthesis and the proof architecture supporting it. In the
regular interior range the resulting criteria agree with the established
conditions.

\subsection{Endpoint-safe localisation and atomic geometry}

The principal obstruction to a naive global Stieltjes formulation appears
at a non-attained left endpoint. On a compact restriction
\(J=[c,d]\Subset I\), Section~\ref{sec:linear-hardy} associates each input
with an endpoint-safe increasing profile \(\Phi_{p,v;J}\). At \(p=1\),
the correct right-continuous representative may carry an initial Stieltjes
mass. For every relevant power \(r>0\),
\begin{equation}
 \dd\!\left(\Phi_{1,v;J}^{\,r}\right)(\{c\})
 =\Phi_{1,v;J}(c)^r.
 \label{eq:left-boundary-profile-atom}
\end{equation}
If the ambient interval has no smallest point, a single global profile can
lose this mass. A constant \(p=1\) profile on an open interval has no
interior Stieltjes variation although the associated Hardy operator is
nonzero. Section~\ref{sec:examples} gives the corresponding explicit
failure mechanism.

We therefore restart the operator, profiles and measures on every compact
\(J=[c,d]\Subset I\). The global norm is then recovered exactly from
\begin{equation}
 C_I=\sup_{J\Subset I}C_J.
 \label{eq:directed-norm-identity}
\end{equation}
This identity is valid without a Banach assumption on the output. Moreover,
whenever a local criterion is a sum of structurally different components,
the complete expression is directed as a whole. No replacement by
independently directed summands is built into the definition.

A second endpoint issue arises in the lower-triangle reduction. The
resulting measure operator splits into a closed Hardy part and a strict
Copson part. At a common atom, the diagonal contribution belongs to
\(\int_{[c,x]}h\,\dd\mu\), whereas the Copson component is
\(\int_{(x,d]}h\,\dd\mu\). Consequently the associated Hardy tails are
closed and the Copson tails are strict. This distinction disappears for
nonatomic data but is essential in the endpoint and atomic configurations
covered here.

\subsection{Contributions and main theorem architecture}

The paper has five principal contributions.
\begin{enumerate}
\item It gives a directed compact-restriction formulation on every real
interval, with the \(p=1\) left-boundary atom retained locally and the global
constant recovered by \eqref{eq:directed-norm-identity}.

\item It develops the two endpoint-safe analytic inputs required by the
bilinear argument. These are a compact linear Hardy theorem and a direct compact
measure-valued Hardy--Copson theorem with the closed-Hardy/strict-Copson
atomic allocation made explicit. The detailed finite-chain, reversal,
partition and compact-transfer arguments are separated from the
main proof flow but remain fully available in the technical appendices.

\item It proves the compact bilinear characterisation by one architecture
for the whole range \eqref{eq:global-parameter-range}. The upper and mixed
regions use exact freezing and repeated linear reduction. In the lower
triangle, power lifting first moves the relevant quantity into a genuine
Banach space. Only then is positive duality applied, followed by Tonelli,
a second linear reduction and the measure-valued Hardy--Copson split. Thus
no duality argument is applied directly in \(L^q\) when \(q<1\).

\item It preserves the complete mixed and atomic structure. In particular,
a strict mixed tail alone is not the full criterion. The frozen boundary
contribution is also necessary. The lower-triangle characteristics likewise
retain the closed/strict allocation forced by atoms. Section~\ref{sec:examples}
contains short examples isolating both mechanisms.

\item It treats extended-valued weights operationally by simultaneous
regularisation \(u_m=u\wedge m\), \(v_{i,m}=v_i+1/m\), directing the
complete local characteristic through \(m\). No convergence of raw
extended-profile Stieltjes measures is required. In the regular interior
range, Section~\ref{sec:classical-comparison} identifies the resulting
conditions term by term with the classical \(A_1,\ldots,A_7\) criteria.
\end{enumerate}

These contributions should be read as a unified formulation and proof
architecture rather than as separate priority claims for classical Hardy theory,
freezing, endpoint exponent ranges or the known interior bilinear criteria.
The present paper is restricted to finite output exponent \(q\). The
\(L^\infty\) output problem is outside its scope.

For a compact interval \(J=[c,d]\Subset I\), let \(C_J\) be the optimal
local bilinear constant and let \(\mathfrak A_J^{\mathrm{op}}\) denote the
complete operational characteristic defined in
Section~\ref{sec:local-characteristics}. It incorporates the upper, mixed,
shallow-lower, deep-lower and infinite-input branches without requiring the
regime table to be repeated here. The compact theorem of
Section~\ref{sec:compact-bilinear} is
\begin{equation}
 C_J\asymp_{p_1,p_2,q}\mathfrak A_J^{\mathrm{op}},
 \label{eq:compact-master-introduction}
\end{equation}
with constants independent of \(J\), the weights and the regularisation
index. When \(p_1=p_2=\infty\), the local relation is exact. Combining this
with \eqref{eq:directed-norm-identity} gives the global theorem
\begin{equation}
 C_I
 \asymp_{p_1,p_2,q}
 \sup_{J\Subset I}\mathfrak A_J^{\mathrm{op}},
 \label{eq:global-directed-characterisation}
\end{equation}
again with equality in the doubly-infinite input case.

\subsection{Organisation of the paper}

Section~\ref{sec:weighted-spaces} fixes the extended weighted functionals,
compact restart and simultaneous regularisation. Section~\ref{sec:linear-hardy}
introduces the endpoint-safe profiles and states the compact linear theorem.
Its complete transfer proof is in \CompactLinearTransferProofLocation.
Section~\ref{sec:hardy-copson} gives the compact measure-valued principle,
with the finite-chain, strict-reversal and partition arguments in
\FiniteChainHardyProofLocation, \StrictCopsonReversalProofLocation\ and
\PartitionApproximationProofLocation. Sections~\ref{sec:local-characteristics}--\ref{sec:global-theorem}
define the local characteristics, prove the compact bilinear theorem and
pass to arbitrary intervals. Section~\ref{sec:classical-comparison} verifies
agreement with the classical \(A_1,\ldots,A_7\) conditions,
Section~\ref{sec:examples} records the endpoint and structural examples, and
Section~\ref{sec:discussion} concludes. The full extended-weight limiting
arguments are collected in \ExtendedRegularisationProofLocation. The
locally finite auxiliary extension is recorded in
\DirectedMeasureExtensionLocation.

\section{Weighted spaces and directed compact restrictions}
\label{sec:weighted-spaces}

Throughout, \(I\subset\mathbb R\) is an interval with nonempty interior,
\(u,v_1,v_2:I\to[0,\infty]\) are measurable, and
\[
 0<q<\infty,
 \qquad
 1\le p_1,p_2\le\infty.
\]
All functions are nonnegative and measurable, and operator constants may take
the value \(+\infty\).

\subsection{Extended weighted Lebesgue functionals}
\label{subsec:extended-weighted-spaces}

For \(a,b\in[0,\infty]\), define
\begin{equation}
 a\starprod b
 :=
 \begin{cases}
 0,&a=0\ \text{or}\ b=0,\\[1mm]
 ab,&0<a<\infty\ \text{and}\ 0<b<\infty,\\[1mm]
 \infty,&\text{otherwise},
 \end{cases}
 \label{eq:extended-product}
\end{equation}
so in particular \(0\starprod\infty=\infty\starprod0=0\). For a
measurable set \(E\subset I\), a measurable weight
\(w:E\to[0,\infty]\), and \(0<s<\infty\), set
\begin{equation}
 \|h\|_{L^s(w;E)}
 :=
 \left(
   \int_E h(t)^s\starprod w(t)\,dt
 \right)^{1/s},
 \label{eq:extended-Ls}
\end{equation}
with the usual quasi-norm interpretation when \(0<s<1\). For
\(s=\infty\), define
\begin{equation}
 \|h\|_{L^\infty(w;E)}
 :=
 \operatorname*{ess\,sup}_{t\in E}
 \bigl(h(t)\starprod w(t)\bigr).
 \label{eq:extended-Linfty}
\end{equation}
Thus every finite-norm input vanishes almost everywhere on
\(\{w=\infty\}\), whereas \(\{w=0\}\) carries no weighted input cost. In
particular, if a zero-norm input and a finite-norm partner generate a nonzero
bilinear output, the corresponding optimal constant is \(+\infty\). The
optimal constants below are understood with these zero-norm tests included,
so no quotient by a zero denominator is used.

The positive-floor regularisation needed at an infinite input exponent is
controlled by the following elementary estimate.

\begin{lemma}[Positive-floor continuity]
\label{lem:positive-floor}
Let \(E\subset\mathbb R\) have finite measure, let
\(v:E\to[0,\infty]\) be measurable, and put
\(v_m=v+m^{-1}\), with \(\infty+m^{-1}=\infty\). If \(h\) is bounded and
\(\|h\|_{L^\infty(v;E)}<\infty\), then
\begin{equation}
 0
 \le
 \|h\|_{L^\infty(v_m;E)}
 -
 \|h\|_{L^\infty(v;E)}
 \le
 \frac{\|h\|_{L^\infty(E)}}{m},
 \qquad
 \|h\|_{L^\infty(v_m;E)}\downarrow
 \|h\|_{L^\infty(v;E)}.
 \label{eq:positive-floor-limit}
\end{equation}
\end{lemma}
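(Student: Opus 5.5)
The plan is a pointwise comparison of \(h\starprod v_m\) with \(h\starprod v\), followed by taking essential suprema. Put \(M:=\|h\|_{L^\infty(E)}<\infty\) and \(A:=\|h\|_{L^\infty(v;E)}<\infty\). The first step is to record what finiteness of \(A\) buys. On the set \(\{v=\infty\}\) we have \(h\starprod v=\infty\) wherever \(h>0\), so \(h=0\) almost everywhere there. Since \(v_m=\infty\) exactly on \(\{v=\infty\}\), we also get \(h\starprod v_m=0=h\starprod v\) almost everywhere on that set. Hence that set can be discarded up to a null set, and on its complement all quantities are finite.

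Next I compare pointwise on \(\{v<\infty\}\). The extended product is monotone in each argument, so \(h\starprod v\le h\starprod v_m\), which gives the lower bound \(0\le\|h\|_{L^\infty(v_m;E)}-A\) after taking essential suprema. For the upper bound I check the cases of \eqref{eq:extended-product}:
\begin{equation*}
 h\starprod v_m\le h\starprod v+h\cdot m^{-1}\le h\starprod v+M/m \quad\text{a.e.}
\end{equation*}
\begin{itemize}
\item If \(h=0\), both sides of the first inequality vanish.
\item If \(h>0\) and \(v=0\), then \(v_m=1/m\) and the left side equals \(h/m\).
\item If \(h>0\) and \(0<v<\infty\), the left side is the ordinary product \(h(v+1/m)=hv+h/m\).
\end{itemize}
Taking essential suprema and using \(\operatorname*{ess\,sup}(a+b)\le\operatorname*{ess\,sup}a+\operatorname*{ess\,sup}b\) gives the second inequality in \eqref{eq:positive-floor-limit}.

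Finally, monotone decrease in \(m\) follows the same way: \(v_{m+1}\le v_m\) pointwise, and monotonicity of \(\starprod\) gives \(h\starprod v_{m+1}\le h\starprod v_m\). The two-sided estimate then forces convergence to \(A\) as \(m\to\infty\).

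The only delicate point is the null-set bookkeeping on \(\{v=\infty\}\), where the floor does nothing and the \(0\starprod\infty\) convention is needed. The hypothesis that \(E\) has finite measure is not actually needed for this argument; boundedness of \(h\) suffices.
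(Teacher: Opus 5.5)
Your proposal is correct and follows the paper's own argument. The paper likewise uses the pointwise relation \(h\starprod v_m=h\starprod v+h/m\) on \(\{v<\infty\}\) (your three cases show it is in fact an equality there), uses that \(h=0\) a.e.\ on \(\{v=\infty\}\) because the weighted norm is finite, and then takes essential suprema. Your remark that the finite measure of \(E\) is not needed here is also accurate; it matters only for the finite-\(p\) analogue in the appendix.
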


\begin{proof}
On \(\{v<\infty\}\), one has
\(h\starprod v_m=h\starprod v+h/m\), while finite
\(L^\infty(v;E)\)-norm forces \(h=0\) almost everywhere on
\(\{v=\infty\}\). Taking essential suprema gives the estimate. The bounded
truncation argument used for arbitrary inputs is recorded in
\ExtendedRegularisationProofLocation.
\end{proof}

\subsection{Compact restrictions}
\label{subsec:compact-restrictions}

Write \(J\Subset I\) when \(J=[c,d]\) is a nondegenerate compact interval
contained in \(I\). Define
\begin{equation}
 H_Jh(x)
 :=
 \int_c^x h(t)\,dt,
 \qquad
 \mathcal B_J(f,g)(x)
 :=
 H_Jf(x)\starprod H_Jg(x),
 \qquad x\in J,
 \label{eq:local-operators}
\end{equation}
and define \(C_J\) as the infimum of all finite constants \(C\ge0\) for
which
\begin{equation}
 \|\mathcal B_J(f,g)\|_{L^q(u;J)}
 \le
 C
 \|f\|_{L^{p_1}(v_1;J)}
 \|g\|_{L^{p_2}(v_2;J)},
 \qquad f,g\ge0,
 \label{eq:local-bilinear-inequality}
\end{equation}
holds for every pair with finite input functionals, with
\(\inf\varnothing:=+\infty\).
The local Hardy operator is restarted at the attained left endpoint \(c\), a
point that becomes essential for the endpoint-complete profiles in
Section~\ref{sec:linear-hardy}.

\begin{proposition}[Directed compact exhaustion]
\label{prop:directed-exhaustion}
If \(J\subset K\Subset I\), then
\begin{equation}
 C_J\le C_K\le C_I.
 \label{eq:compact-monotonicity}
\end{equation}
Moreover,
\begin{equation}
 C_I
 =
 \sup_{J\Subset I}C_J.
 \label{eq:directed-exhaustion}
\end{equation}
\end{proposition}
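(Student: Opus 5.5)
The monotonicity \eqref{eq:compact-monotonicity} will follow by extension by zero combined with a translation of the starting point. Let \(J=[c,d]\subset K=[c',d']\), and take \(f,g\ge0\) on \(J\) with finite input functionals. We extend them by zero to \(\tilde f,\tilde g\) on \(K\) (respectively on \(I\)). The input functionals are unchanged, since \(0^{p}\starprod w=0\) and the \(L^\infty\) case is identical. For \(x\in J\) we have \(H_K\tilde f(x)=\int_{c'}^x\tilde f=\int_c^x f=H_Jf(x)\), and the same holds for \(g\). Hence \(\mathcal B_K(\tilde f,\tilde g)\ge\mathcal B_J(f,g)\) pointwise on \(J\), using that \(\starprod\) is monotone in each argument on \([0,\infty]\). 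Integrating against \(u\) over \(J\subset K\) then gives \(\|\mathcal B_J(f,g)\|_{L^q(u;J)}\le\|\mathcal B_K(\tilde f,\tilde g)\|_{L^q(u;K)}\). Consequently every admissible constant for \(K\) is admissible for \(J\), and \(C_J\le C_K\). The same computation with \(H_I\) gives \(C_K\le C_I\), since \(H_I\tilde f=H_Kf\) on \(K\), and the zero-norm tests transfer in the same way. The convention \(\inf\varnothing=+\infty\) covers the infinite cases automatically.

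This yields \(\sup_J C_J\le C_I\). For the reverse inequality we may assume \(S:=\sup_{J\Subset I}C_J<\infty\). Fix \(f,g\) on \(I\) with finite input functionals. We choose an increasing sequence of compact intervals \(J_n=[c_n,d_n]\Subset I\) with \(c_n\downarrow\inf I\) and \(d_n\uparrow\sup I\), taking endpoints in \(I\) and stabilising at an endpoint when it is attained. We apply the inequality on \(J_n\) to \(f|_{J_n},g|_{J_n}\), whose input functionals are bounded by those on \(I\). This gives
\[
\Bigl(\int_{J_n}\bigl(H_{J_n}f\starprod H_{J_n}g\bigr)^q\starprod u\Bigr)^{1/q}\le S\,\|f\|_{L^{p_1}(v_1;I)}\|g\|_{L^{p_2}(v_2;I)}.
\]
For each fixed \(x\) in the interior of \(I\) (or at an attained endpoint), \(H_{J_n}f(x)=\int_{c_n}^x f\uparrow H_If(x)\) by monotone convergence, and the same holds for \(g\). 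Moreover \(\mathbf 1_{J_n}(x)\uparrow1\).

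The step needing care is passing to the limit through the extended product. I would verify that \(\starprod\) is lower semicontinuous along increasing sequences, i.e.\ \(a_n\uparrow a\), \(b_n\uparrow b\) imply \(a_n\starprod b_n\uparrow a\starprod b\). This is checked by cases. If \(a=0\) or \(b=0\), all terms are \(0\). If both limits are finite and positive, the terms are eventually ordinary products. If, say, \(a=\infty\) and \(b>0\), then eventually \(b_n>0\) and \(a_n\starprod b_n\ge a_nb_n\to\infty\). The same case check applies to the outer product with \(u(x)\), where \(u\) is fixed and the powers \(t\mapsto t^q\) are increasing and continuous. Monotone convergence of \(\mathbf 1_{J_n}\bigl(\mathcal B_{J_n}(f,g)\bigr)^q\starprod u\) then yields the global inequality with constant \(S\), so \(C_I\le S\).

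The zero-norm tests are covered by the same inequality: if the right-hand side is zero, the limit forces the left-hand side to be zero. Hence no quotient is needed, and the argument uses neither the triangle inequality nor duality, so it is valid for \(0<q<1\).
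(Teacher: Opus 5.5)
Your proposal is correct and follows the paper's own argument. Zero extension gives \(C_J\le C_K\le C_I\), and the reverse inequality comes from applying the local inequality with constant \(S=\sup_{J\Subset I}C_J\) along an increasing compact exhaustion, then passing to the limit by monotone convergence; your explicit check that \(\starprod\) is continuous along increasing sequences, and your bound of the restricted input functionals by the global ones, only spell out details the paper's proof leaves implicit.
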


\begin{proof}
Zero extension from \(J\) to \(K\) preserves both input norms and the Hardy
outputs on \(J\), which gives \(C_J\le C_K\le C_I\). For the reverse global
inequality, choose an increasing compact exhaustion
\begin{equation}
 J_n=[c_n,d_n]\Subset I,
 \qquad
 J_n\subset J_{n+1},
 \qquad
 \bigcup_{n\ge1}J_n=I.
 \label{eq:compact-exhaustion}
\end{equation}
For fixed \(f,g\ge0\), the restarted outputs on \(J_n\), extended by zero
outside \(J_n\), increase pointwise to \(H_If\,H_Ig\). Monotone convergence
therefore yields convergence of the output \(L^q(u)\)-functionals, also for
\(0<q<1\). The corresponding restricted input norms increase to their global
values (with essential suprema when \(p_i=\infty\)). Applying
\eqref{eq:local-bilinear-inequality} with
\(S=\sup_{J\Subset I}C_J\) and passing to the limit gives \(C_I\le S\).
\end{proof}

\begin{corollary}
\label{cor:exhaustion-independence}
For every increasing compact exhaustion satisfying
\eqref{eq:compact-exhaustion},
\begin{equation}
 C_I
 =
 \sup_{n\ge1}C_{J_n}
 =
 \lim_{n\to\infty}C_{J_n}.
 \label{eq:exhaustion-sequence}
\end{equation}
Hence the directed norm is independent of the chosen exhaustion.
\end{corollary}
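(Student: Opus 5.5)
The plan is to combine the monotonicity \eqref{eq:compact-monotonicity} with the directed identity \eqref{eq:directed-exhaustion} of Proposition~\ref{prop:directed-exhaustion}. Fix an increasing compact exhaustion \((J_n)\) as in \eqref{eq:compact-exhaustion}.

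First, since \(J_n\subset J_{n+1}\Subset I\), monotonicity gives \(C_{J_n}\le C_{J_{n+1}}\le C_I\). Hence \((C_{J_n})\) is nondecreasing in \([0,\infty]\), so its limit exists and
\[
 \lim_{n\to\infty}C_{J_n}=\sup_{n\ge1}C_{J_n}\le C_I .
\]

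Second, for the reverse inequality I will show \(\sup_{J\Subset I}C_J\le\sup_n C_{J_n}\) and then invoke \eqref{eq:directed-exhaustion}. Take any \(J=[c,d]\Subset I\). I claim \(J\subset J_n\) for some \(n\). The intervals \(J_n\) are nested and their union is \(I\supset\{c,d\}\). So \(c\in J_{n_1}\) and \(d\in J_{n_2}\) for some indices. Setting \(n=\max\{n_1,n_2\}\), both endpoints lie in the interval \(J_n\), and therefore \(J\subset J_n\) by convexity. Monotonicity then yields \(C_J\le C_{J_n}\le\sup_m C_{J_m}\). Taking the supremum over \(J\) and using \eqref{eq:directed-exhaustion} gives \(C_I\le\sup_n C_{J_n}\).

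Combining the two steps gives \eqref{eq:exhaustion-sequence}. Independence of the exhaustion is immediate, because the common value is \(C_I\) for every exhaustion.

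The only delicate point is the cofinality step: one must use that the \(J_n\) are intervals (convex), not merely compact sets whose union is \(I\). This holds by hypothesis, so I expect no genuine obstacle. The proof works entirely in \([0,\infty]\), so the case \(C_I=+\infty\) needs no separate treatment.
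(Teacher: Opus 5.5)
Your proposal is correct and follows the paper's proof: monotonicity \eqref{eq:compact-monotonicity} makes \((C_{J_n})\) nondecreasing and bounded by \(C_I\), and cofinality of the exhaustion (every \(J\Subset I\) lies in some \(J_n\)) combined with \eqref{eq:directed-exhaustion} gives the reverse inequality. Your endpoint-plus-convexity argument for the cofinality step supplies a justification that the paper states without proof.
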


\begin{proof}
The sequence \(C_{J_n}\) is nondecreasing by
\eqref{eq:compact-monotonicity}, and every \(J\Subset I\) is contained in
some sufficiently large \(J_n\). Proposition~\ref{prop:directed-exhaustion}
then gives the claim.
\end{proof}

\subsection{Simultaneous regularisation of extended weights}
\label{subsec:extended-regularisation}

Fix \(J=[c,d]\Subset I\). For \(m\ge1\), define
\begin{equation}
 u_m:=u\wedge m,
 \qquad
 v_{i,m}:=v_i+\frac1m,
 \qquad i=1,2,
 \label{eq:simultaneous-regularisation}
\end{equation}
with \(\infty+m^{-1}=\infty\), and let \(C_{J,m}\) denote the corresponding
local bilinear constant. On the finite part of \(v_i\), the regularised input
weight is bounded below by \(m^{-1}\).
On \(\{v_i=\infty\}\), finite-norm
inputs vanish and reciprocal factors are assigned the value zero. Hence the
reciprocal functions and reciprocal-power densities used later are bounded
on the compact interval at each fixed regularisation level.

\begin{proposition}[Convergence of the regularised constants]
\label{prop:regularised-constants}
For every \(J\Subset I\),
\begin{equation}
 C_{J,m}\uparrow C_J
 \qquad
 \text{as }m\to\infty.
 \label{eq:regularised-constant-limit}
\end{equation}
\end{proposition}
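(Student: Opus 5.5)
We need to show that $C_{J,m}$ is nondecreasing in $m$, that $C_{J,m}\le C_J$, and that $\lim_m C_{J,m}\ge C_J$.

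\emph{Monotonicity and the upper bound.} As $m$ increases, $u_m=u\wedge m$ increases pointwise and $v_{i,m}=v_i+1/m$ decreases pointwise. Hence, for fixed $f,g\ge0$, the output functional $\|\mathcal B_J(f,g)\|_{L^q(u_m;J)}$ increases and each input functional $\|f\|_{L^{p_1}(v_{1,m};J)}$, $\|g\|_{L^{p_2}(v_{2,m};J)}$ decreases. This uses only that $\starprod$ is monotone in each argument on $[0,\infty]$, including the cases $a=\infty$ and $b=0$. Two consequences follow:
\begin{itemize}
\item the set of admissible constants shrinks, so $C_{J,m}\le C_{J,m+1}$;
\item since $u_m\le u$ and $v_{i,m}\ge v_i$, every constant admissible for $(u,v_1,v_2)$ is admissible at level $m$, so $C_{J,m}\le C_J$.
\end{itemize}
For the second point, note that a pair with finite regularised input norms also has finite unregularised input norms. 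Write $L:=\lim_m C_{J,m}\le C_J$.

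\emph{The lower bound $L\ge C_J$.} Assume $L<\infty$, since otherwise there is nothing to prove. Fix $f,g\ge0$ with finite $v$-input functionals. First we reduce to bounded inputs. Replace $f$ by $f_k:=(f\wedge k)\chi_{\{v_1<\infty\}}$, and similarly for $g$. Finite norm forces $f=0$ a.e.\ on $\{v_1=\infty\}$, so $f_k\uparrow f$ a.e. The Hardy integrals $H_Jf_k$ then increase pointwise to $H_Jf$. The truncated outputs also converge upward; the problematic configuration $0\starprod\infty$ is handled because $H_Jf_k\le H_Jf$ and the product is monotone. By monotone convergence the output functional converges, while the input functionals only decrease. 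It therefore suffices to prove
\[
\|\mathcal B_J(f,g)\|_{L^q(u;J)}\le L\,\|f\|_{L^{p_1}(v_1;J)}\,\|g\|_{L^{p_2}(v_2;J)}
\]
for bounded $f,g$ vanishing on $\{v_i=\infty\}$.

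For such inputs, apply the level-$m$ inequality with constant $C_{J,m}\le L$. The output side increases to the $u$-functional by monotone convergence, because $h\starprod u_m\uparrow h\starprod u$ pointwise; this is valid for all $q>0$. The input sides converge as follows:
\begin{itemize}
\item For $p_i<\infty$, we have $f^{p}\starprod v_{i,m}=f^p\starprod v_i+f^p/m$ on $\{v_i<\infty\}$. Dominated convergence applies since $f$ is bounded and $J$ is compact (finite measure).
\item For $p_i=\infty$, apply Lemma~\ref{lem:positive-floor}.
\end{itemize}
Passing to the limit gives the inequality with constant $L$, so $C_J\le L$.

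\emph{Zero-norm tests.} If an unregularised input norm is zero while the output is nonzero, then the regularised input norms tend to zero while the outputs stay bounded below. This forces $C_{J,m}\to\infty$, consistent with $C_J=\infty$. The main delicate point is exactly this extended-value bookkeeping: the truncation step, the interaction with $\{v_i=\infty\}$ and $\{u=\infty\}$, and the convention $0\starprod\infty=0$. I would verify each monotone limit pointwise using the case definition of $\starprod$.
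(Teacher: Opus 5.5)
Your argument is correct and is essentially the paper's own proof (Appendix~\ref{app:extended-regularisation}). Both use monotonicity of $C_{J,m}$ from $u_m\uparrow$ and $v_{i,m}\downarrow$, then bounded truncation with monotone convergence on the output and positive-floor convergence on the inputs, sending $m\to\infty$ before removing the truncation.

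Two small remarks. First, the pointwise limit $H_Jf_k\starprod H_Jg_k\uparrow H_Jf\starprod H_Jg$ does not follow from monotonicity of $\starprod$ alone, which gives only one inequality; it follows from continuity of $\starprod$ along increasing sequences, which you can check from the case definition. Second, your final zero-norm paragraph is redundant, because the main argument already covers zero-norm pairs. As written it would also need truncation, since an unbounded zero-cost input can have infinite regularised norm.
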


\begin{proof}
Increasing \(m\) increases \(u_m\) and decreases each \(v_{i,m}\), so
\(C_{J,m}\) is nondecreasing and bounded above by \(C_J\). For bounded inputs with positive finite original norms, monotone convergence
on the output and positive-floor convergence on the inputs show that the
regularised test ratios converge to the original one. Arbitrary positive
finite-norm inputs follow by bounded truncation.
If a zero-norm input produces a nonzero output with a finite-norm partner,
the same bounded-truncation argument forces \(C_{J,m}\to\infty\). Otherwise
such directions add no restriction. Thus the limiting regularised constant
is exactly \(C_J\). Full truncation details, including the infinite-input
endpoint, are given in \ExtendedRegularisationProofLocation.
\end{proof}

\begin{corollary}[Operational extended characteristics]
\label{cor:operational-characteristics}
Suppose that for every \(m\ge1\),
\begin{equation}
 c\,\mathfrak A_{J,m}
 \le
 C_{J,m}
 \le
 C\,\mathfrak A_{J,m},
 \label{eq:uniform-regularised-equivalence}
\end{equation}
where \(c,C>0\) depend only on the exponents and are independent of
\(J,m\), and the weights. Define
\begin{equation}
 \mathfrak A_J^{\mathrm{op}}
 :=
 \sup_{m\ge1}\mathfrak A_{J,m}.
 \label{eq:operational-characteristic}
\end{equation}
Then
\begin{equation}
 c\,\mathfrak A_J^{\mathrm{op}}
 \le
 C_J
 \le
 C\,\mathfrak A_J^{\mathrm{op}}.
 \label{eq:extended-characteristic-equivalence}
\end{equation}
\end{corollary}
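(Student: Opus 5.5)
The plan is to pass to the limit $m\to\infty$ in \eqref{eq:uniform-regularised-equivalence}, using Proposition~\ref{prop:regularised-constants}. That proposition gives $C_{J,m}\uparrow C_J$ in $[0,\infty]$, so
\[
 C_J=\sup_{m\ge1}C_{J,m}=\lim_{m\to\infty}C_{J,m}.
\]
The whole argument consists of taking suprema over $m$ on both sides of the two-sided bound. We do not need any monotonicity of $\mathfrak A_{J,m}$ in $m$: the operational characteristic is defined as a supremum, not as a limit. This is exactly why \eqref{eq:operational-characteristic} uses $\sup_m$.

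For the lower bound, fix $m$. By the left inequality in \eqref{eq:uniform-regularised-equivalence} and the monotonicity $C_{J,m}\le C_J$ from Proposition~\ref{prop:regularised-constants}, we get $c\,\mathfrak A_{J,m}\le C_{J,m}\le C_J$. Taking the supremum over $m\ge1$ gives $c\,\mathfrak A_J^{\mathrm{op}}\le C_J$, because multiplication by the fixed positive constant $c$ commutes with suprema in $[0,\infty]$. This holds also when the supremum is $+\infty$, with $c\cdot\infty=\infty$; here $c>0$ is finite, so no $0\cdot\infty$ ambiguity arises.

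For the upper bound, fix $m$ again. The right inequality gives $C_{J,m}\le C\,\mathfrak A_{J,m}\le C\,\mathfrak A_J^{\mathrm{op}}$. Taking the supremum over $m$ and using $\sup_m C_{J,m}=C_J$ yields $C_J\le C\,\mathfrak A_J^{\mathrm{op}}$.

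The constants $c$ and $C$ pass through unchanged. They therefore depend only on the exponents, which is the required uniformity in $J$ and the weights.

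There is essentially no obstacle here. The only points needing care are the extended-value conventions. All quantities live in $[0,\infty]$, and the degenerate cases must be handled correctly. If some $C_{J,m}=\infty$, the upper bound forces $\mathfrak A_{J,m}=\infty$. If $C_J=\infty$ because of a zero-norm direction, Proposition~\ref{prop:regularised-constants} still gives $C_{J,m}\to\infty$, so the argument is unaffected. All of the substantive analytic content, namely the identification $\sup_m C_{J,m}=C_J$, is already contained in Proposition~\ref{prop:regularised-constants}.
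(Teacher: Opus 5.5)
Your proposal is correct and is essentially the paper's proof. The paper likewise takes the supremum over \(m\) in the uniform two-sided bound and uses \(C_{J,m}\uparrow C_J\) from the proposition on convergence of the regularised constants; your write-up just spells out the extended-value bookkeeping.
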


\begin{proof}
Take the supremum over \(m\) in
\eqref{eq:uniform-regularised-equivalence} and use
Proposition~\ref{prop:regularised-constants}.
\end{proof}

\begin{remark}
\label{rem:no-raw-stieltjes-convergence}
The operational definition \eqref{eq:operational-characteristic} makes no
claim that the raw profiles or their Lebesgue--Stieltjes measures converge
under \eqref{eq:simultaneous-regularisation}. No such convergence is needed.
The regularised compact characterisations are proved with constants uniform
in \(m\), and Corollary~\ref{cor:operational-characteristics} transfers them
to the extended-weight problem. The complete technical formulation is given
in \ExtendedRegularisationProofLocation.
\end{remark}

Proposition~\ref{prop:directed-exhaustion} and
Corollary~\ref{cor:operational-characteristics} are the two limiting
principles used below.
We first remove the regularisation on a fixed compact
interval and then take the directed compact supremum.

\section{Endpoint-safe profiles and the compact linear Hardy theorem}
\label{sec:linear-hardy}

The bilinear reductions below repeatedly use a linear Hardy inequality on a
compact interval.  Fix
\[
 J=[c,d]\Subset I,
 \qquad c<d,
\]
and, for an integrable output weight \(w\ge0\), set
\begin{equation}
 W_J(x):=\int_x^d w(t)\,\dd t,
 \qquad x\in J.
 \label{eq:local-output-tail}
\end{equation}
At a fixed regularisation level an input weight \(v\) is called
\emph{regular on \(J\)} if, for some \(\delta>0\),
\begin{equation}
 v(t)\in[\delta,\infty]
 \qquad\text{for almost every }t\in J.
 \label{eq:regular-input-weight}
\end{equation}
The value \(v=\infty\) is allowed on a forbidden input set.
Finite-norm
inputs vanish there and reciprocal expressions are assigned the value zero.
The regularised compact problems of Section~\ref{subsec:extended-regularisation}
satisfy these hypotheses.  As usual, \(A\asymp_{\alpha}B\) means equivalence
with positive constants depending only on the displayed parameters \(\alpha\).

\subsection{Local integration profiles}
\label{subsec:local-profiles}

Let \(1\le p\le\infty\).  We use
\begin{equation}
 \frac1\infty=0,
 \qquad
 \frac1{v(t)}
 =
 \begin{cases}
 1/v(t),&v(t)<\infty,\\
 0,&v(t)=\infty.
 \end{cases}
 \label{eq:reciprocal-weight-convention}
\end{equation}
For \(1<p<\infty\), with \(p'=p/(p-1)\), define
\begin{equation}
 \Phi_{p,v;J}(x)
 :=
 \left(
   \int_c^x v(t)^{1-p'}\,\dd t
 \right)^{1/p'},
 \qquad c\le x\le d,
 \label{eq:finite-p-profile}
\end{equation}
where \(v^{1-p'}=0\) on \(\{v=\infty\}\).  This profile is finite,
continuous and nondecreasing, and
\begin{equation}
 \Phi_{p,v;J}(c)=0.
 \label{eq:finite-p-profile-left}
\end{equation}
For \(p=\infty\), put
\begin{equation}
 \Phi_{\infty,v;J}(x)
 :=
 \int_c^x\frac{\dd t}{v(t)},
 \qquad c\le x\le d.
 \label{eq:infinite-p-profile}
\end{equation}

The endpoint \(p=1\) requires two one-sided representatives to be
kept distinct.  For \(c<x\le d\), define the exact functional profile
\begin{equation}
 F_{v,J}(x)
 :=
 \operatorname*{ess\,sup}_{c<t<x}\frac1{v(t)}.
 \label{eq:p-one-open-profile}
\end{equation}
Then \(F_{v,J}\) is nondecreasing and left-continuous on \((c,d]\).
For the Stieltjes formulation we use its right-continuous completion
\begin{equation}
 \Phi_{1,v;J}(x)
 :=
 \begin{cases}
 \displaystyle\lim_{y\downarrow c}F_{v,J}(y),&x=c,\\[2mm]
 \displaystyle\lim_{y\downarrow x}F_{v,J}(y),&c<x<d,\\[2mm]
 F_{v,J}(d),&x=d.
 \end{cases}
 \label{eq:p-one-left-value}
\end{equation}
Thus \(F_{v,J}\) gives the exact norm of integration up to the open upper
endpoint, whereas \(\Phi_{1,v;J}\) is the right-continuous generator used
for the endpoint-complete Stieltjes measure.  In particular,
\(\Phi_{1,v;J}(c)\) records the limiting functional norm immediately to the
right of \(c\), not the value of the Hardy operator at the singleton
\(\{c\}\).

\begin{lemma}[Norm of the local integration functional]
\label{lem:local-integration-functional}
Let \(v\) be regular on \(J\).  If \(1<p<\infty\), then for every
\(x\in J\),
\begin{equation}
 \sup_{\|h\|_{L^p(v;J)}\le1}
 \int_c^x h(t)\,\dd t
 =
 \Phi_{p,v;J}(x).
 \label{eq:finite-p-functional-norm}
\end{equation}
If \(p=1\) and \(c<x\le d\), then
\begin{equation}
 \sup_{\|h\|_{L^1(v;J)}\le1}
 \int_c^x h(t)\,\dd t
 =
 F_{v,J}(x).
 \label{eq:p-one-functional-norm}
\end{equation}
\end{lemma}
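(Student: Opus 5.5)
The plan is to prove both identities by the standard duality between \(L^p(v)\) and the integration functional, treating each case in two halves: an upper bound from Hölder's inequality and a lower bound from explicit near-extremal test functions. Throughout I use the conventions \eqref{eq:reciprocal-weight-convention} and the extended product \eqref{eq:extended-product}. Any admissible \(h\) with \(\|h\|_{L^p(v;J)}\le1\) vanishes almost everywhere on \(\{v=\infty\}\). Regularity \eqref{eq:regular-input-weight} gives \(1/v\le 1/\delta\), so every reciprocal quantity is bounded and all integrals are finite.

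\emph{Case \(1<p<\infty\).} For the upper bound, on \(E_x:=(c,x)\cap\{v<\infty\}\) I write \(h=(hv^{1/p})\,v^{-1/p}\) and apply Hölder. This gives
\[
\int_c^x h\,\dd t\le \|h\|_{L^p(v;J)}\Bigl(\int_{E_x}v^{-p'/p}\,\dd t\Bigr)^{1/p'}=\|h\|_{L^p(v;J)}\,\Phi_{p,v;J}(x),
\]
since \(-p'/p=1-p'\) and \(v^{1-p'}=0\) on \(\{v=\infty\}\). For the lower bound, if \(\Phi_{p,v;J}(x)=0\) there is nothing to prove (this includes \(x=c\)). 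Otherwise I take \(h=\Phi_{p,v;J}(x)^{-p'/p}\,v^{1-p'}\chi_{(c,x)}\). This function is bounded by regularity and vanishes on \(\{v=\infty\}\). Then \(h^pv=\Phi^{-p'}v^{(1-p')p+1}=\Phi^{-p'}v^{1-p'}\), so \(\|h\|_{L^p(v;J)}=1\). Moreover \(\int_c^xh\,\dd t=\Phi^{p'-p'/p}=\Phi\). Hence the supremum is attained.

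\emph{Case \(p=1\), \(c<x\le d\).} For the upper bound, \(h=hv\cdot v^{-1}\) on \(\{v<\infty\}\) gives
\[
\int_c^x h\,\dd t\le \operatorname*{ess\,sup}_{c<t<x}\frac1{v(t)}\int_c^x hv\,\dd t\le F_{v,J}(x)\,\|h\|_{L^1(v;J)}.
\]
For the lower bound, fix \(0<\lambda<F_{v,J}(x)\), assuming \(F_{v,J}(x)>0\) (otherwise there is nothing to prove). By definition of the essential supremum, the set \(E_\lambda:=\{t\in(c,x):1/v(t)>\lambda\}\) has positive measure. It lies inside \(\{v<\infty\}\) because \(1/v=0\) there, and it has finite measure since \(J\) is compact. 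Put \(h=\chi_{E_\lambda}/\int_{E_\lambda}v\,\dd t\). Here \(0<\int_{E_\lambda}v<\lambda^{-1}|E_\lambda|<\infty\), so \(\|h\|_{L^1(v;J)}=1\). Then
\[
\int_c^xh\,\dd t=\frac{|E_\lambda|}{\int_{E_\lambda}v\,\dd t}\ge\lambda,
\]
because \(v<1/\lambda\) on \(E_\lambda\). Letting \(\lambda\uparrow F_{v,J}(x)\) yields the identity \eqref{eq:p-one-functional-norm}.

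The only delicate point is bookkeeping rather than analysis. One must check that the test functions avoid \(\{v=\infty\}\), that no \(0\cdot\infty\) product arises in the weighted norms, and that the \(p=1\) supremum is taken over the open interval \((c,x)\). This is why the identity holds with the left-continuous \(F_{v,J}\) and not with the right-continuous completion \(\Phi_{1,v;J}\). The quantity \(\Phi_{1,v;J}\) can exceed \(F_{v,J}\) at a jump, and the integral over \((c,x)\) cannot see values at or beyond \(x\).
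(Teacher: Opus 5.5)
Your proof is correct and follows the paper's argument: Hölder's inequality for the upper bounds, an explicit extremiser for \(1<p<\infty\), and normalised test functions on the sets \(\{t\in(c,x):1/v(t)>\lambda\}\) with \(\lambda\uparrow F_{v,J}(x)\) for \(p=1\). The differences are cosmetic. Regularity bounds \(v^{1-p'}\), so you can use the untruncated extremiser \(v^{1-p'}\chi_{(c,x)}\) where the paper refers to truncated ones, and you normalise \(\chi_{E_\lambda}\) where the paper normalises \(v^{-1}\mathbf 1_E\); both choices work equally well.
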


\begin{proof}
For \(1<p<\infty\), Hölder's inequality gives the upper bound in
\eqref{eq:finite-p-functional-norm}, since \(-p'/p=1-p'\), and the usual
truncated Hölder extremisers give sharpness.  For \(p=1\),
\[
 \int_c^x h(t)\,\dd t
 \le
 F_{v,J}(x)\int_c^x h(t)v(t)\,\dd t.
\]
Conversely, if \(0<\lambda<F_{v,J}(x)\), choose a positive-measure
set \(E\subset(c,x)\) on which \(v^{-1}>\lambda\) and normalise
\(v^{-1}\mathbf1_E\) in \(L^1(v;J)\).  Letting
\(\lambda\uparrow F_{v,J}(x)\) proves the reverse inequality.  Full
functional details, including the infinite-input endpoint, are recorded in
\CompactLinearTransferProofLocation.
\end{proof}

\begin{lemma}[Invariance under the \(p=1\) Stieltjes completion]
\label{lem:p-one-completion-invariance}
Let \(F=F_{v,J}\), \(\Phi=\Phi_{1,v;J}\), and let
\(Z:J\to[0,\infty)\) be continuous. Then
\begin{equation}
 \sup_{c<x\le d}F(x)Z(x)
 =
 \sup_{x\in J}\Phi(x)Z(x).
 \label{eq:p-one-supremum-invariance}
\end{equation}
For every \(r>0\), the left-continuous Lebesgue--Stieltjes measure generated
by \(F^r\), with the initial value zero immediately to the left of \(c\),
is exactly the Borel measure generated by the right-continuous completion
\(\Phi^r\). Equivalently, both measures satisfy
\begin{equation}
 \mu([c,x])=\Phi(x)^r,
 \qquad c\le x\le d.
 \label{eq:p-one-stieltjes-invariance}
\end{equation}
\end{lemma}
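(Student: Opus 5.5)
The proof splits into two parts: the supremum identity \eqref{eq:p-one-supremum-invariance}, and the identification of the two Stieltjes measures. Both rest on one elementary observation. Because \(F\) is nondecreasing, its one-sided limits exist, and for \(c<x<d\)
\[
 F(x)\le\Phi(x)=F(x+)\le F(y)\quad\text{for every }y\in(x,d],
\]
while \(\Phi(d)=F(d)\). Also \(\Phi(c)=F(c+)=\inf_{y>c}F(y)\). Monotonicity of \(F\) follows directly from the definition as an essential supremum over the growing intervals \((c,x)\). Left-continuity is not actually needed for the supremum identity, but I will use it for the measures.

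For the supremum identity, first compare pointwise. Since \(F\le\Phi\) on \((c,d]\) and \(Z\ge0\), we get \(\sup_{(c,d]}FZ\le\sup_{J}\Phi Z\). For the reverse inequality, take any \(x\in[c,d)\) and let \(y\downarrow x\) with \(y>x\). Then \(F(y)\to\Phi(x)\) by definition of the completion, and \(Z(y)\to Z(x)\) by continuity of \(Z\). Hence
\[
 \Phi(x)Z(x)=\lim_{y\downarrow x}F(y)Z(y)\le\sup_{(c,d]}FZ.
\]
At \(x=d\) we have \(\Phi(d)=F(d)\) directly. Some care is needed with the value \(+\infty\). Under the regularity hypothesis \eqref{eq:regular-input-weight}, \(F\le1/\delta\) is finite, so no \(0\cdot\infty\) issue arises. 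Without that hypothesis, I would read the products in the sense of \(\starprod\) and note that the limit argument still works wherever \(Z(x)>0\), while points with \(Z(x)=0\) contribute nothing to either side.

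For the measures, let \(\nu\) be the left-continuous Lebesgue--Stieltjes measure generated by \(F^r\), extended by \(F^r(c)=0\) immediately to the left of \(c\). Left-continuity gives \(\nu([a,b))=F(b)^r-F(a)^r\), and \(F^r\) is again monotone and left-continuous on \((c,d]\), so \(\nu\) is well defined. The natural test class is the closed intervals \([c,x]\), which generate the Borel \(\sigma\)-algebra of \(J\) and form a \(\pi\)-system. For \(x<d\) I would compute
\[
 \nu([c,x])=\lim_{y\downarrow x}\nu([c,y))=\lim_{y\downarrow x}F(y)^r=\Phi(x)^r,
\]
using continuity of \(\nu\) from above together with the zero initial value. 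At \(x=d\) the closed interval includes the right endpoint, so there I would use the convention \(F(d)=\Phi(d)\) adopted for the measure on the closed interval. At \(x=c\) the formula yields the atom \(\nu(\{c\})=\Phi(c)^r\), which is exactly \eqref{eq:left-boundary-profile-atom}. The right-continuous Borel measure \(\mu\) generated by \(\Phi^r\), started from \(0\) just left of \(c\), satisfies \(\mu([c,x])=\Phi(x)^r\) by construction. Both measures are finite and agree on this \(\pi\)-system, so the \(\pi\)–\(\lambda\) uniqueness theorem gives \(\nu=\mu\).

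I expect the main obstacle to be the endpoint conventions rather than the analysis. At \(c\), one must make precise the phrase ``the initial value zero immediately to the left of \(c\)'': it has to be formalised so that the mass \(F(c+)^r\) is placed on \(\{c\}\) and not lost. At \(d\), one must fix what the left-continuous measure assigns to \(\{d\}\); the resolution is to declare the total mass to be \(F(d)^r\).
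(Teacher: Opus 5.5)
Your proof is correct and takes the same route as the paper's: the bound \(F\le\Phi\) together with right limits \(y\downarrow x\) (using continuity of \(Z\), and \(\Phi(d)=F(d)\) at \(d\)) gives the supremum identity, while continuity from above of the left-continuous measure on \([c,y)\), \(y\downarrow x\), followed by uniqueness on the closed initial intervals \([c,x]\), gives the measure identity. Your explicit handling of the endpoint conventions (zero value just left of \(c\), total mass \(F(d)^r\)) and of the finiteness of \(F\) under regularity makes precise what the paper's proof leaves implicit.
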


\begin{proof}
Since \(F\le\Phi\), one inequality in
\eqref{eq:p-one-supremum-invariance} is immediate. For \(x<d\), choose
\(y_n\downarrow x\). Then \(F(y_n)\to\Phi(x)\) and, by continuity,
\(Z(y_n)\to Z(x)\). The endpoint \(x=c\) is treated in the same way.
At \(x=d\), \(F(d)=\Phi(d)\). Hence the two suprema agree.

For the measure statement, denote by \(\lambda_F\) the left-continuous
Stieltjes measure, so that
\(\lambda_F([c,b))=F(b)^r\) for \(c<b\le d\). If \(x<d\), continuity
from above and \(b\downarrow x\) give
\[
 \lambda_F([c,x])
 =\lim_{b\downarrow x}F(b)^r
 =\Phi(x)^r.
\]
For \(x=d\) the same identity follows from \(\Phi(d)=F(d)\). These closed
initial intervals determine the Borel measure on \(J\), proving
\eqref{eq:p-one-stieltjes-invariance}.
\end{proof}

For the two inputs of the bilinear problem we write
\begin{equation}
 \Phi_{i,J}:=\Phi_{p_i,v_i;J},
 \qquad i=1,2.
 \label{eq:bilinear-local-profiles}
\end{equation}

\subsection{Endpoint-safe Lebesgue--Stieltjes measures}
\label{subsec:endpoint-stieltjes}

Let \(G:[c,d]\to[0,\infty)\) be nondecreasing and right-continuous.  We
associate with \(G\) the Lebesgue--Stieltjes measure obtained by extending
\(G\) by zero immediately to the left of \(c\).  Thus
\begin{equation}
 dG([c,x])=G(x),
 \qquad c\le x\le d,
 \label{eq:stieltjes-cumulative}
\end{equation}
and, in particular,
\begin{equation}
 dG(\{c\})=G(c).
 \label{eq:stieltjes-left-atom}
\end{equation}

For \(1\le p<\infty\) and \(0<q<p\), define \(r\in(0,\infty)\) by
\begin{equation}
 \frac1r=\frac1q-\frac1p.
 \label{eq:linear-r-exponent}
\end{equation}
and set
\begin{equation}
 \mu_{p,v;J}
 :=
 d\!\left(\Phi_{p,v;J}^{\,r}\right).
 \label{eq:profile-stieltjes-measure}
\end{equation}
Then
\begin{equation}
 \mu_{p,v;J}([c,x])=\Phi_{p,v;J}(x)^r,
 \qquad c\le x\le d,
 \label{eq:profile-measure-cumulative}
\end{equation}
and
\begin{equation}
 \mu_{p,v;J}(\{c\})=\Phi_{p,v;J}(c)^r.
 \label{eq:profile-measure-left-atom}
\end{equation}
For \(1<p<\infty\) this atom vanishes, whereas for \(p=1\) it may be
positive and must be retained.

\begin{remark}[Necessity of the boundary atom]
\label{rem:necessity-boundary-atom}
If \(v\equiv1\) and \(p=1\), then \(\Phi_{1,v;J}\equiv1\) and hence
\(d(\Phi_{1,v;J}^{\,r})=\delta_c\). Variation on \((c,d]\) alone would
therefore lose the entire contribution.  The corresponding bilinear endpoint
mechanism is exhibited explicitly in Subsection~\ref{subsec:nonattained-endpoint-example}.
\end{remark}

\subsection{The compact endpoint-safe linear theorem}
\label{subsec:compact-linear-hardy}

For \(1\le p<\infty\), let \(L_{p,q}(v,w;J)\) denote the optimal constant
in
\begin{equation}
 \|H_Jh\|_{L^q(w;J)}
 \le
 L_{p,q}(v,w;J)\|h\|_{L^p(v;J)},
 \qquad h\ge0.
 \label{eq:compact-linear-hardy-inequality}
\end{equation}

\begin{theorem}[Compact endpoint-safe linear Hardy theorem]
\label{thm:compact-linear-hardy}
Let \(J=[c,d]\), let \(1\le p<\infty\) and \(0<q<\infty\), and let
\(v\) be regular on \(J\) in the sense above and let \(w\ge0\) be
integrable on \(J\).  If \(p\le q\), then
\begin{equation}
 L_{p,q}(v,w;J)
 \asymp_{p,q}
 \sup_{x\in J}\Phi_{p,v;J}(x)W_J(x)^{1/q}.
 \label{eq:compact-linear-convex}
\end{equation}
If \(0<q<p<\infty\) and \(r\) is defined by
\eqref{eq:linear-r-exponent}, then
\begin{equation}
 L_{p,q}(v,w;J)
 \asymp_{p,q}
 \left(
   \int_J W_J(x)^{r/q}
   \,\dd\!\left(\Phi_{p,v;J}(x)^r\right)
 \right)^{1/r}.
 \label{eq:compact-linear-nonconvex}
\end{equation}
Both equivalences hold in the extended sense.  In particular, for
\(p=1>q\), the integral in \eqref{eq:compact-linear-nonconvex} contains
\begin{equation}
 W_J(c)^{r/q}\Phi_{1,v;J}(c)^r.
 \label{eq:p-one-boundary-contribution}
\end{equation}
The comparison constants depend only on \(p\) and \(q\).
\end{theorem}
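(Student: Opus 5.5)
We prove \eqref{eq:compact-linear-convex} and \eqref{eq:compact-linear-nonconvex} by treating \(1<p<\infty\) and \(p=1\) separately. In each case we reduce to the classical structure of the two-weight Hardy inequality on a compact interval, but with every functional norm expressed through Lemma~\ref{lem:local-integration-functional} and every Stieltjes quantity through the closed-interval convention \eqref{eq:stieltjes-cumulative}. Regularity of \(v\) makes \(v^{1-p'}\) and \(1/v\) bounded on \(J\), and integrability of \(w\) makes \(W_J\) finite and continuous. Hence all profiles are finite and all integrals below are legitimate. The set \(\{v=\infty\}\) is harmless: finite-norm inputs vanish there and the reciprocal convention \eqref{eq:reciprocal-weight-convention} removes it from every profile.

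\emph{Convex branch \(p\le q\).} For the lower bound, fix \(x\in J\) and test with near-extremisers of Lemma~\ref{lem:local-integration-functional} supported in \([c,x]\). Since \(H_Jh\) is nondecreasing, \(H_Jh\ge\int_c^x h\) on \([x,d]\), so
\[
\|H_Jh\|_{L^q(w;J)}\ge \Bigl(\int_c^x h\Bigr)W_J(x)^{1/q}.
\]
This gives \(L_{p,q}\ge\Phi_{p,v;J}(x)W_J(x)^{1/q}\) for \(1<p\) and \(L_{p,q}\ge F_{v,J}(x)W_J(x)^{1/q}\) for \(p=1\), \(x>c\). Then \eqref{eq:p-one-supremum-invariance}, applied with \(Z=W_J^{1/q}\), passes to the right-continuous completion \(\Phi_{1,v;J}\). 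For the upper bound when \(p=1\), write \(H_Jh(x)=\int_c^x h\) and use Minkowski's integral inequality if \(q\ge1\):
\[
\|H_Jh\|_{L^q(w)}\le\int_J h(t)W_J(t)^{1/q}\,dt\le\operatorname*{ess\,sup}_t\Bigl(\tfrac{W_J(t)^{1/q}}{v(t)}\Bigr)\|h\|_{L^1(v)}.
\]
The last essential supremum is dominated by \(\sup_x F_{v,J}(x)W_J(x)^{1/q}\) by monotonicity of \(W_J\). For \(1<p\le q\), we use the standard Muckenhoupt/Bradley argument, e.g.\ Hölder's inequality with the auxiliary factor \(\Phi_{p,v;J}^{\,\theta}\), on \([c,d]\) with a constant depending only on \(p,q\).

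\emph{Non-convex branch \(q<p\).} For \(1<p<\infty\), the profile is continuous with \(\Phi(c)=0\), so the atom is absent. We follow the elementary approach of \cite{GogatishviliPick2025} and the discretisation via \(\Phi^r\)-level sets. For the upper bound, choose points \(c=x_0<x_1<\dots\) where \(\Phi^r\) doubles, estimate \(H_Jh\) on blocks by Hölder's inequality, and sum with Hölder in \(\ell^{p/q}\). For the lower bound, test with \(h=\sum_k\lambda_k h_k\), where the \(h_k\) are block extremisers and \(\lambda_k\) is chosen so that the sum reproduces \(\sum_k W_J(x_k)^{r/q}(\Phi(x_{k+1})^r-\Phi(x_k)^r)\), which is comparable to the Stieltjes integral. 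For \(p=1>q\), we have \(r=q/(1-q)\). The plan is the same with \(F_{v,J}\) in the discretisation, but the first block must start at the right limit \(\Phi_{1,v;J}(c)\). The test function is concentrated on a set \(E\subset(c,c+\varepsilon)\) with \(v^{-1}\) close to \(\Phi_{1,v;J}(c)\), and it produces the boundary term \eqref{eq:p-one-boundary-contribution} as \(\varepsilon\downarrow0\), because \(W_J(c+\varepsilon)\to W_J(c)\). By Lemma~\ref{lem:p-one-completion-invariance}, the resulting discrete sums match the Stieltjes integral against \(d(\Phi_{1,v;J}^{\,r})\) including the atom.

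\emph{Main obstacle.} I expect the hardest step to be the uniform handling of the \(p=1\), \(q<1\) non-convex case. Neither Minkowski's inequality nor duality in \(L^q\) is available there. The upper bound requires a careful block decomposition in which the first block \([c,x_1]\) contributes exactly \(W_J(c)^{r/q}\Phi(c)^r\), and later blocks are governed by jumps of \(\Phi^r\) at atoms on the correct side, closed at the left. I would first try the doubling-point discretisation, allowing atoms of \(d\Phi^r\) to be single blocks, and check that the comparison constants depend only on \(q\). This is the content deferred to \CompactLinearTransferProofLocation.
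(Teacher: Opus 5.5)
\textbf{There is a genuine gap in the case $p=1>q$.} Your plan reproves the linear theorem directly on $J$, which differs from the paper. The paper transports $J$ onto the half-line (Appendix~\ref{app:compact-linear-transfer}) and invokes the half-line theorem of \cite{GogatishviliPick2025}. That theorem already covers all $1\le p<\infty$, $0<q<\infty$, \emph{including} $p=1$ with the left-continuous profile $F_{v,J}$, and has exponent-only constants. The paper then proves only two bridges:
\begin{itemize}
\item Lemma~\ref{lem:p-one-completion-invariance}, which identifies the left-continuous Stieltjes measure of $F_{v,J}^{\,r}$ with $\dd(\Phi_{1,v;J}^{\,r})$. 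The atom at $0$ pushes forward to the atom $\Phi_{1,v;J}(c)^r$ at $c$.
\item A finite-ceiling limit $v^{[N]}\uparrow v$ that handles the forbidden set $\{v=\infty\}$.
\end{itemize}

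In your proposal the case $p=1>q$, which is the endpoint case the theorem is about, is not proved; you only say what you ``would first try''. This is not a formality. The crude bound $H_Jh\le F_{v,J}\|h\|_{L^1(v;J)}$ only yields $\int_J F_{v,J}^q w$. That quantity is not controlled by the Stieltjes characteristic with $q$-only constants: for $J=[0,1]$, $v(t)=1/t$, $q=\tfrac12$ and $W_J$ a truncation of $x^{-\beta}$, the ratio is of order $(1-2\beta)^{-1}$. Moreover, doubling points of $F_{v,J}$ need not exist, because $F_{v,J}$ jumps. So neither direction of \eqref{eq:compact-linear-nonconvex} is established for $p=1$.

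Two smaller slips:
\begin{itemize}
\item In the $p=1\le q$ upper bound, monotonicity of $W_J$ points the wrong way. What you need is $1/v\le\Phi_{1,v;J}$ a.e. (equivalently, continuity of $W_J$ as $x\downarrow t$), followed by Lemma~\ref{lem:p-one-completion-invariance}.
\item For $1<p<\infty$ a doubling sequence cannot start at $x_0=c$, since $\Phi_{p,v;J}(c)=0$. It must be indexed by $k\in\mathbb Z$ and accumulate at $c$.
\end{itemize}

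\textbf{How to close the gap.} Either cite and transfer as the paper does, or supply a block construction adapted to the jumps. Write $\Phi:=\Phi_{1,v;J}$ and assume $\Phi(c)>0$.
\begin{enumerate}
\item \emph{Blocks.} Put $x_0=c$ and $x_{j+1}:=\sup\{x\in(x_j,d]:F_{v,J}(x)\le2\Phi(x_j)\}$. Left-continuity of $F_{v,J}$ gives $F_{v,J}(x_{j+1})\le2\Phi(x_j)$. Right-continuity of $\Phi$ gives $\Phi(x_{j+1})\ge2\Phi(x_j)$ unless $x_{j+1}=d$. Since $\Phi\le1/\delta$, the chain is finite, and every jump of $\Phi$ sits at a block endpoint.
\item \emph{Upper bound.} On $(x_k,x_{k+1}]$ one has $H_Jh\le2\sum_{j\le k}\Phi(x_j)\|h\|_{L^1(v;(x_j,x_{j+1}))}$. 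Use $(\sum a_j)^q\le\sum a_j^q$ and then H\"older with exponents $1/q$ and $1/(1-q)$. This gives $L_{1,q}\le2S^{1/r}$ with $S:=\sum_j\Phi(x_j)^rW_J(x_j)^{r/q}$. The $j=0$ term of $S$ is exactly \eqref{eq:p-one-boundary-contribution}.
\item \emph{Comparison.} Geometric growth, together with the fact that jumps occur only at block endpoints, gives $S\asymp_q\int_JW_J^{r/q}\,\dd(\Phi^r)$ in both directions.
\item \emph{Lower bound.} Test with $h=\sum_j\lambda_jh_j$, where $\lambda_j=\Phi(x_j)^rW_J(x_j)^{r/q}$ and the $h_j$ are normalised near-extremisers in $(c,x_j+\varepsilon)$. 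Using $(r+1)q=r$, a power-increment estimate and Abel summation, one gets $\int_J(H_Jh)^qw\gtrsim S$ while $\|h\|_{L^1(v;J)}\le S$. Hence $L_{1,q}\gtrsim S^{1/q-1}=S^{1/r}$.
\item \emph{Case $\Phi(c)=0$.} The blocks accumulate at the first point where $F_{v,J}$ becomes positive.
\end{enumerate}

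The direct route gives a self-contained argument that handles $\{v=\infty\}$ without a ceiling bridge. The paper's route gains brevity by delegating every classical estimate, including $p=1>q$, to the half-line theorem.
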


\begin{proof}
The underlying half-line characterisation, including the exponent-only
necessity and sufficiency estimates used here, is given by
\cite[Theorem~2.1 and its proof]{GogatishviliPick2025}. For a recent interval treatment
including the critical case \(p=1\), see
\cite{GogatishviliPickTurcinovaUnver2025}. The complete compact transport is
proved in \CompactLinearTransferProofLocation.  We record the interface needed
later.  Put \(\ell=d-c\), translate \(J\) onto \([0,\ell]\), extend the
output weight by zero on \([\ell,\infty)\), extend each input function by
zero there, and continue the input weight by any positive regular value.  Deleting
the part of any half-line input beyond \(\ell\) cannot
increase its input norm and does not change the output on the support of the
translated output weight.  Hence the compact optimal constant equals the
corresponding half-line constant.

On \([0,\ell]\), translation carries the output tail to
\(W_J(c+s)\) and, for \(1<p<\infty\), the input profile to
\(\Phi_{p,v;J}(c+s)\). For \(p=1\), the half-line theorem uses the
left-continuous exact profile \(F_{v,J}\). Lemma~\ref{lem:p-one-completion-invariance}
shows that replacing it by the right-continuous Stieltjes completion
\(\Phi_{1,v;J}\) changes neither the convex supremum nor the non-convex
Lebesgue--Stieltjes measure. In particular the initial mass becomes the atom
\(\Phi_{1,v;J}(c)^r\) at \(s=0\). The finite-ceiling argument in
\CompactLinearTransferProofLocation{} then extends the transported theorem
from finite input weights to regular weights that may equal \(+\infty\) on
forbidden sets. Since all estimates in the half-line theorem and in that
bridge are uniform, the comparison constants depend only on \((p,q)\), not
on \(J\), the weights, the regularisation level, or the ceiling parameter.
Thus the theorem is a compact-transfer statement, not a new
characterisation of the classical linear inequality.
\end{proof}

\subsection{The infinite-input endpoint}
\label{subsec:infinite-input-functional}

The case \(p=\infty\) is exact and does not require the preceding two-weight
theorem.

\begin{lemma}[Exact \(L^\infty(v)\) integration functional]
\label{lem:linfty-integration-functional}
Let \(v:J\to[\delta,\infty]\) be measurable for some \(\delta>0\).  Then,
for every \(x\in J\),
\begin{equation}
 \sup_{\|h\|_{L^\infty(v;J)}\le1}H_Jh(x)
 =
 \Phi_{\infty,v;J}(x).
 \label{eq:linfty-functional-identity}
\end{equation}
Moreover,
\begin{equation}
 \sigma_v(t):=\frac1{v(t)}
 =
 \begin{cases}
 1/v(t),&v(t)<\infty,\\
 0,&v(t)=\infty,
 \end{cases}
 \label{eq:linfty-extremiser}
\end{equation}
satisfies
\begin{equation}
 \|\sigma_v\|_{L^\infty(v;J)}\le1,
 \label{eq:linfty-extremiser-norm}
\end{equation}
and
\begin{equation}
 H_J\sigma_v(x)=\Phi_{\infty,v;J}(x),
 \qquad x\in J.
 \label{eq:linfty-extremiser-output}
\end{equation}
\end{lemma}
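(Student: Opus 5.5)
The plan is to verify the explicit extremiser first and then obtain the supremum identity by combining a pointwise upper bound with that extremiser. The order is: properties of \(\sigma_v\), then the upper bound for arbitrary admissible \(h\), then attainment.

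\emph{Properties of \(\sigma_v\).} Since \(v\ge\delta\) almost everywhere, we have \(0\le\sigma_v\le1/\delta\). In particular \(\sigma_v\) is bounded and integrable on the compact interval \(J\), and \(\Phi_{\infty,v;J}(x)\le(x-c)/\delta<\infty\). Now evaluate the extended product \eqref{eq:extended-product} pointwise:
\begin{itemize}
\item on \(\{v<\infty\}\) we have \(\sigma_v\starprod v=(1/v)\,v=1\), because \(0<1/v<\infty\) and \(0<v<\infty\) there;
\item on \(\{v=\infty\}\) we have \(\sigma_v=0\), so \(\sigma_v\starprod v=0\starprod\infty=0\).
\end{itemize}
Taking the essential supremum gives \eqref{eq:linfty-extremiser-norm}. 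Identity \eqref{eq:linfty-extremiser-output} is then just the definition \eqref{eq:infinite-p-profile} together with \(H_J\sigma_v(x)=\int_c^x\sigma_v\).

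\emph{Upper bound.} Let \(h\ge0\) with \(\|h\|_{L^\infty(v;J)}\le1\). Then \(h\starprod v\le1\) almost everywhere, and I claim \(h\le\sigma_v\) almost everywhere.
\begin{itemize}
\item On \(\{v=\infty\}\): if \(h(t)>0\) then \(h(t)\starprod v(t)=\infty\), which is impossible off a null set. Hence \(h=0=\sigma_v\) almost everywhere there.
\item On \(\{\delta\le v<\infty\}\): if \(h(t)=\infty\) the product is \(\infty\), which is excluded almost everywhere. Otherwise \(h\) is finite and \(h\,v\le1\), so \(h\le1/v=\sigma_v\). The case \(h(t)=0\) is trivial.
\end{itemize}
Integrating over \([c,x]\) gives \(H_Jh(x)\le\Phi_{\infty,v;J}(x)\).

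\emph{Conclusion and main obstacle.} Combining the upper bound with the admissibility of \(\sigma_v\) and \eqref{eq:linfty-extremiser-output}, the supremum equals \(\Phi_{\infty,v;J}(x)\) and is attained. This gives \eqref{eq:linfty-functional-identity}. The argument is short, and the only delicate point is the extended-product bookkeeping. One must check that the convention \(0\starprod\infty=0\) makes \(\sigma_v\) admissible on the forbidden set \(\{v=\infty\}\). One must also check that the case split (\(h=0\), \(h\) finite and positive, \(h=\infty\)) is exhaustive, so the pointwise inequality \(h\le\sigma_v\) holds almost everywhere without any division by \(\infty\). The hypothesis \(v\ge\delta\) is used only for finiteness and integrability of \(\sigma_v\). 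The identity itself would hold without it, with \(\Phi_{\infty,v;J}\) allowed to be \(+\infty\).
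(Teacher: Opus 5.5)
Your proposal is correct and follows essentially the same route as the paper: the pointwise bound \(h\le\sigma_v\) almost everywhere (with both sides zero on \(\{v=\infty\}\)) is integrated over \([c,x]\), and the supremum is attained by the admissible extremiser \(\sigma_v\). The paper works with a general norm \(N\) and argues by homogeneity; your explicit case analysis of the extended product simply spells out bookkeeping that the paper compresses into one line.
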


\begin{proof}
If \(N=\|h\|_{L^\infty(v;J)}<\infty\), then
\(h\le N/v\) almost everywhere, with both sides zero on
\(\{v=\infty\}\).  Hence
\(H_Jh(x)\le N\Phi_{\infty,v;J}(x)\).  The function \(\sigma_v\) has
weighted \(L^\infty\)-norm at most one under the \(\starprod\) convention and
attains equality, proving the claim.
\end{proof}

\begin{corollary}[Infinite-input freezing]
\label{cor:infinite-input-freezing}
Let \(v_1:J\to[\delta,\infty]\), let \(v_2\) and \(u\) be regularised
weights, and let \(0<q<\infty\) and \(1\le p_2<\infty\).  The optimal
constant in
\begin{equation}
 \|H_Jf\,H_Jg\|_{L^q(u;J)}
 \le
 C\|f\|_{L^\infty(v_1;J)}\|g\|_{L^{p_2}(v_2;J)}
 \label{eq:one-infinite-bilinear-inequality}
\end{equation}
is exactly
\begin{equation}
 C
 =
 \left\|
 H_J:
 L^{p_2}(v_2;J)
 \longrightarrow
 L^q\!\left(\Phi_{\infty,v_1;J}^{\,q}u;J\right)
 \right\|.
 \label{eq:one-infinite-linear-reduction}
\end{equation}
If both input exponents are infinite, then
\begin{equation}
 C
 =
 \left(
   \int_J
   \left[
     \Phi_{\infty,v_1;J}(x)\Phi_{\infty,v_2;J}(x)
   \right]^q
   u(x)\,\dd x
 \right)^{1/q}.
 \label{eq:two-infinite-exact-formula}
\end{equation}
\end{corollary}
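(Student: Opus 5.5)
The plan is to freeze the infinite input using Lemma~\ref{lem:linfty-integration-functional}, and to prove the two inequalities between the optimal constant and the linear norm in \eqref{eq:one-infinite-linear-reduction} separately. Throughout, write \(\Phi_1:=\Phi_{\infty,v_1;J}\), and let \(L\) denote the norm in \eqref{eq:one-infinite-linear-reduction}.

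\emph{Upper bound.} Fix \(f\) with \(N_1:=\|f\|_{L^\infty(v_1;J)}<\infty\) and \(g\) with finite \(L^{p_2}(v_2;J)\)-norm. Lemma~\ref{lem:linfty-integration-functional} gives \(H_Jf(x)\le N_1\Phi_1(x)\) for every \(x\in J\). Since \(v_1\ge\delta\), the profile \(\Phi_1\) is finite, bounded by \((d-c)/\delta\). The regularised \(u\) is finite, and \(v_2\) is bounded below, so \(H_Jg\) is finite. Hence every product appearing here is an ordinary product of finite numbers, and no \(\starprod\) subtlety arises. Raising to the power \(q\) and integrating against \(u\) gives
\[
\|H_Jf\,H_Jg\|_{L^q(u;J)}\le N_1\,\|H_Jg\|_{L^q(\Phi_1^q u;J)}\le N_1 L\,\|g\|_{L^{p_2}(v_2;J)}.
\]
Therefore \(C\le L\). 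This step works equally for \(0<q<1\), because it is only a pointwise bound followed by monotone integration.

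\emph{Lower bound.} Insert the extremiser \(f=\sigma_v\) from \eqref{eq:linfty-extremiser} with \(v=v_1\). By \eqref{eq:linfty-extremiser-norm} its norm is at most one, and by \eqref{eq:linfty-extremiser-output} it satisfies \(H_J\sigma_v=\Phi_1\) identically. The defining inequality then yields
\[
\|H_Jg\|_{L^q(\Phi_1^q u;J)}\le C\,\|g\|_{L^{p_2}(v_2;J)}
\]
for every admissible \(g\), so \(L\le C\). The case \(C=\infty\) needs no quotient and is handled by the same inequality, in line with the zero-norm convention of Subsection~\ref{subsec:extended-weighted-spaces}. If \(\|\sigma_v\|=0\), then \(\sigma_v=0\) almost everywhere. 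In that case \(\Phi_1\equiv0\), both sides vanish, and the claim is trivial.

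\emph{The doubly infinite case.} Freeze both inputs. The upper bound applies Lemma~\ref{lem:linfty-integration-functional} to each factor, and the lower bound tests with \(\sigma_{v_1}\) and \(\sigma_{v_2}\) simultaneously. This gives exactly the integral in \eqref{eq:two-infinite-exact-formula}.

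The only point needing care is the extended-value bookkeeping. Two things must be checked: that \(\sigma_v\) has norm at most one under \(\starprod\), which holds because \(\sigma_v\starprod v\) equals \(1\) on \(\{v<\infty\}\) and \(0\starprod\infty=0\) on \(\{v=\infty\}\); and that the boundedness of \(\Phi_1\) and the finiteness of \(u\) rule out any \(0\cdot\infty\) ambiguity in the products. Both are secured by the hypothesis \(v_1\ge\delta\) together with the regularisation, so I do not expect a genuine obstacle beyond this.
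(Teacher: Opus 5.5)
Your proposal is correct and follows essentially the same route as the paper. The upper estimate comes from the pointwise bound \(H_Jf\le\|f\|_{L^\infty(v_1;J)}\Phi_{\infty,v_1;J}\) of Lemma~\ref{lem:linfty-integration-functional}. The reverse estimate comes from testing with \(\sigma_{v_1}\), and with \(\sigma_{v_1}\) and \(\sigma_{v_2}\) simultaneously in the doubly infinite case. Your extra bookkeeping on the finiteness of the products is fine. The separate degenerate case \(\|\sigma_{v_1}\|=0\) is harmless but unnecessary, since the lower bound only uses \(\|\sigma_{v_1}\|_{L^\infty(v_1;J)}\le1\).
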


\begin{proof}
Lemma~\ref{lem:linfty-integration-functional} gives the required pointwise
upper bound for each infinite input.  Testing with \(\sigma_{v_1}\) gives
equality in \eqref{eq:one-infinite-linear-reduction}. When both inputs are
infinite, testing simultaneously with \(\sigma_{v_1}\) and \(\sigma_{v_2}\)
gives \eqref{eq:two-infinite-exact-formula}.
\end{proof}

\begin{remark}
\label{rem:extended-infinite-input}
For arbitrary extended input weights, apply these identities at the
regularised level \(v_{i,m}=v_i+1/m\) and then use the operational supremum
of Corollary~\ref{cor:operational-characteristics}. The full limiting argument
is contained in \ExtendedRegularisationProofLocation.
\end{remark}

\section{A direct measure-valued Hardy--Copson principle}
\label{sec:hardy-copson}

The lower-triangle reduction of the bilinear problem produces Hardy and
Copson operators between spaces defined by Borel measures.  The relevant
criteria belong to the established general-measure Hardy theory. See, for
example, \cite{OkpotiPerssonSinnamon2007,OkpotiPerssonSinnamon2008,
SinnamonNormalForm}. Related four-weight inequalities involving Hardy and
Copson operators have also been characterised in
\cite{GogatishviliPickUnver2022}. For the bilinear argument we need a compact
version in which three endpoint features are explicit. The Hardy kernel is
closed, the Copson kernel is strict, and every left-boundary Stieltjes atom is
retained. No priority claim for the general Hardy--Copson problem is intended.
We derive that version directly from finite chains, exact Copson reversal and
atom-preserving interval partitions. Arbitrary Copson weights are then
recovered by monotone truncation.  The complete finite-chain and partition
estimates are recorded in Appendices~\ref{app:finite-chain-hardy}--\ref{app:partition-approximation}.

\subsection{Compact operators and endpoint conventions}
\label{subsec:measure-operators}

Let
\[
 J=[c,d],\qquad c<d,
\]
let \(\mu\) and \(\nu\) be finite nonnegative Borel measures on \(J\), and
fix
\begin{equation}
 1<P,Q<\infty,
 \qquad
 P'=\frac{P}{P-1}.
 \label{eq:measure-exponents}
\end{equation}
For \(h\ge0\), define the closed Hardy operator
\begin{equation}
 H_\mu h(x)
 :=
 \int_{[c,x]}h(t)\,\dd\mu(t),
 \qquad x\in J,
 \label{eq:measure-hardy-operator}
\end{equation}
and, for a Borel weight \(\omega:J\to[0,\infty]\), the strict weighted
Copson operator
\begin{equation}
 C_{\mu,\omega}h(x)
 :=
 \int_{(x,d]}\omega(t)h(t)\,\dd\mu(t),
 \label{eq:weighted-copson-operator}
\end{equation}
where an unbounded or infinite-valued \(\omega\) is interpreted by the
monotone truncation described below.

Put
\begin{equation}
 \begin{aligned}
 M(x)&:=\mu([c,x]), & N(x)&:=\nu([x,d]),\\
 K(x)&:=\nu([c,x]), &
 B_\omega(x)&:=\int_{(x,d]}\omega(t)^{P'}\,\dd\mu(t).
 \end{aligned}
 \label{eq:measure-cumulative-functions}
\end{equation}
The closed interval in \(M,N\) and the strict interval in \(B_\omega\) are
intentional.
A common input/output atom at \(x\) contributes to
\(H_\mu h(x)\) but not to \(C_{\mu,\omega}h(x)\).

If \(Q<P\), define \(R\in(0,\infty)\) by
\begin{equation}
 \frac1R=\frac1Q-\frac1P,
 \qquad
 \frac RQ=1+\frac RP>1.
 \label{eq:measure-r-exponent}
\end{equation}
Whenever \(F:[c,d]\to[0,\infty)\) is nondecreasing and right-continuous,
\(\dd(F^\gamma)\) denotes the Lebesgue--Stieltjes measure obtained after
extending \(F^\gamma\) by zero immediately to the left of \(c\).  Hence
\begin{equation}
 \dd(F^\gamma)(\{c\})=F(c)^\gamma.
 \label{eq:measure-stieltjes-boundary}
\end{equation}
In particular, the non-convex criteria below include the left-boundary
masses
\[
 M(c)^{R/P'}\delta_c,
 \qquad
 K(c)^{R/Q}\delta_c.
\]

Define the compact characteristics
\begin{equation}
 \mathfrak H_J
 :=
 \begin{cases}
 \displaystyle
 \sup_{x\in J}M(x)^{1/P'}N(x)^{1/Q}, & P\le Q,\\[3mm]
 \displaystyle
 \left[
   \int_J N(x)^{R/Q}\,
   \dd\!\left(M(x)^{R/P'}\right)
 \right]^{1/R}, & Q<P,
 \end{cases}
 \label{eq:compact-hardy-characteristic}
\end{equation}
and
\begin{equation}
 \mathfrak C_{\omega,J}
 :=
 \begin{cases}
 \displaystyle
 \sup_{x\in J}K(x)^{1/Q}B_\omega(x)^{1/P'}, & P\le Q,\\[3mm]
 \displaystyle
 \left[
   \int_J B_\omega(x)^{R/P'}\,
   \dd\!\left(K(x)^{R/Q}\right)
 \right]^{1/R}, & Q<P.
 \end{cases}
 \label{eq:compact-copson-characteristic}
\end{equation}

\subsection{Finite-chain inputs and the partition passage}
\label{subsec:finite-chain-inputs}

For an ordered chain \(x_1<\cdots<x_n\), let
\[
 \mu=\sum_{i=1}^n\mu_i\delta_{x_i},
 \qquad
 \nu=\sum_{k=1}^n\nu_k\delta_{x_k},
\]
and set
\[
 M_k:=\sum_{i=1}^k\mu_i,
 \qquad
 N_k:=\sum_{j=k}^n\nu_j.
\]
The closed finite-chain Hardy operator is
\[
 (H_na)_k:=\sum_{i=1}^k a_i\mu_i.
\]

\begin{theorem}[Complete finite-chain Hardy criterion]
\label{thm:finite-chain-hardy}
Let \(\cH_n\) be the optimal constant in
\[
 \left[
   \sum_{k=1}^n(H_na)_k^Q\nu_k
 \right]^{1/Q}
 \le
 \cH_n
 \left(\sum_{i=1}^n a_i^P\mu_i\right)^{1/P},
 \qquad a_i\ge0.
\]
Then
\begin{equation}
 \cH_n
 \asymp_{P,Q}
 \begin{cases}
 \displaystyle
 \max_{1\le k\le n}M_k^{1/P'}N_k^{1/Q}, & P\le Q,\\[3mm]
 \displaystyle
 \left[
   \sum_{k=1}^n N_k^{R/Q}
   \left(M_k^{R/P'}-M_{k-1}^{R/P'}\right)
 \right]^{1/R}, & Q<P,
 \end{cases}
 \label{eq:finite-chain-hardy-criterion}
\end{equation}
where \(M_0=0\), and the comparison constants are independent of \(n\)
and of the masses.
\end{theorem}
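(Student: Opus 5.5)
We treat the two branches separately, always reducing to the weighted discrete Hardy inequality with weights \(\mu_i\) on the input side. Without loss of generality every \(\mu_i>0\); indices with \(\mu_i=0\) contribute nothing to the input or to \(H_na\), and discarding them leaves \(M_k\) unchanged and only merges output masses into tails \(N_k\), which we check does not alter either side of \eqref{eq:finite-chain-hardy-criterion} beyond bookkeeping. Substitute \(b_i=a_i\mu_i^{1/P}\). The inequality then becomes the statement that the matrix operator with kernel \(\mu_i^{1/P'}\mathbf 1_{i\le k}\) maps \(\ell^P\) into \(\ell^Q(\nu)\).

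\emph{Convex case \(P\le Q\).} For the lower bound, fix \(k\) and test with \(a_i=\mathbf 1_{i\le k}\mu_i^{P'-1}\). The input is \(M_k^{1/P}\). For every \(j\ge k\) one has \((H_na)_j\ge\sum_{i\le k}\mu_i^{P'}\), and with the substitution \(\mu_i^{P'}=\mu_i\cdot\mu_i^{P'-1}\) this should be taken relative to the measure \(\mu\). More cleanly, choose \(a_i\equiv1\) on \(i\le k\), which gives output \(\ge M_kN_k^{1/Q}\) and input \(M_k^{1/P}\); hence \(\cH_n\ge M_k^{1/P'}N_k^{1/Q}\). For the upper bound we use the standard Muckenhoupt/Hardy argument. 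Apply Hölder with the auxiliary factor \(M_i^{\alpha}\), where \(\alpha=1/(PP')\); this uses the discrete estimate \(\sum_{i\le k}\mu_iM_i^{-\beta}\le C_\beta M_k^{1-\beta}\) for \(0<\beta<1\), which follows by comparing each term with \(\int_{M_{i-1}}^{M_i}s^{-\beta}ds\). Then use Minkowski's inequality (since \(Q/P\ge1\)), interchange the sums, and bound \(\sum_{k\ge i}\nu_kM_k^{-\gamma}\) by summation by parts through the tails \(N_k\) together with the hypothesis \(N_k\le A^QM_k^{-Q/P'}\). This yields \(\cH_n\lesssim_{P,Q}A\).

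\emph{Non-convex case \(Q<P\).} For the upper bound we use the discrete Sinnamon–Stepanov route. Write \(H_na\) through its increments and use the closed-tail identity \(\sum_k(H_na)_k^Q\nu_k=\sum_k\bigl[(H_na)_k^Q-(H_na)_{k-1}^Q\bigr]N_k\). Bound each increment via Hölder against \(M_k\) by the same auxiliary power trick, and then apply Hölder with exponents \(P/Q\) and \(R/Q\). This leaves the sum \(\sum_kN_k^{R/Q}(M_k^{R/P'}-M_{k-1}^{R/P'})\), which is exactly the characteristic. The term \(k=1\) carries \(M_1^{R/P'}\) because \(M_0=0\); this is the discrete atom. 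For the lower bound we test with the extremal \(a_i=\bigl(\sum_{k\ge i}N_k^{R/Q}\Delta_k(M^{R/P'})\bigr)^{\theta}\)-type sequences. Specifically, take \(a_i\) proportional to \(N_i^{R/(QP)}M_i^{(R/P'-1)/P'\cdot(\ldots)}\) in the usual way, truncated to \(i\le m\) so that everything is finite, and then optimise.

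The main obstacle is this necessity estimate in the case \(Q<P\). The difficulty is to get \(n\)-independent constants while respecting the closed tails (the diagonal \(k=i\) term) and the first atom \(M_1\), since the standard continuous test functions have no clean discrete analogue. To handle it we would prove the lower bound by duality for \(Q\ge1\), and for \(Q<1\) by the power-lifting identity \((H_na)^Q\) versus \(H_n(\cdot)\) with the exponent \(P/Q\). In both cases we would compare \(M_k^{R/P'}-M_{k-1}^{R/P'}\) with \(M_k^{R/P'-1}\mu_k\) only up to the elementary two-sided bound that holds after a dyadic grouping of the chain: group the indices where \(M_k\) crosses the levels \(2^j\). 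Within each block, the block-constant test sequences give the lower bound with constants depending only on \(P,Q\).
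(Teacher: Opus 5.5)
Your convex branch is sound. The test $a_i=\mathbf 1_{i\le k}$ is the paper's, and your H\"older--Minkowski upper bound works with $M_i$ kept as the auxiliary weight; the paper instead substitutes $V_i\le AN_i^{-1/Q}$ before applying Minkowski. There is a sign slip: with $\alpha=1/(PP')$ the output factor is $M_k^{+Q/(P')^2}$. So the tail you must control is $\sum_{k\ge i}\nu_kM_k^{Q/(P')^2}$, which summation by parts and $N_k\le A^QM_k^{-Q/P'}$ bound by $A^QM_i^{-Q/(PP')}$, and the exponents then cancel.

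Your $Q<P$ upper bound is only an outline, but it can be completed. Write $F_k=(H_na)_k$ and start from $\sum_kF_k^Q\nu_k\le Q\sum_kN_kF_k^{Q-1}a_k\mu_k$, which is valid since $Q>1$. Apply H\"older with exponents $P$, $P/(Q-1)$, $R/Q$ to the three factors $a_k\mu_k^{1/P}$, $F_k^{Q-1}(\mu_kM_k^{-P})^{(Q-1)/P}$ and $N_kM_k^{Q-1}\mu_k^{1-Q/P}$. Combined with the canonical bound $\sum_kF_k^P\mu_kM_k^{-P}\lesssim_P\sum_ka_k^P\mu_k$, this is shorter than the paper's $I+II$ splitting.

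The genuine gap is necessity for $Q<P$, i.e.\ $\cH_n\gtrsim\mathfrak H_n$ with $\mathfrak H_n$ the non-convex characteristic. You never fix a test sequence: its $M$-exponent is left as ``$(\ldots)$'' to be optimised. The fallbacks you list do not supply one.
\begin{itemize}
\item The case $Q<1$ cannot occur, since $1<P,Q<\infty$.
\item Duality only relocates the difficulty. It gives the closed Copson operator $\ell^{Q'}(\nu)\to\ell^{P'}(\mu)$ with the same norm, which is again non-convex because $P'<Q'$.
\item The dyadic grouping rests on a misdiagnosis. For all $\gamma>0$ and $0\le y\le x$ one has $\min\{1,\gamma^{-1}\}(x^\gamma-y^\gamma)\le x^{\gamma-1}(x-y)\le\max\{1,\gamma^{-1}\}(x^\gamma-y^\gamma)$. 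Hence $M_k^{R/P'}-M_{k-1}^{R/P'}\asymp M_k^{R/Q'}\mu_k$ termwise (note $R/P'-1=R/Q'$), including $k=1$ with $M_0=0$.
\end{itemize}

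What is missing is the Abel step used in the paper, which uses $R/Q=1+R/P$ and moves the difference onto the output tail:
\[
\sum_{k=1}^nN_k^{R/Q}\bigl(M_k^{R/P'}-M_{k-1}^{R/P'}\bigr)=\sum_{k=1}^nM_k^{R/P'}\bigl(N_k^{R/Q}-N_{k+1}^{R/Q}\bigr)\asymp_{P,Q}\sum_{k=1}^nM_k^{R/P'}N_k^{R/P}\nu_k .
\]
This must be followed by an explicit test. The paper uses $a_i=D_i^{1/P}V_i^{(\theta-1)(P'-1)}$ with $V_i=M_i^{1/P'}$, $\theta=R/(2P')$ and $D_i=\sum_{k\ge i}N_k^{R/P}\nu_kV_k^{R/2}$. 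The direct discrete analogue of the continuous extremal also works, with no truncation needed on a finite chain. Take $a_i=N_i^{R/(PQ)}M_i^{R/(PQ')}$.
\begin{itemize}
\item The input is $\sum_iN_i^{R/Q}M_i^{R/Q'}\mu_i\le\mathfrak H_n^R$.
\item Since $N_i\ge N_k$ for $i\le k$, $F_k\gtrsim N_k^{R/(PQ)}M_k^{1+R/(PQ')}$. Because $Q(1+R/(PQ'))=R/P'$, this gives $F_k^Q\gtrsim N_k^{R/P}M_k^{R/P'}$, so the output is $\gtrsim\mathfrak H_n^R$.
\item Therefore $\cH_n\gtrsim\mathfrak H_n^{R/Q-R/P}=\mathfrak H_n$.
\end{itemize}
Without some such argument, your proposal establishes only three of the four required estimates.
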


\begin{proof}[Proof architecture]
The complete proof is given in \FiniteChainHardyProofLocation.  Necessity
in the convex range follows from initial-segment tests.  Sufficiency is a
Hölder estimate combined with the elementary power-increment comparison on
the cumulative masses.  When \(Q<P\), Abel summation converts the displayed
Stieltjes-type sum to the equivalent weighted form used for the sharp test.
Hölder with exponents \(R/Q\) and \(P/Q\) then gives the sufficiency estimate.
All constants are uniform in the chain length.
\end{proof}

For the strict Copson branch let \(\omega_i\in[0,\infty)\), put
\[
 K_k:=\sum_{j=1}^k\nu_j,
 \qquad
 B_k:=\sum_{i=k+1}^n\omega_i^{P'}\mu_i,
 \qquad B_n:=0,
\]
and define
\[
 (C_{\omega,n}a)_k
 :=
 \sum_{i=k+1}^n\omega_i a_i\mu_i.
\]

\begin{theorem}[Strict finite-chain Copson criterion]
\label{thm:finite-chain-copson}
Let \(\cC_n\) be the optimal constant in
\[
 \left[
   \sum_{k=1}^n(C_{\omega,n}a)_k^Q\nu_k
 \right]^{1/Q}
 \le
 \cC_n
 \left(\sum_{i=1}^n a_i^P\mu_i\right)^{1/P}.
\]
Then
\begin{equation}
 \cC_n
 \asymp_{P,Q}
 \begin{cases}
 \displaystyle
 \max_{1\le k\le n}K_k^{1/Q}B_k^{1/P'}, & P\le Q,\\[3mm]
 \displaystyle
 \left[
   \sum_{k=1}^n B_k^{R/P'}
   \left(K_k^{R/Q}-K_{k-1}^{R/Q}\right)
 \right]^{1/R}, & Q<P,
 \end{cases}
 \label{eq:finite-chain-copson-criterion}
\end{equation}
where \(K_0=0\).  The diagonal input atom is excluded from \(B_k\).
\end{theorem}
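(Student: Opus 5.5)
The plan is to reduce the strict Copson criterion to Theorem~\ref{thm:finite-chain-hardy} by an exact reversal of the chain, after removing the weight \(\omega\) by a change of variables. First I will absorb \(\omega\) into the input measure. Indices with \(\omega_i=0\) contribute nothing to \((C_{\omega,n}a)_k\) or to \(B_k\), so we may set \(a_i=0\) there. This only lowers the input norm, so these indices may be deleted from the input side. On the remaining indices, put \(b_i:=\omega_i a_i\) and \(\tilde\mu_i:=\omega_i^{P'}\mu_i\). Then
\[
 \omega_i a_i\mu_i=b_i^{\,1-P'}\cdots
\]
is not the right form. Instead I will use the standard substitution \(a_i=\omega_i^{P'-1}c_i\). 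This gives \(\omega_i a_i\mu_i=c_i\tilde\mu_i\) and \(a_i^P\mu_i=c_i^P\omega_i^{P(P'-1)}\mu_i=c_i^P\tilde\mu_i\), since \(P(P'-1)=P'\). The inequality is then equivalent, with the same optimal constant, to the unweighted strict Copson inequality with input masses \(\tilde\mu_i\) and output masses \(\nu_k\). Moreover \(B_k=\sum_{i>k}\tilde\mu_i\).

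Second, I will reverse the order \(k\mapsto n+1-k\). The strict Copson sum \(\sum_{i=k+1}^n c_i\tilde\mu_i\) becomes a strict Hardy sum \(\sum_{i<k'}\). To land exactly on the closed operator of Theorem~\ref{thm:finite-chain-hardy}, I will interleave the two measures on a chain of length \(2n\). Each input atom \(\tilde\mu_i\) is placed at a point immediately after the output atom \(\nu_i\) in the original order, that is, immediately before it in the reversed order. In the reversed interleaved chain, the closed Hardy sum at the output point coming from \(\nu_k\) then collects exactly the input atoms with \(i>k\), so the diagonal atom is excluded. Zero masses are permitted in Theorem~\ref{thm:finite-chain-hardy}, because its constants are uniform in the masses and in the chain length. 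Hence \(\cC_n\) equals the Hardy constant of this \(2n\)-chain.

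Third, I will translate the Hardy characteristic back. At the output point for \(\nu_k\), the cumulative input mass \(M\) in the reversed chain equals \(B_k\), and the output tail \(N\) equals \(K_k\). At input points, \(N\) coincides with the value at the adjacent output point. Consequently, in the convex case the maximum over the \(2n\)-chain reduces to \(\max_k K_k^{1/Q}B_k^{1/P'}\). The remaining terms are dominated because \(M\) is monotone and the extra points have zero output mass.

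In the case \(Q<P\), the Hardy sum reads \(\sum N^{R/Q}\,\Delta(M^{R/P'})\), with increments taken at the input points. The main obstacle is to convert this into the displayed form \(\sum_k B_k^{R/P'}(K_k^{R/Q}-K_{k-1}^{R/Q})\), in which the increments are placed on the output side. I will do this by Abel summation (summation by parts). Both expressions equal the discrete double "integral" \(\iint \mathbf 1_{\{\text{input index}>\text{output index}\}}\,d(M^{R/P'})\,d(N^{R/Q})\). This identity is exact, provided the boundary terms vanish, which follows from \(B_n=0\) and \(K_0=0\). The strictness placement must be checked carefully at this step so that no diagonal term reappears. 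After that check, the equivalence holds with the constants of Theorem~\ref{thm:finite-chain-hardy}.
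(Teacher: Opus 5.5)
Your overall route is the paper's. The substitution $a_i=\omega_i^{P'-1}c_i$ is exactly the paper's $b_i=a_i\omega_i^{-1/(P-1)}$ with $\lambda_i=\omega_i^{P'}\mu_i$. The reduction is an exact chain reversal onto the closed finite-chain Hardy theorem, and the non-convex form comes from the same exact Abel identity using $K_0=0$ and $B_n=0$.

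However, the interleaving as you describe it has the wrong orientation, and strictness is precisely what the theorem is about. If $\tilde\mu_i$ sits immediately after $\nu_i$ in the original order, then in the reversed order it sits immediately before $\nu_i$. The closed Hardy sum at the output point of $\nu_k$ then collects $\tilde\mu_n,\dots,\tilde\mu_{k+1}$ and also $\tilde\mu_k$. You therefore obtain the closed Copson operator $\sum_{i\ge k}\omega_ia_i\mu_i$, and the cumulative input mass at that point is $\sum_{i\ge k}\tilde\mu_i$, not $B_k$. This is not a constant-factor issue. For $n=1$ the strict constant is $\mathcal C_1=0$, whereas your chain has norm $\omega_1\mu_1^{1/P'}\nu_1^{1/Q}>0$. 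Your third step, which identifies $M=B_k$ and $N=K_k$ at the output point of $\nu_k$, silently uses the opposite placement.

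The fix is to put $\tilde\mu_i$ immediately before $\nu_i$ in the original order, i.e.\ after it in the reversed order. The reversed chain is then $\nu_n,\tilde\mu_n,\nu_{n-1},\tilde\mu_{n-1},\dots,\nu_1,\tilde\mu_1$, and the Hardy sum at $\nu_k$ is $\sum_{i>k}c_i\tilde\mu_i$. The two ends automatically encode that $(C_{\omega,n}a)_n=0$ and that $a_1$ never acts.

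With this orientation your identifications are correct. At the input point of $\tilde\mu_i$ one has $M=\sum_{i'\ge i}\tilde\mu_{i'}$ and $N=K_{i-1}$, so each such term repeats the term at $\nu_{i-1}$ (or vanishes when $i=1$). The convex maximum is therefore $\max_kK_k^{1/Q}B_k^{1/P'}$. The non-convex Hardy sum is $\sum_{k=1}^{n-1}K_k^{R/Q}\bigl(B_k^{R/P'}-B_{k+1}^{R/P'}\bigr)$, which the Abel identity converts to the displayed form.

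Merging each adjacent pair $(\tilde\mu_{k+1},\nu_k)$ into one point and discarding the two inert ends gives exactly the paper's shortened $(n-1)$-chain, $\hat\mu_j=\lambda_{n-j+1}$, $\hat\nu_j=\nu_{n-j}$. That version avoids zero-input points altogether. If you keep the $2n$-chain, zero input masses are indeed admissible, but the reason is the merging reduction in Appendix~A, not the uniformity of the constants. A zero-input point has the same Hardy value as its predecessor and adds nothing new to either characteristic.
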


\begin{proof}[Proof architecture]
The exact reversal is proved in \StrictCopsonReversalProofLocation.  On the
active coordinates the substitution
\[
 \lambda_i=\omega_i^{P'}\mu_i,
 \qquad
 b_i=a_i\omega_i^{-1/(P-1)}
\]
is isometric and removes the weight.  Reversing the chain and deleting the
unused endpoint turns \(i>k\) exactly into a closed Hardy relation.  The
reversed cumulative masses are \(B_k\) and \(K_k\).
In the non-convex range
a finite Abel summation yields \eqref{eq:finite-chain-copson-criterion}.
This reversal is also what fixes the strict exclusion of a same-point input
atom.
\end{proof}

The finite-chain criteria pass to finite Borel measures through refining
ordered interval partitions that isolate every atom and whose nonsingleton
cells have uniformly vanishing input/output mass.

\begin{theorem}[Atom-preserving partition approximation]
\label{thm:partition-approximation}
There are refining ordered interval partitions of \(J\) for which the
associated closed atomic Hardy operators and strict atomic Copson operators
converge in operator norm to their continuous counterparts.  Along the same
partitions, the finite-chain convex characteristics and the non-convex
Stieltjes sums converge to the corresponding continuous expressions in
\eqref{eq:compact-hardy-characteristic} and
\eqref{eq:compact-copson-characteristic}.  Every fixed atom is eventually a
singleton cell, so the left-boundary mass and the closed-Hardy/strict-Copson
diagonal allocation are preserved exactly.
\end{theorem}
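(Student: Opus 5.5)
Fix finite Borel measures \(\mu,\nu\) on \(J=[c,d]\) and, for the Copson branch, first a bounded Borel weight \(\omega\); unbounded or infinite-valued weights are recovered afterwards by monotone truncation. I would build the partitions in three steps.

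\emph{Construction.} The atoms of \(\mu+\nu\) are countable. Order them by decreasing mass, \(\{a_1,a_2,\dots\}\), and choose \(N_n\) so that the atoms outside \(\{a_1,\dots,a_{N_n}\}\) carry total mass at most \(1/n\). Next use the continuity of the diffuse part of \(\mu+\nu\) together with the residual small atoms. This gives finitely many cut points such that
\begin{itemize}
 \item each of \(c,a_1,\dots,a_{N_n}\) is a singleton cell,
 \item every other cell has \((\mu+\nu)\)-mass below \(2/n\), and also \(\omega^{P'}\mu\)-mass below \(2/n\).
\end{itemize}
Taking common refinements makes the sequence refining. Each fixed atom is eventually a singleton cell, and so is \(c\); this yields the boundary-atom and diagonal-allocation claims directly.

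\emph{Discretised operators.} On a cell \(E_k\), replace \(h\) by its \(\mu\)-average \(a_k\). Put \(\mu_k=\mu(E_k)\) and \(\nu_k=\nu(E_k)\). In the Copson case, replace \(\omega\) by \(\omega_k\), chosen so that \(\omega_k^{P'}\mu_k=\int_{E_k}\omega^{P'}\,d\mu\). The closed atomic Hardy operator is \(\sum_{i\le k}a_i\mu_i\) on \(E_k\), and the strict Copson operator is \(\sum_{i>k}\omega_ia_i\mu_i\).

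The difference from \(H_\mu h\) at \(x\in E_k\) is controlled by the single cell containing \(x\). When \(E_k\) is a singleton atom the closed convention makes the difference vanish exactly. Otherwise the difference is at most
\[
 \int_{E_k}h\,d\mu\le\mu(E_k)^{1/P'}\|h\|_{L^P(\mu)}\le(2/n)^{1/P'}\|h\|_{L^P(\mu)}.
\]
The \(L^Q(\nu)\) norm of this error is then at most \(\nu(J)^{1/Q}(2/n)^{1/P'}\|h\|_{L^P(\mu)}\), which gives norm convergence. The averaging map \(h\mapsto(a_k)\) is an \(L^P\) contraction by Jensen, so the comparison is legitimate between the discrete and continuous spaces. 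The Copson case is identical, with the strict kernel: the diagonal cell is omitted in both operators when it is a singleton, and contributes an error of size \((2/n)^{1/P'}\) otherwise.

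\emph{Characteristics.} Here \(M_k,N_k,K_k,B_k\) are the values of \(M,N,K,B_\omega\) evaluated at suitable endpoints of cells. The suprema therefore converge by uniform smallness of the jumps across non-singleton cells. Exact equality holds at singleton atoms, and bounded monotone functions on \(J\) have at most small jumps off the isolated atoms.

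For the non-convex sums I would write, for example,
\[
 \sum_k N_k^{R/Q}\bigl(M_k^{R/P'}-M_{k-1}^{R/P'}\bigr)=\int_J \tilde N_n\,d(M^{R/P'}),
\]
where \(\tilde N_n\) is a step function. It converges to \(N^{R/Q}\) at every point except those where \(N\) jumps inside a non-singleton cell, and that set has \(d(M^{R/P'})\)-measure tending to zero. Dominated convergence then gives the limit, with the atom at \(c\) preserved because \(M_0=0\).

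\emph{Main obstacle.} The hardest step is this last one in the presence of common atoms of \(\mu\) and \(\nu\) that are not yet isolated, together with the strict-versus-closed evaluation of \(B_\omega\). I would first try the domination \(\tilde N_n\le N(c)^{R/Q}\). I would also check pointwise convergence separately at isolated atoms and on the diffuse part, using the fact that the exceptional atoms carry total mass at most \(1/n\).
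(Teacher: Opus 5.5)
\textbf{The gap: the weighted Copson branch.} Your Hardy branch follows the paper's Appendix~\ref{app:partition-approximation} closely. You use atom-isolating refining partitions with small non-singleton cells, a block operator that differs from \(H_\mu\) only on the current cell, averaging to compare block and chain norms, and the identity \(\sum_k N_k^{R/Q}(M_k^{R/P'}-M_{k-1}^{R/P'})=\int_J \tilde N_n\,d(M^{R/P'})\). The weighted Copson branch, however, has a genuine gap.

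Take \(a_i\) to be the \(\mu\)-average of \(h\) on \(E_i\), and define \(\omega_i\) by \(\omega_i^{P'}\mu_i=\int_{E_i}\omega^{P'}\,d\mu\). Then your discrete strict Copson operator at \(x\in E_k\) is \(\sum_{i>k}\omega_i\int_{E_i}h\,d\mu\), whereas
\[
 C_{\mu,\omega}h(x)=\sum_{i>k}\int_{E_i}\omega h\,d\mu+\int_{E_k\cap(x,d]}\omega h\,d\mu .
\]
So the difference contains, besides the diagonal term, the off-diagonal error \(\sum_{i>k}\int_{E_i}(\omega_i-\omega)h\,d\mu\), with one contribution from every later cell. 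This error depends on how much \(\omega\) oscillates inside the cells, not on their masses. For a given partition, choose \(\omega\) oscillating inside every cell and \(h\) supported where \(\omega<\omega_i\); the error then has norm bounded below independently of the cell masses. Hence ``the Copson case is identical'' is false for the operator you wrote down, and your diagonal estimate \((2/n)^{1/P'}\) does not give norm convergence.

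\textbf{How to repair it.} The paper avoids the problem by substituting first: it sets \(d\lambda=\omega^{P'}d\mu\) and \(b=h\,\omega^{-1/(P-1)}\) on \(\{\omega>0\}\) (Lemma~\ref{lem:appC-continuous-weighted-substitution}). It then discretises the unweighted strict Copson operator on \(L^P(\lambda)\) along partitions adapted to \(\lambda+\nu\).

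Equivalently, you can keep \(\omega\) inside the cell integrals and use \(h\mapsto\sum_{i>k}\int_{E_i}\omega h\,d\mu\).
\begin{itemize}
 \item The error is then only the diagonal term, and \(\int_{E_k\cap(x,d]}\omega h\,d\mu\le\lambda(E_k)^{1/P'}\|h\|_{L^P(\mu)}\).
 \item The block norm equals the chain norm with your \(\omega_i\). This follows from cellwise H\"older, \(\int_{E_i}\omega h\,d\mu\le\lambda(E_i)^{1/P'}\|h\mathbf 1_{E_i}\|_{L^P(\mu)}\), with equality for \(h\) proportional to \(\omega^{P'-1}\) on each cell. Jensen on \(\mu\)-averages is not the right tool here.
\end{itemize}
If you prefer to keep your own discretisation, you need an extra lemma: \(\sum_i\omega_i\mathbf 1_{E_i}\to\omega\) in \(L^{P'}(\mu)\) along the refining partitions. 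That would bound the off-diagonal error by \(\nu(J)^{1/Q}\bigl\|\sum_i\omega_i\mathbf 1_{E_i}-\omega\bigr\|_{L^{P'}(\mu)}\).

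\textbf{Smaller points.}
\begin{itemize}
 \item Your second bullet can fail as stated. A residual atom of \(\mu\) with mass at most \(1/n\) can carry \(\omega^{P'}\mu\)-mass up to \(\|\omega\|_\infty^{P'}/n\). You must therefore order and isolate the atoms of \(\mu+\nu+\omega^{P'}\mu\).
 \item To get \(\|H_m^+\|=\mathcal H_m\), and hence \(\mathcal H_m\to\mathcal H\), Jensen is not enough. You also need the reverse inequality, obtained from cell-constant test functions \(h=\sum_k a_k\mathbf 1_{E_k}\).
 \item The ``main obstacle'' you anticipate does not arise. For \(x\in E_k\),
\[
 0\le M_k-M(x)\le\mu(E_k),\quad 0\le N_k-N(x)\le\nu(E_k),\quad 0\le K_k-K(x)\le\nu(E_k),\quad 0\le B_\omega(x)-B_k\le\lambda(E_k).
\]
Each bound vanishes on singleton cells and is at most the non-singleton mass bound otherwise. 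So the step functions converge uniformly on \(J\), and common atoms that are not yet isolated are simply part of that small cell mass.
 \item There is no exceptional set. Your claim that convergence fails where \(N\) jumps inside a non-singleton cell is wrong: pointwise, and indeed uniform, convergence holds everywhere.
 \item Integrating against the finite measures \(d(M^{R/P'})\) and \(d(K^{R/Q})\) then gives the non-convex limits directly, exactly as in the paper.
\end{itemize}
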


The full construction, including the cell-error estimates, exact block-chain
norm identities and non-convex Stieltjes limits, is given in
\PartitionApproximationProofLocation.

For an arbitrary Borel weight \(\omega:J\to[0,\infty]\), put
\begin{equation}
 \omega_m:=\omega\wedge m.
 \label{eq:copson-weight-truncation}
\end{equation}
Then, for every \(h\ge0\),
\begin{equation}
 C_{\mu,\omega_m}h(x)\uparrow C_{\mu,\omega}h(x),
 \qquad
 B_{\omega_m}(x)\uparrow B_\omega(x),
 \label{eq:copson-truncation-limits}
\end{equation}
and monotone convergence gives
\begin{equation}
 \|C_{\mu,\omega_m}:L^P(\mu)\to L^Q(\nu)\|
 \uparrow
 \|C_{\mu,\omega}:L^P(\mu)\to L^Q(\nu)\|.
 \label{eq:copson-norm-truncation}
\end{equation}
The convex supremum and the non-convex Stieltjes characteristic increase to
their counterparts for \(\omega\).  Thus no separate finiteness assumption
on the strict Copson weight is required.

\subsection{The compact measure-valued theorem}
\label{subsec:compact-measure-theorem}

\begin{theorem}[Compact direct measure-valued Hardy--Copson theorem]
\label{thm:compact-measure-hardy-copson}
Let \(\mu,\nu\) be finite nonnegative Borel measures on \(J=[c,d]\), let
\(1<P,Q<\infty\), and let \(\omega:J\to[0,\infty]\) be Borel measurable.
Then
\begin{equation}
 \|H_\mu:L^P(\mu)\to L^Q(\nu)\|
 \asymp_{P,Q}
 \mathfrak H_J,
 \label{eq:compact-measure-hardy-result}
\end{equation}
and
\begin{equation}
 \|C_{\mu,\omega}:L^P(\mu)\to L^Q(\nu)\|
 \asymp_{P,Q}
 \mathfrak C_{\omega,J}.
 \label{eq:compact-measure-copson-result}
\end{equation}
Both equivalences hold in the extended sense, with constants depending only
on \(P,Q\).  The Stieltjes measures include the left-boundary masses from
\eqref{eq:measure-stieltjes-boundary}. The Hardy input tail is closed and the
Copson input tail is strict.
\end{theorem}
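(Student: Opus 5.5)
The plan is to assemble the theorem from the three ingredients already stated in this section, in the order finite chain, then partition limit, then truncation.

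\emph{Hardy estimate.} For finitely supported $\mu,\nu$ on a common ordered chain, Theorem~\ref{thm:finite-chain-hardy} is exactly \eqref{eq:compact-measure-hardy-result}. The discrete cumulative masses $M_k,N_k$ are the closed cumulative functions $M(x_k),N(x_k)$ of \eqref{eq:measure-cumulative-functions}. The increments $M_k^{R/P'}-M_{k-1}^{R/P'}$ are the atoms of $\dd(M^{R/P'})$, and $M_0=0$ reproduces the boundary mass $M(c)^{R/P'}\delta_c$ from \eqref{eq:measure-stieltjes-boundary}.

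For general finite Borel $\mu,\nu$ I would invoke Theorem~\ref{thm:partition-approximation}. Along the refining partitions, the atomic Hardy operators converge in operator norm to $H_\mu$, and the discrete characteristics converge to $\mathfrak H_J$. Since the comparison constants in Theorem~\ref{thm:finite-chain-hardy} are uniform in $n$ and in the masses, both one-sided inequalities pass to the limit. This works in the extended sense: if one side is infinite, the other is forced infinite by the same limits, using lower semicontinuity of the norms under monotone approximation.

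\emph{Copson estimate for bounded weights.} I would first take $\omega$ bounded, say $\omega\le m$. On chains, Theorem~\ref{thm:finite-chain-copson} gives the criterion with strict tails $B_k$ and closed $K_k$. The partition theorem again transfers this to the strict operator $C_{\mu,\omega}$ and to $\mathfrak C_{\omega,J}$. Because every atom eventually becomes a singleton cell, the diagonal exclusion is preserved: the strict tail $B_\omega(x)=\int_{(x,d]}$ is the limit of the $B_k$. The left mass $K(c)^{R/Q}\delta_c$ is retained in the same way as in the Hardy case.

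\emph{Arbitrary weights.} For general $\omega$ I would apply the bounded case to $\omega_m=\omega\wedge m$ from \eqref{eq:copson-weight-truncation}. Letting $m\to\infty$, \eqref{eq:copson-norm-truncation} handles the operator norms. On the characteristic side, monotone convergence of $B_{\omega_m}\uparrow B_\omega$ gives convergence of the supremum in the case $P\le Q$ (a supremum of increasing functions). In the case $Q<P$ it gives convergence of the integral against the fixed measure $\dd(K^{R/Q})$.

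\emph{Main obstacle.} The delicate step is the non-convex Stieltjes limit inside the partition passage. One must show that $\sum_k N_k^{R/Q}(M_k^{R/P'}-M_{k-1}^{R/P'})$ converges to $\int_J N^{R/Q}\,\dd(M^{R/P'})$, using the closed tail $N(x)=\nu([x,d])$, which is left-continuous, as opposed to its right-continuous version. My approach here is as follows:
\begin{itemize}
\item Isolate the finitely many large atoms as singletons, where the sum is exact.
\item Control the remaining cells by their uniformly vanishing mass.
\item Use the monotonicity of $N$ to bracket each cell between its endpoint values.
\end{itemize}
This is precisely the content deferred to \PartitionApproximationProofLocation, which I will take as given.
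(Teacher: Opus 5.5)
Your proposal follows the paper's proof: the finite-chain criteria (Theorems~\ref{thm:finite-chain-hardy} and~\ref{thm:finite-chain-copson}), transfer along atom-preserving partitions, and monotone truncation $\omega_m=\omega\wedge m$. Your handling of the non-convex Stieltjes limit is also the paper's. In Appendix~\ref{app:partition-approximation}, partition prefixes are initial intervals, so the cell increments of $\dd(M^{R/P'})$ are exact, and $N_m^+\to N$ uniformly.

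The one step you skip is the bounded-weight Copson case. The partition approximation is proved for the unweighted strict operator $C_\sigma$. When $\omega$ varies inside a cell there is no canonical chain weight $\omega_i$, so the partition theorem does not apply to $C_{\mu,\omega}$ as written. The paper first applies the isometric substitution $\dd\lambda=\omega^{P'}\dd\mu$, $b=h\,\omega^{-1/(P-1)}$. This turns $C_{\mu,\omega}$ into $C_\lambda$ with $B_\omega(x)=\lambda((x,d])$, and the partition is then taken with respect to $\lambda+\nu$. Equivalently, you could set $\omega_i^{P'}\mu_i:=\lambda(E_i)$ and use Hölder on each cell, which is the same substitution at block level.

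Your extended-sense remark for $H_\mu$ is unnecessary. For finite $\mu,\nu$ one has $\|H_\mu\|\le\mu(J)^{1/P'}\nu(J)^{1/Q}$ and $\mathfrak H_J<\infty$. Infinite values therefore arise only for unbounded $\omega$, where the monotone truncation already handles them. The partition passage uses operator-norm convergence, not monotone approximation.
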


\begin{proof}
Apply Theorem~\ref{thm:finite-chain-hardy} and the strict Copson criterion
Theorem~\ref{thm:finite-chain-copson} on the atom-preserving partition
chains, then pass to the finite-measure limits described above.  For bounded
\(\omega\), the continuous substitution
\(\dd\lambda=\omega^{P'}\dd\mu\) reduces the weighted Copson problem
isometrically to the unweighted strict one.  Finally,
\eqref{eq:copson-truncation-limits}--\eqref{eq:copson-norm-truncation}
remove the boundedness assumption.  The complete finite-chain, reversal and
partition arguments are given in \FiniteChainHardyProofLocation,
\StrictCopsonReversalProofLocation, and \PartitionApproximationProofLocation.
\end{proof}

\begin{remark}[The strict differential]
\label{rem:strict-copson-differential}
In the non-convex Copson condition the Stieltjes differential is
\(\dd(K^{R/Q})\), generated by the output head
\(K(x)=\nu([c,x])\). It is not generated by the strict input tail
\(B_\omega\).  This orientation is forced both by exact chain reversal and
by the atom-preserving partition limit.
\end{remark}

Only Theorem~\ref{thm:compact-measure-hardy-copson} is required in the local
bilinear proof.  For completeness, the corresponding locally finite
arbitrary-interval extension, obtained by directed compact exhaustion, is
recorded in \DirectedMeasureExtensionLocation.  This separation avoids
introducing a global Stieltjes derivative at a non-attained endpoint into the
main proof.

\section{Local bilinear characteristics}
\label{sec:local-characteristics}

Fix
\[
 J=[c,d]\Subset I
\]
and one regularisation level, writing simply
\(u=u_m\) and \(v_i=v_{i,m}\), \(i=1,2\). Thus \(u\) is finite and
integrable on \(J\), while each \(v_i\) is bounded below by a positive
constant on its finite part and may equal \(+\infty\) on a forbidden input
set. Let
\begin{equation}
 \Phi_{i,J}:=\Phi_{p_i,v_i;J},
 \qquad i=1,2,
 \label{eq:local-input-profiles}
\end{equation}
and
\begin{equation}
 U_J(x):=\int_x^d u(t)\,\dd t,
 \qquad x\in J.
 \label{eq:local-output-tail-bilinear}
\end{equation}
All characteristics below may take the value \(+\infty\). When
\(p_i=1\), \(\Phi_{i,J}\) denotes the right-continuous Stieltjes completion
from Section~\ref{subsec:local-profiles}. The exact truncated integration
functional is \(F_{v_i,J}\). Lemma~\ref{lem:p-one-completion-invariance}
shows that this completion leaves the convex supremum characteristic and the
non-convex profile measure unchanged, so the formulas below remain valid
without identifying \(F_{v_i,J}(x)\) pointwise with \(\Phi_{i,J}(x)\).

Whenever \(q<p_i<\infty\), define
\begin{equation}
 \frac1{r_i}:=\frac1q-\frac1{p_i},
 \qquad
 \mu_{i,J}:=\dd\!\left(\Phi_{i,J}^{\,r_i}\right).
 \label{eq:ri-definition}
\end{equation}
The generator is extended by zero immediately to the left of \(c\), so that
\begin{equation}
 \mu_{i,J}([c,x])=\Phi_{i,J}(x)^{r_i},
 \qquad
 \mu_{i,J}(\{c\})=\Phi_{i,J}(c)^{r_i}.
 \label{eq:profile-measure-cumulative-section5}
\end{equation}
In particular, the left-boundary atom is retained and may be nonzero when
\(p_i=1\).

\subsection{The upper characteristic}
\label{subsec:upper-characteristic}

For \(p_1,p_2<\infty\) with \(p_1,p_2\le q\), set
\begin{equation}
 \mathfrak B_{1,J}
 :=
 \sup_{x\in J}
 U_J(x)^{1/q}\Phi_{1,J}(x)\Phi_{2,J}(x).
 \label{eq:B1-characteristic}
\end{equation}
This is the complete upper-region characteristic and also the frozen boundary
component in the mixed regimes.

\subsection{The mixed characteristics}
\label{subsec:mixed-characteristics}

Assume first that
\begin{equation}
 p_1\le q<p_2<\infty.
 \label{eq:first-mixed-region}
\end{equation}
The complete frozen characteristic and its strict-tail component are
\begin{equation}
 \widehat{\mathfrak B}_{2,J}
 :=
 \sup_{x\in J}
 \Phi_{1,J}(x)
 \left[
   \int_J U_J\!\left(\max\{x,t\}\right)^{r_2/q}
   \,\dd\mu_{2,J}(t)
 \right]^{1/r_2},
 \label{eq:B2-complete}
\end{equation}
and
\begin{equation}
 \mathfrak B_{2,J}
 :=
 \sup_{x\in J}
 \Phi_{1,J}(x)
 \left[
   \int_{(x,d]} U_J(t)^{r_2/q}
   \,\dd\mu_{2,J}(t)
 \right]^{1/r_2}.
 \label{eq:B2-strict}
\end{equation}
In the symmetric regime
\begin{equation}
 p_2\le q<p_1<\infty,
 \label{eq:second-mixed-region}
\end{equation}
put
\begin{equation}
 \widehat{\mathfrak B}_{3,J}
 :=
 \sup_{x\in J}
 \Phi_{2,J}(x)
 \left[
   \int_J U_J\!\left(\max\{x,t\}\right)^{r_1/q}
   \,\dd\mu_{1,J}(t)
 \right]^{1/r_1},
 \label{eq:B3-complete}
\end{equation}
and
\begin{equation}
 \mathfrak B_{3,J}
 :=
 \sup_{x\in J}
 \Phi_{2,J}(x)
 \left[
   \int_{(x,d]} U_J(t)^{r_1/q}
   \,\dd\mu_{1,J}(t)
 \right]^{1/r_1}.
 \label{eq:B3-strict}
\end{equation}

\begin{lemma}[Exact frozen-tail decomposition]
\label{lem:frozen-tail-decomposition}
Under \eqref{eq:first-mixed-region}, for every \(x\in J\),
\begin{equation}
 \begin{aligned}
 &\int_J
 U_J\!\left(\max\{x,t\}\right)^{r_2/q}\,\dd\mu_{2,J}(t)
 \\
 &\qquad=
 U_J(x)^{r_2/q}\Phi_{2,J}(x)^{r_2}
 +
 \int_{(x,d]}U_J(t)^{r_2/q}\,\dd\mu_{2,J}(t).
 \end{aligned}
 \label{eq:exact-frozen-tail-decomposition}
\end{equation}
Consequently,
\begin{equation}
 \max\{\mathfrak B_{1,J},\mathfrak B_{2,J}\}
 \le \widehat{\mathfrak B}_{2,J}
 \le \mathfrak B_{1,J}+\mathfrak B_{2,J}
 \le 2\widehat{\mathfrak B}_{2,J},
 \label{eq:mixed-equivalence}
\end{equation}
and, after interchanging the indices,
\begin{equation}
 \max\{\mathfrak B_{1,J},\mathfrak B_{3,J}\}
 \le \widehat{\mathfrak B}_{3,J}
 \le \mathfrak B_{1,J}+\mathfrak B_{3,J}
 \le 2\widehat{\mathfrak B}_{3,J}.
 \label{eq:symmetric-mixed-equivalence}
\end{equation}
\end{lemma}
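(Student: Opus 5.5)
The plan is to prove the identity \eqref{eq:exact-frozen-tail-decomposition} first and then obtain \eqref{eq:mixed-equivalence} from it by elementary manipulations.

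\emph{The identity.} Fix \(x\in J\) and split \(J=[c,x]\cup(x,d]\) as a disjoint union.

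On \([c,x]\) we have \(\max\{x,t\}=x\), so the integrand is the constant \(U_J(x)^{r_2/q}\). That piece therefore equals
\[
U_J(x)^{r_2/q}\,\mu_{2,J}([c,x]).
\]
By \eqref{eq:profile-measure-cumulative-section5}, \(\mu_{2,J}([c,x])=\Phi_{2,J}(x)^{r_2}\). This holds including \(x=c\), because the left-boundary atom is retained in \(\mu_{2,J}\).

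On \((x,d]\) we have \(\max\{x,t\}=t\), so that piece is exactly the strict tail integral.

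Both pieces are nonnegative, possibly \(+\infty\), so the additivity of the integral over the disjoint union is valid in \([0,\infty]\). No cancellation occurs, and the identity holds in the extended sense. The only point needing care is to use the closed set \([c,x]\) for the frozen part, so that any atom of \(\mu_{2,J}\) at \(x\) lands in the \(\Phi_{2,J}(x)^{r_2}\) term. This is consistent with the strict domain \((x,d]\) in \eqref{eq:B2-strict}.

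\emph{The chain of inequalities.} Write \(a(x)=U_J(x)^{1/q}\Phi_{2,J}(x)\) and \(b(x)=\bigl[\int_{(x,d]}U_J^{r_2/q}\,\dd\mu_{2,J}\bigr]^{1/r_2}\). The identity reads
\[
\Bigl[\int_J U_J(\max\{x,t\})^{r_2/q}\,\dd\mu_{2,J}(t)\Bigr]^{1/r_2}=\bigl(a(x)^{r_2}+b(x)^{r_2}\bigr)^{1/r_2}.
\]
For \(s>0\) and \(a,b\ge0\) we have the elementary bounds
\[
\max\{a,b\}\le (a^s+b^s)^{1/s}\le a+b .
\]
The right inequality needs \(s\ge1\). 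I need to check \(r_2\ge1\): since \(p_2>q\), \(1/r_2=1/q-1/p_2<1/q\), so \(r_2>q\). When \(q<1\), however, \(r_2\) may be below \(1\).

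\textbf{This is the main obstacle.} If \(r_2<1\), only \((a^s+b^s)^{1/s}\le 2^{1/s-1}(a+b)\) is available, which would spoil the constant \(2\). I would look for a way around it. The hypothesis \(p_1\le q\) with \(p_1\ge1\) forces \(q\ge1\), so \(r_2>q\ge1\) and the obstacle does not arise. With \(s=r_2\ge1\) the elementary bounds hold.

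Multiply by \(\Phi_{1,J}(x)\ge0\), using the \(\starprod\) convention if any factor is infinite. Then take \(\sup_{x\in J}\), using \(\sup(A+B)\le\sup A+\sup B\) and \(\sup\max=\max\sup\). This gives
\[
\max\{\mathfrak B_{1,J},\mathfrak B_{2,J}\}\le\widehat{\mathfrak B}_{2,J}\le\mathfrak B_{1,J}+\mathfrak B_{2,J}.
\]
The last inequality \(\mathfrak B_{1,J}+\mathfrak B_{2,J}\le2\max\{\mathfrak B_{1,J},\mathfrak B_{2,J}\}\le2\widehat{\mathfrak B}_{2,J}\) is immediate from the first.

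\emph{The symmetric case.} Statement \eqref{eq:symmetric-mixed-equivalence} follows by interchanging the indices \(1\leftrightarrow2\). Under \eqref{eq:second-mixed-region}, the same argument applies with \(r_1>q\ge p_2\ge1\).
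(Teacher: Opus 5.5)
Your proposal is correct and follows the paper's proof essentially verbatim. You split \(J=[c,x]\cup(x,d]\), use \(\mu_{2,J}([c,x])=\Phi_{2,J}(x)^{r_2}\) on the closed piece, then compare \((a^{r_2}+b^{r_2})^{1/r_2}\) with \(\max\{a,b\}\) and \(a+b\) before taking suprema. Your check that \(p_1\le q\) with \(p_1\ge1\) forces \(r_2>q\ge1\) is right, and symmetrically \(r_1>q\ge p_2\ge1\). This is what makes the middle inequality hold with constant \(1\), a point the paper leaves implicit.
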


\begin{proof}
Split the integral at \([c,x]\cup(x,d]\). On the first part the kernel is
\(U_J(x)\), while on the second it is \(U_J(t)\). The identity follows from
\(\mu_{2,J}([c,x])=\Phi_{2,J}(x)^{r_2}\). The inequalities are then the
elementary comparison of
\((a^{r_2}+b^{r_2})^{1/r_2}\) with \(\max\{a,b\}\) and \(a+b\), followed by
taking suprema. The symmetric statement is identical.
\end{proof}

Thus the strict tail alone is not the mixed condition. Indeed,
\begin{equation}
 \widehat{\mathfrak B}_{2,J}
 \asymp
 \mathfrak B_{1,J}+\mathfrak B_{2,J},
 \qquad
 \widehat{\mathfrak B}_{3,J}
 \asymp
 \mathfrak B_{1,J}+\mathfrak B_{3,J}.
 \label{eq:complete-mixed-condition-summary}
\end{equation}
The separated-support example in Section~\ref{sec:examples} shows that the
frozen term cannot in general be discarded.

\subsection{The lower-triangle characteristics}
\label{subsec:lower-characteristics}

Assume
\begin{equation}
 q<\min\{p_1,p_2\}<\infty.
 \label{eq:lower-triangle-region}
\end{equation}
Then both \(r_1\) and \(r_2\) are defined by \eqref{eq:ri-definition}. Set
\begin{equation}
 \mathfrak B_{4,J}
 :=
 \sup_{x\in J}
 \Phi_{1,J}(x)
 \left[
   \int_{[x,d]} U_J(t)^{r_2/q}\,\dd\mu_{2,J}(t)
 \right]^{1/r_2},
 \label{eq:B4-characteristic}
\end{equation}
and
\begin{equation}
 \mathfrak B_{5,J}
 :=
 \sup_{x\in J}
 \Phi_{2,J}(x)
 \left[
   \int_{(x,d]} U_J(t)^{r_1/q}\,\dd\mu_{1,J}(t)
 \right]^{1/r_1}.
 \label{eq:B5-characteristic}
\end{equation}
The first tail is closed because it comes from the Hardy component and retains
the diagonal atom. The second is strict because it comes from the Copson
component and excludes that atom. In the shallow lower triangle,
\begin{equation}
 \frac1q\le\frac1{p_1}+\frac1{p_2},
 \label{eq:shallow-lower-region}
\end{equation}
the complete quantity is
\begin{equation}
 \mathfrak A_{45,J}:=\mathfrak B_{4,J}+\mathfrak B_{5,J}.
 \label{eq:A45-characteristic}
\end{equation}

\subsection{The deep lower-triangle characteristics}
\label{subsec:deep-lower-characteristics}

If instead
\begin{equation}
 \frac1q>\frac1{p_1}+\frac1{p_2},
 \label{eq:deep-lower-region}
\end{equation}
define
\begin{equation}
 \frac1\kappa
 :=
 \frac1q-\frac1{p_1}-\frac1{p_2}.
 \label{eq:kappa-definition}
\end{equation}
The deep characteristics are
\begin{equation}
 \mathfrak B_{6,J}
 :=
 \left[
   \int_J
   \left(
     \int_{[x,d]} U_J(t)^{r_2/q}\,\dd\mu_{2,J}(t)
   \right)^{\kappa/r_2}
   \dd\!\left(\Phi_{1,J}(x)^\kappa\right)
 \right]^{1/\kappa},
 \label{eq:B6-characteristic}
\end{equation}
and
\begin{equation}
 \mathfrak B_{7,J}
 :=
 \left[
   \int_J
   \left(
     \int_{(x,d]} U_J(t)^{r_1/q}\,\dd\mu_{1,J}(t)
   \right)^{\kappa/r_1}
   \dd\!\left(\Phi_{2,J}(x)^\kappa\right)
 \right]^{1/\kappa}.
 \label{eq:B7-characteristic}
\end{equation}
Here \(\Phi_{i,J}^\kappa\) is again extended by zero to the left of \(c\), so
its left-boundary atom is included. The complete deep quantity is
\begin{equation}
 \mathfrak A_{67,J}:=\mathfrak B_{6,J}+\mathfrak B_{7,J}.
 \label{eq:A67-characteristic}
\end{equation}
The ordered pairs
\((\mathfrak B_{4,J},\mathfrak B_{5,J})\) and
\((\mathfrak B_{6,J},\mathfrak B_{7,J})\) therefore retain the same
closed-Hardy/strict-Copson allocation. Making both tails closed would add a
false diagonal Copson contribution, while making both strict would lose the
diagonal Hardy contribution.

\subsection{Infinite input exponents}
\label{subsec:one-infinite-characteristics}

If \(p_1=\infty\) and \(1\le p_2<\infty\), the exact freezing reduction of
Corollary~\ref{cor:infinite-input-freezing} gives the effective tail
\begin{equation}
 W_{1,J}(x)
 :=
 \int_x^d \Phi_{1,J}(t)^q u(t)\,\dd t,
 \label{eq:W1-infinite-tail}
\end{equation}
and the linear Hardy characteristic
\begin{equation}
 \mathfrak B_{\infty,2;J}
 :=
 \begin{cases}
 \displaystyle
 \sup_{x\in J}\Phi_{2,J}(x)W_{1,J}(x)^{1/q},
 & p_2\le q,\\[3mm]
 \displaystyle
 \left[
   \int_J W_{1,J}(x)^{r_2/q}
   \,\dd\!\left(\Phi_{2,J}(x)^{r_2}\right)
 \right]^{1/r_2},
 & q<p_2<\infty.
 \end{cases}
 \label{eq:B-infinity-2}
\end{equation}
For \(1\le p_1<\infty\) and \(p_2=\infty\), interchange the indices. With
\begin{equation}
 W_{2,J}(x)
 :=
 \int_x^d \Phi_{2,J}(t)^q u(t)\,\dd t,
 \label{eq:W2-infinite-tail}
\end{equation}
set
\begin{equation}
 \mathfrak B_{1,\infty;J}
 :=
 \begin{cases}
 \displaystyle
 \sup_{x\in J}\Phi_{1,J}(x)W_{2,J}(x)^{1/q},
 & p_1\le q,\\[3mm]
 \displaystyle
 \left[
   \int_J W_{2,J}(x)^{r_1/q}
   \,\dd\!\left(\Phi_{1,J}(x)^{r_1}\right)
 \right]^{1/r_1},
 & q<p_1<\infty.
 \end{cases}
 \label{eq:B-1-infinity}
\end{equation}
If \(p_1=p_2=\infty\), define
\begin{equation}
 \mathfrak B_{\infty,\infty;J}
 :=
 \left[
   \int_J
   \bigl(\Phi_{1,J}(x)\Phi_{2,J}(x)\bigr)^q
   u(x)\,\dd x
 \right]^{1/q}.
 \label{eq:B-infinity-infinity}
\end{equation}
At each regularisation level the last quantity equals the local bilinear
operator norm exactly.

\subsection{Operational characteristics for extended weights}
\label{subsec:operational-local-characteristics}

For arbitrary measurable weights, use the simultaneous regularisation of
Section~\ref{subsec:extended-regularisation},
\(u_m=u\wedge m\) and \(v_{i,m}=v_i+1/m\), and attach the superscript
\((m)\) to the resulting profiles, measures and characteristics. Define
\begin{equation}
 \mathfrak A_J^{(m)}
 :=
 \begin{cases}
 \mathfrak B_{1,J}^{(m)},
 & p_1,p_2<\infty,\ p_1,p_2\le q,\\[1mm]
 \widehat{\mathfrak B}_{2,J}^{(m)},
 & p_1\le q<p_2<\infty,\\[1mm]
 \widehat{\mathfrak B}_{3,J}^{(m)},
 & p_2\le q<p_1<\infty,\\[1mm]
 \mathfrak B_{4,J}^{(m)}+\mathfrak B_{5,J}^{(m)},
 & q<\min\{p_1,p_2\}<\infty,\quad
   \displaystyle\frac1q\le\frac1{p_1}+\frac1{p_2},\\[3mm]
 \mathfrak B_{6,J}^{(m)}+\mathfrak B_{7,J}^{(m)},
 & q<\min\{p_1,p_2\}<\infty,\quad
   \displaystyle\frac1q>\frac1{p_1}+\frac1{p_2},\\[3mm]
 \mathfrak B_{\infty,2;J}^{(m)},
 & p_1=\infty,\ p_2<\infty,\\[1mm]
 \mathfrak B_{1,\infty;J}^{(m)},
 & p_1<\infty,\ p_2=\infty,\\[1mm]
 \mathfrak B_{\infty,\infty;J}^{(m)},
 & p_1=p_2=\infty.
 \end{cases}
 \label{eq:complete-regularised-characteristic}
\end{equation}

\begin{definition}[Operational local characteristic]
\label{def:operational-local-characteristic}
The extended local characteristic is
\begin{equation}
 \mathfrak A_J^{\mathrm{op}}
 :=
 \sup_{m\ge1}\mathfrak A_J^{(m)}.
 \label{eq:operational-local-characteristic}
\end{equation}
\end{definition}

The supremum is always taken over the \emph{complete} local expression. Thus,
in the lower regions it is
\(\sup_m(\mathfrak B_{4,J}^{(m)}+\mathfrak B_{5,J}^{(m)})\) or
\(\sup_m(\mathfrak B_{6,J}^{(m)}+\mathfrak B_{7,J}^{(m)})\), not a sum of
separate operational suprema. Likewise, Lemma~\ref{lem:frozen-tail-decomposition}
applied at each level gives
\begin{equation}
 \begin{aligned}
 \sup_m\widehat{\mathfrak B}_{2,J}^{(m)}
 &\asymp
 \sup_m\bigl(\mathfrak B_{1,J}^{(m)}+\mathfrak B_{2,J}^{(m)}\bigr),\\
 \sup_m\widehat{\mathfrak B}_{3,J}^{(m)}
 &\asymp
 \sup_m\bigl(\mathfrak B_{1,J}^{(m)}+\mathfrak B_{3,J}^{(m)}\bigr),
 \end{aligned}
 \label{eq:operational-mixed-equivalence}
\end{equation}
with comparison constants \(1\) and \(2\).

No unregularised extended-valued Stieltjes measure is introduced. Every
Stieltjes expression is evaluated at a regularised level and only the complete
characteristics are directed through \(\sup_m\). If the compact weights are
already finite and regular, the regularisation may be omitted and the
superscript \(\mathrm{op}\) is suppressed.

\section{The compact bilinear characterisation}
\label{sec:compact-bilinear}

We now prove the local bilinear theorem on a compact interval
\[
 J=[c,d]\Subset I.
\]
The proof combines the compact linear Hardy theorem from
Section~\ref{sec:linear-hardy}, the measure-valued Hardy--Copson principle
from Section~\ref{sec:hardy-copson}, and the characteristics of
Section~\ref{sec:local-characteristics}.

\subsection{An exact freezing identity}
\label{subsec:freezing-identity}

\begin{lemma}[Exact freezing of one input]
\label{lem:freezing-identity}
Assume that \(1\le p_1,p_2<\infty\) and that the weights are regular on
\(J\). For \(g\ge0\), define
\begin{equation}
 w_g(y):=H_Jg(y)^q u(y),
 \qquad y\in J.
 \label{eq:frozen-output-weight}
\end{equation}
Then
\begin{equation}
 C_J
 =
 \sup_{\substack{g\ge0\\0<\|g\|_{L^{p_2}(v_2;J)}<\infty}}
 \frac{L_{p_1,q}(v_1,w_g;J)}{\|g\|_{L^{p_2}(v_2;J)}},
 \label{eq:exact-freezing}
\end{equation}
where \(L_{p_1,q}(v_1,w_g;J)\) is the optimal linear Hardy constant in
\eqref{eq:compact-linear-hardy-inequality}.
\end{lemma}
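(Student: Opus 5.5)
Fix \(g\ge0\) with \(0<\|g\|_{L^{p_2}(v_2;J)}<\infty\). Since the weights are regular and the exponents are finite, the profile \(\Phi_{2,J}\) is finite. Lemma~\ref{lem:local-integration-functional} gives \(H_Jg(y)\le\Phi_{2,J}(d)\|g\|_{L^{p_2}(v_2;J)}<\infty\) for every \(y\in J\); at \(p_2=1\) we use \(F_{v_2,J}\le\Phi_{1,v_2;J}(d)\). Hence \(H_Jg\) is a finite nonnegative function. Because \(u\) is finite at the regularised level, the extended product \(\starprod\) in \(\mathcal B_J(f,g)\) reduces to the ordinary product almost everywhere on the support of \(u\). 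This gives the pointwise identity
\[
 \|\mathcal B_J(f,g)\|_{L^q(u;J)}^q
 =\int_J H_Jf(y)^q\,H_Jg(y)^q u(y)\,\dd y
 =\|H_Jf\|_{L^q(w_g;J)}^q ,
\]
where \(w_g\) is the frozen weight \eqref{eq:frozen-output-weight}. It is integrable because \(H_Jg\) is bounded and \(u\) is integrable. This is the whole mechanism: freezing \(g\) turns the bilinear output functional into a linear weighted output functional for \(f\).

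For the upper bound, take any pair with finite input functionals. If \(\|g\|=0\), then \(g=0\) almost everywhere on \(\{v_2>0\}\), and regularity (\(v_2\ge\delta\)) forces \(g=0\) a.e. Hence \(H_Jg\equiv0\), the output vanishes, and such pairs impose no restriction. The case \(\|f\|=0\) is handled the same way, using regularity of \(v_1\). For \(\|g\|>0\), the definition of \(L_{p_1,q}(v_1,w_g;J)\) gives
\[
\|\mathcal B_J(f,g)\|_{L^q(u;J)}\le L_{p_1,q}(v_1,w_g;J)\,\|f\|_{L^{p_1}(v_1;J)}\le S\,\|f\|\,\|g\|,
\]
where \(S\) denotes the right-hand side of \eqref{eq:exact-freezing}. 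Therefore \(C_J\le S\). Conversely, suppose \(C\) is admissible in \eqref{eq:local-bilinear-inequality} and fix \(g\) with positive finite norm. Then \(\|H_Jf\|_{L^q(w_g;J)}\le C\|g\|\,\|f\|\) for all \(f\ge0\) with finite norm. Inputs with infinite norm make the linear inequality trivial, and inputs with zero norm vanish by regularity. Thus \(L_{p_1,q}(v_1,w_g;J)\le C\|g\|\). Taking the supremum over \(g\) and the infimum over admissible \(C\) gives \(S\le C_J\). The argument also holds in the extended sense: if \(C_J=\infty\), there is no admissible finite \(C\), and the upper-bound argument then forces \(S=\infty\).

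The only delicate point is the bookkeeping with the extended product and with the zero-norm and infinite-weight conventions. In particular, on \(\{v_i=\infty\}\) finite-norm inputs vanish, so the Hardy outputs are unaffected. We must also make sure no quotient with a zero denominator appears. Regularity of \(v_1,v_2\) and finiteness of \(u\) at the fixed regularisation level dispose of all of these issues, as indicated above.
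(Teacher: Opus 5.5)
Your proof is correct and follows the paper's own argument. For fixed \(g\) you identify \(\|\mathcal B_J(f,g)\|_{L^q(u;J)}\) with \(\|H_Jf\|_{L^q(w_g;J)}\), then take the supremum first over \(f\) and then over \(g\). Your extra bookkeeping fills in details the paper's one-line proof leaves implicit: regularity forces zero-norm inputs to vanish a.e., so excluding \(\|g\|=0\) from the supremum is harmless, and boundedness of \(H_Jg\) makes the extended product an ordinary one.
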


\begin{proof}
For fixed \(g\),
\[
 \|H_Jf\,H_Jg\|_{L^q(u;J)}
 =\|H_Jf\|_{L^q(w_g;J)}.
\]
Taking first the supremum over \(f\) and then over \(g\) gives
\eqref{eq:exact-freezing}.
No triangle inequality in \(L^q\) is used.
\end{proof}

\subsection{Statement of the compact theorem}
\label{subsec:compact-theorem-statement}

\begin{theorem}[Compact endpoint-safe bilinear characterisation]
\label{thm:compact-bilinear}
Let
\[
 J=[c,d]\Subset I,
 \qquad 0<q<\infty,
 \qquad 1\le p_1,p_2\le\infty,
\]
and let \(u,v_1,v_2:J\to[0,\infty]\) be arbitrary measurable weights.
If \(C_J\) is the optimal constant in \eqref{eq:local-bilinear-inequality},
then
\begin{equation}
 C_J\asymp_{p_1,p_2,q}\mathfrak A_J^{\mathrm{op}},
 \label{eq:compact-master-form}
\end{equation}
where \(\mathfrak A_J^{\mathrm{op}}\) is defined by
\eqref{eq:complete-regularised-characteristic}--\eqref{eq:operational-local-characteristic}.
The regime/characteristic pairing is the following compact restatement of
\eqref{eq:complete-regularised-characteristic}.
\begin{center}
\small
\begin{tabular}{@{}ll@{}}
\(p_1,p_2<\infty,\ p_1,p_2\le q\)
  & \(\mathfrak B_{1,J}^{(m)}\)\\
\(p_1\le q<p_2<\infty\)
  & \(\widehat{\mathfrak B}_{2,J}^{(m)}\)\\
\(p_2\le q<p_1<\infty\)
  & \(\widehat{\mathfrak B}_{3,J}^{(m)}\)\\
\(q<\min\{p_1,p_2\},\ 1/q\le1/p_1+1/p_2\)
  & \(\mathfrak B_{4,J}^{(m)}+\mathfrak B_{5,J}^{(m)}\)\\
\(q<\min\{p_1,p_2\},\ 1/q>1/p_1+1/p_2\)
  & \(\mathfrak B_{6,J}^{(m)}+\mathfrak B_{7,J}^{(m)}\)\\
\(p_1=\infty,\ p_2<\infty\)
  & \(\mathfrak B_{\infty,2;J}^{(m)}\)\\
\(p_1<\infty,\ p_2=\infty\)
  & \(\mathfrak B_{1,\infty;J}^{(m)}\)\\
\(p_1=p_2=\infty\)
  & \(\mathfrak B_{\infty,\infty;J}^{(m)}\).
\end{tabular}
\end{center}
In every row the complete expression is directed through the
regularisation index. In the doubly infinite case,
\begin{equation}
 C_J
 =\sup_{m\ge1}\mathfrak B_{\infty,\infty;J}^{(m)}.
 \label{eq:compact-two-infinite-result}
\end{equation}
All comparison constants are independent of \(J\), the weights, and the
regularisation index.
\end{theorem}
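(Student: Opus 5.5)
By Corollary~\ref{cor:operational-characteristics} and Proposition~\ref{prop:regularised-constants}, it is enough to prove, at one fixed regularisation level \(m\), that
\[
c\,\mathfrak A_J^{(m)}\le C_{J,m}\le C\,\mathfrak A_J^{(m)}
\]
with constants depending only on \(p_1,p_2,q\). So we fix \(m\), drop the superscript, and assume \(u\) bounded and integrable and \(v_i\) regular on \(J\). The proof then goes row by row through the regime table.

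\emph{Infinite-input rows.} These are exact. For \(p_1=p_2=\infty\), Corollary~\ref{cor:infinite-input-freezing} gives \(C_{J,m}=\mathfrak B_{\infty,\infty;J}\), and taking the supremum over \(m\) yields \eqref{eq:compact-two-infinite-result}. For exactly one infinite exponent, the same corollary reduces \(C_{J,m}\) to the linear Hardy norm with output weight \(\Phi_{\infty,v_1;J}^{\,q}u\). Its output tail is \(W_{1,J}\), which is integrable, so Theorem~\ref{thm:compact-linear-hardy} gives \(\mathfrak B_{\infty,2;J}\) in both the convex and non-convex branches. The case \(p_2=\infty\) is symmetric.

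\emph{Upper and mixed rows, via freezing.} By Lemma~\ref{lem:freezing-identity}, \(C_{J,m}=\sup_g L_{p_1,q}(v_1,w_g;J)/\|g\|\). When \(p_1\le q\), Theorem~\ref{thm:compact-linear-hardy} turns this into
\[
\sup_g\sup_x \Phi_{1,J}(x)\Bigl(\int_x^d H_Jg^q u\Bigr)^{1/q}\Big/\|g\|.
\]
We exchange the two suprema. For fixed \(x\), the inner quantity is the norm of \(H_J:L^{p_2}(v_2)\to L^q(u\mathbf 1_{[x,d]})\). Applying Theorem~\ref{thm:compact-linear-hardy} to this restricted problem, the output tail is \(U_J(\max\{x,t\})\). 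If \(p_2\le q\), we get \(\sup_t\Phi_{2,J}(t)U_J(\max\{x,t\})^{1/q}\), and the outer supremum over \(x\) collapses it to \(\mathfrak B_{1,J}\). If \(q<p_2\), we get exactly the bracket in \(\widehat{\mathfrak B}_{2,J}\). The row \(\widehat{\mathfrak B}_{3,J}\) follows by symmetry: freeze \(f\) instead. Since the two inputs enter \(H_Jf\,H_Jg\) symmetrically, there is no circularity.

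\emph{Lower triangle \(q<\min\{p_1,p_2\}\): the main obstacle.} Here we freeze \(g\) and apply the non-convex linear theorem. This gives
\[
C_{J,m}\asymp\sup_g\|g\|^{-1}\Bigl(\int_J\bigl(\textstyle\int_x^d H_Jg^q u\bigr)^{r_1/q}\,d\mu_{1,J}(x)\Bigr)^{1/r_1}.
\]
We raise this to the power \(q\) (power lifting). Since \(r_1/q>1\), the quantity becomes the norm of \(x\mapsto\int_x^d H_Jg^qu\) in the Banach space \(L^{r_1/q}(\mu_{1,J})\). We then dualise with positive \(\phi\) in \(L^{(r_1/q)'}(\mu_{1,J})\) and apply Tonelli. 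This turns the expression into \(\int H_Jg(y)^q u(y)\,\Psi_\phi(y)\,dy\), where \(\Psi_\phi(y)=\int_{[c,y]}\phi\,d\mu_{1,J}\).

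Next we take the supremum over \(g\) for fixed \(\phi\). This is a second linear Hardy problem with output weight \(u\Psi_\phi\), again non-convex because \(q<p_2\). Its characteristic is
\[
\int\Bigl(\int_t^d u\Psi_\phi\Bigr)^{r_2/q}\,d\mu_{2,J}(t).
\]
Writing \(\int_t^d u\Psi_\phi=\Psi_\phi(t)U_J(t)+\int_{(t,d]}\phi\,U_J\,d\mu_{1,J}\), the dependence on \(\phi\) splits into a closed Hardy operator \(\Psi_\phi(t)\), carrying output weight \(U_J^{r_2/q}\,d\mu_{2,J}\), and a strict Copson operator with weight \(U_J\).

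Raising to the appropriate power, this gives an \(L^{(r_1/q)'}(\mu_{1,J})\to L^{r_2/q}\) boundedness problem for each part, and Theorem~\ref{thm:compact-measure-hardy-copson} applies to both. The subcase is decided by comparing \((r_1/q)'\) with \(r_2/q\), and a calculation should show that this comparison is exactly \(1/q\le1/p_1+1/p_2\). In the convex subcase the Hardy characteristic gives \(\mathfrak B_4\), with its closed tail \(\int_{[x,d]}\), and the strict Copson characteristic gives \(\mathfrak B_5\). In the non-convex subcase, the Stieltjes characteristics with exponent \(R\) should reproduce \(\kappa\) and hence \(\mathfrak B_6\) and \(\mathfrak B_7\).

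The hard part is the bookkeeping in this last step. One must check that the exponent algebra yields \(\kappa\), and that the measures \(d(M^{R/P'})\) match \(d(\Phi^\kappa)\) including the left-boundary atom at \(c\). The roles of \(\Phi_1\) and \(\Phi_2\) in \(\mathfrak B_4\)–\(\mathfrak B_7\) may require transposing (dual) forms. If the orientation does not match directly, I would instead freeze \(f\) first, or use Copson–Hardy duality, keeping the closed/strict allocation dictated by the split above. All constants come only from Theorems~\ref{thm:compact-linear-hardy} and~\ref{thm:compact-measure-hardy-copson} and from exponent-dependent elementary inequalities, so they are uniform in \(J\), the weights, and \(m\).
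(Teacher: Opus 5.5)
Your proposal follows the paper's proof essentially step for step. It fixes the regularisation level, uses the exact $p=\infty$ functional for the infinite rows, and handles the upper and mixed rows by exact freezing plus the compact linear theorem with the suprema exchanged. For the lower triangle it uses power lifting, positive duality in $L^{r_1/q}(\mu_{1,J})$, Tonelli, a second linear reduction, and the closed-Hardy/strict-Copson split.

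The bookkeeping you left open works out directly, with no transposition or refreezing needed. With $P=p_1/q$ and $Q=r_2/q$ one has $P'=r_1/q$, $P\le Q\iff 1/q\le 1/p_1+1/p_2$, and in the deep case $R=\kappa/q$. Hence $M^{R/P'}=\Phi_{1,J}^{\kappa}$ and $K^{R/Q}=\Phi_{2,J}^{\kappa}$, with the atoms at $c$ included. The Hardy part therefore yields $\mathfrak B_{4,J}$ and $\mathfrak B_{6,J}$, and the Copson part yields $\mathfrak B_{5,J}$ and $\mathfrak B_{7,J}$.
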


\begin{proof}
\medskip
\noindent\textbf{Step 1. Fixed regularisation.}
By Proposition~\ref{prop:regularised-constants}, it suffices to work at one
regularised level, with constants independent of \(m\). We suppress the
index and write \(u,v_i,C_J,\Phi_{i,J},\mu_{i,J}\) for the regularised
quantities. If \(p_i=1\), \(\Phi_{i,J}\) is the right-continuous Stieltjes
completion, not the pointwise norm of the truncated integration functional.
Theorem~\ref{thm:compact-linear-hardy} incorporates
Lemma~\ref{lem:p-one-completion-invariance} and the finite-ceiling bridge of
\CompactLinearTransferProofLocation{}, so every freezing step below is valid
also for forbidden sets \(\{v_i=\infty\}\), with constants depending only on
the displayed exponents.

\medskip
\noindent\textbf{Step 2. The upper region.}
Assume \(p_1,p_2<\infty\) and \(p_1,p_2\le q\). With
\[
 W_g(x):=\int_x^d H_Jg(y)^q u(y)\,\dd y,
\]
Lemma~\ref{lem:freezing-identity} and the convex branch of
Theorem~\ref{thm:compact-linear-hardy} first give
\begin{equation}
 C_J
 \asymp_{p_1,q}
 \sup_{g\ne0}
 \frac{\displaystyle\sup_{x\in J}\Phi_{1,J}(x)W_g(x)^{1/q}}
 {\|g\|_{L^{p_2}(v_2;J)}}.
 \label{eq:first-upper-reduction}
\end{equation}
For fixed \(x\), let \(u_x=u\mathbf 1_{[x,d]}\). Then
\(W_g(x)^{1/q}=\|H_Jg\|_{L^q(u_x;J)}\), so the two suprema commute and
\begin{equation}
 C_J
 \asymp_{p_1,q}
 \sup_{x\in J}\Phi_{1,J}(x)L_{p_2,q}(v_2,u_x;J).
 \label{eq:second-upper-reduction}
\end{equation}
The tail of \(u_x\) is
\(U_J(\max\{x,t\})\). Since \(p_2\le q\), the second convex linear
criterion therefore gives
\begin{equation}
 C_J
 \asymp_{p_1,p_2,q}
 \sup_{x,t\in J}
 \Phi_{1,J}(x)\Phi_{2,J}(t)
 U_J(\max\{x,t\})^{1/q}.
 \label{eq:upper-double-supremum}
\end{equation}
If \(x\le t\), monotonicity of \(\Phi_{1,J}\) bounds the corresponding
term by \(\mathfrak B_{1,J}\). If \(t\le x\), use monotonicity of
\(\Phi_{2,J}\). Conversely, taking \(t=x\) recovers
\(\mathfrak B_{1,J}\). Hence
\begin{equation}
 C_J\asymp_{p_1,p_2,q}\mathfrak B_{1,J}.
 \label{eq:regular-upper-result}
\end{equation}

\medskip
\noindent\textbf{Step 3. The mixed regions.}
If \(p_1\le q<p_2<\infty\), the same first freezing followed by the
non-convex linear criterion for the second input yields
\begin{equation}
 C_J
 \asymp_{p_1,p_2,q}
 \sup_{x\in J}\Phi_{1,J}(x)
 \left[
   \int_J U_J(\max\{x,t\})^{r_2/q}\,\dd\mu_{2,J}(t)
 \right]^{1/r_2}
 =\widehat{\mathfrak B}_{2,J}.
 \label{eq:regular-first-mixed-result}
\end{equation}
Lemma~\ref{lem:frozen-tail-decomposition} therefore gives
\[
 C_J\asymp_{p_1,p_2,q}\mathfrak B_{1,J}+\mathfrak B_{2,J}.
\]
Interchanging the inputs gives, when \(p_2\le q<p_1<\infty\),
\begin{equation}
 C_J
 \asymp_{p_1,p_2,q}
 \widehat{\mathfrak B}_{3,J}
 \asymp
 \mathfrak B_{1,J}+\mathfrak B_{3,J}.
 \label{eq:regular-second-mixed-result}
\end{equation}

\medskip
\noindent\textbf{Step 4. Power lifting in the lower triangle.}
Assume
\begin{equation}
 q<\min\{p_1,p_2\}<\infty.
 \label{eq:lower-assumption-section6}
\end{equation}
The non-convex linear criterion applied after the first freezing gives
\begin{equation}
 C_J
 \asymp_{p_1,p_2,q}
 \sup_{g\ne0}
 \frac{
 \left[\int_J W_g(x)^{r_1/q}\,\dd\mu_{1,J}(x)\right]^{1/r_1}
 }{\|g\|_{L^{p_2}(v_2;J)}}.
 \label{eq:lower-first-linear-reduction}
\end{equation}
Set
\begin{equation}
 s_1:=\frac{r_1}{q}=\frac{p_1}{p_1-q}>1,
 \qquad
 s_1':=\frac{p_1}{q}>1.
 \label{eq:s1-exponents}
\end{equation}
Raising \eqref{eq:lower-first-linear-reduction} to the \(q\)-th power gives
\begin{equation}
 C_J^q
 \asymp_{p_1,p_2,q}
 \sup_{g\ne0}
 \frac{\|W_g\|_{L^{s_1}(\mu_{1,J})}}
 {\|g\|_{L^{p_2}(v_2;J)}^q}.
 \label{eq:lower-power-lifted}
\end{equation}
Positive \(L^{s_1}\)-duality is exact, giving
\begin{equation}
 \|W_g\|_{L^{s_1}(\mu_{1,J})}
 =
 \sup_{\substack{h\ge0\\
 \|h\|_{L^{s_1'}(\mu_{1,J})}\le1}}
 \int_J W_g(x)h(x)\,\dd\mu_{1,J}(x).
 \label{eq:positive-duality}
\end{equation}
Thus
\begin{equation}
 C_J^q
 \asymp
 \sup_{\substack{h\ge0\\\|h\|_{L^{p_1/q}(\mu_{1,J})}\le1}}
 \sup_{g\ne0}
 \frac{\displaystyle\int_J W_g(x)h(x)\,\dd\mu_{1,J}(x)}
 {\|g\|_{L^{p_2}(v_2;J)}^q}.
 \label{eq:lower-dual-suprema}
\end{equation}
Tonelli gives
\begin{equation}
 \int_J W_g(x)h(x)\,\dd\mu_{1,J}(x)
 =
 \int_J H_Jg(y)^q u(y)
 \left(\int_{[c,y]}h(t)\,\dd\mu_{1,J}(t)\right)\dd y.
 \label{eq:first-tonelli-lower}
\end{equation}
For fixed \(h\), put
\begin{equation}
 w_h(y):=u(y)\int_{[c,y]}h(t)\,\dd\mu_{1,J}(t).
 \label{eq:lower-temporary-weight}
\end{equation}
The inner supremum in \eqref{eq:lower-dual-suprema} is the \(q\)-th
power of the linear Hardy norm from \(L^{p_2}(v_2;J)\) to
\(L^q(w_h;J)\). Since \(q<p_2\), Theorem~\ref{thm:compact-linear-hardy}
gives
\begin{equation}
 \sup_{g\ne0}
 \frac{\displaystyle\int_J H_Jg(y)^q w_h(y)\,\dd y}
 {\|g\|_{L^{p_2}(v_2;J)}^q}
 \asymp_{p_2,q}
 \left[\int_J \mathcal T h(x)^{r_2/q}\,\dd\mu_{2,J}(x)\right]^{q/r_2},
 \label{eq:second-linear-lower}
\end{equation}
where
\begin{equation}
 \mathcal T h(x)
 :=\int_x^d u(y)
 \left(\int_{[c,y]}h(t)\,\dd\mu_{1,J}(t)\right)\dd y.
 \label{eq:lower-measure-operator}
\end{equation}
With
\begin{equation}
 P:=\frac{p_1}{q}>1,
 \qquad
 Q:=\frac{r_2}{q}=\frac{p_2}{p_2-q}>1,
 \label{eq:transformed-PQ}
\end{equation}
we conclude that
\begin{equation}
 C_J^q
 \asymp_{p_1,p_2,q}
 \left\|\mathcal T:L^P(\mu_{1,J})\longrightarrow L^Q(\mu_{2,J})\right\|.
 \label{eq:bilinear-to-measure-operator}
\end{equation}
The duality is therefore used only after power lifting to the Banach space
\(L^{r_1/q}(\mu_{1,J})\).
No duality is applied directly in \(L^q\), so
this step also covers \(0<q<1\).

\medskip
\noindent\textbf{Step 5. Exact Hardy--Copson splitting.}
A second Tonelli rearrangement gives
\begin{equation}
 \begin{aligned}
 \mathcal T h(x)
 &=U_J(x)\int_{[c,x]}h(t)\,\dd\mu_{1,J}(t)
   +\int_{(x,d]}U_J(t)h(t)\,\dd\mu_{1,J}(t)\\
 &=:\mathcal T_Hh(x)+\mathcal T_Ch(x).
 \end{aligned}
 \label{eq:exact-hardy-copson-split}
\end{equation}
The atom at \(t=x\) belongs to the Hardy term and is excluded from the
Copson term. Positivity gives
\begin{equation}
 \|\mathcal T\|\asymp \|\mathcal T_H\|+\|\mathcal T_C\|.
 \label{eq:T-norm-sum}
\end{equation}
Moreover,
\begin{equation}
 P\le Q
 \quad\Longleftrightarrow\quad
 \frac1q\le\frac1{p_1}+\frac1{p_2}.
 \label{eq:transformed-regime-equivalence}
\end{equation}

\medskip
\noindent\textbf{Step 6. The shallow lower triangle.}
Assume \(1/q\le1/p_1+1/p_2\), so \(P\le Q\). For the Hardy component,
Theorem~\ref{thm:compact-measure-hardy-copson} is applied with output
measure \(U_J^Q\mu_{2,J}\). For the Copson component it is applied with
input measure \(\mu_{1,J}\), output measure \(\mu_{2,J}\), and weight
\(U_J\). The corresponding heads and tails are
\begin{equation}
 \begin{aligned}
 M_H(x)&=\Phi_{1,J}(x)^{r_1},
 &N_H(x)&=\int_{[x,d]}U_J(t)^{r_2/q}\,\dd\mu_{2,J}(t),\\
 K_C(x)&=\Phi_{2,J}(x)^{r_2},
 &B_C(x)&=\int_{(x,d]}U_J(t)^{r_1/q}\,\dd\mu_{1,J}(t).
 \end{aligned}
 \label{eq:shallow-head-tail-map}
\end{equation}
Since
\begin{equation}
 P'=\frac{r_1}{q},
 \qquad
 Q=\frac{r_2}{q},
 \label{eq:shallow-exponent-identities}
\end{equation}
we have
\[
 M_H(x)^{1/P'}=\Phi_{1,J}(x)^q,
 \qquad
 N_H(x)^{1/Q}
 =\left[\int_{[x,d]}U_J(t)^{r_2/q}\,\dd\mu_{2,J}(t)\right]^{q/r_2},
\]
and
\[
 K_C(x)^{1/Q}=\Phi_{2,J}(x)^q,
 \qquad
 B_C(x)^{1/P'}
 =\left[\int_{(x,d]}U_J(t)^{r_1/q}\,\dd\mu_{1,J}(t)\right]^{q/r_1}.
\]
Thus the convex Hardy and strict-Copson criteria identify
\begin{equation}
 \|\mathcal T_H\|^{1/q}\asymp_{p_1,p_2,q}\mathfrak B_{4,J},
 \qquad
 \|\mathcal T_C\|^{1/q}\asymp_{p_1,p_2,q}\mathfrak B_{5,J}.
 \label{eq:shallow-component-characteristics}
\end{equation}
The first tail is closed and the second strict, exactly as in
\eqref{eq:B4-characteristic}--\eqref{eq:B5-characteristic}. Together with
\eqref{eq:bilinear-to-measure-operator} and \eqref{eq:T-norm-sum}, and the
elementary equivalence \((a^q+b^q)^{1/q}\asymp_q a+b\), this gives
\begin{equation}
 C_J\asymp_{p_1,p_2,q}\mathfrak B_{4,J}+\mathfrak B_{5,J}.
 \label{eq:regular-shallow-result}
\end{equation}

\medskip
\noindent\textbf{Step 7. The deep lower triangle.}
Assume \(1/q>1/p_1+1/p_2\), so \(Q<P\). If
\[
 \frac1{R_0}:=\frac1Q-\frac1P,
\]
then
\begin{equation}
 R_0=\frac\kappa q,
 \qquad
 \frac{R_0}{Q}=\frac\kappa{r_2},
 \qquad
 \frac{R_0}{P'}=\frac\kappa{r_1}.
 \label{eq:deep-exponent-identities}
\end{equation}
Applying the non-convex Hardy branch to \(\mathcal T_H\) gives
\begin{equation}
 \|\mathcal T_H\|
 \asymp_{p_1,p_2,q}
 \left[
   \int_J
   \left(\int_{[x,d]}U_J(t)^{r_2/q}\,\dd\mu_{2,J}(t)\right)^{\kappa/r_2}
   \dd\!\left(\Phi_{1,J}(x)^\kappa\right)
 \right]^{q/\kappa},
 \label{eq:TH-deep-measure-result}
\end{equation}
whereas the non-convex strict-Copson branch gives
\begin{equation}
 \|\mathcal T_C\|
 \asymp_{p_1,p_2,q}
 \left[
   \int_J
   \left(\int_{(x,d]}U_J(t)^{r_1/q}\,\dd\mu_{1,J}(t)\right)^{\kappa/r_1}
   \dd\!\left(\Phi_{2,J}(x)^\kappa\right)
 \right]^{q/\kappa}.
 \label{eq:TC-deep-measure-result}
\end{equation}
Thus
\begin{equation}
 \|\mathcal T_H\|^{1/q}\asymp\mathfrak B_{6,J},
 \qquad
 \|\mathcal T_C\|^{1/q}\asymp\mathfrak B_{7,J}.
 \label{eq:deep-component-characteristics}
\end{equation}
The differentials are generated by \(\Phi_{1,J}^{\kappa}\) and
\(\Phi_{2,J}^{\kappa}\), respectively, and retain their left-boundary
atoms. Hence
\begin{equation}
 C_J\asymp_{p_1,p_2,q}\mathfrak B_{6,J}+\mathfrak B_{7,J}.
 \label{eq:regular-deep-result}
\end{equation}

\medskip
\noindent\textbf{Step 8. Infinite input exponents.}
If \(p_1=\infty\) and \(p_2<\infty\),
Corollary~\ref{cor:infinite-input-freezing} gives the exact reduction
\[
 C_J=L_{p_2,q}(v_2,\Phi_{1,J}^{\,q}u;J),
\]
so Theorem~\ref{thm:compact-linear-hardy} yields
\begin{equation}
 C_J\asymp_{p_2,q}\mathfrak B_{\infty,2;J}.
 \label{eq:regular-one-infinite-result}
\end{equation}
The case \(p_2=\infty\), \(p_1<\infty\), is symmetric and gives
\(C_J\asymp_{p_1,q}\mathfrak B_{1,\infty;J}\). If both input exponents
are infinite, the same corollary gives exactly
\begin{equation}
 C_J
 =\left[\int_J(\Phi_{1,J}(x)\Phi_{2,J}(x))^q u(x)\,\dd x\right]^{1/q}
 =\mathfrak B_{\infty,\infty;J}.
 \label{eq:regular-two-infinite-result}
\end{equation}

\medskip
\noindent\textbf{Step 9. Removal of regularisation.}
All preceding estimates are uniform in \(m\). Therefore, outside the doubly
infinite case,
\[
 C_{J,m}\asymp_{p_1,p_2,q}\mathfrak A_J^{(m)}
\]
uniformly in \(m\). Proposition~\ref{prop:regularised-constants} and
Corollary~\ref{cor:operational-characteristics} yield
\begin{equation}
 C_J
 \asymp_{p_1,p_2,q}
 \sup_{m\ge1}\mathfrak A_J^{(m)}
 =\mathfrak A_J^{\mathrm{op}}.
 \label{eq:remove-regularisation-final}
\end{equation}
In the doubly infinite case the levelwise identity is exact, giving
\eqref{eq:compact-two-infinite-result}. This completes the proof.
\end{proof}

\section{The directed global characterisation}
\label{sec:global-theorem}

Let \(I\subset\mathbb R\) be any interval with nonempty interior, possibly
bounded or unbounded and with arbitrary endpoint type.  For each compact
restriction \(J=[c,d]\Subset I\), the Hardy operator is restarted at the
attained left endpoint,
\[
 H_Jh(x)=\int_c^x h(t)\,\dd t,
\]
and \(\mathfrak A_J^{\mathrm{op}}\) denotes the complete operational local
characteristic of Section~\ref{sec:local-characteristics}.

\subsection{Directed characteristics}
\label{subsec:directed-characteristics}

\begin{definition}[Directed global characteristic]
\label{def:directed-global-characteristic}
Define
\begin{equation}
\mathfrak A_I^{\mathrm{dir}}
 :=
 \sup_{J\Subset I}\mathfrak A_J^{\mathrm{op}}
 =
 \sup_{J\Subset I}\sup_{m\ge1}\mathfrak A_J^{(m)},
\label{eq:directed-global-characteristic}
\end{equation}
where \(\mathfrak A_J^{(m)}\) is the complete regime expression from
Section~\ref{sec:local-characteristics}, formed from the regularised weights
on the fixed interval \(J\).
\end{definition}

The complete local expression is directed as a whole.  Thus, in the lower
triangle one takes
\[
 \sup_{J\Subset I}\sup_{m\ge1}
 \bigl(\mathfrak B_{4,J}^{(m)}+\mathfrak B_{5,J}^{(m)}\bigr)
 \quad\text{or}\quad
 \sup_{J\Subset I}\sup_{m\ge1}
 \bigl(\mathfrak B_{6,J}^{(m)}+\mathfrak B_{7,J}^{(m)}\bigr),
\]
not independent directed suprema of the two summands. The same convention
applies to the frozen and strict components of the mixed characteristics.
Analytically, the order of construction remains local. We fix \(J\), restart the
operator, regularise the restricted weights, form the local profiles and
Stieltjes measures, and only then take the regularisation and compact
suprema.

\subsection{The global theorem}
\label{subsec:directed-global-theorem}

\begin{theorem}[Directed global bilinear characterisation]
\label{thm:directed-global-bilinear}
Let \(I\subset\mathbb R\) be any interval with nonempty interior, let
\[
 0<q<\infty,
 \qquad
 1\le p_1,p_2\le\infty,
\]
and let \(u,v_1,v_2:I\to[0,\infty]\) be arbitrary measurable weights. Let
\(\mathcal B_I(f,g)=H_If\starprod H_Ig\) and let \(C_I\) be the infimum of
all finite admissible constants in \eqref{eq:global-bilinear-hardy-inequality},
with \(\inf\varnothing=+\infty\). Then
\begin{equation}
C_I
 \asymp_{p_1,p_2,q}
 \mathfrak A_I^{\mathrm{dir}},
\label{eq:directed-global-result}
\end{equation}
with comparison constants independent of the interval and of the weights.
If \(p_1=p_2=\infty\), then the equivalence is exact, with
\begin{equation}
C_I=\mathfrak A_I^{\mathrm{dir}}.
\label{eq:directed-global-infinite-equality}
\end{equation}
\end{theorem}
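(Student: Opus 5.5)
The proof combines two results already established: the compact theorem (Theorem~\ref{thm:compact-bilinear}) and the directed exhaustion identity (Proposition~\ref{prop:directed-exhaustion}). No new analysis is needed. We only have to check that the comparison constants do not depend on $J$, and that taking suprema over $J$ and over $m$ in the stated order is legitimate.

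\textbf{Step 1.} Fix $J=[c,d]\Subset I$ and restrict $u,v_1,v_2$ to $J$. Theorem~\ref{thm:compact-bilinear} gives constants $0<c_0\le C_0<\infty$, depending only on $(p_1,p_2,q)$, such that
\[
 c_0\,\mathfrak A_J^{\mathrm{op}}\le C_J\le C_0\,\mathfrak A_J^{\mathrm{op}},
\]
with both sides allowed to equal $+\infty$. The constants are independent of $J$, of the weights and of the regularisation index; this is part of the conclusion of that theorem.

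\textbf{Step 2.} Take the supremum over all $J\Subset I$. Multiplication by a fixed positive constant commutes with suprema in $[0,\infty]$, so
\[
 c_0\,\mathfrak A_I^{\mathrm{dir}}\le\sup_{J\Subset I}C_J\le C_0\,\mathfrak A_I^{\mathrm{dir}}.
\]
Here $\mathfrak A_I^{\mathrm{dir}}$ is exactly $\sup_J\mathfrak A_J^{\mathrm{op}}$ by Definition~\ref{def:directed-global-characteristic}. Proposition~\ref{prop:directed-exhaustion} identifies $\sup_J C_J$ with $C_I$. That identity holds without any Banach or triangle-inequality assumption, so it covers $0<q<1$. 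It also holds when $C_I=+\infty$, because both the zero-norm tests and the finite-constant convention are built into the local and global definitions of the optimal constants in the same way.

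\textbf{Step 3 (doubly infinite case).} When $p_1=p_2=\infty$, equation~\eqref{eq:compact-two-infinite-result} gives $C_J=\sup_m\mathfrak B^{(m)}_{\infty,\infty;J}=\mathfrak A_J^{\mathrm{op}}$ for each $J$. Taking the supremum over $J$ and applying Proposition~\ref{prop:directed-exhaustion} yields $C_I=\mathfrak A_I^{\mathrm{dir}}$ exactly.

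The only point needing care is whether the global constant $C_I$, defined through $H_I$ with integration from the possibly non-attained left endpoint, really equals $\sup_J C_J$, where each $C_J$ uses the restarted operator $H_J$. This is the monotone-convergence argument inside Proposition~\ref{prop:directed-exhaustion}. There, for a fixed exhaustion, the restarted outputs $H_{J_n}f\,H_{J_n}g\,\mathbf 1_{J_n}$ increase pointwise to $H_If\starprod H_Ig$. Before relying on it, I would check two things:
\begin{itemize}
\item the pointwise increase is compatible with the extended product $\starprod$ on sets where $u=\infty$ or where some $H_If$ is infinite;
\item the restricted input norms increase to the global ones, including through essential suprema when $p_i=\infty$.
\end{itemize}
Granting that proposition as stated, the theorem follows directly.
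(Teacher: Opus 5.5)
Your proposal is correct and is essentially the paper's own proof: it combines the identity \(C_I=\sup_{J\Subset I}C_J\) from Proposition~\ref{prop:directed-exhaustion} with the compact equivalence \(C_J\asymp_{p_1,p_2,q}\mathfrak A_J^{\mathrm{op}}\) of Theorem~\ref{thm:compact-bilinear}, whose constants are uniform in \(J\). It then takes the supremum over \(J\), and uses the exact levelwise identity \eqref{eq:compact-two-infinite-result} in the doubly infinite case.
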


\begin{proof}
By Proposition~\ref{prop:directed-exhaustion},
\[
 C_I=\sup_{J\Subset I}C_J.
\]
The compact characterisation, Theorem~\ref{thm:compact-bilinear}, gives
\(C_J\asymp_{p_1,p_2,q}\mathfrak A_J^{\mathrm{op}}\) with constants uniform
in \(J\) and in the weights.  Taking the supremum over \(J\Subset I\) proves
\eqref{eq:directed-global-result}.  When \(p_1=p_2=\infty\), the compact
identity is exact for every \(J\), so the same supremum gives
\eqref{eq:directed-global-infinite-equality}.
\end{proof}

Consequently, the bilinear inequality is bounded on \(I\) if and only if
\(\mathfrak A_I^{\mathrm{dir}}<\infty\).

\subsection{Evaluation along a fixed compact exhaustion}
\label{subsec:fixed-global-exhaustion}

\begin{proposition}[Exhaustion independence]
\label{prop:characteristic-exhaustion-independence}
Let
\begin{equation}
J_1\subset J_2\subset\cdots\Subset I,
 \qquad
 \bigcup_{n=1}^{\infty}J_n=I.
\label{eq:section7-fixed-exhaustion}
\end{equation}
Then
\begin{equation}
\mathfrak A_I^{\mathrm{dir}}
 \asymp_{p_1,p_2,q}
 \sup_{n\ge1}\mathfrak A_{J_n}^{\mathrm{op}},
\label{eq:characteristic-fixed-exhaustion}
\end{equation}
and
\begin{equation}
C_I=\lim_{n\to\infty}C_{J_n}.
\label{eq:norm-fixed-exhaustion}
\end{equation}
\end{proposition}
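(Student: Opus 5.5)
The plan is to route both claims through the norms $C_J$, because the characteristics $\mathfrak A_J^{\mathrm{op}}$ are not themselves known to be monotone in $J$. The local Hardy operator is restarted at each $c$, so the profiles and Stieltjes measures change with $J$, and comparing $\mathfrak A_J^{\mathrm{op}}$ with $\mathfrak A_K^{\mathrm{op}}$ directly for $J\subset K$ looks delicate, especially at the left-boundary atoms. Passing through the norms avoids this entirely.

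\textbf{The norm identity.} The second assertion, \eqref{eq:norm-fixed-exhaustion}, is Corollary~\ref{cor:exhaustion-independence} applied to the given exhaustion \eqref{eq:section7-fixed-exhaustion}. Proposition~\ref{prop:directed-exhaustion} makes $C_{J_n}$ nondecreasing by \eqref{eq:compact-monotonicity}. It also gives $C_I=\sup_{J\Subset I}C_J$. Every compact $J\Subset I$ lies in some $J_n$: the interiors of the $J_n$ increase, and a compact set covered by an increasing union eventually sits in one member. A one-sided endpoint of $I$ needs a brief check here. Since each $J_n$ is closed and the union is all of $I$, an attained endpoint of $I$ belongs to every $J_n$ for $n$ large. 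Hence $\sup_n C_{J_n}=C_I$, and by monotonicity this supremum is the limit.

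\textbf{The characteristic equivalence.} For \eqref{eq:characteristic-fixed-exhaustion}, let $c_0,C_0$ be the exponent-only constants of Theorem~\ref{thm:compact-bilinear}, which are uniform in $J$. The inequality $\sup_n\mathfrak A_{J_n}^{\mathrm{op}}\le\mathfrak A_I^{\mathrm{dir}}$ is trivial because each $J_n\Subset I$. For the reverse inequality, take any $J\Subset I$ and choose $n$ with $J\subset J_n$. Then
\[
\mathfrak A_J^{\mathrm{op}}\le c_0^{-1}C_J\le c_0^{-1}C_{J_n}\le c_0^{-1}C_0\,\mathfrak A_{J_n}^{\mathrm{op}},
\]
where the middle step is \eqref{eq:compact-monotonicity}. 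Taking the supremum over $J$ gives $\mathfrak A_I^{\mathrm{dir}}\le (C_0/c_0)\sup_n\mathfrak A_{J_n}^{\mathrm{op}}$.

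\textbf{Extended values and the main obstacle.} All quantities may equal $+\infty$. The inequalities still hold in $[0,\infty]$, since Theorem~\ref{thm:compact-bilinear} is stated in the extended sense. In the doubly infinite case the constants equal $1$, so equality holds. The only real obstacle is the covering step: every $J\Subset I$ must lie in a single $J_n$ even when $I$ has an attained endpoint. The closedness argument in the norm-identity paragraph settles this.
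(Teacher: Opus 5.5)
Your proposal is correct and follows the paper's proof essentially verbatim. The norm identity is Corollary~\ref{cor:exhaustion-independence}, and the characteristic equivalence comes from the chain $\mathfrak A_J^{\mathrm{op}}\lesssim C_J\le C_{J_n}\lesssim\mathfrak A_{J_n}^{\mathrm{op}}$, which uses the uniform compact constants and the monotonicity of the norms. One small simplification: your covering step does not need interiors or compactness, since both endpoints of $J=[c,d]$ lie in a common $J_n$ by nesting and $J_n$ is an interval, so $J\subset J_n$ follows at once (closedness of $J_n$ is not what is used).
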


\begin{proof}
The norm identity is Corollary~\ref{cor:exhaustion-independence}.  Since the
compact comparison constants are uniform, monotonicity of the operator norms
under \(J\subset J_n\) yields
\(
 \mathfrak A_J^{\mathrm{op}}\lesssim C_J\le C_{J_n}
 \lesssim\mathfrak A_{J_n}^{\mathrm{op}}
\)
for all sufficiently large \(n\).  Taking the required suprema proves
\eqref{eq:characteristic-fixed-exhaustion}.
\end{proof}

\begin{remark}
\label{rem:characteristics-not-monotone}
Although \(C_{J_n}\) is nondecreasing, individual local characteristics need
not be.
Enlarging the compact interval changes its left endpoint, the local
profiles, the output tail and the Stieltjes measures.  The proposition
asserts equivalence of the resulting suprema, not termwise monotonicity.
\end{remark}

\subsection{Why no single global Stieltjes profile is used}
\label{subsec:no-global-profile}

The directed formulation also removes the endpoint defect that occurs when
\(I\) has no smallest point.  A global profile may have positive limiting
mass at the missing left endpoint while its interior Stieltjes derivative
records no such mass.  On each \(J=[c,d]\Subset I\), however, the attained
left endpoint carries the atom
\[
 \dd\!\left(\Phi_{i,J}^{\,r_i}\right)(\{c\})
 =
 \Phi_{i,J}(c)^{r_i},
\]
so the local characteristic is endpoint-safe.  Thus the global object in
Theorem~\ref{thm:directed-global-bilinear} is the directed family of compact
characteristics, not a single Stieltjes measure constructed directly on
\(I\). Subsection~\ref{subsec:nonattained-endpoint-example} gives the concrete
failure mechanism.

\section{Agreement with the classical characterisation}
\label{sec:classical-comparison}

We now verify that the endpoint-safe local characteristics reduce to the
classical conditions in the regular interior range.  Fix
\(J=[c,d]\Subset I\), assume
\[
 1<p_1,p_2,q<\infty,
 \qquad
 0<u,v_1,v_2<\infty \quad\text{a.e. on }J,
\]
and put
\begin{equation}
 U(x):=\int_x^d u(t)\,\dd t,
 \qquad
 V_i(x):=\int_c^x v_i(t)^{1-p_i'}\,\dd t,
 \qquad i=1,2.
 \label{eq:classical-UV}
\end{equation}
Then
\begin{equation}
 \Phi_{i,J}(x)=V_i(x)^{1/p_i'},
 \qquad i=1,2.
 \label{eq:classical-profile-identification}
\end{equation}
Whenever \(q<p_i\), let
\begin{equation}
 \frac1{r_i}=\frac1q-\frac1{p_i},
 \qquad
 r_i=\frac{p_iq}{p_i-q}.
 \label{eq:classical-ri-definition}
\end{equation}
Since \(q>1\),
\begin{equation}
 \frac{r_i}{p_i'}-1=\frac{r_i}{q'}.
 \label{eq:classical-ri-identity}
\end{equation}
In the deep lower triangle,
\begin{equation}
 \frac1\kappa
 :=\frac1q-\frac1{p_1}-\frac1{p_2}>0.
 \label{eq:classical-kappa-definition}
\end{equation}

The comparison is made only in this interior setting.  Here the profiles are
absolutely continuous and vanish at the left endpoint, so their Stieltjes
measures have no atoms and the closed/strict distinction disappears.  The
resulting formulas are precisely the classical integral structures of
\cite{AguilarCanestroOrtegaRamirez2012}.
The equation-number notation used in
\cite{Krepela2017} is noted at the end of the section.

\subsection{The canonical conditions \texorpdfstring{\(A_1,\ldots,A_7\)}{A1--A7}}
\label{subsec:canonical-A-conditions}

For \(p_1,p_2\le q\), define
\begin{equation}
 A_1
 :=
 \sup_{c<x<d}
 U(x)^{1/q}V_1(x)^{1/p_1'}V_2(x)^{1/p_2'}.
 \label{eq:classical-A1}
\end{equation}

If \(p_1\le q<p_2\), put
\begin{equation}
 A_2
 :=
 \sup_{c<x<d}V_1(x)^{1/p_1'}
 \left[
   \int_x^d
   U(t)^{r_2/q}V_2(t)^{r_2/q'}
   v_2(t)^{1-p_2'}\,\dd t
 \right]^{1/r_2}.
 \label{eq:classical-A2}
\end{equation}
If \(p_2\le q<p_1\), define symmetrically
\begin{equation}
 A_3
 :=
 \sup_{c<x<d}V_2(x)^{1/p_2'}
 \left[
   \int_x^d
   U(t)^{r_1/q}V_1(t)^{r_1/q'}
   v_1(t)^{1-p_1'}\,\dd t
 \right]^{1/r_1}.
 \label{eq:classical-A3}
\end{equation}
The complete mixed conditions are \(A_1+A_2\) and \(A_1+A_3\), respectively.

In the shallow lower triangle
\[
 q<\min\{p_1,p_2\},
 \qquad
 \frac1q\le\frac1{p_1}+\frac1{p_2},
\]
define
\begin{equation}
 A_4
 :=
 \sup_{c<x<d}V_1(x)^{1/p_1'}
 \left[
   \int_x^d
   U(t)^{r_2/q}V_2(t)^{r_2/q'}
   v_2(t)^{1-p_2'}\,\dd t
 \right]^{1/r_2},
 \label{eq:classical-A4}
\end{equation}
and
\begin{equation}
 A_5
 :=
 \sup_{c<x<d}V_2(x)^{1/p_2'}
 \left[
   \int_x^d
   U(t)^{r_1/q}V_1(t)^{r_1/q'}
   v_1(t)^{1-p_1'}\,\dd t
 \right]^{1/r_1}.
 \label{eq:classical-A5}
\end{equation}
Thus \(A_4\) and \(A_2\), and likewise \(A_5\) and \(A_3\), have the same
analytic forms but occur in different exponent regions.

Finally, in the deep lower triangle
\[
 q<\min\{p_1,p_2\},
 \qquad
 \frac1q>\frac1{p_1}+\frac1{p_2},
\]
define
\begin{equation}
 \begin{aligned}
 A_6
 &:=[\int_c^d
   \bigl(\int_x^d
     U(t)^{r_2/q}V_2(t)^{r_2/q'}
     v_2(t)^{1-p_2'}\,\dd t\bigr)^{\kappa/r_2}
   V_1(x)^{\kappa/r_2'}v_1(x)^{1-p_1'}\,\dd x]^{1/\kappa},
 \\
 A_7
 &:=[\int_c^d
   \bigl(\int_x^d
     U(t)^{r_1/q}V_1(t)^{r_1/q'}
     v_1(t)^{1-p_1'}\,\dd t\bigr)^{\kappa/r_1}
   V_2(x)^{\kappa/r_1'}v_2(x)^{1-p_2'}\,\dd x]^{1/\kappa}.
 \end{aligned}
 \label{eq:classical-A6-A7}
\end{equation}

\subsection{Densities and termwise identification}
\label{subsec:termwise-classical-identification}

The conversion from the local Stieltjes characteristics is elementary.

\begin{lemma}[Classical profile densities]
\label{lem:classical-profile-densities}
If \(q<p_i\), then
\begin{equation}
 \dd(\Phi_{i,J}^{r_i})
 =\frac{r_i}{p_i'}
   V_i^{r_i/q'}v_i^{1-p_i'}\,\dd x.
 \label{eq:ri-profile-density}
\end{equation}
In the deep lower triangle,
\begin{equation}
 \begin{aligned}
 \dd(\Phi_{1,J}^{\kappa})
 &=\frac{\kappa}{p_1'}
   V_1^{\kappa/r_2'}v_1^{1-p_1'}\,\dd x,
 \\
 \dd(\Phi_{2,J}^{\kappa})
 &=\frac{\kappa}{p_2'}
   V_2^{\kappa/r_1'}v_2^{1-p_2'}\,\dd x.
 \end{aligned}
 \label{eq:kappa-profile-densities}
\end{equation}
\end{lemma}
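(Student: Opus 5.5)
The plan is to reduce both displays to the one-variable chain rule for an absolutely continuous function raised to a power strictly larger than one. The exponent identity \eqref{eq:classical-ri-identity}, together with an analogous identity involving \(\kappa\), then puts the derivatives in the stated form.

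\textbf{Absolute continuity and the boundary atom.} In the interior setting of this section, \(v_i^{1-p_i'}\) is integrable on \(J\). This is automatic at a fixed regularisation level, because \(v_i\ge 1/m\) there and so \(v_i^{1-p_i'}\le m^{p_i'-1}\). Hence \(V_i(x)=\int_c^x v_i^{1-p_i'}\) is finite, nondecreasing and absolutely continuous on \(J\), with \(V_i(c)=0\) and \(V_i'=v_i^{1-p_i'}\) almost everywhere. By \eqref{eq:classical-profile-identification}, for any exponent \(\gamma>0\) we have \(\Phi_{i,J}^{\gamma}=V_i^{\gamma/p_i'}\). Whenever \(\alpha:=\gamma/p_i'\ge1\), the map \(s\mapsto s^\alpha\) is \(C^1\), and hence Lipschitz, on the bounded range \([0,V_i(d)]\). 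So \(V_i^\alpha\) is absolutely continuous and has derivative \(\alpha V_i^{\alpha-1}V_i'\) almost everywhere. Since \(V_i(c)^\alpha=0\), the convention \eqref{eq:stieltjes-left-atom} gives no atom at \(c\). The continuity of \(V_i^\alpha\) gives no interior atoms. Therefore \(\dd(\Phi_{i,J}^\gamma)\) is exactly \(\alpha V_i^{\alpha-1}v_i^{1-p_i'}\,\dd x\).

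\textbf{The \(r_i\) case.} Take \(\gamma=r_i\). By \eqref{eq:classical-ri-identity}, \(r_i/p_i'=1+r_i/q'\), and this exceeds \(1\) because \(q>1\). The previous paragraph then yields \eqref{eq:ri-profile-density} directly, with prefactor \(r_i/p_i'\) and power \(V_i^{r_i/q'}\).

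\textbf{The \(\kappa\) case.} Take \(\gamma=\kappa\) for \(\Phi_{1,J}\). The only computation is the identity
\[
\frac{\kappa}{p_1'}-1=\kappa\Bigl(1-\frac1{p_1}\Bigr)-\kappa\Bigl(\frac1q-\frac1{p_1}-\frac1{p_2}\Bigr)=\kappa\Bigl(1-\frac1q+\frac1{p_2}\Bigr)=\kappa\Bigl(1-\frac1{r_2}\Bigr)=\frac{\kappa}{r_2'}.
\]
It uses \eqref{eq:kappa-definition} and \(1/r_2=1/q-1/p_2\). Since \(r_2=p_2q/(p_2-q)>q>1\), we have \(r_2'\in(1,\infty)\), so \(\kappa/p_1'=1+\kappa/r_2'>1\). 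The chain rule then gives the first line of \eqref{eq:kappa-profile-densities}. Interchanging the indices gives the second line, with \(r_1'\) in place of \(r_2'\).

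The only point needing care is confirming that every power exceeds one. This is what makes composition with \(s^\alpha\) preserve absolute continuity, and it is why no singular or atomic part can appear. In each case this reduces to \(q>1\) and \(r_i>1\), both available in the present interior range.
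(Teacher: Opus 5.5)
Your proof is correct and follows the same route as the paper. You write \(\Phi_{i,J}^\gamma=V_i^{\gamma/p_i'}\), differentiate using \(V_i'=v_i^{1-p_i'}\), and convert the exponent via \eqref{eq:classical-ri-identity} and the identity \(1/p_1'-1/\kappa=1/r_2'\) (equivalent to your \(\kappa/p_1'-1=\kappa/r_2'\)); your checks of finiteness of \(V_i\), absolute continuity, and the vanishing left-boundary atom just spell out what the paper states in the sentence following the lemma.
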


\begin{proof}
From \(\Phi_{i,J}^{r_i}=V_i^{r_i/p_i'}\) and
\(V_i'=v_i^{1-p_i'}\), differentiation together with
\eqref{eq:classical-ri-identity} gives \eqref{eq:ri-profile-density}.
Moreover
\[
 \frac1{p_1'}-\frac1\kappa=\frac1{r_2'},
 \qquad
 \frac1{p_2'}-\frac1\kappa=\frac1{r_1'},
\]
which gives \eqref{eq:kappa-profile-densities} by the same calculation.
\end{proof}

Because \(p_i>1\), \(\Phi_{i,J}(c)=0\), and the measures in the lemma are
absolutely continuous.  Hence \([x,d]\) and \((x,d]\) give the same
integrals in every characteristic appearing below.

\begin{proposition}[Identification of the seven structures]
\label{prop:seven-classical-identifications}
Whenever the corresponding quantities are defined,
\begin{equation}
 \begin{aligned}
 \mathfrak B_{1,J}
 &=A_1,
 \\
 \mathfrak B_{2,J}
 &=\left(\frac{r_2}{p_2'}\right)^{1/r_2}A_2,
 &\qquad
 \mathfrak B_{3,J}
 &=\left(\frac{r_1}{p_1'}\right)^{1/r_1}A_3,
 \\
 \mathfrak B_{4,J}
 &=\left(\frac{r_2}{p_2'}\right)^{1/r_2}A_4,
 &
 \mathfrak B_{5,J}
 &=\left(\frac{r_1}{p_1'}\right)^{1/r_1}A_5,
 \\
 \mathfrak B_{6,J}
 &=\left(\frac{r_2}{p_2'}\right)^{1/r_2}
   \left(\frac{\kappa}{p_1'}\right)^{1/\kappa}A_6,
 &
 \mathfrak B_{7,J}
 &=\left(\frac{r_1}{p_1'}\right)^{1/r_1}
   \left(\frac{\kappa}{p_2'}\right)^{1/\kappa}A_7.
 \end{aligned}
 \label{eq:seven-classical-identifications}
\end{equation}
\end{proposition}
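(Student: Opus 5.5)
The plan is a direct substitution argument: in the interior setting the profiles are absolutely continuous, so each Stieltjes characteristic becomes a Lebesgue integral whose density is given by Lemma~\ref{lem:classical-profile-densities}. I will handle the identities in \eqref{eq:seven-classical-identifications} in the order \(\mathfrak B_1\), then \(\mathfrak B_2\)--\(\mathfrak B_5\), then \(\mathfrak B_6,\mathfrak B_7\). Throughout, \(U_J=U\) and \(\Phi_{i,J}=V_i^{1/p_i'}\) by \eqref{eq:classical-profile-identification}. Since \(p_i>1\), the profiles vanish at \(c\) and there are no atoms, so closed and strict tails may be interchanged freely.

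\textbf{The convex supremum.} Substituting the profiles gives the integrand of \eqref{eq:classical-A1} directly. The only point is that \(\mathfrak B_{1,J}\) takes the supremum over the closed interval \(J\), while \(A_1\) uses \(c<x<d\). At \(x=c\) the profile factor \(V_i(c)=0\) vanishes, and at \(x=d\) we have \(U(d)=0\). By continuity of \(U\) and \(V_i\), the endpoint values are \(0\), so they do not affect the supremum. Hence \(\mathfrak B_{1,J}=A_1\).

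\textbf{The single-tail characteristics \(\mathfrak B_2,\dots,\mathfrak B_5\).} I insert \eqref{eq:ri-profile-density} into the tail integral. This gives
\[
\int_{(x,d]}U^{r_2/q}\,\dd\mu_{2,J}=\frac{r_2}{p_2'}\int_x^d U(t)^{r_2/q}V_2(t)^{r_2/q'}v_2(t)^{1-p_2'}\,\dd t .
\]
Raising to the power \(1/r_2\) produces the factor \((r_2/p_2')^{1/r_2}\). Multiplying by \(\Phi_{1,J}=V_1^{1/p_1'}\) and taking the supremum then yields \(\mathfrak B_{2,J}\) and \(\mathfrak B_{4,J}\). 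For \(\mathfrak B_{4,J}\), the closed tail \([x,d]\) equals the strict one because the measure has no atoms. The endpoints \(x=c,d\) contribute zero, as in the convex case. The statements for \(\mathfrak B_3\) and \(\mathfrak B_5\) follow by interchanging the indices.

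\textbf{The deep characteristics \(\mathfrak B_6,\mathfrak B_7\).} For these I apply both densities.
\begin{enumerate}
\item The inner tail becomes \((r_2/p_2')\) times the classical inner integral, and it is raised to the power \(\kappa/r_2\).
\item The outer differential \(\dd(\Phi_{1,J}^\kappa)\) is replaced by \((\kappa/p_1')V_1^{\kappa/r_2'}v_1^{1-p_1'}\,\dd x\) using \eqref{eq:kappa-profile-densities}. There is no left atom because \(\Phi_{1,J}(c)=0\).
\end{enumerate}
Taking the \(1/\kappa\) power gives the constant \((r_2/p_2')^{1/r_2}(\kappa/p_1')^{1/\kappa}\) in front of \(A_6\). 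The case of \(\mathfrak B_7\) is symmetric.

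The main obstacle is keeping the extended-value conventions consistent. Both sides can be \(+\infty\), and the identities must hold in that sense. I will rely on Tonelli for positive integrands, which handles the infinite values without further argument. I also need the exponent identities \(r_i/p_i'-1=r_i/q'\) and \(1/p_1'-1/\kappa=1/r_2'\), but these are already recorded in \eqref{eq:classical-ri-identity} and in the proof of Lemma~\ref{lem:classical-profile-densities}. With those in hand, the remaining work is routine bookkeeping.
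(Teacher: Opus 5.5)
Your proposal is correct and follows essentially the paper's own argument: substitute the densities of Lemma~\ref{lem:classical-profile-densities} into each Stieltjes characteristic, use the absence of atoms to identify closed and strict tails, and collect the constants $(r_i/p_i')^{1/r_i}$ and $(\kappa/p_j')^{1/\kappa}$. Your closed-versus-open supremum check at $x=c,d$ is a detail the paper leaves implicit; there one factor already vanishes under $0\cdot\infty=0$, so continuity is not needed, and the step you attribute to Tonelli is really just integrating a nonnegative function against a measure with a density.
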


\begin{proof}
The first identity follows from \eqref{eq:classical-profile-identification}.
For example, inserting \eqref{eq:ri-profile-density} with \(i=2\) into the
strict mixed characteristic gives
\[
 \mathfrak B_{2,J}
 =\left(\frac{r_2}{p_2'}\right)^{1/r_2}A_2.
\]
The identities for \(\mathfrak B_{3,J},\mathfrak B_{4,J}\), and
\(\mathfrak B_{5,J}\) follow identically (with the two inputs interchanged
where appropriate).  In the deep region, use
\eqref{eq:ri-profile-density} in the inner integral and
\eqref{eq:kappa-profile-densities} in the outer Stieltjes integral.
The two
constants are then raised to the powers \(1/r_i\) and \(1/\kappa\), giving
the last two identities.
\end{proof}

Together with Lemma~\ref{lem:frozen-tail-decomposition}, the proposition
also gives the complete mixed criteria
\begin{equation}
 \widehat{\mathfrak B}_{2,J}\asymp_{p_2,q}A_1+A_2,
 \qquad
 \widehat{\mathfrak B}_{3,J}\asymp_{p_1,q}A_1+A_3.
 \label{eq:complete-classical-mixed-criteria}
\end{equation}
Thus the strict term alone does not display the frozen contribution. An
equivalent one-expression formulation may absorb both pieces by integration
by parts.

\subsection{Recovery of the classical regime theorem}
\label{subsec:recovery-classical-regimes}

\begin{theorem}[Recovery of the classical \(A_1,\ldots,A_7\) conditions]
\label{thm:recovery-classical-A1-A7}
Under the assumptions above,
\begin{equation}
 \begin{array}{c|c}
 \text{parameter region} & \text{optimal compact constant} \\ \hline
 p_1,p_2\le q
   & C_J\asymp_{p_1,p_2,q}A_1 \\[1mm]
 p_1\le q<p_2
   & C_J\asymp_{p_1,p_2,q}A_1+A_2 \\[1mm]
 p_2\le q<p_1
   & C_J\asymp_{p_1,p_2,q}A_1+A_3 \\[1mm]
 q<\min\{p_1,p_2\},\ q^{-1}\le p_1^{-1}+p_2^{-1}
   & C_J\asymp_{p_1,p_2,q}A_4+A_5 \\[1mm]
 q<\min\{p_1,p_2\},\ q^{-1}>p_1^{-1}+p_2^{-1}
   & C_J\asymp_{p_1,p_2,q}A_6+A_7.
 \end{array}
 \label{eq:classical-regime-recovery}
\end{equation}
\end{theorem}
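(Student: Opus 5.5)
The plan is to derive the table directly from Theorem~\ref{thm:compact-bilinear} together with Proposition~\ref{prop:seven-classical-identifications}, without reopening any of the analytic estimates.

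\emph{Step 1: the regularisation is harmless here.} Under the interior assumptions $0<u,v_i<\infty$ a.e., the Hölder condition $v_i^{1-p_i'}\in L^1(J)$ that is implicit in the classical statement, and integrability of $u$ on $J$, I would argue that $\sup_m\mathfrak A_J^{(m)}$ coincides with the unregularised $\mathfrak A_J$ formed from $u,v_1,v_2$ themselves.

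\begin{itemize}
\item As $m\to\infty$, monotone convergence gives $U^{(m)}\uparrow U$ pointwise.
\item Also $(v_i+1/m)^{1-p_i'}\uparrow v_i^{1-p_i'}$, since $1-p_i'<0$, so $V_i^{(m)}\uparrow V_i$ pointwise.
\item Each classical expression $A_k$ is assembled from these cumulative functions and their densities through nonnegative monotone operations: suprema, integrals, and positive powers.
\end{itemize}

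The cleanest route is to compare with the $A_k$ written through Lemma~\ref{lem:classical-profile-densities}. Every integrand there is increasing in $m$, so monotone convergence applied inside each supremum and integral gives $\sup_m A_k^{(m)}=A_k$. The sums $A_4+A_5$ and $A_6+A_7$ are then handled by the same monotone limit. Alternatively, and more simply, the paper already says the superscript op may be dropped for finite regular weights. Where $v_i$ is not bounded below, though, I would rely on the monotone argument just described.

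\emph{Step 2: translate row by row.}

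\begin{itemize}
\item In the upper region, Theorem~\ref{thm:compact-bilinear} gives $C_J\asymp\mathfrak B_{1,J}=A_1$.
\item In the mixed regions it gives $C_J\asymp\widehat{\mathfrak B}_{2,J}$. Lemma~\ref{lem:frozen-tail-decomposition} and \eqref{eq:complete-classical-mixed-criteria} then yield $A_1+A_2$, and symmetrically $A_1+A_3$.
\item In the shallow and deep lower triangles, Proposition~\ref{prop:seven-classical-identifications} converts $\mathfrak B_{4}+\mathfrak B_5$ and $\mathfrak B_6+\mathfrak B_7$ into $A_4+A_5$ and $A_6+A_7$. The multiplying constants are positive and depend only on $p_1,p_2,q$.
\end{itemize}

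At this stage the closed/strict distinction is irrelevant. Indeed, since $p_i>1$, the Stieltjes measures are absolutely continuous with $\Phi_{i,J}(c)=0$, so there are no atoms, as already noted.

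\emph{Main obstacle.} The only delicate point is Step~1 in degenerate situations, namely when some $A_k=\infty$, or when $V_i$ is infinite because $v_i^{1-p_i'}\notin L^1$. Here I would check that the monotone limits still produce $+\infty$ on both sides consistently, which monotone convergence handles in the extended sense. I would also confirm that the identities in Proposition~\ref{prop:seven-classical-identifications} remain valid as identities in $[0,\infty]$. All remaining steps are bookkeeping of constants.
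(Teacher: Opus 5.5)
Your proposal is correct and takes the paper's route: the paper's proof is exactly your Step~2, applying Theorem~\ref{thm:compact-bilinear}, Proposition~\ref{prop:seven-classical-identifications} and \eqref{eq:complete-classical-mixed-criteria} row by row. Your Step~1 makes explicit a point the paper leaves implicit. For weights satisfying only $0<u,v_i<\infty$ a.e. (so $v_i$ need not be bounded below and $u$ may be unbounded), the operational supremum $\sup_m\mathfrak A_J^{(m)}$ reduces to the classical expressions by monotone convergence in the density forms of Lemma~\ref{lem:classical-profile-densities}, evaluated at the finite regularised levels.
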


\begin{proof}
Apply Theorem~\ref{thm:compact-bilinear} and
Proposition~\ref{prop:seven-classical-identifications}. In the two mixed
regions use \eqref{eq:complete-classical-mixed-criteria}.  This gives all
five rows of \eqref{eq:classical-regime-recovery}.
\end{proof}

Some classical formulations retain an additional \(A_1\) in the shallow
lower triangle. Here \(A_1=\mathfrak B_{1,J}\lesssim C_J\lesssim A_4+A_5\)
by Theorem~\ref{thm:compact-bilinear} and
Proposition~\ref{prop:seven-classical-identifications}. The analogous
deep-regime auxiliary terms are controlled by \(A_6+A_7\). Thus the displayed
reduced forms are equivalent to the longer formulations. Compare also
\cite[Propositions~3.2--3.3]{Krepela2017}.

\subsection{Relation with K\v{r}epela's notation}
\label{subsec:krepela-cross-reference}

K\v{r}epela \cite{Krepela2017} labels his conditions by equation number rather
than by the sequence \(A_1,\ldots,A_7\).  At regime level, his
\(A^{(2.2)}\), \(A^{(2.3)}\), \(A^{(2.4)}+A^{(2.5)}\), and
\(A^{(2.6)}+A^{(2.7)}\) correspond respectively to the upper, mixed, shallow,
and deep structures above (with input interchange in the symmetric mixed
case).  In particular,
\[
 A^{(2.3)}\asymp A^{(2.2)}+A^{(2.5)}
\]
is the same structural completion as \(A_1+A_2\).  A fuller equation-number
cross-reference is given below.

\subsubsection*{Equation-number correspondence}

For ease of comparison with the 2017 K\v{r}epela paper discussed
above, the regime-level correspondence between the canonical
\(A_1,\ldots,A_7\) notation and his equation-number notation is
\[
 \begin{array}{c|c|c}
 \text{parameter region}
 & \text{canonical notation}
 & \text{K\v{r}epela notation}
 \\ \hline
 p_1,p_2\le q
 & A_1
 & A^{(2.2)}
 \\[1mm]
 p_1\le q<p_2
 & A_1+A_2
 & A^{(2.3)}
 \\[1mm]
 p_2\le q<p_1
 & A_1+A_3
 & \text{index-swapped }A^{(2.3)}
 \\[1mm]
 q<\min\{p_1,p_2\},\ q^{-1}\le p_1^{-1}+p_2^{-1}
 & A_4+A_5
 & A^{(2.4)}+A^{(2.5)}
 \\[1mm]
 q<\min\{p_1,p_2\},\ q^{-1}>p_1^{-1}+p_2^{-1}
 & A_6+A_7
 & A^{(2.6)}+A^{(2.7)}.
 \end{array}
\]
In the first mixed region the integration-by-parts comparison
\[
 A^{(2.3)}\asymp A^{(2.2)}+A^{(2.5)}
\]
corresponds to the complete condition \(A_1+A_2\).  The identification of
the two individual shallow terms depends on the order in which the inputs
are frozen, whereas their sum is invariant under interchange of input
indices.

\subsection{Interpretation}
\label{subsec:interpretation-classical-comparison}

Theorem~\ref{thm:recovery-classical-A1-A7} shows that no new boundedness
criterion is introduced in the regular interior range.  The extension is
structural. Absolutely continuous densities are replaced by local
Lebesgue--Stieltjes measures, the \(p_i=1\) left-boundary atom is retained,
closed Hardy and strict Copson pieces are separated at common atoms,
\(p_i=\infty\) is treated through exact integration functionals, extended
weights are handled operationally, and arbitrary intervals are reached by
directed compact restrictions.  Thus the endpoint-safe theorem agrees with
the classical \(A_1,\ldots,A_7\) characterisation wherever the latter
applies while remaining valid beyond that setting.

\section{Endpoint and structural examples}
\label{sec:examples}

The following examples isolate four structural features of the endpoint-safe
formulation. These are the left-boundary atom, completion of a strict mixed tail by its
frozen term, the closed Hardy/strict Copson allocation of a common atom, and
operational blow-up for zero-cost inputs.

\subsection{A non-attained left endpoint}
\label{subsec:nonattained-endpoint-example}

Take
\[
 I=(0,1),\qquad p_1=p_2=1,\qquad q=\frac12,
 \qquad u=v_1=v_2=1.
\]
For \(F=H_If\) and \(G=H_Ig\), Cauchy--Schwarz and Tonelli give
\[
 \begin{aligned}
 \|FG\|_{L^{1/2}(0,1)}
 &=\left(\int_0^1\sqrt{F(x)G(x)}\,\dd x\right)^2\\
 &\le
 \left(\int_0^1F(x)\,\dd x\right)
 \left(\int_0^1G(x)\,\dd x\right)\\
 &=
 \left(\int_0^1(1-t)f(t)\,\dd t\right)
 \left(\int_0^1(1-t)g(t)\,\dd t\right)\\
 &\le \|f\|_{L^1(0,1)}\|g\|_{L^1(0,1)}.
 \end{aligned}
\]
Thus \(C_I\le1\). For
\(f_n=g_n=n\mathbf 1_{(0,1/n)}\), both input norms equal one and
\[
 \|H_If_n\,H_Ig_n\|_{L^{1/2}(0,1)}
 =\left(1-\frac1{2n}\right)^2\longrightarrow1,
\]
so
\begin{equation}
 C_I=1.
 \label{eq:open-endpoint-exact-norm}
\end{equation}

A naive global \(p_i=1\) profile on \((0,1)\) is constant. In fact, \(\Phi_i(x)=1\). Its Stieltjes measure inside the open interval is therefore
zero and cannot record the initial profile mass. On a compact restriction
\(J=[c,d]\Subset(0,1)\), however,
\[
 \Phi_{1,J}=\Phi_{2,J}=1,
 \qquad
 r_1=r_2=1,
 \qquad
 \mu_{1,J}=\mu_{2,J}=\delta_c,
 \qquad
 U_J(x)=d-x.
\]
Since \(1/q=1/p_1+1/p_2\), the shallow lower-triangle terms become
\begin{equation}
 \mathfrak B_{4,J}=(d-c)^2,
 \qquad
 \mathfrak B_{5,J}=0.
 \label{eq:open-endpoint-B4}
\end{equation}
The strict term vanishes because the only profile mass is the atom at \(c\),
which is excluded from every tail \((x,d]\). The same argument as above,
with interval length \(d-c\), yields
\[
 C_J=(d-c)^2,
 \qquad
 \sup_{J\Subset(0,1)}C_J=1.
\]
Thus the local left-boundary atom exactly restores the mass missed by an
interior Stieltjes derivative at a non-attained endpoint.

\subsection{The strict mixed tail is not a complete condition}
\label{subsec:mixed-strict-tail-example}

Let \(J=[0,1]\), \(0<\alpha<1\), \(p_1=q=1\), \(p_2=2\), and \(u=1\).
Choose
\begin{equation}
 v_1(t)=
 \begin{cases}
 \infty,&0\le t<\alpha,\\
 1,&\alpha\le t\le1,
 \end{cases}
 \qquad
 v_2(t)=
 \begin{cases}
 1,&0\le t\le\alpha,\\
 \infty,&\alpha<t\le1.
 \end{cases}
 \label{eq:separated-support-weights}
\end{equation}
Finite-norm \(f\) and \(g\) are therefore supported, respectively, to the
right and left of \(\alpha\). The associated profiles and measure are
\[
 \Phi_{1,J}(x)=
 \begin{cases}
 0,&x<\alpha,\\
 1,&x\ge\alpha,
 \end{cases}
 \qquad
 \Phi_{2,J}(x)=\min\{x,\alpha\}^{1/2},
 \qquad
 \mu_{2,J}=\mathbf 1_{(0,\alpha)}(t)\,\dd t.
\]
Consequently the strict mixed term vanishes.
If \(x<\alpha\), then
\(\Phi_{1,J}(x)=0\), whereas if \(x\ge\alpha\), then
\(\mu_{2,J}((x,1])=0\). Hence
\begin{equation}
 \mathfrak B_{2,J}=0,
 \qquad
 \mathfrak B_{1,J}=(1-\alpha)\sqrt{\alpha},
 \qquad
 \widehat{\mathfrak B}_{2,J}=(1-\alpha)\sqrt{\alpha}.
 \label{eq:separated-complete-mixed}
\end{equation}

The operator norm has the same positive value. Put
\(G=\int_0^\alpha g(t)\,\dd t\). Tonelli gives
\begin{equation}
 \|H_Jf\,H_Jg\|_{L^1(0,1)}
 =G\int_\alpha^1(1-t)f(t)\,\dd t
 \le (1-\alpha)\sqrt{\alpha}\,
 \|f\|_{L^1(v_1;J)}\|g\|_{L^2(v_2;J)}.
 \label{eq:separated-output-calculation}
\end{equation}
For
\[
 g_\alpha=\alpha^{-1/2}\mathbf 1_{(0,\alpha)},
 \qquad
 f_n=n\mathbf 1_{(\alpha,\alpha+1/n)},
\]
the input norms equal one and the output tends to
\((1-\alpha)\sqrt\alpha\). Therefore
\begin{equation}
 C_J=(1-\alpha)\sqrt\alpha>0
 \qquad\text{while}\qquad
 \mathfrak B_{2,J}=0.
 \label{eq:separated-exact-norm}
\end{equation}
Thus the strict tail alone is incomplete. The missing contribution is exactly
the frozen term \(\mathfrak B_{1,J}\).

\subsection{Allocation of a common atom}
\label{subsec:common-atom-example}

The closed/strict convention is already forced by a single common atom. In
the measure operator of \eqref{eq:exact-hardy-copson-split}, take
\(\mu_1=\mu_2=\delta_\xi\) for some \(\xi\in(c,d)\) and normalise
\(U(\xi)=1\). At the only point seen by the output measure,
\[
 \mathcal T_Hh(\xi)
 =U(\xi)\int_{[c,\xi]}h\,\dd\delta_\xi=h(\xi),
 \qquad
 \mathcal T_Ch(\xi)
 =\int_{(\xi,d]}Uh\,\dd\delta_\xi=0.
\]
Hence the common atom belongs exactly once, to the closed Hardy component. If
the Copson tail were replaced by \([\xi,d]\), the same atom would be counted
twice.
If both components were strict, it would be lost. The asymmetric
allocation \([c,x]\) versus \((x,d]\) is therefore structural rather than
notational.

For comparison, the doubly infinite branch requires no separate mechanism.
When \(J=[0,1]\) and \(u=v_1=v_2=1\), the choice \(f=g=1\) gives directly
\(C_J=(2q+1)^{-1/q}=\mathfrak B_{\infty,\infty;J}\).

\subsection{A zero-cost input and operational blow-up}
\label{subsec:zero-cost-input-example}

Let \(J=[0,1]\), \(p_1=p_2=q=1\), \(u=v_2=1\), and
\[
 v_1(t)=
 \begin{cases}
 0,&0<t<1/2,\\
 1,&1/2\le t\le1.
 \end{cases}
\]
With \(f=\mathbf 1_{(0,1/2)}\) and \(g=1\),
\[
 \|f\|_{L^1(v_1;J)}=0,
 \qquad
 \|g\|_{L^1(v_2;J)}=1,
\]
whereas
\[
 H_Jf(x)=
 \begin{cases}
 x,&0\le x\le1/2,\\
 1/2,&1/2<x\le1,
 \end{cases}
 \qquad
 H_Jg(x)=x.
\]
Therefore
\begin{equation}
 \|H_Jf\,H_Jg\|_{L^1(0,1)}
 =\int_0^{1/2}x^2\,\dd x
  +\frac12\int_{1/2}^1x\,\dd x
 =\frac{11}{48}>0,
 \label{eq:zero-cost-positive-output}
\end{equation}
so no finite constant is possible, and hence
\begin{equation}
 C_J=\infty.
 \label{eq:zero-cost-infinite-norm}
\end{equation}
Under the simultaneous regularisation of Section~\ref{sec:weighted-spaces},
the same test gives
\[
 C_{J,m}\ge \frac{11m}{24(1+m^{-1})}\longrightarrow\infty,
 \qquad
 \mathfrak A_J^{\mathrm{op}}=\infty.
\]
Thus operational regularisation detects precisely the blow-up forced by a
zero-cost input producing nonzero output.

\section{Discussion and concluding remarks}
\label{sec:discussion}

The results above give an endpoint-safe characterisation of the weighted
bilinear Hardy inequality on an arbitrary real interval for
\(0<q<\infty\) and \(1\le p_1,p_2\le\infty\).  The global statement is
obtained by restarting the operator on compact restrictions
\(J=[c,d]\Subset I\), forming the complete regularised local
characteristic, and then taking the directed supremum.  Thus the central
global object is
\[
 C_I\asymp_{p_1,p_2,q}
 \sup_{J\Subset I}\mathfrak A_J^{\mathrm{op}}
 =
 \sup_{J\Subset I}\sup_{m\ge1}\mathfrak A_J^{(m)},
\]
with equality in the doubly infinite case.  This local-to-global order is
part of the formulation rather than an auxiliary device.
Endpoint
completion and extended-weight regularisation are performed before the
directed limit.

\subsection{Nature of the contribution}
\label{subsec:nature-contribution}

Classical interior boundedness criteria for weighted bilinear Hardy
inequalities are already available by discretisation, anti-discretisation
and iterative reduction methods.
See
\cite{AguilarCanestroOrtegaRamirez2012,Krepela2017,
KanjilalPerssonShambilova2019,StepanovShambilova2019,
GogatishviliJainKanjilal2022}. In particular,
\cite{KanjilalPerssonShambilova2019} states the five-region
\(A_1,\ldots,A_7\) theorem for \(0<q<\infty\) and
\(1<p_1,p_2<\infty\), while \cite{StepanovShambilova2019} treats a direct
same-direction Oinarov-kernel product that contains the ordinary Hardy
product for unit kernels. Endpoint input exponents and \(q<1\) also occur in
earlier multidimensional theory \cite{BilgicliMustafayevUnver2020}, while
recent adjacent developments treat metric-measure bilinear inequalities
\cite{RuzhanskyShriwastawaVerma2024}, weak-type targets
\cite{GarciaGarciaOrtegaSalvador2023}, and the mixed product \(Hf\,H^*g\)
\cite{MohantyJainJain2025}.
Section~\ref{sec:classical-comparison} verifies that, when
\(1<p_1,p_2,q<\infty\) and the weights are regular and finite, the present
characteristics reduce to the classical quantities \(A_1,\ldots,A_7\).
The unit-kernel comparison with Temirkhanova, Zhangabergenova and Oinarov
further confirms that the classical interior criteria and the associated
reduction mechanisms are not separate priority claims of the present paper.
Accordingly, no claim is made for a first bilinear boundedness
characterisation, a first exponent range, the freezing principle, or a first
direct reduction. The contribution claimed here is restricted to the
combination of directed compact restart, explicit endpoint and atomic
bookkeeping, the closed-Hardy/strict-Copson allocation, one direct
finite-output architecture, and the present operational treatment of
extended-valued weights on arbitrary real intervals. To the best of our
knowledge, we are not aware of an earlier continuous same-direction strong-type
theorem assembling all of these features in one formulation.

\subsection{Endpoint and atomic architecture}
\label{subsec:role-compact-restrictions}

The compact-restriction mechanism is essential even on bounded open
intervals.  If the left endpoint of \(I\) is not attained, a global
\(p_i=1\) profile may have no Stieltjes variation inside \(I\) although
the corresponding integration functional is non-trivial.  Restarting on
\(J=[c,d]\Subset I\) makes \(c\) an attained boundary and records the
missing mass through the atom
\(\dd(\Phi_{i,J}^{r_i})(\{c\})=\Phi_{i,J}(c)^{r_i}\).  The global norm is
then recovered exactly from \(C_I=\sup_{J\Subset I}C_J\).
See
Section~\ref{sec:global-theorem} and
Example~\ref{subsec:nonattained-endpoint-example}.

The same endpoint bookkeeping determines the lower-triangle geometry.
The exact decomposition in Section~\ref{sec:compact-bilinear} assigns the
diagonal atom to the closed Hardy part and excludes it from the strict
Copson part.  Consequently the characteristics
\(\mathfrak B_{4,J},\mathfrak B_{6,J}\) use closed tails, whereas
\(\mathfrak B_{5,J},\mathfrak B_{7,J}\) use strict tails.  This
difference disappears for atomless profile measures but is indispensable
at endpoints.
Example~\ref{subsec:common-atom-example} shows the failure
caused by an incorrect allocation.  Likewise, the mixed criterion must
retain both the frozen and strict-tail components,
\(\widehat{\mathfrak B}_{2,J}\asymp
\mathfrak B_{1,J}+\mathfrak B_{2,J}\). The separated-support example
\ref{subsec:mixed-strict-tail-example} shows that the strict term alone
may vanish while the operator norm remains positive.

\subsection{Proof architecture and extended weights}
\label{subsec:directness-proof}

The bilinear proof is direct in the sense that it avoids the
usual discretising-sequence and anti-discretisation machinery.  Exact
freezing reduces the upper and mixed regimes to compact linear Hardy
inequalities, while the lower triangle is treated by power lifting,
positive duality and the measure-valued Hardy--Copson theorem.  When
\(q<1\), no duality is taken in the original quasi-Banach output space.
After lifting, the relevant exponent is
\(r_i/q=p_i/(p_i-q)>1\), so duality occurs only in an auxiliary Banach
space.  The auxiliary measure theorem itself is obtained from finite
chains, exact strict-Copson reversal and atom-preserving partition
approximation, with the full technical arguments recorded in the appendices.

Related power-lifting and duality arguments, together with Stieltjes-type
differential reductions in the regular interior setting, also occur in
\cite{TemirkhanovaZhangabergenovaOinarov2026}. These mechanisms are not
claimed here as separate innovations.

For weights taking values in \([0,\infty]\), Section~\ref{sec:weighted-spaces}
uses the simultaneous regularisation
\[
 u_m=u\wedge m,
 \qquad
 v_{i,m}=v_i+\frac1m,
\]
for which \(C_{J,m}\uparrow C_J\) and
\(\mathfrak A_J^{\mathrm{op}}=\sup_m\mathfrak A_J^{(m)}\).  This
operational formulation handles zero-cost and forbidden input regions
without assigning a formal Stieltjes measure to an extended-valued
unregularised profile.  In particular, the proof needs no assertion of
weak convergence of the regularised profile measures.
Only the uniform
local equivalence and monotone convergence of the operator constants are
used.  The detailed limiting arguments are collected in
\ExtendedRegularisationProofLocation.

\subsection{Scope and further directions}
\label{subsec:scope-limitations}

The theorem concerns the positive same-direction product
\((f,g)\mapsto H_If\,H_Ig\) with finite output exponent \(q\).  It does
not cover the single integral \(\int fg\), Hardy--Steklov operators with
variable limits, kernel-weighted or mixed forward--backward products,
weak-type or Lorentz targets, the endpoint \(q=\infty\), compactness,
sharp equivalence constants, genuinely multilinear products, or
cancellation effects for signed kernels. Weighted weak-type bilinear Hardy
inequalities have been studied in \cite{GarciaGarciaOrtegaSalvador2023},
and the mixed orientation \(Hf\,H^*g\) has been treated for interior
exponents in \cite{MohantyJainJain2025}.
These are adjacent problems rather
than cases of the theorem proved here. Positivity immediately extends the
boundedness assertion from nonnegative inputs to real or complex inputs by
absolute values, but the argument does not exploit cancellation. Except in
the doubly infinite case and special examples, the optimal constant is
determined only up to factors depending on the exponents. The auxiliary
measure-valued theorem is needed only for \(1<P,Q<\infty\), precisely the
range created by the lower-triangle power lifting, and should be viewed
against the broader Hardy--Copson literature such as
\cite{GogatishviliPickUnver2022}.

Natural extensions of the present architecture include compactness criteria
obtained by supplementing the local characteristics with vanishing
conditions at the ends of an exhaustion. Further directions include
multilinear products of three or more Hardy factors and endpoint-safe
arbitrary-interval versions of mixed-orientation and weak-type problems
already present in the literature. Other possibilities are metric-measure
analogues beyond the current one-dimensional compact-restart setting
\cite{RuzhanskyShriwastawaVerma2024} and separate theories for
\(q=\infty\) and sharp constants. These problems require additional
arguments and are not consequences of the present boundedness theorem.

\subsection{Conclusion}
\label{subsec:conclusion}

The main structural conclusion is that one may treat
\(0<q<\infty\) and \(1\le p_1,p_2\le\infty\) on arbitrary intervals
within a common endpoint-safe architecture, provided that the problem is
formulated through directed compact restrictions.  On each compact
interval, the exact profiles and their Stieltjes completions encode the
input integration functionals and endpoint masses, the left-boundary atom
completes the \(p_i=1\) case, the
Hardy--Copson splitting allocates common atoms without loss or double
counting, and simultaneous regularisation supplies the operational
meaning of extended weights.

In the regular interior range these conditions agree with the classical
\(A_1,\ldots,A_7\) characterisation.  The same local architecture then
extends the formulation to endpoint exponents, non-attained interval
boundaries and degenerate weights while preserving the exact global
identity \(C_I=\sup_{J\Subset I}C_J\).

\appendix

\section{The complete finite-chain Hardy proof}
\label{app:finite-chain-hardy}

This appendix proves the finite-chain Hardy estimate used in
\HardyCopsonSectionReference. The result is uniform in the length of
the chain and in the underlying masses. It provides all four
implications required for the subsequent passage to finite Borel
measures.

\subsection{Notation and a power-increment estimate}
\label{subsec:appA-notation}

Let
\[
 1<P,Q<\infty,
 \qquad
 P'=\frac{P}{P-1}.
\]
Consider a finite ordered chain
\[
 x_1<x_2<\cdots<x_n
\]
carrying input and output masses
\[
 \mu_i>0,
 \qquad
 \nu_i\ge0,
 \qquad
 1\le i\le n.
\]
Zero input masses may be removed by the usual merging procedure. For a
noninitial zero-mass coordinate, merge the corresponding output mass with
the preceding output coordinate.
An initial zero-mass coordinate may be
discarded because its Hardy prefix is zero. Hence the assumption
\(\mu_i>0\) entails no loss of generality.

Define
\begin{equation}
 M_k:=\sum_{i=1}^k\mu_i,
 \qquad
 N_k:=\sum_{j=k}^n\nu_j,
 \qquad
 1\le k\le n,
 \label{eq:appA-cumulative-masses}
\end{equation}
with
\begin{equation}
 M_0:=0,
 \qquad
 N_{n+1}:=0,
 \label{eq:appA-boundary-cumulative-masses}
\end{equation}
and put
\begin{equation}
 V_k:=M_k^{1/P'},
 \qquad
 V_0:=0.
 \label{eq:appA-Vk}
\end{equation}

For a nonnegative sequence \(a=(a_i)_{i=1}^n\), let
\begin{equation}
 F_k:=(H_na)_k
 :=
 \sum_{i=1}^k a_i\mu_i,
 \qquad
 1\le k\le n.
 \label{eq:appA-finite-Hardy}
\end{equation}

We first record the elementary increment estimate used repeatedly in
the proof.

\begin{lemma}[Power-increment estimate]
\label{lem:appA-power-increment}
Let
\[
 0=s_0\le s_1\le\cdots\le s_m<\infty
\]
and let \(\gamma>0\). A summand with zero increment is understood to be
zero. In particular, when \(0<\gamma<1\), the formally indeterminate
product
\[
 0^{\gamma-1}(0-0)
\]
is assigned the value zero. Then
\begin{equation}
 c_\gamma s_m^\gamma
 \le
 \sum_{i=1}^m
 s_i^{\gamma-1}(s_i-s_{i-1})
 \le
 C_\gamma s_m^\gamma,
 \label{eq:appA-increasing-power-increment}
\end{equation}
where
\begin{equation}
 c_\gamma:=\min\{1,\gamma^{-1}\},
 \qquad
 C_\gamma:=\max\{1,\gamma^{-1}\}.
 \label{eq:appA-power-constants}
\end{equation}

If
\[
 t_1\ge t_2\ge\cdots\ge t_{m+1}=0,
\]
with the same zero-increment convention, then
\begin{equation}
 c_\gamma t_1^\gamma
 \le
 \sum_{i=1}^m
 t_i^{\gamma-1}(t_i-t_{i+1})
 \le
 C_\gamma t_1^\gamma.
 \label{eq:appA-decreasing-power-increment}
\end{equation}
\end{lemma}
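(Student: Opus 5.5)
The plan is a termwise comparison of each summand \(s_i^{\gamma-1}(s_i-s_{i-1})\) with the increment \(s_i^\gamma-s_{i-1}^\gamma\), followed by telescoping. Summands with \(s_i=s_{i-1}\) are zero by the stated convention, and so is the corresponding power increment. So fix \(i\) with \(0\le s_{i-1}<s_i\), write \(a=s_{i-1}\), \(b=s_i\), and compare \(b^{\gamma-1}(b-a)\) with \(b^\gamma-a^\gamma=\int_a^b\gamma t^{\gamma-1}\,dt\).

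If \(\gamma\ge1\), then \(t^{\gamma-1}\le b^{\gamma-1}\) on \([a,b]\). This gives \(b^\gamma-a^\gamma\le\gamma b^{\gamma-1}(b-a)\), that is, \(\gamma^{-1}(b^\gamma-a^\gamma)\le b^{\gamma-1}(b-a)\). In the other direction, \(b^{\gamma-1}(b-a)=b^\gamma-ab^{\gamma-1}\le b^\gamma-a^\gamma\), because \(a^\gamma=a\,a^{\gamma-1}\le a\,b^{\gamma-1}\).

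If \(0<\gamma<1\), then \(t^{\gamma-1}\ge b^{\gamma-1}\) on \((a,b]\). This gives \(b^\gamma-a^\gamma\ge\gamma b^{\gamma-1}(b-a)\), so \(b^{\gamma-1}(b-a)\le\gamma^{-1}(b^\gamma-a^\gamma)\). In the other direction, \(b^{\gamma-1}(b-a)=b^\gamma-ab^{\gamma-1}\ge b^\gamma-a^\gamma\), since now \(a^{\gamma-1}\ge b^{\gamma-1}\) for \(a>0\); the case \(a=0\) holds with equality.

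In both cases we get \(c_\gamma(b^\gamma-a^\gamma)\le b^{\gamma-1}(b-a)\le C_\gamma(b^\gamma-a^\gamma)\) with the constants in \eqref{eq:appA-power-constants}. Summing over \(i\) and telescoping with \(s_0=0\) yields \eqref{eq:appA-increasing-power-increment}.

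For the decreasing statement \eqref{eq:appA-decreasing-power-increment}, reindex by \(s_j:=t_{m+1-j}\) for \(0\le j\le m\). This gives \(0=s_0\le\cdots\le s_m=t_1\), and the summand \(t_i^{\gamma-1}(t_i-t_{i+1})\) becomes \(s_j^{\gamma-1}(s_j-s_{j-1})\) with \(j=m+1-i\). The first part then applies verbatim.

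There is no real obstacle here. The only care needed is the convention at \(0^{\gamma-1}\) when \(\gamma<1\) and \(s_1=0\). The termwise argument only ever evaluates \(b^{\gamma-1}\) with \(b>0\), and zero-increment terms are handled before that, so the convention is consistent.
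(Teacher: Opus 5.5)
Your proposal is correct and follows essentially the paper's route: a termwise two-sided comparison \(c_\gamma(b^\gamma-a^\gamma)\le b^{\gamma-1}(b-a)\le C_\gamma(b^\gamma-a^\gamma)\), then telescoping with \(s_0=0\). The only differences are cosmetic. You verify the two-point inequality through \(\int_a^b\gamma t^{\gamma-1}\,dt\) and the rewriting \(b^\gamma-ab^{\gamma-1}\), whereas the paper bounds the ratio \((1-u)/(1-u^\gamma)\) with \(u=a/b\); and your reindexing for the decreasing case is equivalent to the paper's direct application with \(x=t_i\), \(y=t_{i+1}\).
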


\begin{proof}
Let \(0\le y<x\) and put \(u=y/x\). Then
\[
 \frac{x^{\gamma-1}(x-y)}
      {x^\gamma-y^\gamma}
 =
 \frac{1-u}{1-u^\gamma}.
\]
When \(0<\gamma\le1\), this ratio lies between \(1\) and
\(\gamma^{-1}\).
When \(\gamma\ge1\), it lies between
\(\gamma^{-1}\) and \(1\). Hence
\[
 c_\gamma(x^\gamma-y^\gamma)
 \le
 x^{\gamma-1}(x-y)
 \le
 C_\gamma(x^\gamma-y^\gamma).
\]
The same inequalities are immediate when \(x=y\), under the stated
zero-increment convention.

Apply this comparison with
\[
 x=s_i,
 \qquad
 y=s_{i-1},
\]
and sum over \(i\) to obtain
\eqref{eq:appA-increasing-power-increment}. Applying it instead with
\[
 x=t_i,
 \qquad
 y=t_{i+1},
\]
gives \eqref{eq:appA-decreasing-power-increment}.
\end{proof}

\subsection{Statement of the finite-chain theorem}
\label{subsec:appA-theorem-statement}

Let \(\mathcal H_n\) denote the optimal constant in
\begin{equation}
 \left(
   \sum_{k=1}^n F_k^Q\nu_k
 \right)^{1/Q}
 \le
 \mathcal H_n
 \left(
   \sum_{i=1}^n a_i^P\mu_i
 \right)^{1/P},
 \qquad
 a_i\ge0.
 \label{eq:appA-finite-chain-inequality}
\end{equation}

\begin{theorem}[Complete finite-chain Hardy theorem]
\label{thm:appA-finite-chain-hardy}
Let \(1<P,Q<\infty\).

If \(P\le Q\), define
\begin{equation}
 \mathfrak H_n
 :=
 \max_{1\le k\le n}
 V_kN_k^{1/Q}
 =
 \max_{1\le k\le n}
 M_k^{1/P'}N_k^{1/Q}.
 \label{eq:appA-convex-characteristic}
\end{equation}
Then
\begin{equation}
 \mathfrak H_n
 \le
 \mathcal H_n
 \le
 C_{P,Q}\mathfrak H_n.
 \label{eq:appA-convex-result}
\end{equation}

If \(Q<P\), define \(R\in(0,\infty)\) by
\begin{equation}
 \frac1R
 =
 \frac1Q-\frac1P
 \label{eq:appA-R-definition}
\end{equation}
and put
\begin{equation}
 \mathfrak H_n
 :=
 \left[
   \sum_{k=1}^n
   N_k^{R/Q}
   \left(
     V_k^R-V_{k-1}^R
   \right)
 \right]^{1/R}.
 \label{eq:appA-nonconvex-characteristic}
\end{equation}
Then
\begin{equation}
 c_{P,Q}\mathfrak H_n
 \le
 \mathcal H_n
 \le
 C_{P,Q}\mathfrak H_n.
 \label{eq:appA-nonconvex-result}
\end{equation}

The comparison constants depend only on \(P\) and \(Q\), and are
independent of \(n\), \((\mu_i)\), and \((\nu_i)\).
\end{theorem}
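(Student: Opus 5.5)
In the convex case $P\le Q$, the lower bound $\mathfrak H_n\le\mathcal H_n$ comes from an initial-segment test. Fix $k$ and take $a_i=1$ for $i\le k$ and $a_i=0$ otherwise. Then $F_j\ge M_k$ for every $j\ge k$, so the left side of \eqref{eq:appA-finite-chain-inequality} is at least $M_kN_k^{1/Q}$. The right side is $\mathcal H_nM_k^{1/P}$. Dividing gives $M_k^{1/P'}N_k^{1/Q}\le\mathcal H_n$.

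For the upper bound, I use the classical weighted Hölder device with $V_i^{1/P'}$ inserted. Write $a_i\mu_i=(a_iV_i^{s}\mu_i^{1/P})(V_i^{-s}\mu_i^{1/P'})$ with $s=1/(PP')$ (or any fixed $0<s<1/P$ chosen in terms of $P$). Hölder then gives
\[
F_k\le\Bigl(\sum_{i\le k}a_i^PV_i^{sP}\mu_i\Bigr)^{1/P}\Bigl(\sum_{i\le k}V_i^{-sP'}\mu_i\Bigr)^{1/P'}.
\]
Since $\mu_i=M_i-M_{i-1}$, Lemma~\ref{lem:appA-power-increment} with $\gamma=1-sP'>0$ bounds the second factor by $C\,M_k^{(1-sP')/P'}$. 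Raise to the power $Q$, multiply by $\nu_k$ and sum. Because $Q/P\ge1$, Minkowski's inequality in $\ell^{Q/P}$ (equivalently, $\|\cdot\|_{\ell^{Q/P}}\le\|\cdot\|_{\ell^1}$ after exchanging sums) reduces the estimate to
\[
\sup_i V_i^{sP}\Bigl(\sum_{k\ge i}M_k^{(1-sP')Q/P'}\nu_k\Bigr)^{P/Q}.
\]
The inner sum is controlled by $\mathfrak H_n^Q$ times a sum of the type $\sum_{k\ge i}N_k^{-\theta}(N_k-N_{k+1})$. To see this, insert $M_k^{1/P'}\le\mathfrak H_nN_k^{-1/Q}$, so that the expression contains $N_k^{-(1-sP')}\nu_k$. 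Applying the decreasing form of Lemma~\ref{lem:appA-power-increment} with $\gamma=sP'$ bounds this by $N_i^{sP'}$. Combining with $V_i^{sP}$ and using $M_i^{1/P'}N_i^{1/Q}\le\mathfrak H_n$ once more closes the estimate with a constant depending only on $P,Q$. The choice of $s$ is made so that both increment exponents are strictly positive.

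In the nonconvex case $Q<P$, I first use Abel summation to rewrite $\sum_k N_k^{R/Q}(V_k^R-V_{k-1}^R)=\sum_j\nu_j\sum_{k\le j}N_k^{R/Q-1}\cdots$. More usefully, via Lemma~\ref{lem:appA-power-increment} this is equivalent, up to constants, to
\[
\sum_k \nu_k N_k^{R/P}M_k^{R/P'}\asymp\mathfrak H_n^R,
\]
using $R/Q-1=R/P$.

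For sufficiency, apply the same Hölder splitting of $F_k$ as above. Then apply Hölder in $k$ with exponents $P/Q$ and $R/Q$ (note $Q/P+Q/R=1$), distributing the weight $\nu_k$ so that one factor is $\sum_ia_i^P\mu_i$, after exchanging the order of summation and applying the decreasing increment lemma, and the other factor is the equivalent form of $\mathfrak H_n^R$.

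For necessity, test with
\[
a_i=\Bigl(\sum_{k\ge i}\nu_kN_k^{R/P}M_k^{R/P'-1}\Bigr)^{1/(P-1)}
\]
or, more simply, with the discrete extremal $a_i=N_i^{R/(PQ)}M_i^{(R/P'-1)/P}\cdot$(normalisation). Truncate the chain to the first $K$ coordinates to guarantee finiteness; here finiteness is automatic since $n$ is finite. Then compute the left side from below by $F_k\ge\sum_{i\le k}a_i\mu_i$ and the increment lemma, and the right side from above by the same lemma. This yields $\mathfrak H_n^R\lesssim\mathcal H_n^{Q}\mathfrak H_n^{RQ/P}$, and dividing (the quantity is finite, and zero is trivial) gives $c_{P,Q}\mathfrak H_n\le\mathcal H_n$.

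I expect this necessity test to be the main obstacle. The exponents of $M_i$ and $N_i$ in the test sequence must be chosen so that both the lower bound for $F_k$ and the upper bound for $\sum a_i^P\mu_i$ collapse, through Lemma~\ref{lem:appA-power-increment}, to the same Stieltjes sum. The constants must remain independent of $n$ and of the masses, including when some $\nu_k$ vanish.
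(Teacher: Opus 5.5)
Your convex case is the paper's Steps~1--3, but the exponents need fixing. Since \(V_i=M_i^{1/P'}\), you have \(V_i^{-sP'}\mu_i=M_i^{-s}(M_i-M_{i-1})\). So Lemma~\ref{lem:appA-power-increment} is applied with \(\gamma=1-s\), and the second Hölder factor is \(\lesssim V_k^{1-s}\). The tail sum then gives \(\mathfrak H_n^{(1-s)Q}N_i^{s}\), and \(V_i^{sP}N_i^{sP/Q}=(V_iN_i^{1/Q})^{sP}\le\mathfrak H_n^{sP}\) closes the estimate. Your exponents \(1-sP'\) and \(N_i^{sP'}\) do not combine into a power of \(V_iN_i^{1/Q}\).

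Your simpler nonconvex test \(a_i=N_i^{R/(PQ)}M_i^{(R/P'-1)/P}\) does work, and it is leaner than the paper's test built from \(D_i\). Lemma~\ref{lem:appA-power-increment} gives \(\sum_ia_i^P\mu_i\lesssim\mathfrak H_n^R\). Since \(N_i\ge N_k\) for \(i\le k\), you get \(F_k\gtrsim N_k^{R/(PQ)}M_k^{\gamma}\) with \(\gamma=1+(R/P'-1)/P>0\). Because \(R(P-Q)=PQ\), one has \(\gamma Q=R/P'\), so \(\sum_kF_k^Q\nu_k\gtrsim\sum_k\nu_kN_k^{R/P}M_k^{R/P'}\asymp\mathfrak H_n^R\).

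\textbf{The gap is sufficiency for \(Q<P\).} After the local estimate \(F_k^Q\lesssim X_k^{Q/P}V_k^{(1-s)Q}\), with \(X_k=\sum_{i\le k}a_i^PV_i^{sP}\mu_i\), you apply Hölder in \(k\) with \(\nu_k\)-weights, so that the second factor is \(\mathcal T=\sum_k\nu_kV_k^RN_k^{R/P}\). That choice forces the first factor to be
\(\sum_kX_kV_k^{-sP}N_k^{-1}\nu_k=\sum_ia_i^PV_i^{sP}\mu_i\sum_{k\ge i}V_k^{-sP}N_k^{-1}\nu_k\), for every \(s\). The inner sum is exactly the excluded borderline \(\gamma=0\) of Lemma~\ref{lem:appA-power-increment}, and it is not uniformly bounded. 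Take \(\mu_1=1\), \(\mu_k\le1/n\) for \(k\ge2\) (so \(V_k\approx V_1\)), and \(N_k=2^{1-k}\). Then \(V_1^{sP}\sum_{k}V_k^{-sP}N_k^{-1}\nu_k\approx n/2\). Yet for \(a=(1,0,\dots,0)\) both sides of the inequality and \(\mathfrak H_n\) are of order one, so this Hölder step loses a factor \(n^{Q/P}\). Using \(V_kN_k^{1/Q}\lesssim\mathfrak H_n\) to move a larger power of \(V_k\) into the second factor only pushes the \(N_k\)-exponent in the first factor below \(-1\).

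\textbf{The missing idea.} Hölder must be applied against the increments \(\Delta_s=V_s^R-V_{s-1}^R\), with the output weight entering as the tail \(N_s\); that is, against the original form \(\sum_sN_s^{R/Q}\Delta_s\). To turn \(\sum_{k\ge s}(\cdots)\nu_k\) into \(N_s\), you need a nonincreasing majorant of the increasing sums \(X_k\). The paper builds it in Step~5:
\begin{itemize}
\item It writes \(V_k^{Q/2}=V_k^{-Q}V_k^{3Q/2}\) and splits \(V_k^{-P}=\sum_{\tau=k}^{n-1}d_\tau+V_n^{-P}\), where \(d_\tau=V_\tau^{-P}-V_{\tau+1}^{-P}\), using concavity of \(t\mapsto t^{Q/P}\).
\item It replaces \(X_k\sum_{\tau\ge k}d_\tau\) by the nonincreasing \(E_k=\sum_{\tau\ge k}X_\tau d_\tau\).
\item It expands \(V_k^{3Q/2}\asymp\sum_{s\le k}V_s^{3Q/2-R}\Delta_s\) and uses Fubini to reach \(\sum_sE_s^{Q/P}V_s^{3Q/2-R}N_s\Delta_s\).
\item Only then does it apply Hölder with exponents \(R/Q,P/Q\).
\item The leftover factor is returned to \(\sum_ia_i^P\mu_i\) by Fubini and the decreasing lemma with exponent \(1/2\) applied to \(t_\tau=V_\tau^{-P}\).
\item The terminal term carrying \(V_n^{-P}\) is estimated separately.
\end{itemize}
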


\begin{proof}
We establish the two implications in each exponent regime separately.

\medskip
\noindent\textbf{Step 1. A local weighted estimate.}

Let \(0<\varepsilon<1\). For every \(1\le k\le n\),
\begin{equation}
 F_k
 \le
 C_{P,\varepsilon}
 \left(
   \sum_{i=1}^k
   a_i^P V_i^{\varepsilon P}\mu_i
 \right)^{1/P}
 V_k^{1-\varepsilon}.
 \label{eq:appA-local-estimate}
\end{equation}
Indeed, Hölder's inequality gives
\[
 \begin{aligned}
 F_k
 &=
 \sum_{i=1}^k
 \left(
   a_iV_i^\varepsilon
 \right)
 V_i^{-\varepsilon}\mu_i
 \\
 &\le
 \left(
   \sum_{i=1}^k
   a_i^PV_i^{\varepsilon P}\mu_i
 \right)^{1/P}
 \left(
   \sum_{i=1}^k
   V_i^{-\varepsilon P'}\mu_i
 \right)^{1/P'}.
 \end{aligned}
\]
Since
\[
 V_i^{P'}=M_i
 \qquad\text{and}\qquad
 \mu_i=M_i-M_{i-1},
\]
Lemma~\ref{lem:appA-power-increment}, applied with exponent
\(1-\varepsilon\), yields
\[
 \begin{aligned}
 \sum_{i=1}^kV_i^{-\varepsilon P'}\mu_i
 &=
 \sum_{i=1}^k
 M_i^{-\varepsilon}(M_i-M_{i-1})
 \\
 &\lesssim_\varepsilon
 M_k^{1-\varepsilon}.
 \end{aligned}
\]
Taking the \(P'\)-th root proves
\eqref{eq:appA-local-estimate}.

\medskip
\noindent\textbf{Step 2. Necessity when \(P\le Q\).}

Fix \(1\le k\le n\) and choose
\[
 a_i=
 \begin{cases}
 1,&i\le k,\\
 0,&i>k.
 \end{cases}
\]
Then
\[
 F_j=M_k
 \qquad\text{for every }j\ge k.
\]
Consequently,
\[
 \left(
   \sum_{j=1}^nF_j^Q\nu_j
 \right)^{1/Q}
 \ge
 M_kN_k^{1/Q},
\]
whereas
\[
 \left(
   \sum_{i=1}^na_i^P\mu_i
 \right)^{1/P}
 =
 M_k^{1/P}.
\]
It follows that
\[
 \mathcal H_n
 \ge
 M_k^{1/P'}N_k^{1/Q}.
\]
Taking the maximum over \(k\) gives the lower estimate in
\eqref{eq:appA-convex-result}.

\medskip
\noindent\textbf{Step 3. Sufficiency when \(P\le Q\).}

Let
\[
 A:=\mathfrak H_n.
\]
If \(N_1=0\), then every \(\nu_k\) is zero and the operator is trivial.
Otherwise, let
\[
 n_0:=\max\{k:N_k>0\}.
\]
For \(k>n_0\), one has \(\nu_k=0\), and input coordinates with
\(i>n_0\) cannot affect any output coordinate carrying positive
\(\nu\)-mass. We may therefore replace \(n\) by \(n_0\) and assume
\begin{equation}
 N_k>0,
 \qquad
 1\le k\le n.
 \label{eq:appA-positive-output-tails}
\end{equation}

Fix, for example,
\[
 \varepsilon=\frac12.
\]
By \eqref{eq:appA-convex-characteristic},
\[
 V_i
 \le
 A N_i^{-1/Q}.
\]
Substitution into \eqref{eq:appA-local-estimate} gives
\begin{equation}
 F_k
 \lesssim_{P,Q}
 A
 \left(
   \sum_{i=1}^k
   a_i^P
   N_i^{-\varepsilon P/Q}
   \mu_i
 \right)^{1/P}
 N_k^{(\varepsilon-1)/Q}.
 \label{eq:appA-convex-local-reduction}
\end{equation}

Put
\begin{equation}
 s:=\frac QP\ge1,
 \qquad
 b_i:=a_i^P
 N_i^{-\varepsilon P/Q}\mu_i,
 \qquad
 w_k:=N_k^{\varepsilon-1}\nu_k.
 \label{eq:appA-convex-Minkowski-data}
\end{equation}
Minkowski's inequality on the finite measure space
\(\{1,\ldots,n\}\) gives
\begin{equation}
 \left[
   \sum_{k=1}^n
   \left(
     \sum_{i=1}^kb_i
   \right)^s
   w_k
 \right]^{1/s}
 \le
 \sum_{i=1}^n
 b_i
 \left(
   \sum_{k=i}^nw_k
 \right)^{1/s}.
 \label{eq:appA-discrete-Minkowski}
\end{equation}

Because
\[
 N_k-N_{k+1}=\nu_k,
\]
the decreasing form of
Lemma~\ref{lem:appA-power-increment}, applied with exponent
\(\varepsilon\), yields
\begin{equation}
 \sum_{k=i}^n
 N_k^{\varepsilon-1}\nu_k
 \lesssim_\varepsilon
 N_i^\varepsilon.
 \label{eq:appA-output-tail-increment}
\end{equation}
Since \(s^{-1}=P/Q\), the powers of \(N_i\) cancel, giving
\[
 \begin{aligned}
 b_i
 \left(
   \sum_{k=i}^nw_k
 \right)^{1/s}
 &\lesssim_{P,Q}
 a_i^P
 N_i^{-\varepsilon P/Q}
 \mu_i
 N_i^{\varepsilon P/Q}
 \\
 &=
 a_i^P\mu_i.
 \end{aligned}
\]
Therefore,
\begin{equation}
 \left[
   \sum_{k=1}^n
   \left(
     \sum_{i=1}^kb_i
   \right)^s
   w_k
 \right]^{1/s}
 \lesssim_{P,Q}
 \sum_{i=1}^na_i^P\mu_i.
 \label{eq:appA-Minkowski-conclusion}
\end{equation}

Raising \eqref{eq:appA-Minkowski-conclusion} to the power
\(s=Q/P\) and using
\eqref{eq:appA-convex-local-reduction}, we obtain
\[
 \sum_{k=1}^nF_k^Q\nu_k
 \lesssim_{P,Q}
 A^Q
 \left(
   \sum_{i=1}^na_i^P\mu_i
 \right)^{Q/P}.
\]
This proves the upper estimate in
\eqref{eq:appA-convex-result}.

\medskip
\noindent\textbf{Step 4. Necessity when \(Q<P\).}

Assume now that \(Q<P\), and let \(R\) be defined by
\eqref{eq:appA-R-definition}. Then
\begin{equation}
 \frac RQ
 =
 1+\frac RP
 >
 1.
 \label{eq:appA-R-exponent-identity}
\end{equation}
Set
\[
 \alpha:=\frac RQ
\]
and define
\begin{equation}
 \mathcal S^R
 :=
 \sum_{k=1}^n
 N_k^\alpha
 \left(
   V_k^R-V_{k-1}^R
 \right).
 \label{eq:appA-S-characteristic}
\end{equation}
Thus
\[
 \mathcal S=\mathfrak H_n.
\]

Finite Abel summation gives
\begin{equation}
 \mathcal S^R
 =
 \sum_{k=1}^n
 V_k^R
 \left(
   N_k^\alpha-N_{k+1}^\alpha
 \right).
 \label{eq:appA-Abel-necessity}
\end{equation}
Since
\[
 N_k-N_{k+1}=\nu_k
 \qquad\text{and}\qquad
 \alpha-1=\frac RP,
\]
Lemma~\ref{lem:appA-power-increment} implies
\begin{equation}
 \mathcal S^R
 \asymp_{P,Q}
 \mathcal T,
 \qquad
 \mathcal T
 :=
 \sum_{k=1}^n
 V_k^R N_k^{R/P}\nu_k.
 \label{eq:appA-T-equivalence}
\end{equation}

If \(\mathcal T=0\), the desired estimate is immediate. Suppose
\(\mathcal T>0\). Define
\begin{equation}
 \theta:=\frac{R}{2P'},
 \qquad
 \lambda:=R-\theta P'=\frac R2>0,
 \label{eq:appA-theta-lambda}
\end{equation}
and put
\begin{equation}
 D_i
 :=
 \sum_{k=i}^n
 N_k^{R/P}\nu_kV_k^\lambda,
 \qquad
 1\le i\le n,
 \label{eq:appA-Di}
\end{equation}
together with the test sequence
\begin{equation}
 a_i
 :=
 D_i^{1/P}
 V_i^{(\theta-1)(P'-1)}.
 \label{eq:appA-test-sequence}
\end{equation}

Since
\[
 P(P'-1)=P',
\]
finite Fubini gives
\[
 \begin{aligned}
 \sum_{i=1}^na_i^P\mu_i
 &=
 \sum_{i=1}^n
 D_i
 V_i^{(\theta-1)P'}\mu_i
 \\
 &=
 \sum_{k=1}^n
 N_k^{R/P}\nu_kV_k^\lambda
 \sum_{i=1}^k
 M_i^{\theta-1}\mu_i.
 \end{aligned}
\]
By Lemma~\ref{lem:appA-power-increment}, applied with exponent
\(\theta\),
\[
 \sum_{i=1}^k
 M_i^{\theta-1}\mu_i
 \lesssim_{P,Q}
 M_k^\theta
 =
 V_k^{\theta P'}.
\]
Because
\[
 \lambda+\theta P'=R,
\]
we obtain
\begin{equation}
 \sum_{i=1}^na_i^P\mu_i
 \lesssim_{P,Q}
 \sum_{k=1}^n
 N_k^{R/P}\nu_kV_k^R
 =
 \mathcal T.
 \label{eq:appA-test-input-bound}
\end{equation}

For \(i\le k\), the sequence \((D_i)\) is decreasing, so
\[
 D_i\ge D_k.
\]
Consequently,
\[
 \begin{aligned}
 F_k
 &=
 \sum_{i=1}^k
 D_i^{1/P}
 V_i^{(\theta-1)(P'-1)}
 \mu_i
 \\
 &\ge
 D_k^{1/P}
 \sum_{i=1}^k
 M_i^{(\theta-1)/P}\mu_i.
 \end{aligned}
\]
Another application of
Lemma~\ref{lem:appA-power-increment} gives
\begin{equation}
 F_k
 \gtrsim_{P,Q}
 D_k^{1/P}
 V_k^{1+\theta(P'-1)}.
 \label{eq:appA-test-output-first-bound}
\end{equation}

Moreover,
\[
 \begin{aligned}
 D_k
 &\ge
 V_k^\lambda
 \sum_{j=k}^n
 N_j^{R/P}\nu_j
 \\
 &\asymp_{P,Q}
 V_k^\lambda N_k^{R/Q},
 \end{aligned}
\]
where the final comparison follows from the decreasing form of
Lemma~\ref{lem:appA-power-increment}, using
\[
 \frac RQ=1+\frac RP.
\]
Hence
\begin{equation}
 D_k
 \gtrsim_{P,Q}
 V_k^\lambda N_k^{R/Q}.
 \label{eq:appA-Dk-lower-bound}
\end{equation}

Combining
\eqref{eq:appA-test-output-first-bound} and
\eqref{eq:appA-Dk-lower-bound}, and using
\begin{equation}
 Q\left(
   1+\theta(P'-1)+\frac{\lambda}{P}
 \right)
 =
 R,
 \label{eq:appA-test-exponent-identity}
\end{equation}
we obtain
\[
 F_k^Q
 \gtrsim_{P,Q}
 V_k^R N_k^{R/P}.
\]
Therefore,
\begin{equation}
 \sum_{k=1}^nF_k^Q\nu_k
 \gtrsim_{P,Q}
 \mathcal T.
 \label{eq:appA-test-output-bound}
\end{equation}

Apply \eqref{eq:appA-finite-chain-inequality} to the test sequence
\eqref{eq:appA-test-sequence}. Equations
\eqref{eq:appA-test-input-bound} and
\eqref{eq:appA-test-output-bound} give
\[
 \mathcal T^{1/Q}
 \lesssim_{P,Q}
 \mathcal H_n\mathcal T^{1/P}.
\]
Since
\[
 \frac1R=\frac1Q-\frac1P,
\]
it follows that
\[
 \mathcal T^{1/R}
 \lesssim_{P,Q}
 \mathcal H_n.
\]
Finally, \eqref{eq:appA-T-equivalence} yields
\[
 \mathfrak H_n
 =
 \mathcal S
 \lesssim_{P,Q}
 \mathcal H_n,
\]
which proves the lower estimate in
\eqref{eq:appA-nonconvex-result}.

\medskip
\noindent\textbf{Step 5. Sufficiency when \(Q<P\).}

Let
\[
 A:=\mathfrak H_n.
\]
Fix
\begin{equation}
 \delta:=1,
 \qquad
 \varepsilon:=\frac12,
 \qquad
 \beta:=(1-\varepsilon+\delta)Q=\frac{3Q}{2},
 \label{eq:appA-step5-parameters}
\end{equation}
and define
\begin{equation}
 X_k
 :=
 \sum_{i=1}^k
 a_i^PV_i^{P/2}\mu_i.
 \label{eq:appA-Xk}
\end{equation}
The local estimate
\eqref{eq:appA-local-estimate} gives
\[
 F_k^Q
 \lesssim_P
 X_k^{Q/P}V_k^{Q/2}.
\]
Since
\[
 \frac Q2
 =
 -\delta Q+\beta,
\]
we have
\begin{equation}
 \sum_{k=1}^nF_k^Q\nu_k
 \lesssim_P
 \sum_{k=1}^n
 X_k^{Q/P}
 V_k^{-\delta Q}
 V_k^\beta\nu_k.
 \label{eq:appA-step5-start}
\end{equation}

Because \(Q/P<1\),
\[
 V_k^{-\delta P}
 =
 \left(
   V_k^{-\delta P}-V_n^{-\delta P}
 \right)
 +
 V_n^{-\delta P}
\]
and concavity of \(t\mapsto t^{Q/P}\) imply
\begin{equation}
 \sum_{k=1}^nF_k^Q\nu_k
 \lesssim_{P,Q}
 I+II,
 \label{eq:appA-I-II-splitting}
\end{equation}
where
\begin{equation}
 I
 :=
 \sum_{k=1}^n
 X_k^{Q/P}
 \left(
   V_k^{-\delta P}-V_n^{-\delta P}
 \right)^{Q/P}
 V_k^\beta\nu_k
 \label{eq:appA-I-definition}
\end{equation}
and
\begin{equation}
 II
 :=
 V_n^{-\delta Q}
 \sum_{k=1}^n
 X_k^{Q/P}V_k^\beta\nu_k.
 \label{eq:appA-II-definition}
\end{equation}

For \(1\le\tau<n\), set
\begin{equation}
 d_\tau
 :=
 V_\tau^{-\delta P}
 -
 V_{\tau+1}^{-\delta P}
 \ge0,
 \label{eq:appA-dtau}
\end{equation}
and, for \(1\le s\le n\), put
\begin{equation}
 \Delta_s
 :=
 V_s^R-V_{s-1}^R.
 \label{eq:appA-Delta}
\end{equation}
Applying Lemma~\ref{lem:appA-power-increment} to the increasing
sequence \((V_s^R)\) gives
\begin{equation}
 V_k^\beta
 \asymp_{P,Q}
 \sum_{s=1}^k
 V_s^{\beta-R}\Delta_s.
 \label{eq:appA-beta-increment}
\end{equation}

Furthermore,
\[
 V_k^{-\delta P}-V_n^{-\delta P}
 =
 \sum_{\tau=k}^{n-1}d_\tau.
\]
Since \(X_k\) is nondecreasing,
\begin{equation}
 X_k
 \sum_{\tau=k}^{n-1}d_\tau
 \le
 \sum_{\tau=k}^{n-1}X_\tau d_\tau
 =:
 E_k.
 \label{eq:appA-Ek-definition}
\end{equation}
The sequence \((E_k)\) is nonincreasing. By
\eqref{eq:appA-beta-increment}, finite Fubini,
\eqref{eq:appA-Ek-definition}, and
\[
 \sum_{k=s}^n\nu_k=N_s,
\]
we obtain
\begin{equation}
 I
 \lesssim_{P,Q}
 \sum_{s=1}^n
 E_s^{Q/P}
 V_s^{\beta-R}
 N_s\Delta_s.
 \label{eq:appA-I-before-Holder}
\end{equation}

Apply Hölder's inequality with conjugate exponents
\[
 \frac RQ
 \qquad\text{and}\qquad
 \frac PQ.
\]
Using the definition of \(A\), we obtain
\begin{equation}
 I
 \lesssim_{P,Q}
 A^QJ^{Q/P},
 \label{eq:appA-I-Holder}
\end{equation}
where
\begin{equation}
 J
 :=
 \sum_{s=1}^n
 E_s
 V_s^{(\beta-R)P/Q}
 \Delta_s.
 \label{eq:appA-J-definition}
\end{equation}

Finite Fubini gives
\begin{equation}
 J
 =
 \sum_{\tau=1}^{n-1}
 X_\tau d_\tau
 \sum_{s=1}^\tau
 V_s^{(\beta-R)P/Q}\Delta_s.
 \label{eq:appA-J-Fubini}
\end{equation}
The exponent identity
\begin{equation}
 R+\frac PQ(\beta-R)
 =
 (\delta-\varepsilon)P
 =
 \frac P2
 >0
 \label{eq:appA-step5-exponent-identity}
\end{equation}
and Lemma~\ref{lem:appA-power-increment} imply
\begin{equation}
 \sum_{s=1}^\tau
 V_s^{(\beta-R)P/Q}\Delta_s
 \lesssim_{P,Q}
 V_\tau^{P/2}.
 \label{eq:appA-inner-J-bound}
\end{equation}

Substitute \eqref{eq:appA-Xk} into
\eqref{eq:appA-J-Fubini}, use
\eqref{eq:appA-inner-J-bound}, and reverse the finite sums. This gives
\begin{equation}
 \begin{aligned}
 J
 &\lesssim_{P,Q}
 \sum_{i=1}^n
 a_i^PV_i^{P/2}\mu_i
 \\
 &\qquad\times
 \sum_{\tau=i}^{n-1}
 V_\tau^{P/2}
 \left(
   V_\tau^{-P}-V_{\tau+1}^{-P}
 \right).
 \end{aligned}
 \label{eq:appA-J-reversed}
\end{equation}

For \(i\le\tau\le n\), set
\[
 t_\tau:=V_\tau^{-P}
\]
and append the terminal value \(t_{n+1}:=0\). Then
\[
 V_\tau^{P/2}
 \left(
   V_\tau^{-P}-V_{\tau+1}^{-P}
 \right)
 =
 t_\tau^{-1/2}(t_\tau-t_{\tau+1}).
\]
Adding the nonnegative terminal increment and applying the decreasing
form of Lemma~\ref{lem:appA-power-increment} with exponent \(1/2\),
we obtain
\[
 \begin{aligned}
 &\sum_{\tau=i}^{n-1}
 V_\tau^{P/2}
 \left(
   V_\tau^{-P}-V_{\tau+1}^{-P}
 \right)
 \\
 &\quad\le
 \sum_{\tau=i}^{n}
 t_\tau^{-1/2}(t_\tau-t_{\tau+1})
 \lesssim
 t_i^{1/2}
 =
 V_i^{-P/2}.
 \end{aligned}
\]
Consequently,
\begin{equation}
 J
 \lesssim_{P,Q}
 \sum_{i=1}^na_i^P\mu_i.
 \label{eq:appA-J-final-bound}
\end{equation}
Combining
\eqref{eq:appA-I-Holder} and
\eqref{eq:appA-J-final-bound}, we obtain
\begin{equation}
 I
 \lesssim_{P,Q}
 A^Q
 \left(
   \sum_{i=1}^na_i^P\mu_i
 \right)^{Q/P}.
 \label{eq:appA-I-final-bound}
\end{equation}

It remains to estimate \(II\). Since \(V_i\le V_n\),
\[
 X_k
 \le
 X_n
 \le
 V_n^{P/2}
 \sum_{i=1}^na_i^P\mu_i.
\]
Hence
\begin{equation}
 II
 \le
 V_n^{-Q/2}
 \left(
   \sum_{i=1}^na_i^P\mu_i
 \right)^{Q/P}
 \sum_{k=1}^nV_k^\beta\nu_k.
 \label{eq:appA-II-first-bound}
\end{equation}

Finite Abel summation yields
\[
 \sum_{k=1}^nV_k^\beta\nu_k
 =
 \sum_{k=1}^n
 N_k
 \left(
   V_k^\beta-V_{k-1}^\beta
 \right).
\]
Using Lemma~\ref{lem:appA-power-increment},
\begin{equation}
 \sum_{k=1}^nV_k^\beta\nu_k
 \lesssim_{P,Q}
 \sum_{k=1}^n
 N_kV_k^{\beta-R}\Delta_k.
 \label{eq:appA-II-Abel}
\end{equation}
Apply Hölder's inequality as in
\eqref{eq:appA-I-Holder}, now without the factor \(E_k\). Using
\eqref{eq:appA-inner-J-bound} with \(\tau=n\), we obtain
\begin{equation}
 \sum_{k=1}^nV_k^\beta\nu_k
 \lesssim_{P,Q}
 A^QV_n^{Q/2}.
 \label{eq:appA-beta-output-bound}
\end{equation}
Equations
\eqref{eq:appA-II-first-bound} and
\eqref{eq:appA-beta-output-bound} imply
\begin{equation}
 II
 \lesssim_{P,Q}
 A^Q
 \left(
   \sum_{i=1}^na_i^P\mu_i
 \right)^{Q/P}.
 \label{eq:appA-II-final-bound}
\end{equation}

Finally,
\eqref{eq:appA-I-II-splitting},
\eqref{eq:appA-I-final-bound}, and
\eqref{eq:appA-II-final-bound} give
\[
 \sum_{k=1}^nF_k^Q\nu_k
 \lesssim_{P,Q}
 A^Q
 \left(
   \sum_{i=1}^na_i^P\mu_i
 \right)^{Q/P}.
\]
This proves the upper estimate in
\eqref{eq:appA-nonconvex-result} and completes the proof.
\end{proof}

\begin{remark}[Uniformity of the finite-chain theorem]
\label{rem:appA-uniformity}
Every comparison constant in
Theorem~\ref{thm:appA-finite-chain-hardy} depends only on \(P\) and
\(Q\). In particular, the constants are uniform over all ordered
partitions and all chain lengths. This uniformity is what permits the
partition limit in Appendix~\ref{app:partition-approximation}.
\end{remark}

\begin{remark}[Logical role]
\label{rem:appA-logical-role}
Theorem~\ref{thm:appA-finite-chain-hardy} contains both necessity and
sufficiency in both exponent regimes. The strict Copson theorem in
Appendix~\ref{app:copson-reversal} is obtained from this result by an
exact weighted substitution and reversal of the shortened chain. No
additional discrete Hardy theorem is invoked.
\end{remark}

\section{Exact reversal for the strict Copson operator}
\label{app:copson-reversal}

This appendix derives the finite-chain strict Copson theorem from
Theorem~\ref{thm:appA-finite-chain-hardy}. The reduction is exact. A
weighted isometric substitution removes the Copson weight, the inactive
input and output endpoints are discarded, and the remaining chain is
reversed. No additional discrete Hardy inequality is required.

Throughout this appendix,
\[
 1<P,Q<\infty,
 \qquad
 P'=\frac{P}{P-1}.
\]
Let
\[
 \mu_i>0,
 \qquad
 \nu_i\ge0,
 \qquad
 0\le\omega_i<\infty,
 \qquad
 1\le i\le n.
\]
The treatment of unbounded Copson weights is deferred to the monotone
truncation argument in Appendix~\ref{app:extended-regularisation}.

\subsection{The strict finite-chain operator}
\label{subsec:appB-strict-operator}

For a nonnegative sequence \(a=(a_i)_{i=1}^n\), define
\begin{equation}
 (C_{\omega,n}a)_k
 :=
 \sum_{i=k+1}^n
 \omega_i a_i\mu_i,
 \qquad
 1\le k\le n.
 \label{eq:appB-strict-Copson}
\end{equation}
The strict inequality \(i>k\) is part of the definition. In particular,
\[
 (C_{\omega,n}a)_n=0,
\]
and the first input coordinate \(a_1\) does not occur in any output
coordinate.

Let \(\mathcal C_n\) denote the optimal constant in
\begin{equation}
 \left[
   \sum_{k=1}^n
   (C_{\omega,n}a)_k^Q\nu_k
 \right]^{1/Q}
 \le
 \mathcal C_n
 \left[
   \sum_{i=1}^n
   a_i^P\mu_i
 \right]^{1/P},
 \qquad a_i\ge0.
 \label{eq:appB-strict-Copson-inequality}
\end{equation}

Define the cumulative output heads
\begin{equation}
 K_k
 :=
 \sum_{j=1}^k\nu_j,
 \qquad
 0\le k\le n,
 \label{eq:appB-Kk}
\end{equation}
where \(K_0:=0\), and the weighted strict input tails
\begin{equation}
 B_k
 :=
 \sum_{i=k+1}^n
 \omega_i^{P'}\mu_i,
 \qquad
 1\le k\le n.
 \label{eq:appB-Bk}
\end{equation}
Thus
\[
 B_n=0.
\]

\subsection{Removal of the Copson weight}
\label{subsec:appB-weight-removal}

The first step converts the weighted strict Copson operator into an
unweighted strict tail operator with a modified input measure.

\begin{lemma}[Weighted isometric substitution]
\label{lem:appB-weighted-isometry}
For \(1\le i\le n\), put
\begin{equation}
 \lambda_i
 :=
 \omega_i^{P'}\mu_i.
 \label{eq:appB-lambda}
\end{equation}
In computing the optimal constant \(\mathcal C_n\), it is sufficient to
consider sequences satisfying
\begin{equation}
 a_1=0
 \qquad\text{and}\qquad
 a_i=0
 \quad\text{whenever }\omega_i=0.
 \label{eq:appB-active-sequences}
\end{equation}
For such a sequence, define
\begin{equation}
 b_i
 :=
 \begin{cases}
 a_i\omega_i^{-1/(P-1)},&\omega_i>0,\\
 0,&\omega_i=0.
 \end{cases}
 \label{eq:appB-bi-substitution}
\end{equation}
Then
\begin{equation}
 a_i^P\mu_i
 =
 b_i^P\lambda_i
 \label{eq:appB-input-isometry}
\end{equation}
and
\begin{equation}
 \omega_i a_i\mu_i
 =
 b_i\lambda_i.
 \label{eq:appB-output-isometry}
\end{equation}
Consequently,
\begin{equation}
 (C_{\omega,n}a)_k
 =
 \sum_{i=k+1}^n b_i\lambda_i.
 \label{eq:appB-unweighted-strict-tail}
\end{equation}
\end{lemma}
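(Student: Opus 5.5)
The argument has two independent parts: a reduction showing that only the sequences in \eqref{eq:appB-active-sequences} matter, and a coordinatewise algebraic check of \eqref{eq:appB-input-isometry} and \eqref{eq:appB-output-isometry}. Identity \eqref{eq:appB-unweighted-strict-tail} then follows by summing \eqref{eq:appB-output-isometry} over \(i=k+1,\dots,n\).

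\textbf{Reduction.} Fix an arbitrary nonnegative sequence \(a\) and define \(\tilde a\) by
\[
 \tilde a_i = a_i\,\mathbf 1_{\{i\ge2,\ \omega_i>0\}} .
\]
We compare outputs and inputs.

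On the output side, \(a_1\) never appears in \((C_{\omega,n}a)_k\), because the sum in \eqref{eq:appB-strict-Copson} runs over \(i\ge k+1\ge2\). Every index with \(\omega_i=0\) contributes \(\omega_ia_i\mu_i=0\). Hence \((C_{\omega,n}\tilde a)_k=(C_{\omega,n}a)_k\) for every \(k\).

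On the input side, \(\sum\tilde a_i^P\mu_i\le\sum a_i^P\mu_i\), since \(\tilde a\) only deletes nonnegative terms.

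So passing from \(a\) to \(\tilde a\) leaves the left side of \eqref{eq:appB-strict-Copson-inequality} unchanged and cannot increase the right side. The optimal constant \(\mathcal C_n\) is therefore already determined by sequences satisfying \eqref{eq:appB-active-sequences}. This includes the extended conventions: a zero-norm input produces zero output after the reduction, so no quotient by zero arises.

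\textbf{Algebraic identities.} On indices with \(\omega_i>0\), put \(b_i=a_i\omega_i^{-1/(P-1)}\).

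For the input identity,
\[
 b_i^P\lambda_i=a_i^P\omega_i^{-P/(P-1)}\omega_i^{P'}\mu_i=a_i^P\mu_i,
 \qquad\text{since } P/(P-1)=P'.
\]

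For the output identity,
\[
 b_i\lambda_i=a_i\omega_i^{P'-1/(P-1)}\mu_i=\omega_ia_i\mu_i,
 \qquad\text{since } P'-\tfrac1{P-1}=\tfrac{P-1}{P-1}=1.
\]

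On indices with \(\omega_i=0\), the reduction gives \(a_i=0\) and the definition gives \(b_i=0\). Also \(\lambda_i=0\). Both sides of each identity therefore vanish.

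The only delicate point is this bookkeeping at \(\omega_i=0\), where the substitution is undefined. The reduction step removes it by forcing \(a_i=0\) there, so the identities hold coordinatewise for every \(i\).
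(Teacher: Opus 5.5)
Your proposal is correct and follows the paper's proof essentially step for step. Zeroing $a_1$ and the coordinates with $\omega_i=0$ leaves every output coordinate unchanged and does not increase the input sum; the two identities then follow coordinatewise from $P/(P-1)=P'$, with both sides vanishing on the inactive indices, and summing over $i>k$ gives the strict-tail formula. Your remark about zero-norm inputs is harmless but unnecessary: in this appendix all $\mu_i>0$, so a zero input norm already forces $a\equiv0$.
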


\begin{proof}
The coordinate \(a_1\) does not contribute to
\(C_{\omega,n}a\). Likewise, when \(\omega_i=0\), the coordinate \(a_i\)
contributes nothing to the output. Replacing all such coordinates by
zero leaves the output unchanged and does not increase the input norm.
Thus the supremum defining \(\mathcal C_n\) may be restricted to
\eqref{eq:appB-active-sequences}.

For \(\omega_i>0\), the definition of \(b_i\) gives
\[
 a_i=b_i\omega_i^{1/(P-1)}.
\]
Since
\[
 \frac{P}{P-1}=P',
\]
we obtain
\[
 a_i^P\mu_i
 =
 b_i^P\omega_i^{P'}\mu_i
 =
 b_i^P\lambda_i,
\]
and
\[
 \omega_i a_i\mu_i
 =
 b_i\omega_i^{1+1/(P-1)}\mu_i
 =
 b_i\omega_i^{P'}\mu_i
 =
 b_i\lambda_i.
\]
Both identities are also valid on the inactive coordinates under
\eqref{eq:appB-active-sequences}. Substitution into
\eqref{eq:appB-strict-Copson} proves
\eqref{eq:appB-unweighted-strict-tail}.
\end{proof}

\begin{remark}
\label{rem:appB-zero-lambda}
Some of the transformed masses \(\lambda_i\) may vanish. Such
coordinates may be suppressed as in
Subsection~\ref{subsec:appA-notation}, or retained with zero mass. The
finite-chain Hardy theorem is unchanged under either convention.
\end{remark}

\subsection{Exact reversal of the shortened chain}
\label{subsec:appB-exact-reversal}

The strict operator has two inactive endpoint coordinates.

\[
 a_1
 \quad\text{is never used},
 \qquad
 (C_{\omega,n}a)_n=0.
\]
The active problem therefore has \(n-1\) input and \(n-1\) output
coordinates.

\begin{lemma}[Exact strict-Copson reversal]
\label{lem:appB-exact-reversal}
Assume \(n\ge2\), and set
\[
 m:=n-1.
\]
For \(1\le j,\ell\le m\), define
\begin{equation}
 \widehat\mu_j
 :=
 \lambda_{n-j+1},
 \qquad
 \widehat b_j
 :=
 b_{n-j+1},
 \qquad
 \widehat\nu_\ell
 :=
 \nu_{n-\ell}.
 \label{eq:appB-reversed-data}
\end{equation}
Let
\begin{equation}
 (\widehat H_m\widehat b)_\ell
 :=
 \sum_{j=1}^{\ell}
 \widehat b_j\widehat\mu_j.
 \label{eq:appB-reversed-Hardy}
\end{equation}
Then, for every \(1\le\ell\le m\),
\begin{equation}
 (\widehat H_m\widehat b)_\ell
 =
 (C_{\omega,n}a)_{n-\ell}.
 \label{eq:appB-pointwise-reversal}
\end{equation}
Moreover,
\begin{equation}
 \sum_{\ell=1}^{m}
 (\widehat H_m\widehat b)_\ell^Q
 \widehat\nu_\ell
 =
 \sum_{k=1}^{n}
 (C_{\omega,n}a)_k^Q\nu_k
 \label{eq:appB-output-norm-reversal}
\end{equation}
and
\begin{equation}
 \sum_{j=1}^{m}
 \widehat b_j^P\widehat\mu_j
 =
 \sum_{i=1}^{n}
 a_i^P\mu_i
 \label{eq:appB-input-norm-reversal}
\end{equation}
for sequences satisfying \eqref{eq:appB-active-sequences}.

Consequently,
\begin{equation}
 \mathcal C_n
 =
 \widehat{\mathcal H}_{m},
 \label{eq:appB-exact-norm-equality}
\end{equation}
where \(\widehat{\mathcal H}_{m}\) is the optimal Hardy constant for the
reversed chain
\[
 \widehat H_m:
 \ell^P(\widehat\mu)
 \longrightarrow
 \ell^Q(\widehat\nu).
\]
\end{lemma}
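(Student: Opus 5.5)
The plan is a direct index computation followed by a bijection argument on admissible sequences. First I will verify the pointwise identity \eqref{eq:appB-pointwise-reversal}. Fix \(1\le\ell\le m\) and set \(k=n-\ell\), so \(1\le k\le n-1\). Substituting \(i=n-j+1\) maps \(j\in\{1,\dots,\ell\}\) bijectively onto \(i\in\{n-\ell+1,\dots,n\}=\{k+1,\dots,n\}\). Hence
\[
 (\widehat H_m\widehat b)_\ell=\sum_{j=1}^{\ell}b_{n-j+1}\lambda_{n-j+1}=\sum_{i=k+1}^n b_i\lambda_i=(C_{\omega,n}a)_k,
\]
where the last equality is \eqref{eq:appB-unweighted-strict-tail} from Lemma~\ref{lem:appB-weighted-isometry}. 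This step uses only the strictness \(i>k\): the closed Hardy prefix of length \(\ell\) in the reversed chain is exactly the strict tail beyond \(k\).

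For \eqref{eq:appB-output-norm-reversal}, the map \(\ell\mapsto k=n-\ell\) is a bijection from \(\{1,\dots,m\}\) onto \(\{1,\dots,n-1\}\), and \(\widehat\nu_\ell=\nu_{n-\ell}\). The only index missing on the right-hand side is \(k=n\), and its term vanishes because \((C_{\omega,n}a)_n=0\). For \eqref{eq:appB-input-norm-reversal}, \(j\mapsto i=n-j+1\) is a bijection onto \(\{2,\dots,n\}\). Using \eqref{eq:appB-input-isometry} termwise gives \(\sum_j\widehat b_j^P\widehat\mu_j=\sum_{i=2}^n a_i^P\mu_i\). The missing \(i=1\) term is zero since \(a_1=0\) by \eqref{eq:appB-active-sequences}.

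For the norm equality \eqref{eq:appB-exact-norm-equality}, I will show the two suprema run over equivalent families. By Lemma~\ref{lem:appB-weighted-isometry}, \(\mathcal C_n\) may be computed over admissible \(a\). Each such \(a\) yields a reversed sequence \(\widehat b\) with identical input and output functionals, so \(\mathcal C_n\le\widehat{\mathcal H}_m\). Conversely, take any \(\widehat b\ge0\) on \(\{1,\dots,m\}\) and set \(b_i=\widehat b_{n-i+1}\) for \(i\ge2\). Then define \(a_i=b_i\omega_i^{1/(P-1)}\) when \(\omega_i>0\), and \(a_i=0\) otherwise, with \(a_1=0\).

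This converse is the only delicate point. Coordinates with \(\omega_i=0\) have \(\widehat\mu_j=\lambda_i=0\), so changing \(\widehat b_j\) there affects neither side of the reversed inequality. Replacing those \(\widehat b_j\) by zero therefore loses nothing. With that modification the identities above hold for the constructed \(a\), which gives \(\widehat{\mathcal H}_m\le\mathcal C_n\). Zero-mass coordinates may also be kept, consistent with Remark~\ref{rem:appB-zero-lambda}. Finally, the formal zero-denominator cases are handled as optimal constants over all nonnegative sequences, so no quotient by zero arises.
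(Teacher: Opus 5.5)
Your proposal is correct and follows the paper's proof essentially step for step. You use the same index substitutions $i=n-j+1$ and $k=n-\ell$ for the pointwise, output and input identities, with $(C_{\omega,n}a)_n=0$ and $a_1=0$ absorbing the discarded endpoints. Your explicit converse construction, which sets $\widehat b_j$ to zero where $\widehat\mu_j=\lambda_{n-j+1}=0$, just spells out what the paper compresses into the remark that the substitutions are invertible on the active coordinates.
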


\begin{proof}
For \(1\le\ell\le m\),
\[
 \begin{aligned}
 (\widehat H_m\widehat b)_\ell
 &=
 \sum_{j=1}^{\ell}
 b_{n-j+1}\lambda_{n-j+1}
 \\
 &=
 \sum_{i=n-\ell+1}^{n}
 b_i\lambda_i
 \\
 &=
 (C_{\omega,n}a)_{n-\ell},
 \end{aligned}
\]
which proves \eqref{eq:appB-pointwise-reversal}.

Using the change of index
\[
 k=n-\ell,
\]
we obtain
\[
 \begin{aligned}
 \sum_{\ell=1}^{m}
 (\widehat H_m\widehat b)_\ell^Q
 \widehat\nu_\ell
 &=
 \sum_{\ell=1}^{n-1}
 (C_{\omega,n}a)_{n-\ell}^Q
 \nu_{n-\ell}
 \\
 &=
 \sum_{k=1}^{n-1}
 (C_{\omega,n}a)_k^Q\nu_k.
 \end{aligned}
\]
Since \((C_{\omega,n}a)_n=0\), this is
\eqref{eq:appB-output-norm-reversal}.

Similarly,
\[
 \begin{aligned}
 \sum_{j=1}^{m}
 \widehat b_j^P\widehat\mu_j
 &=
 \sum_{j=1}^{n-1}
 b_{n-j+1}^P\lambda_{n-j+1}
 \\
 &=
 \sum_{i=2}^{n}b_i^P\lambda_i
 \\
 &=
 \sum_{i=1}^{n}a_i^P\mu_i,
 \end{aligned}
\]
where Lemma~\ref{lem:appB-weighted-isometry} and \(a_1=0\) were used.

Thus the two operator quotients agree exactly. The substitutions and
index reversal are invertible on the active coordinates, so taking the
two suprema gives \eqref{eq:appB-exact-norm-equality}.
\end{proof}

\subsection{Transformation of the cumulative quantities}
\label{subsec:appB-cumulative-reversal}

For the reversed Hardy chain, define
\begin{equation}
 \widehat M_\ell
 :=
 \sum_{j=1}^{\ell}\widehat\mu_j,
 \qquad
 \widehat N_\ell
 :=
 \sum_{s=\ell}^{m}\widehat\nu_s,
 \qquad
 1\le\ell\le m.
 \label{eq:appB-reversed-cumulatives}
\end{equation}

\begin{lemma}[Reversal of heads and tails]
\label{lem:appB-cumulative-reversal}
For \(1\le\ell\le n-1\),
\begin{equation}
 \widehat M_\ell
 =
 B_{n-\ell}
 \label{eq:appB-Mhat-B}
\end{equation}
and
\begin{equation}
 \widehat N_\ell
 =
 K_{n-\ell}.
 \label{eq:appB-Nhat-K}
\end{equation}
\end{lemma}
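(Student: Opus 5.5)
The plan is a direct index computation from the reversed data \eqref{eq:appB-reversed-data}, treating the two identities separately.

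For \eqref{eq:appB-Mhat-B}, I would write \(\widehat M_\ell=\sum_{j=1}^{\ell}\widehat\mu_j=\sum_{j=1}^{\ell}\lambda_{n-j+1}\). Substituting \(i=n-j+1\) sends \(j=1,\dots,\ell\) onto \(i=n-\ell+1,\dots,n\). Hence
\[
 \widehat M_\ell=\sum_{i=n-\ell+1}^{n}\omega_i^{P'}\mu_i .
\]
By \eqref{eq:appB-lambda} and \eqref{eq:appB-Bk}, this is exactly the strict tail \(B_{n-\ell}\), where the sum starts at \((n-\ell)+1\). The strictness of \(B_k\) is precisely what the reversed closed Hardy head sees: the diagonal input coordinate \(i=n-\ell\) is excluded.

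For \eqref{eq:appB-Nhat-K}, I would write \(\widehat N_\ell=\sum_{s=\ell}^{m}\widehat\nu_s=\sum_{s=\ell}^{n-1}\nu_{n-s}\). Substituting \(j=n-s\) sends \(s=\ell,\dots,n-1\) onto \(j=1,\dots,n-\ell\). This gives \(\sum_{j=1}^{n-\ell}\nu_j=K_{n-\ell}\) by \eqref{eq:appB-Kk}. The only point to check here is that the upper limit \(m=n-1\) corresponds to \(j=1\). The discarded output coordinate \(\nu_n\) never appears, consistent with \((C_{\omega,n}a)_n=0\), and it is also not needed since \(n-\ell\le n-1\).

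There is no genuine obstacle. The only care needed is the bookkeeping of index ranges at the two discarded endpoints: \(a_1\) on the input side and \(\nu_n\) on the output side. One must confirm that \(\ell\) ranging over \(1,\dots,n-1\) makes \(n-\ell\) range over \(1,\dots,n-1\), so that both \(B_{n-\ell}\) and \(K_{n-\ell}\) are defined by \eqref{eq:appB-Bk} and \eqref{eq:appB-Kk}. It is also worth noting that the identities hold regardless of whether zero masses \(\lambda_i\) are suppressed (Remark~\ref{rem:appB-zero-lambda}), since zero terms do not change the sums.
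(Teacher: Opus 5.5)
Your proposal is correct and follows the paper's proof: both identities come from the reindexings \(i=n-j+1\) and \(k=n-s\) applied to the reversed data, giving \(\sum_{i=n-\ell+1}^{n}\lambda_i=B_{n-\ell}\) and \(\sum_{k=1}^{n-\ell}\nu_k=K_{n-\ell}\). Your comments on the excluded diagonal coordinate and the unused \(\nu_n\) are accurate additions that the paper does not spell out.
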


\begin{proof}
By \eqref{eq:appB-reversed-data},
\[
 \begin{aligned}
 \widehat M_\ell
 &=
 \sum_{j=1}^{\ell}
 \lambda_{n-j+1}
 \\
 &=
 \sum_{i=n-\ell+1}^{n}\lambda_i
 \\
 &=
 B_{n-\ell}.
 \end{aligned}
\]
Likewise,
\[
 \begin{aligned}
 \widehat N_\ell
 &=
 \sum_{s=\ell}^{n-1}\nu_{n-s}
 \\
 &=
 \sum_{k=1}^{n-\ell}\nu_k
 \\
 &=
 K_{n-\ell}.
 \end{aligned}
\]
\end{proof}

\subsection{The strict Copson theorem}
\label{subsec:appB-Copson-theorem}

\begin{theorem}[Finite-chain strict Copson theorem]
\label{thm:appB-finite-chain-Copson}
Let \(\mathcal C_n\) be the optimal constant in
\eqref{eq:appB-strict-Copson-inequality}.

If \(P\le Q\), define
\begin{equation}
 \mathfrak C_n
 :=
 \max_{1\le k\le n}
 K_k^{1/Q}B_k^{1/P'}.
 \label{eq:appB-convex-Copson-characteristic}
\end{equation}
Then
\begin{equation}
 \mathfrak C_n
 \le
 \mathcal C_n
 \le
 C_{P,Q}\mathfrak C_n.
 \label{eq:appB-convex-Copson-result}
\end{equation}

If \(Q<P\), define \(R\in(0,\infty)\) by
\begin{equation}
 \frac1R
 =
 \frac1Q-\frac1P
 \label{eq:appB-Copson-R}
\end{equation}
and put
\begin{equation}
 \mathfrak C_n
 :=
 \left[
   \sum_{k=1}^{n}
   B_k^{R/P'}
   \left(
     K_k^{R/Q}-K_{k-1}^{R/Q}
   \right)
 \right]^{1/R}.
 \label{eq:appB-nonconvex-Copson-characteristic}
\end{equation}
Then
\begin{equation}
 c_{P,Q}\mathfrak C_n
 \le
 \mathcal C_n
 \le
 C_{P,Q}\mathfrak C_n.
 \label{eq:appB-nonconvex-Copson-result}
\end{equation}

The constants are independent of \(n\), the masses, and the Copson
weight.
\end{theorem}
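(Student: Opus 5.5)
The plan is to reduce everything to Theorem~\ref{thm:appA-finite-chain-hardy} through Lemma~\ref{lem:appB-weighted-isometry}, Lemma~\ref{lem:appB-exact-reversal} and Lemma~\ref{lem:appB-cumulative-reversal}. The case \(n=1\) is trivial: \(C_{\omega,1}a\equiv0\) and \(B_1=0\), so both sides vanish. For \(n\ge2\), Lemma~\ref{lem:appB-exact-reversal} gives \(\mathcal C_n=\widehat{\mathcal H}_m\) with \(m=n-1\), the optimal constant of the reversed closed Hardy chain with input masses \(\widehat\mu_j=\lambda_{n-j+1}\) and output masses \(\widehat\nu_\ell=\nu_{n-\ell}\). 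Zero masses \(\widehat\mu_j\) are handled by the merging convention of Remark~\ref{rem:appB-zero-lambda}; this preserves the cumulative quantities, so Theorem~\ref{thm:appA-finite-chain-hardy} applies with constants depending only on \(P,Q\). It then remains to rewrite \(\widehat{\mathfrak H}_m\) in terms of \(K_k\) and \(B_k\) using \(\widehat M_\ell=B_{n-\ell}\) and \(\widehat N_\ell=K_{n-\ell}\).

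\emph{Convex case \(P\le Q\).} Here
\[
\widehat{\mathfrak H}_m=\max_{1\le\ell\le n-1}\widehat M_\ell^{1/P'}\widehat N_\ell^{1/Q}=\max_{1\le k\le n-1}B_k^{1/P'}K_k^{1/Q}.
\]
The index \(k=n\) contributes nothing since \(B_n=0\), so this equals \(\mathfrak C_n\). Inequality \eqref{eq:appB-convex-Copson-result} then follows directly from \eqref{eq:appA-convex-result}, including the sharp lower constant \(1\).

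\emph{Non-convex case \(Q<P\).} I first apply Abel summation to the Stieltjes sum of the reversed chain, using \(\widehat M_0=0\) and \(\widehat N_{m+1}=0\):
\[
\widehat{\mathfrak H}_m^R=\sum_{\ell=1}^m\widehat N_\ell^{R/Q}\bigl(\widehat M_\ell^{R/P'}-\widehat M_{\ell-1}^{R/P'}\bigr)=\sum_{\ell=1}^m\widehat M_\ell^{R/P'}\bigl(\widehat N_\ell^{R/Q}-\widehat N_{\ell+1}^{R/Q}\bigr).
\]
Substituting \(k=n-\ell\) turns this into
\[
\sum_{k=1}^{n-1}B_k^{R/P'}\bigl(K_k^{R/Q}-K_{k-1}^{R/Q}\bigr),
\]
where \(\widehat N_{m+1}=0\) corresponds to \(K_0=0\). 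The missing \(k=n\) term vanishes because \(B_n=0\), so \(\widehat{\mathfrak H}_m=\mathfrak C_n\) exactly. Then \eqref{eq:appA-nonconvex-result} yields \eqref{eq:appB-nonconvex-Copson-result}.

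The only delicate step is this index bookkeeping at the boundary: checking that the unused coordinates \(a_1\) and \((C_{\omega,n}a)_n\) correspond exactly to the vanishing terms \(B_n=0\) and \(K_0=0\), so that no diagonal atom is included. I will also note that the reduction holds for every finite weight \(\omega\). Consequently all constants are inherited from Theorem~\ref{thm:appA-finite-chain-hardy}, and they are independent of \(n\), the masses and \(\omega\).
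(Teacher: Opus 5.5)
Your proposal is correct and follows the paper's proof. You reduce via the exact reversal \(\mathcal C_n=\widehat{\mathcal H}_{n-1}\), apply Theorem~\ref{thm:appA-finite-chain-hardy}, and translate heads and tails through \(\widehat M_\ell=B_{n-\ell}\), \(\widehat N_\ell=K_{n-\ell}\), using \(B_n=0\) to adjoin the index \(k=n\). The only difference is cosmetic: in the non-convex case you apply Abel summation on the reversed chain before substituting \(k=n-\ell\), whereas the paper first substitutes to obtain \(\sum_{k=1}^{n-1}K_k^{R/Q}(B_k^{R/P'}-B_{k+1}^{R/P'})\) and then applies Abel summation with \(K_0=0\) and \(B_n=0\).
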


\begin{proof}
If \(n=1\), then \(C_{\omega,1}=0\), \(B_1=0\), and all asserted
quantities vanish. Assume henceforth that \(n\ge2\).

By Lemma~\ref{lem:appB-exact-reversal},
\[
 \mathcal C_n=\widehat{\mathcal H}_{n-1}.
\]

\medskip
\noindent\textbf{The case \(P\le Q\).}

Theorem~\ref{thm:appA-finite-chain-hardy} and
Lemma~\ref{lem:appB-cumulative-reversal} give
\[
 \begin{aligned}
 \widehat{\mathcal H}_{n-1}
 &\asymp_{P,Q}
 \max_{1\le\ell\le n-1}
 \widehat M_\ell^{1/P'}
 \widehat N_\ell^{1/Q}
 \\
 &=
 \max_{1\le\ell\le n-1}
 B_{n-\ell}^{1/P'}
 K_{n-\ell}^{1/Q}
 \\
 &=
 \max_{1\le k\le n-1}
 K_k^{1/Q}B_k^{1/P'}.
 \end{aligned}
\]
Since \(B_n=0\), adjoining the index \(k=n\) does not change the
maximum. This proves \eqref{eq:appB-convex-Copson-result}.

\medskip
\noindent\textbf{The case \(Q<P\).}

The non-convex branch of
Theorem~\ref{thm:appA-finite-chain-hardy} gives
\begin{equation}
 \begin{aligned}
 \widehat{\mathcal H}_{n-1}^R
 &\asymp_{P,Q}
 \sum_{\ell=1}^{n-1}
 \widehat N_\ell^{R/Q}
 \left(
   \widehat M_\ell^{R/P'}
   -
   \widehat M_{\ell-1}^{R/P'}
 \right).
 \end{aligned}
 \label{eq:appB-reversed-Hardy-characteristic}
\end{equation}
where \(\widehat M_0:=0\).

Under the substitution
\[
 k=n-\ell,
\]
one has
\[
 \widehat M_\ell=B_k,
 \qquad
 \widehat M_{\ell-1}=B_{k+1},
 \qquad
 \widehat N_\ell=K_k.
\]
Therefore,
\begin{equation}
 \widehat{\mathcal H}_{n-1}^R
 \asymp_{P,Q}
 \sum_{k=1}^{n-1}
 K_k^{R/Q}
 \left(
   B_k^{R/P'}-B_{k+1}^{R/P'}
 \right).
 \label{eq:appB-reversed-difference-form}
\end{equation}

Set
\[
 X_k:=K_k^{R/Q},
 \qquad
 Y_k:=B_k^{R/P'}.
\]
Since
\[
 X_0=0,
 \qquad
 Y_n=0,
\]
finite Abel summation gives the exact identity
\begin{equation}
 \sum_{k=1}^{n-1}
 X_k(Y_k-Y_{k+1})
 =
 \sum_{k=1}^{n}
 Y_k(X_k-X_{k-1}).
 \label{eq:appB-Abel-identity}
\end{equation}
Hence
\begin{equation}
 \begin{aligned}
 &\sum_{k=1}^{n-1}
 K_k^{R/Q}
 \left(
   B_k^{R/P'}-B_{k+1}^{R/P'}
 \right)
 \\
 &\qquad=
 \sum_{k=1}^{n}
 B_k^{R/P'}
 \left(
   K_k^{R/Q}-K_{k-1}^{R/Q}
 \right).
 \end{aligned}
 \label{eq:appB-two-Copson-forms}
\end{equation}
Combining
\eqref{eq:appB-reversed-difference-form} and
\eqref{eq:appB-two-Copson-forms} proves
\eqref{eq:appB-nonconvex-Copson-result}.
\end{proof}

\begin{remark}[Why the differential is generated by \(K\)]
\label{rem:appB-K-differential}
The reversal first produces the equivalent discrete expression
\[
 \sum_{k=1}^{n-1}
 K_k^{R/Q}
 \left(
   B_k^{R/P'}-B_{k+1}^{R/P'}
 \right).
\]
The exact Abel identity
\eqref{eq:appB-two-Copson-forms} converts it into
\[
 \sum_{k=1}^{n}
 B_k^{R/P'}
 \left(
   K_k^{R/Q}-K_{k-1}^{R/Q}
 \right).
\]
It is the second form that passes directly to the measure-valued
criterion
\[
 \int B_\omega(x)^{R/P'}\,
 \mathrm d\!\left(K(x)^{R/Q}\right).
\]
Thus the Stieltjes differential in the strict Copson theorem is
generated by \(K^{R/Q}\), not by the strict tail \(B_\omega\).
\end{remark}

\begin{remark}[Strictness and endpoint allocation]
\label{rem:appB-strictness}
The quantity
\[
 B_k=\sum_{i=k+1}^n\omega_i^{P'}\mu_i
\]
excludes the diagonal input mass at \(i=k\). By contrast,
\[
 K_k=\sum_{j=1}^k\nu_j
\]
includes the output mass at \(j=k\). This closed-head/strict-tail
geometry is preserved exactly by the chain reversal and is the discrete
counterpart of the convention
\[
 [c,x]
 \quad\text{for the Hardy part},
 \qquad
 (x,d]
 \quad\text{for the Copson part}.
\]
\end{remark}

\begin{remark}[Uniformity]
\label{rem:appB-uniformity}
The comparison constants in
Theorem~\ref{thm:appB-finite-chain-Copson} are precisely those inherited
from Theorem~\ref{thm:appA-finite-chain-hardy}. They are therefore
uniform over the ordered partitions used in
Appendix~\ref{app:partition-approximation}.
\end{remark}
\section{Partition approximation for finite Borel measures}
\label{app:partition-approximation}

This appendix passes from the finite-chain results of
Appendices~\ref{app:finite-chain-hardy} and
\ref{app:copson-reversal} to Hardy and strict Copson operators defined
by finite Borel measures on a compact interval.

The approximation is designed to preserve the endpoint convention at
atoms. Large atoms are isolated as singleton cells, while every
non-singleton cell has asymptotically small input and output mass. The
resulting block operators differ from the continuous operators only
through an intrablock term whose norm tends to zero.

Throughout this appendix, let
\[
 J=[c,d],
 \qquad
 1<P,Q<\infty,
\]
and let \(\sigma\) and \(\nu\) be finite nonnegative Borel measures on
\(J\). We write
\[
 P'=\frac{P}{P-1}.
\]
When \(Q<P\), \(R\in(0,\infty)\) is defined by
\begin{equation}
 \frac1R=\frac1Q-\frac1P.
 \label{eq:appC-R}
\end{equation}

\subsection{Ordered atom-preserving partitions}
\label{subsec:appC-ordered-partitions}

\begin{definition}[Ordered interval partition]
\label{def:appC-ordered-partition}
An ordered interval partition of \(J\) is a finite Borel partition
\[
 \mathcal P=\{E_1,\ldots,E_N\}
\]
such that
\[
 x<y
 \qquad
 \text{whenever }
 x\in E_i,\ y\in E_j,\ i<j,
\]
and every prefix
\[
 A_k:=\bigcup_{i=1}^kE_i
\]
is an initial interval of one of the forms
\[
 [c,t_k],
 \qquad
 [c,t_k),
 \qquad
 \text{or }J.
\]
The cells may therefore be ordinary half-open intervals or
singletons.
\end{definition}

For such a partition, define
\begin{equation}
 \varepsilon(\mathcal P)
 :=
 \max_{\substack{1\le k\le N\\E_k\text{ not a singleton}}}
 \max\{\sigma(E_k),\nu(E_k)\},
 \label{eq:appC-partition-error}
\end{equation}
with the maximum over the empty set interpreted as zero.

\begin{lemma}[Existence of adapted partitions]
\label{lem:appC-adapted-partitions}
There exists a nested sequence
\[
 \mathcal P_1\prec\mathcal P_2\prec\cdots
\]
of ordered interval partitions of \(J\) such that

\begin{equation}
 \varepsilon(\mathcal P_m)\longrightarrow0.
 \label{eq:appC-small-cell-mass}
\end{equation}
and every atom of \(\sigma+\nu\) is a singleton cell of
\(\mathcal P_m\) for all sufficiently large \(m\).

The partitions may also be chosen so that the Euclidean diameter of
every non-singleton cell tends to zero.
\end{lemma}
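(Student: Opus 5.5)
We build the partitions by a dyadic-type refinement in which the large atoms of \(\rho:=\sigma+\nu\) are isolated first. Because \(\rho\) is finite, for each \(m\) the set \(\mathcal A_m:=\{a\in J:\rho(\{a\})\ge 1/m\}\) contains at most \(m\rho(J)\) points. These sets increase with \(m\), and every atom of \(\rho\) lies in \(\mathcal A_m\) for all large \(m\). We also use the distribution function \(G(t):=\rho([c,t])\), which is right-continuous and nondecreasing. Its jumps are exactly the atoms of \(\rho\).

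\textbf{Construction.} Take the cut points \(T_m\) to be the union of the following sets:
\begin{itemize}
\item the points \(c\) and \(d\);
\item the set \(\mathcal A_m\);
\item the dyadic points \(c+k(d-c)2^{-m}\), for \(0\le k\le 2^m\);
\item the generalised quantile points \(\tau_{m,k}:=\inf\{t\in J: G(t)\ge k\rho(J)2^{-m}\}\), for \(0\le k\le 2^m\).
\end{itemize}
We enumerate \(T_m\) as \(c=s_0<s_1<\cdots<s_L=d\). The cells of \(\mathcal P_m\) are then the singletons \(\{s_j\}\) together with the open gaps \((s_{j-1},s_j)\), listed in increasing order. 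Every prefix union is therefore of the form \([c,s_j]\) or \([c,s_j)\), or equals \(J\), so \(\mathcal P_m\) is an ordered interval partition in the sense of Definition~\ref{def:appC-ordered-partition}.

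We have \(T_m\subset T_{m+1}\): each set in the union increases in \(m\), and this holds for the quantile levels too, because \(k2^{-m}=2k2^{-m-1}\). Hence every cell of \(\mathcal P_{m+1}\) is contained in a cell of \(\mathcal P_m\), which gives the nesting \(\mathcal P_m\prec\mathcal P_{m+1}\). Each atom \(a\) of \(\rho\) lies in \(T_m\) once \(1/m\le\rho(\{a\})\), and from then on \(\{a\}\) is a singleton cell. The dyadic points force every gap to have diameter at most \((d-c)2^{-m}\), which proves the final assertion.

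\textbf{Small cell mass.} This is the main step. Take a gap \(E=(s_{j-1},s_j)\). Since it contains no quantile point in its interior, \(G\) can cross at most one level \(k\rho(J)2^{-m}\) on it. More precisely, for \(t\in(s_{j-1},s_j)\) the value \(G(t)\) and the left limit \(G(s_j-)\) lie in a single interval \([k\rho(J)2^{-m},(k+1)\rho(J)2^{-m}]\). We need some care here, because \(G\) may jump exactly at a quantile point. The jump itself sits at the singleton \(s_{j-1}\) or \(s_j\), and the open gap excludes both endpoints. Therefore
\[
\rho(E)=G(s_j-)-G(s_{j-1})\le \rho(J)2^{-m}.
\]
We justify this by the definition of \(\tau_{m,k}\) as an infimum: if \(\rho(E)\) exceeded \(\rho(J)2^{-m}\), then some level \(k\rho(J)2^{-m}\) would be first reached at a point strictly inside \(E\), and that point would be a quantile point, a contradiction.

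The singletons are excluded from \(\varepsilon(\mathcal P_m)\). We conclude \(\varepsilon(\mathcal P_m)\le\rho(J)2^{-m}\to0\), since \(\sigma(E),\nu(E)\le\rho(E)\). This proves \eqref{eq:appC-small-cell-mass}.

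Note that the atom isolation through \(\mathcal A_m\) is not needed for the mass bound, since the quantile points already absorb every large jump. It is needed to guarantee that all atoms, including small ones, eventually become singletons. The step to check most carefully is the quantile argument at jump points.
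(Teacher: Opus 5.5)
Your construction is correct and follows essentially the paper's route: isolate the atoms of \(\sigma+\nu\) above a threshold, cut by the distribution function of \(\sigma+\nu\), and add Euclidean subdivision points. Your version is more explicit. Making every cut point a singleton and using nested dyadic quantile levels gives the refinement automatically and yields the bound \(\rho(J)2^{-m}\) without the atom threshold, whereas the paper enforces nesting by carrying forward the previous cut points and obtains \(2^{1-m}\) only after isolating the atoms heavier than \(2^{-m}\).
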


\begin{proof}
Put
\[
 \eta:=\sigma+\nu.
\]
For each \(m\), the set
\[
 \mathcal A_m
 :=
 \left\{
   x\in J:
   \eta(\{x\})>2^{-m}
 \right\}
\]
is finite. Moreover,
\[
 \mathcal A_m\subset\mathcal A_{m+1},
\]
and every atom of \(\eta\) belongs to \(\mathcal A_m\) for all
sufficiently large \(m\).

Start with the endpoints \(c,d\), the points of \(\mathcal A_m\), and
all endpoints and singleton cells already used in
\(\mathcal P_{m-1}\). Isolate the points of \(\mathcal A_m\) as
singletons. On each remaining interval, subdivide successively by the
distribution function of \(\eta\) so that every resulting
non-singleton cell has \(\eta\)-mass at most \(2^{1-m}\). Additional
Euclidean subdivision points may be inserted so that its diameter is
at most \(2^{-m}\).

The resulting partition refines \(\mathcal P_{m-1}\), and every
non-singleton cell \(E\) satisfies
\[
 \sigma(E)\le\eta(E)\le2^{1-m},
 \qquad
 \nu(E)\le\eta(E)\le2^{1-m}.
\]
Hence \eqref{eq:appC-small-cell-mass} follows.
\end{proof}

For the rest of the appendix, fix an adapted sequence
\[
 \mathcal P_m
 =
 \{E_{m,1},\ldots,E_{m,N_m}\}
\]
as in Lemma~\ref{lem:appC-adapted-partitions}. To simplify notation,
the index \(m\) will occasionally be suppressed.

\subsection{Block Hardy and Copson operators}
\label{subsec:appC-block-operators}

Define the continuous closed Hardy operator
\begin{equation}
 H_\sigma h(x)
 :=
 \int_{[c,x]}h(t)\,\dd\sigma(t)
 \label{eq:appC-continuous-Hardy}
\end{equation}
and the continuous strict Copson operator
\begin{equation}
 C_\sigma h(x)
 :=
 \int_{(x,d]}h(t)\,\dd\sigma(t).
 \label{eq:appC-continuous-Copson}
\end{equation}

For \(x\in E_{m,k}\), define the block operators
\begin{equation}
 H_m^+h(x)
 :=
 \sum_{i=1}^{k}
 \int_{E_{m,i}}h\,\dd\sigma
 \label{eq:appC-block-Hardy}
\end{equation}
and
\begin{equation}
 C_m^-h(x)
 :=
 \sum_{i=k+1}^{N_m}
 \int_{E_{m,i}}h\,\dd\sigma.
 \label{eq:appC-block-Copson}
\end{equation}
The superscripts emphasise that the Hardy block operator includes the
whole current cell, whereas the Copson block operator excludes it.

Set
\begin{equation}
 D_mh(x)
 :=
 \sum_{\substack{1\le k\le N_m\\E_{m,k}\text{ not a singleton}}}
 \mathbf 1_{E_{m,k}}(x)
 \int_{E_{m,k}}h\,\dd\sigma.
 \label{eq:appC-diagonal-block}
\end{equation}

\begin{lemma}[Intracell error]
\label{lem:appC-intracell-error}
For every \(h\ge0\),
\begin{equation}
 0
 \le
 H_m^+h-H_\sigma h
 \le
 D_mh
 \label{eq:appC-Hardy-error}
\end{equation}
and
\begin{equation}
 0
 \le
 C_\sigma h-C_m^-h
 \le
 D_mh.
 \label{eq:appC-Copson-error}
\end{equation}
\end{lemma}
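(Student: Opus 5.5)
The plan is a pointwise computation. Fix \(m\), \(h\ge0\) and \(x\in J\), and let \(k\) be the unique index with \(x\in E_{m,k}\). Since the partition is ordered, every cell \(E_{m,i}\) with \(i<k\) lies entirely to the left of \(x\), and every cell with \(i>k\) lies entirely to the right of \(x\). Consequently the prefix \(A_{k-1}=\bigcup_{i<k}E_{m,i}\) is contained in \([c,x)\), and the union \(\bigcup_{i>k}E_{m,i}\) is contained in \((x,d]\). More precisely, we have the disjoint decompositions
\[
 [c,x]=A_{k-1}\cup\bigl(E_{m,k}\cap[c,x]\bigr),
 \qquad
 (x,d]=\bigl(E_{m,k}\cap(x,d]\bigr)\cup\bigcup_{i>k}E_{m,i}.
\]
Both follow from the ordering property and from the fact that the cells partition \(J\).

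Using these decompositions and additivity of the integral against \(\sigma\), we obtain
\[
 H_m^+h(x)-H_\sigma h(x)=\int_{E_{m,k}\cap(x,d]}h\,\dd\sigma,
 \qquad
 C_\sigma h(x)-C_m^-h(x)=\int_{E_{m,k}\cap(x,d]}h\,\dd\sigma .
\]
Both differences are therefore nonnegative. Each is bounded by \(\int_{E_{m,k}}h\,\dd\sigma\), because \(h\ge0\).

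It remains to compare this bound with \(D_mh(x)\), and we split into two cases.
\begin{enumerate}
\item If \(E_{m,k}\) is not a singleton, the indicator \(\mathbf 1_{E_{m,k}}(x)=1\). Since the cells are disjoint, this is the only term of \eqref{eq:appC-diagonal-block} that is nonzero at \(x\). Hence \(D_mh(x)=\int_{E_{m,k}}h\,\dd\sigma\), and the bound is exactly \(D_mh(x)\).
\item If \(E_{m,k}=\{x\}\) is a singleton, then \(E_{m,k}\cap(x,d]=\varnothing\). Both differences vanish, and \(0\le D_mh(x)\) holds trivially.
\end{enumerate}
This singleton case is precisely where the closed-Hardy/strict-Copson allocation is preserved with zero error: the atom at \(x\) is counted in \(H_m^+\) and in \(H_\sigma\), and it is excluded from both \(C_m^-\) and \(C_\sigma\).

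The only point needing care is the value \(+\infty\). When some integral is infinite, the subtractions are not defined as written. In that case I would state the estimates in the additive form
\[
 H_\sigma h\le H_m^+h\le H_\sigma h+D_mh,
 \qquad
 C_m^-h\le C_\sigma h\le C_m^-h+D_mh,
\]
which follows from the same decompositions without any subtraction. Alternatively, one can first prove the lemma for bounded \(h\) and then pass to general \(h\ge0\) by monotone truncation. I do not expect any genuine obstacle beyond this bookkeeping.
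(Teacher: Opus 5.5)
Your argument is correct and is essentially the paper's own proof: both use the ordering of the cells to show that each difference equals the \(\sigma\)-integral of \(h\) over \(E_{m,k}\cap(x,d]\), which is zero on a singleton cell and at most \(\int_{E_{m,k}}h\,\dd\sigma=D_mh(x)\) otherwise. Your additive formulation for the case of infinite integrals is a harmless refinement that the paper leaves implicit.
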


\begin{proof}
Fix \(x\in E_{m,k}\). Every point of \(E_{m,i}\) with \(i<k\) lies to
the left of \(x\), while every point of \(E_{m,i}\) with \(i>k\) lies
to its right. Hence the only difference between
\(H_m^+h(x)\) and \(H_\sigma h(x)\) is the portion of the current cell
lying to the right of \(x\).

If \(E_{m,k}\) is a singleton, that difference is zero because the
closed Hardy integral includes the atom at \(x\). Otherwise it is
bounded by
\[
 \int_{E_{m,k}}h\,\dd\sigma.
\]
This proves \eqref{eq:appC-Hardy-error}.

Similarly, the only difference between \(C_\sigma h(x)\) and
\(C_m^-h(x)\) is the part of the current cell lying strictly to the
right of \(x\). On a singleton cell this contribution is zero, because
the Copson kernel is strict. This proves
\eqref{eq:appC-Copson-error}.
\end{proof}

\subsection{Norm of the diagonal error}
\label{subsec:appC-diagonal-estimate}

\begin{lemma}[Small-cell diagonal estimate]
\label{lem:appC-diagonal-estimate}
Let
\[
 \delta_m:=\varepsilon(\mathcal P_m).
\]
Then
\begin{equation}
 \|D_m:L^P(\sigma)\to L^Q(\nu)\|
 \longrightarrow0.
 \label{eq:appC-diagonal-norm-zero}
\end{equation}

More precisely, if \(P\le Q\), then
\begin{equation}
 \|D_m\|
 \le
 \delta_m^{\,1/P'+1/Q},
 \label{eq:appC-diagonal-convex-bound}
\end{equation}
whereas if \(Q<P\), then
\begin{equation}
 \|D_m\|
 \le
 \delta_m\,\nu(J)^{1/R}.
 \label{eq:appC-diagonal-nonconvex-bound}
\end{equation}
\end{lemma}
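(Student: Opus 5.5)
The plan is to estimate the output functional of \(D_mh\) directly, cell by cell, using only Hölder's inequality and the smallness of the non-singleton cells. No Hardy-type machinery is needed, because \(D_m\) is block-diagonal. Fix \(m\) and \(h\ge0\), and write \(E_k=E_{m,k}\). Let \(\sigma_k=\sigma(E_k)\), \(\nu_k=\nu(E_k)\) and \(a_k=\int_{E_k}h^P\,\dd\sigma\), where \(k\) ranges only over the non-singleton cells. Since the cells are disjoint and \(D_mh\) is constant on each such cell,
\[
 \|D_mh\|_{L^Q(\nu)}^Q
 =\sum_k \nu_k\Bigl(\int_{E_k}h\,\dd\sigma\Bigr)^Q
 \le \sum_k \nu_k\,\sigma_k^{Q/P'}\,a_k^{Q/P},
\]
where the inequality is Hölder's inequality on each cell with exponents \(P,P'\). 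By the definition \eqref{eq:appC-partition-error}, each \(\sigma_k\le\delta_m\) and each \(\nu_k\le\delta_m\). Moreover \(\sum_k a_k\le\|h\|_{L^P(\sigma)}^P\).

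In the case \(P\le Q\), I bound \(\nu_k\sigma_k^{Q/P'}\le\delta_m^{1+Q/P'}\). This leaves \(\sum_k a_k^{Q/P}\), and since \(Q/P\ge1\) the elementary embedding \(\ell^1\subset\ell^{Q/P}\) gives \(\sum_k a_k^{Q/P}\le(\sum_k a_k)^{Q/P}\le\|h\|_{L^P(\sigma)}^Q\). Taking \(Q\)-th roots gives \(\|D_m\|\le\delta_m^{1/Q+1/P'}\), which is \eqref{eq:appC-diagonal-convex-bound}.

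In the case \(Q<P\), I apply a discrete Hölder inequality in \(k\) with conjugate exponents \(P/Q\) and \(R/Q\); these are conjugate because \(Q/P+Q/R=1\) by \eqref{eq:appC-R}. This gives
\[
 \sum_k \nu_k\sigma_k^{Q/P'}a_k^{Q/P}
 \le\Bigl(\sum_k \nu_k^{R/Q}\sigma_k^{R/P'}\Bigr)^{Q/R}\Bigl(\sum_k a_k\Bigr)^{Q/P}.
\]
Using \(R/Q=1+R/P\), I write \(\nu_k^{R/Q}=\nu_k\,\nu_k^{R/P}\le\nu_k\,\delta_m^{R/P}\). Together with \(\sigma_k^{R/P'}\le\delta_m^{R/P'}\), this bounds the first sum by \(\delta_m^{R}\sum_k\nu_k\le\delta_m^R\,\nu(J)\). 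Taking roots gives \(\|D_m\|\le\delta_m\,\nu(J)^{1/R}\), which is \eqref{eq:appC-diagonal-nonconvex-bound}.

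Finally, \(\delta_m\to0\) by Lemma~\ref{lem:appC-adapted-partitions}, and \(\nu(J)<\infty\) with the exponents \(1/P'+1/Q\) and \(1\) positive, so both bounds tend to zero and \eqref{eq:appC-diagonal-norm-zero} follows. The only step I expect to need care is the exponent bookkeeping in the non-convex case. There the splitting \(\nu_k^{R/Q}=\nu_k\nu_k^{R/P}\) is essential: it keeps one full power of \(\nu_k\), which is summable to \(\nu(J)\), and spends only the surplus power on the small-cell bound. Singleton cells cause no difficulty, since they are excluded from \(D_m\) by definition.
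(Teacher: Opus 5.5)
Your proof is correct and follows essentially the same route as the paper: Hölder on each non-singleton cell, the embedding \(\ell^P\hookrightarrow\ell^Q\) (your \(\ell^1\subset\ell^{Q/P}\)) when \(P\le Q\), and when \(Q<P\) the diagonal-multiplier bound \(\bigl(\sum_k d_k^R\bigr)^{1/R}\), which your discrete Hölder with exponents \(R/Q,P/Q\) proves. Your splitting \(\nu_k^{R/Q}=\nu_k\nu_k^{R/P}\) is the same bookkeeping as the paper's identity \(R/P'+R/Q-1=R\).
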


\begin{proof}
For a non-singleton cell \(E_{m,k}\), Hölder's inequality gives
\[
 \int_{E_{m,k}}h\,\dd\sigma
 \le
 \sigma(E_{m,k})^{1/P'}
 \left(
   \int_{E_{m,k}}h^P\,\dd\sigma
 \right)^{1/P}.
\]
Put
\[
 z_k
 :=
 \left(
   \int_{E_{m,k}}h^P\,\dd\sigma
 \right)^{1/P}
\]
and
\[
 d_k
 :=
 \sigma(E_{m,k})^{1/P'}
 \nu(E_{m,k})^{1/Q}.
\]
Then
\[
 \|D_mh\|_{L^Q(\nu)}
 \le
 \left(
   \sum_{k}d_k^Qz_k^Q
 \right)^{1/Q},
\]
where the sum is restricted to non-singleton cells.

If \(P\le Q\), the embedding
\[
 \ell^P\hookrightarrow\ell^Q
\]
gives
\[
 \|D_mh\|_{L^Q(\nu)}
 \le
 \left(\sup_kd_k\right)
 \left(\sum_kz_k^P\right)^{1/P}.
\]
Since
\[
 \sigma(E_{m,k}),
 \nu(E_{m,k})
 \le\delta_m,
\]
we obtain \eqref{eq:appC-diagonal-convex-bound}.

Assume now that \(Q<P\). The norm of the diagonal map
\[
 (z_k)\longmapsto(d_kz_k)
\]
from \(\ell^P\) to \(\ell^Q\) is
\[
 \left(\sum_kd_k^R\right)^{1/R}.
\]
Moreover,
\[
 \begin{aligned}
 \sum_kd_k^R
 &=
 \sum_k
 \sigma(E_{m,k})^{R/P'}
 \nu(E_{m,k})^{R/Q}
 \\
 &\le
 \delta_m^{R/P'}
 \delta_m^{R/Q-1}
 \sum_k\nu(E_{m,k}).
 \end{aligned}
\]
Using
\[
 \frac{R}{P'}+\frac{R}{Q}-1=R,
\]
we obtain
\[
 \sum_kd_k^R
 \le
 \delta_m^R\nu(J).
\]
This proves \eqref{eq:appC-diagonal-nonconvex-bound}.
\end{proof}

\subsection{Equality with the finite-chain norms}
\label{subsec:appC-chain-norms}

Define the cell masses
\begin{equation}
 \sigma_{m,k}
 :=
 \sigma(E_{m,k}),
 \qquad
 \nu_{m,k}
 :=
 \nu(E_{m,k}).
 \label{eq:appC-cell-masses}
\end{equation}
Cells with \(\sigma_{m,k}=0\) may be suppressed.

Let \(\mathcal H_m\) be the optimal constant of the finite-chain Hardy
inequality
\begin{equation}
 \left[
   \sum_{k=1}^{N_m}
   \left(
     \sum_{i=1}^ka_i\sigma_{m,i}
   \right)^Q
   \nu_{m,k}
 \right]^{1/Q}
 \le
 \mathcal H_m
 \left[
   \sum_{i=1}^{N_m}
   a_i^P\sigma_{m,i}
 \right]^{1/P},
 \label{eq:appC-chain-Hardy}
\end{equation}
and let \(\mathcal C_m\) be the optimal constant in
\begin{equation}
 \left[
   \sum_{k=1}^{N_m}
   \left(
     \sum_{i=k+1}^{N_m}a_i\sigma_{m,i}
   \right)^Q
   \nu_{m,k}
 \right]^{1/Q}
 \le
 \mathcal C_m
 \left[
   \sum_{i=1}^{N_m}
   a_i^P\sigma_{m,i}
 \right]^{1/P}.
 \label{eq:appC-chain-Copson}
\end{equation}

\begin{lemma}[Exact block-chain correspondence]
\label{lem:appC-block-chain-correspondence}
One has
\begin{equation}
 \|H_m^+:L^P(\sigma)\to L^Q(\nu)\|
 =
 \mathcal H_m
 \label{eq:appC-Hardy-block-equality}
\end{equation}
and
\begin{equation}
 \|C_m^-:L^P(\sigma)\to L^Q(\nu)\|
 =
 \mathcal C_m.
 \label{eq:appC-Copson-block-equality}
\end{equation}
\end{lemma}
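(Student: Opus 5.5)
The plan is to show that the block operators $H_m^+$ and $C_m^-$ factor exactly through the finite chains, via two positive maps: a conditional-expectation (averaging) map and a constant-on-cells lifting map. Suppress $m$ and assume, as allowed, that every cell has $\sigma_k:=\sigma(E_k)>0$. A cell with $\sigma_k=0$ contributes nothing to any input integral, so it only carries output mass; we merge that mass into the preceding cell's output coordinate, exactly as in Subsection~\ref{subsec:appA-notation}.

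\emph{Averaging.} For $h\ge0$ in $L^P(\sigma)$, put $a_i:=\sigma_i^{-1}\int_{E_i}h\,\dd\sigma$. Then $\int_{E_i}h\,\dd\sigma=a_i\sigma_i$, and on each $E_k$ we have
\[
H^+h\equiv\sum_{i\le k}a_i\sigma_i,\qquad C^-h\equiv\sum_{i>k}a_i\sigma_i .
\]
These values are constant on the cell, so
\[
\|H^+h\|_{L^Q(\nu)}^Q=\sum_k\Bigl(\sum_{i\le k}a_i\sigma_i\Bigr)^Q\nu_k,
\]
and similarly for $C^-$. Jensen (or Hölder) on each cell gives $a_i^P\sigma_i\le\int_{E_i}h^P\,\dd\sigma$. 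Hence $\sum_i a_i^P\sigma_i\le\|h\|_{L^P(\sigma)}^P$, and so $\|H^+h\|\le\mathcal H_m\|h\|$. This proves the upper bound $\|H^+\|\le\mathcal H_m$, and in the same way $\|C^-\|\le\mathcal C_m$.

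\emph{Lifting.} Conversely, given $a\ge0$, set $h:=\sum_i a_i\mathbf 1_{E_i}$. Then $\|h\|_{L^P(\sigma)}^P=\sum_ia_i^P\sigma_i$ exactly, and the averages of $h$ are again $a_i$. The outputs therefore coincide with the chain outputs, which gives $\mathcal H_m\le\|H^+\|$ and $\mathcal C_m\le\|C^-\|$. Signed inputs are not needed, since both operators are positive and $|Th|\le T|h|$. Both identities also hold in the extended sense, because each side is a supremum over the same quotients.

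\emph{Bookkeeping.} The only point needing care is the merged or suppressed cells. When a $\sigma$-null cell is absorbed into its predecessor, the Hardy block value on that cell equals the prefix sum up to the predecessor, so attaching its $\nu$-mass to the predecessor's index is exact. For the Copson operator, the strict tail on a $\sigma$-null cell $E_k$ equals the strict tail of the predecessor, because the predecessor's tail also omits $E_k$, which carries no $\sigma$-mass. So this merge is exact as well.

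An initial $\sigma$-null cell needs separate handling. In the Hardy case its output is identically zero, so its $\nu$-mass can simply be dropped. In the Copson case, however, its output equals the full sum $\sum_i a_i\sigma_i$ and cannot be dropped. I would resolve this by keeping such cells with a zero mass $\sigma_k=0$ in the chain, since \eqref{eq:appC-chain-Copson} is well defined with zero input masses, rather than deleting them. Verifying that this convention is consistent with the statement's phrase ``may be suppressed'' is the main (minor) obstacle; the rest is routine.
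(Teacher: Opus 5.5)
Your proposal is correct and follows the paper's proof: cell averages together with Jensen's inequality give $\|H_m^+\|\le\mathcal H_m$ and $\|C_m^-\|\le\mathcal C_m$, and cell-constant lifts give the reverse inequalities. The paper does not merge any cells in this lemma. It simply sets $a_k=0$ on cells with $\sigma_{m,k}=0$ and keeps them in the chain, which is well defined with zero input masses; this is exactly the resolution you arrive at. Your observation that an initial $\sigma$-null cell carrying $\nu$-mass cannot be deleted from the Copson chain without changing $\mathcal C_m$ is a fair caveat on the paper's phrase ``may be suppressed''.
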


\begin{proof}
For \(h\ge0\), define its cell averages by
\[
 a_k
 :=
 \begin{cases}
 \displaystyle
 \frac1{\sigma_{m,k}}
 \int_{E_{m,k}}h\,\dd\sigma,
 &\sigma_{m,k}>0,
 \\[3mm]
 0,
 &\sigma_{m,k}=0.
 \end{cases}
\]
Jensen's inequality yields
\[
 \sum_{k=1}^{N_m}
 a_k^P\sigma_{m,k}
 \le
 \int_Jh^P\,\dd\sigma.
\]
Equations \eqref{eq:appC-block-Hardy} and
\eqref{eq:appC-block-Copson} then give the upper bounds in
\eqref{eq:appC-Hardy-block-equality} and
\eqref{eq:appC-Copson-block-equality}.

Conversely, every nonnegative finite sequence \((a_k)\) is realised by
the cell-constant function
\[
 h=\sum_{k=1}^{N_m}a_k\mathbf 1_{E_{m,k}}.
\]
For such a function, both the input norm and the corresponding block
output norm agree exactly with the discrete expressions. Hence the
reverse inequalities follow.
\end{proof}

\begin{theorem}[Convergence of the operator norms]
\label{thm:appC-operator-norm-convergence}
Let
\[
 \mathcal H
 :=
 \|H_\sigma:L^P(\sigma)\to L^Q(\nu)\|
\]
and
\[
 \mathcal C
 :=
 \|C_\sigma:L^P(\sigma)\to L^Q(\nu)\|.
\]
Then
\begin{equation}
 \mathcal H_m\longrightarrow\mathcal H
 \label{eq:appC-Hardy-norm-convergence}
\end{equation}
and
\begin{equation}
 \mathcal C_m\longrightarrow\mathcal C.
 \label{eq:appC-Copson-norm-convergence}
\end{equation}
\end{theorem}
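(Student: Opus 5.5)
The plan is to compare each continuous operator with its block operator through the intracell error, and then to transfer the norms via Lemma~\ref{lem:appC-block-chain-correspondence}. Fix \(h\ge0\) with \(\|h\|_{L^P(\sigma)}\le1\). By Lemma~\ref{lem:appC-intracell-error} there are the pointwise sandwiches
\[
 H_\sigma h\le H_m^+h\le H_\sigma h+D_mh,
 \qquad
 C_\sigma h-D_mh\le C_m^-h\le C_\sigma h .
\]
Since \(1<Q<\infty\), \(L^Q(\nu)\) is a Banach lattice, so Minkowski's inequality applies without loss. From the Hardy sandwich we get \(\|H_\sigma h\|_{L^Q(\nu)}\le\|H_m^+h\|_{L^Q(\nu)}\le\|H_\sigma h\|_{L^Q(\nu)}+\|D_m\|\). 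Taking suprema over the unit ball gives
\[
 \mathcal H\le\|H_m^+\|\le\mathcal H+\|D_m\|,
\]
and symmetrically
\[
 \mathcal C-\|D_m\|\le\|C_m^-\|\le\mathcal C .
\]
Lemma~\ref{lem:appC-block-chain-correspondence} identifies \(\|H_m^+\|=\mathcal H_m\) and \(\|C_m^-\|=\mathcal C_m\). Lemma~\ref{lem:appC-diagonal-estimate} gives \(\|D_m\|\to0\), and the bound is finite because \(\sigma,\nu\) are finite measures. Together these yield \(|\mathcal H_m-\mathcal H|\le\|D_m\|\to0\) and \(|\mathcal C_m-\mathcal C|\le\|D_m\|\to0\) whenever \(\mathcal H,\mathcal C<\infty\).

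We still have to check that both norms are in fact finite, so that no \(\infty-\infty\) expression arises. For the Copson operator, \(\mathcal C_m\le\mathcal C\) holds trivially, and \(\mathcal C\le\mathcal C_m+\|D_m\|\). Each \(\mathcal C_m\) is finite, because a finite chain with finite masses has a finite operator norm: the norm is bounded by Hölder's inequality by \(\sigma(J)^{1/P'}\nu(J)^{1/Q}\). The same global Hölder bound \(H_\sigma h\le\sigma(J)^{1/P'}\|h\|_{L^P(\sigma)}\) shows \(\mathcal H\le\sigma(J)^{1/P'}\nu(J)^{1/Q}<\infty\), and likewise \(\mathcal C<\infty\). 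So every quantity involved is finite and the two-sided estimates are legitimate.

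The one point that needs care is measurability and well-definedness of the block operators on cells of the form \([c,t)\) versus singletons. In the proof of Lemma~\ref{lem:appC-intracell-error} the pointwise inequalities must hold for every \(x\), including \(x\) at a cell boundary. This is exactly what the ordered-partition definition guarantees, because every prefix \(A_k\) is an initial interval, so I expect no real obstacle there. I would also note that the convergence is in fact monotone for the Copson norms, \(\mathcal C_m\uparrow\mathcal C\), since the partitions are nested and refinement only moves mass from the excluded diagonal cell into strict tails. Monotonicity is not needed for the stated convergence, however.
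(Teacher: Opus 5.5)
Your proposal is correct and follows essentially the paper's route: the intracell sandwich of Lemma~\ref{lem:appC-intracell-error}, the bound \(\|D_m\|\to0\) from Lemma~\ref{lem:appC-diagonal-estimate}, and the identifications \(\|H_m^+\|=\mathcal H_m\), \(\|C_m^-\|=\mathcal C_m\) from Lemma~\ref{lem:appC-block-chain-correspondence}. The paper writes the middle step as \(\bigl|\|H_m^+\|-\|H_\sigma\|\bigr|\le\|H_m^+-H_\sigma\|\le\|D_m\|\), whereas you take suprema directly in the pointwise sandwich; your finiteness check via \(\sigma(J)^{1/P'}\nu(J)^{1/Q}\) makes explicit what the paper leaves implicit.
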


\begin{proof}
By Lemmas~\ref{lem:appC-intracell-error} and
\ref{lem:appC-diagonal-estimate},
\[
 \|H_m^+-H_\sigma\|
 \le
 \|D_m\|
 \longrightarrow0
\]
and
\[
 \|C_m^--C_\sigma\|
 \le
 \|D_m\|
 \longrightarrow0.
\]
Therefore,
\[
 \left|
   \|H_m^+\|-\|H_\sigma\|
 \right|
 \le
 \|H_m^+-H_\sigma\|
 \longrightarrow0,
\]
and similarly for the strict Copson operators. Apply
Lemma~\ref{lem:appC-block-chain-correspondence}.
\end{proof}

\subsection{Cumulative functions and their block approximants}
\label{subsec:appC-cumulative-functions}

Define
\begin{equation}
 M(x):=\sigma([c,x]),
 \qquad
 N(x):=\nu([x,d]),
 \label{eq:appC-MN}
\end{equation}
and
\begin{equation}
 K(x):=\nu([c,x]),
 \qquad
 B(x):=\sigma((x,d]).
 \label{eq:appC-KB}
\end{equation}

For a fixed partition \(\mathcal P_m\), put
\begin{equation}
 A_{m,k}:=\bigcup_{i=1}^kE_{m,i},
 \qquad
 T_{m,k}:=\bigcup_{i=k}^{N_m}E_{m,i}.
 \label{eq:appC-prefix-tail-sets}
\end{equation}
Define
\begin{equation}
 M_{m,k}:=\sigma(A_{m,k}),
 \qquad
 N_{m,k}:=\nu(T_{m,k}),
 \label{eq:appC-discrete-MN}
\end{equation}
and
\begin{equation}
 K_{m,k}:=\nu(A_{m,k}),
 \qquad
 B_{m,k}
 :=
 \sigma\left(
   \bigcup_{i=k+1}^{N_m}E_{m,i}
 \right).
 \label{eq:appC-discrete-KB}
\end{equation}
We use the conventions
\[
 M_{m,0}=K_{m,0}=0,
 \qquad
 B_{m,N_m}=0.
\]

For \(x\in E_{m,k}\), define the step functions
\begin{equation}
 M_m^+(x):=M_{m,k},
 \qquad
 N_m^+(x):=N_{m,k},
 \label{eq:appC-step-MN}
\end{equation}
and
\begin{equation}
 K_m^+(x):=K_{m,k},
 \qquad
 B_m^-(x):=B_{m,k}.
 \label{eq:appC-step-KB}
\end{equation}

\begin{lemma}[Uniform cumulative approximation]
\label{lem:appC-uniform-cumulative}
As \(m\to\infty\),
\begin{equation}
 \|M_m^+-M\|_{L^\infty(J)}
 +
 \|B_m^--B\|_{L^\infty(J)}
 \longrightarrow0
 \label{eq:appC-uniform-sigma-cumulative}
\end{equation}
and
\begin{equation}
 \|N_m^+-N\|_{L^\infty(J)}
 +
 \|K_m^+-K\|_{L^\infty(J)}
 \longrightarrow0.
 \label{eq:appC-uniform-nu-cumulative}
\end{equation}
\end{lemma}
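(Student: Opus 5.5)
The plan is to compare each step function with the corresponding continuous cumulative function at a fixed point \(x\in E_{m,k}\), and to show that the difference is the mass of part of the current cell. The two properties of the adapted partitions from Lemma~\ref{lem:appC-adapted-partitions} then finish the argument. On a singleton cell the relevant mass is zero. On a non-singleton cell it is at most \(\varepsilon(\mathcal P_m)\).

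First fix \(x\in E_{m,k}\). The partition is ordered, so \(A_{m,k-1}\subset[c,x]\) and \([c,x]\subset A_{m,k}\). The first inclusion holds because every point of an earlier cell lies to the left of \(x\). The second holds because every point of a later cell lies to the right of \(x\). Hence
\[
 0\le M_m^+(x)-M(x)=\sigma\bigl(A_{m,k}\setminus[c,x]\bigr)\le\sigma\bigl(E_{m,k}\cap(x,d]\bigr).
\]
If \(E_{m,k}=\{x\}\), this set is empty and the difference vanishes. Otherwise the difference is bounded by \(\sigma(E_{m,k})\le\varepsilon(\mathcal P_m)\).

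The other three comparisons follow the same pattern.
\begin{itemize}
\item For \(K\), the difference \(K_m^+(x)-K(x)=\nu(E_{m,k}\cap(x,d])\) is treated identically.
\item For \(N\), use \(T_{m,k}\supset[x,d]\supset T_{m,k+1}\). This gives \(0\le N_m^+(x)-N(x)=\nu(E_{m,k}\cap[c,x))\), which is zero on a singleton and at most \(\varepsilon(\mathcal P_m)\) otherwise.
\item For the strict tail, \(\bigcup_{i>k}E_{m,i}\subset(x,d]\). So \(0\le B(x)-B_m^-(x)=\sigma(E_{m,k}\cap(x,d])\), which again vanishes on a singleton and is at most \(\varepsilon(\mathcal P_m)\) otherwise.
\end{itemize}

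Taking the supremum over \(x\in J\) gives
\[
 \|M_m^+-M\|_{L^\infty}+\|B_m^--B\|_{L^\infty}+\|N_m^+-N\|_{L^\infty}+\|K_m^+-K\|_{L^\infty}\le 4\,\varepsilon(\mathcal P_m).
\]
This bound tends to zero by \eqref{eq:appC-small-cell-mass}. The estimate is in fact a pointwise supremum, which is stronger than the \(L^\infty\) statement.

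The only delicate step is the bookkeeping at singleton cells. There, closed heads must include the atom at \(x\) and strict tails must exclude it, so the error is exactly zero. This is why large atoms cause no trouble even though their mass does not shrink along the partitions.
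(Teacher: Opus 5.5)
Your proof is correct and follows essentially the same route as the paper. You fix \(x\in E_{m,k}\), bound each discrepancy by the \(\sigma\)- or \(\nu\)-mass of the current cell, observe that it vanishes on singleton cells, and conclude from \(\varepsilon(\mathcal P_m)\to0\). Where you go slightly further is in identifying each error exactly as the mass of the part of \(E_{m,k}\) lying strictly to one side of \(x\). That makes explicit why the singleton-cell errors are exactly zero, which the paper simply asserts.
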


\begin{proof}
Fix \(x\in E_{m,k}\). Since \(A_{m,k}\) contains \([c,x]\),
\[
 0
 \le
 M_m^+(x)-M(x)
 \le
 \sigma(E_{m,k}).
\]
Likewise,
\[
 0
 \le
 N_m^+(x)-N(x)
 \le
 \nu(E_{m,k}),
\]
and
\[
 0
 \le
 K_m^+(x)-K(x)
 \le
 \nu(E_{m,k}).
\]
Finally,
\[
 0
 \le
 B(x)-B_m^-(x)
 \le
 \sigma(E_{m,k}).
\]

If \(E_{m,k}\) is a singleton, each of the corresponding errors is
zero. If it is not a singleton, the errors are bounded by
\(\varepsilon(\mathcal P_m)\). The conclusion follows from
\eqref{eq:appC-small-cell-mass}.
\end{proof}

\subsection{Convergence of the Hardy characteristics}
\label{subsec:appC-Hardy-characteristics}

If \(P\le Q\), define
\begin{equation}
 \mathfrak H
 :=
 \sup_{x\in J}
 M(x)^{1/P'}N(x)^{1/Q}
 \label{eq:appC-continuous-Hardy-convex}
\end{equation}
and
\begin{equation}
 \mathfrak H_m
 :=
 \max_{1\le k\le N_m}
 M_{m,k}^{1/P'}N_{m,k}^{1/Q}.
 \label{eq:appC-discrete-Hardy-convex}
\end{equation}

If \(Q<P\), define
\begin{equation}
 \mathfrak H
 :=
 \left[
   \int_J
   N(x)^{R/Q}\,
   \dd\!\left(M(x)^{R/P'}\right)
 \right]^{1/R}
 \label{eq:appC-continuous-Hardy-nonconvex}
\end{equation}
and
\begin{equation}
 \mathfrak H_m
 :=
 \left[
   \sum_{k=1}^{N_m}
   N_{m,k}^{R/Q}
   \left(
     M_{m,k}^{R/P'}
     -
     M_{m,k-1}^{R/P'}
   \right)
 \right]^{1/R}.
 \label{eq:appC-discrete-Hardy-nonconvex}
\end{equation}

The Stieltjes measure in
\eqref{eq:appC-continuous-Hardy-nonconvex} is defined by extending
\(M^{R/P'}\) by zero to the left of \(c\).

\begin{lemma}[Cell increments of the Hardy Stieltjes measure]
\label{lem:appC-Hardy-cell-increments}
Let
\[
 \rho_H
 :=
 \dd\!\left(M^{R/P'}\right).
\]
Then
\begin{equation}
 \rho_H(E_{m,k})
 =
 M_{m,k}^{R/P'}
 -
 M_{m,k-1}^{R/P'}
 \label{eq:appC-Hardy-cell-measure}
\end{equation}
for every \(m\) and \(k\).
\end{lemma}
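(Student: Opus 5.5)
The identity reduces to evaluating the Stieltjes measure $\rho_H$ on prefix sets. Since $\rho_H$ is generated by the right-continuous nondecreasing function $G:=M^{R/P'}$, extended by zero immediately to the left of $c$, we have by \eqref{eq:stieltjes-cumulative}
\[
 \rho_H([c,t])=G(t)=M(t)^{R/P'}=\sigma([c,t])^{R/P'},
 \qquad c\le t\le d.
\]
By continuity from below,
\[
 \rho_H([c,t))=\lim_{s\uparrow t}\sigma([c,s])^{R/P'}=\sigma([c,t))^{R/P'}
\]
for $c<t\le d$. Here we use that $s\mapsto\sigma([c,s])$ is continuous from the left with limit $\sigma([c,t))$, and that $y\mapsto y^{R/P'}$ is continuous. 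For $t=c$ both sides vanish, since $[c,c)=\varnothing$ and the extension by zero places no mass to the left of $c$. Combining the two cases gives the key identity
\[
 \rho_H(A)=\sigma(A)^{R/P'}
 \qquad\text{for every initial interval }A\text{ of the form }[c,t],\ [c,t),\text{ or }J .
\]

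Next we apply this to the prefixes. By Definition~\ref{def:appC-ordered-partition}, each prefix $A_{m,k}=\bigcup_{i\le k}E_{m,i}$ is an initial interval of one of these three forms, and we set $A_{m,0}=\varnothing$. Hence
\[
 \rho_H(A_{m,k})=\sigma(A_{m,k})^{R/P'}=M_{m,k}^{R/P'},
\]
and this also covers $k=0$, where both sides are $0=M_{m,0}$. Since the cells are disjoint and $E_{m,k}=A_{m,k}\setminus A_{m,k-1}$ with $A_{m,k-1}\subset A_{m,k}$, finite additivity of the finite measure $\rho_H$ yields
\[
 \rho_H(E_{m,k})=\rho_H(A_{m,k})-\rho_H(A_{m,k-1})=M_{m,k}^{R/P'}-M_{m,k-1}^{R/P'} .
\]
This is the claimed identity \eqref{eq:appC-Hardy-cell-measure}.

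The only delicate point is the half-open prefix $[c,t)$. Such a prefix arises when an atom at $t$ has been isolated as a singleton cell, and there the atom must not be counted. The left-limit argument above handles this. The value at the leftmost cell is also correct: when $k=1$ the cell contains $c$, and the left-boundary atom $G(c)=M(c)^{R/P'}$ from \eqref{eq:measure-stieltjes-boundary} is included precisely because of the extension by zero.
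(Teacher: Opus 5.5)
Your proof is correct and follows the same route as the paper: you evaluate $\rho_H(A_{m,k})=\sigma(A_{m,k})^{R/P'}=M_{m,k}^{R/P'}$ on each initial-interval prefix and subtract using $E_{m,k}=A_{m,k}\setminus A_{m,k-1}$. Your continuity-from-below argument for the half-open prefixes $[c,t)$ makes explicit what the paper compresses into ``by the endpoint convention.''
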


\begin{proof}
Since \(A_{m,k}\) is an initial interval, it is either
\([c,t]\), \([c,t)\), or \(J\). By the endpoint convention for the
Lebesgue--Stieltjes measure,
\[
 \rho_H(A_{m,k})
 =
 \sigma(A_{m,k})^{R/P'}
 =
 M_{m,k}^{R/P'}.
\]
Since
\[
 E_{m,k}=A_{m,k}\setminus A_{m,k-1},
\]
subtraction gives \eqref{eq:appC-Hardy-cell-measure}.
\end{proof}

\begin{theorem}[Convergence of the Hardy characteristics]
\label{thm:appC-Hardy-characteristic-convergence}
In both exponent regimes,
\begin{equation}
 \mathfrak H_m\longrightarrow\mathfrak H.
 \label{eq:appC-Hardy-characteristic-limit}
\end{equation}
\end{theorem}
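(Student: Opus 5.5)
I will treat the two regimes separately, using the uniform cumulative approximation of Lemma~\ref{lem:appC-uniform-cumulative} and the exact cell increments of Lemma~\ref{lem:appC-Hardy-cell-increments}. All functions involved are bounded by the finite totals \(\sigma(J)\) and \(\nu(J)\), so the power maps \(t\mapsto t^{1/P'}\), \(t^{1/Q}\), \(t^{R/Q}\) and \(t^{R/P'}\) are uniformly continuous on the relevant compact ranges.

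\emph{Convex case \(P\le Q\).} Observe first that
\[
 \mathfrak H_m=\sup_{x\in J}M_m^+(x)^{1/P'}N_m^+(x)^{1/Q},
\]
because \(M_m^+\) and \(N_m^+\) are constant on each cell, with values \(M_{m,k}\) and \(N_{m,k}\). Every cell is nonempty, so each \(k\) is attained. Lemma~\ref{lem:appC-uniform-cumulative} gives \(M_m^+\to M\) and \(N_m^+\to N\) uniformly on \(J\). Uniform continuity of the powers then shows that \(M_m^{+\,1/P'}N_m^{+\,1/Q}\to M^{1/P'}N^{1/Q}\) uniformly on \(J\), and suprema of uniformly convergent bounded functions converge. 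This gives \(\mathfrak H_m\to\mathfrak H\). No monotonicity argument is needed.

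\emph{Non-convex case \(Q<P\).} By Lemma~\ref{lem:appC-Hardy-cell-increments},
\[
 \mathfrak H_m^R=\sum_k N_{m,k}^{R/Q}\rho_H(E_{m,k})=\int_J N_m^+(x)^{R/Q}\,\dd\rho_H(x),
\]
again because \(N_m^+\) is constant on cells. The measure \(\rho_H\) is finite, with total mass \(M(d)^{R/P'}=\sigma(J)^{R/P'}\), and this total includes the left-boundary atom. Hence
\[
 |\mathfrak H_m^R-\mathfrak H^R|\le\rho_H(J)\,\|(N_m^+)^{R/Q}-N^{R/Q}\|_{L^\infty(J)}\to0.
\]
Here \(N_m^+\to N\) uniformly by Lemma~\ref{lem:appC-uniform-cumulative}, and \(t\mapsto t^{R/Q}\) is uniformly continuous on \([0,\nu(J)]\). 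Taking \(R\)-th roots gives \(\mathfrak H_m\to\mathfrak H\). Both sides may be zero, which causes no difficulty.

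The only delicate point is that the \(L^\infty\) bounds must hold at every point of \(J\), not merely almost everywhere, since \(\rho_H\) may charge single points. Lemma~\ref{lem:appC-uniform-cumulative} is in fact a pointwise bound valid for each \(x\in E_{m,k}\), and it is exact on singleton cells. So atoms, including the one at \(c\), are handled without loss. I expect this bookkeeping, rather than any estimate, to be the main thing to check. In particular I will verify that the closed tail \(N(x)=\nu([x,d])\) is matched by \(N_{m,k}=\nu(T_{m,k})\) with the current cell included.
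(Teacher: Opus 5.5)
Your proposal is correct and matches the paper's proof. In the convex case you rewrite $\mathfrak H_m$ as a supremum of cellwise-constant products and apply Lemma~\ref{lem:appC-uniform-cumulative} together with uniform continuity of the powers; in the non-convex case you use the cell-increment identity to write $\mathfrak H_m^R=\int_J (N_m^+)^{R/Q}\,\dd\rho_H$ and pass to the limit using finiteness of $\rho_H$. Your added point that the cumulative bounds must hold at every point of $J$, not merely almost everywhere (since $\rho_H$ can charge atoms, including $c$), sharpens the paper's $L^\infty(J)$ phrasing, and it is legitimate because the proof of Lemma~\ref{lem:appC-uniform-cumulative} is genuinely pointwise.
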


\begin{proof}
Assume first that \(P\le Q\). Since
\(M_m^+\) and \(N_m^+\) are constant on every partition cell,
\[
 \mathfrak H_m
 =
 \sup_{x\in J}
 M_m^+(x)^{1/P'}
 N_m^+(x)^{1/Q}.
\]
Lemma~\ref{lem:appC-uniform-cumulative} and uniform continuity of the
power functions on bounded intervals imply uniform convergence of the
displayed products. Therefore their suprema converge, proving
\eqref{eq:appC-Hardy-characteristic-limit}.

Assume now that \(Q<P\). By
Lemma~\ref{lem:appC-Hardy-cell-increments},
\[
 \begin{aligned}
 \mathfrak H_m^R
 &=
 \sum_{k=1}^{N_m}
 N_{m,k}^{R/Q}\rho_H(E_{m,k})
 \\
 &=
 \int_J
 N_m^+(x)^{R/Q}\,\dd\rho_H(x).
 \end{aligned}
\]
By Lemma~\ref{lem:appC-uniform-cumulative},
\[
 N_m^+{}^{\,R/Q}
 \longrightarrow
 N^{R/Q}
\]
uniformly on \(J\). Since \(\rho_H\) is finite,
\[
 \mathfrak H_m^R
 \longrightarrow
 \int_JN(x)^{R/Q}\,\dd\rho_H(x)
 =
 \mathfrak H^R.
\]
Taking \(R\)-th roots completes the proof.
\end{proof}

\subsection{Convergence of the strict Copson characteristics}
\label{subsec:appC-Copson-characteristics}

If \(P\le Q\), define
\begin{equation}
 \mathfrak C
 :=
 \sup_{x\in J}
 K(x)^{1/Q}B(x)^{1/P'}
 \label{eq:appC-continuous-Copson-convex}
\end{equation}
and
\begin{equation}
 \mathfrak C_m
 :=
 \max_{1\le k\le N_m}
 K_{m,k}^{1/Q}B_{m,k}^{1/P'}.
 \label{eq:appC-discrete-Copson-convex}
\end{equation}

If \(Q<P\), define
\begin{equation}
 \mathfrak C
 :=
 \left[
   \int_J
   B(x)^{R/P'}\,
   \dd\!\left(K(x)^{R/Q}\right)
 \right]^{1/R}
 \label{eq:appC-continuous-Copson-nonconvex}
\end{equation}
and
\begin{equation}
 \mathfrak C_m
 :=
 \left[
   \sum_{k=1}^{N_m}
   B_{m,k}^{R/P'}
   \left(
     K_{m,k}^{R/Q}
     -
     K_{m,k-1}^{R/Q}
   \right)
 \right]^{1/R}.
 \label{eq:appC-discrete-Copson-nonconvex}
\end{equation}

The Stieltjes measure in
\eqref{eq:appC-continuous-Copson-nonconvex} is defined by extending
\(K^{R/Q}\) by zero to the left of \(c\).

\begin{lemma}[Cell increments of the Copson Stieltjes measure]
\label{lem:appC-Copson-cell-increments}
Let
\[
 \rho_C
 :=
 \dd\!\left(K^{R/Q}\right).
\]
Then
\begin{equation}
 \rho_C(E_{m,k})
 =
 K_{m,k}^{R/Q}
 -
 K_{m,k-1}^{R/Q}.
 \label{eq:appC-Copson-cell-measure}
\end{equation}
\end{lemma}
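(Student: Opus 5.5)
The plan is to mirror the proof of Lemma~\ref{lem:appC-Hardy-cell-increments} exactly, with \(\nu\) and the exponent \(R/Q\) in place of \(\sigma\) and \(R/P'\).

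\emph{Step 1: cumulative identity for \(\rho_C\).} By the endpoint convention of \eqref{eq:measure-stieltjes-boundary}, \(\rho_C\) is generated by extending \(K^{R/Q}\) by zero immediately to the left of \(c\). Since \(K(x)=\nu([c,x])\) is nondecreasing and right-continuous, and \(t\mapsto t^{R/Q}\) is continuous and increasing, \(K^{R/Q}\) is also right-continuous. Hence, exactly as in \eqref{eq:stieltjes-cumulative},
\[
 \rho_C([c,t])=K(t)^{R/Q}=\nu([c,t])^{R/Q}.
\]
For a half-open initial interval \([c,t)\), I would use continuity from below along \(t_n\uparrow t\). This gives
\[
 \rho_C([c,t))=\lim_n\nu([c,t_n])^{R/Q}=\nu([c,t))^{R/Q},
\]
where the last equality again uses continuity of the power map and \(\sigma\)-additivity of \(\nu\). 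If \(t=c\), the set \([c,c)\) is empty and both sides vanish.

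\emph{Step 2: apply the identity to the prefixes.} Definition~\ref{def:appC-ordered-partition} guarantees that every prefix \(A_{m,k}\) has one of the forms \([c,t_k]\), \([c,t_k)\), or \(J\). In all three cases Step~1 yields
\[
 \rho_C(A_{m,k})=\nu(A_{m,k})^{R/Q}=K_{m,k}^{R/Q}.
\]
For \(k=0\) the prefix is empty, so \(\rho_C(A_{m,0})=0=K_{m,0}^{R/Q}\), consistent with the convention \(K_{m,0}=0\).

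\emph{Step 3: subtract.} Since \(E_{m,k}=A_{m,k}\setminus A_{m,k-1}\) with \(A_{m,k-1}\subset A_{m,k}\), and \(\rho_C\) is a finite measure, subtracting gives \eqref{eq:appC-Copson-cell-measure}.

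The only point needing care is the half-open prefix case. There the limit from the left must be identified with \(\nu([c,t))^{R/Q}\) rather than with \(K(t)^{R/Q}\). This works because the left limit of \(K\) at \(t\) is precisely \(\nu([c,t))\). When \(t\) is an atom of \(\nu\) and the singleton \(\{t\}\) is a later cell, the mass at \(t\) is therefore assigned exactly to that singleton. Every other step is a routine application of the convention that \(K^{R/Q}\) is extended by zero to the left of \(c\).
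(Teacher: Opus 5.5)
Your proof is correct and follows the paper's route: the paper proves this lemma by citing the Hardy-increment argument with \(\nu,K\) in place of \(\sigma,M\), using that each prefix \(A_{m,k}\) is an initial interval, that the endpoint convention gives \(\rho_C(A_{m,k})=K_{m,k}^{R/Q}\), and then subtracting. Your continuity-from-below step for half-open prefixes \([c,t)\) simply makes explicit what the paper leaves inside ``the endpoint convention''.
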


\begin{proof}
The proof is identical to that of
Lemma~\ref{lem:appC-Hardy-cell-increments}, with \(\nu\) and \(K\)
in place of \(\sigma\) and \(M\).
\end{proof}

\begin{theorem}[Convergence of the strict Copson characteristics]
\label{thm:appC-Copson-characteristic-convergence}
In both exponent regimes,
\begin{equation}
 \mathfrak C_m\longrightarrow\mathfrak C.
 \label{eq:appC-Copson-characteristic-limit}
\end{equation}
\end{theorem}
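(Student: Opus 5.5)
The plan is to mirror the proof of Theorem~\ref{thm:appC-Hardy-characteristic-convergence}, treating the two exponent regimes separately. Lemma~\ref{lem:appC-uniform-cumulative} supplies the uniform convergences $K_m^+\to K$ and $B_m^-\to B$ on $J$. It also gives $K_m^+\ge K$ and $B_m^-\le B$ pointwise, with equality on every singleton cell.

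\emph{The case $P\le Q$.} The step functions $K_m^+$ and $B_m^-$ are constant on every cell $E_{m,k}$, with values $K_{m,k}$ and $B_{m,k}$ there. Hence
\[
 \mathfrak C_m=\sup_{x\in J}K_m^+(x)^{1/Q}B_m^-(x)^{1/P'}.
\]
Both cumulative functions are bounded by $\nu(J)$ and $\sigma(J)$ respectively. The maps $s\mapsto s^{1/Q}$ and $s\mapsto s^{1/P'}$ are uniformly continuous on bounded subsets of $[0,\infty)$. Therefore the products converge uniformly to $K^{1/Q}B^{1/P'}$ on $J$. Since $|\sup f-\sup g|\le\|f-g\|_\infty$, the suprema converge, which gives $\mathfrak C_m\to\mathfrak C$.

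\emph{The case $Q<P$.} Lemma~\ref{lem:appC-Copson-cell-increments} rewrites the Stieltjes sum as an integral against the fixed finite measure $\rho_C$:
\[
 \mathfrak C_m^R=\sum_{k=1}^{N_m}B_{m,k}^{R/P'}\rho_C(E_{m,k})=\int_J B_m^-(x)^{R/P'}\,\dd\rho_C(x).
\]
The main point is that the sum is evaluated against $\rho_C$ itself rather than against a partition-dependent measure. This is why the differential must be generated by $K^{R/Q}$, as in Remark~\ref{rem:appB-K-differential}.

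We have $B_m^-\to B$ uniformly, and $B_m^-$, $B$ are bounded by $\sigma(J)$. Hence $(B_m^-)^{R/P'}\to B^{R/P'}$ uniformly on $J$. Because $\rho_C(J)=K(d)^{R/Q}=\nu(J)^{R/Q}<\infty$, the integrals converge to
\[
 \int_J B(x)^{R/P'}\,\dd\rho_C(x)=\mathfrak C^R.
\]
Taking $R$-th roots completes the argument.

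The only delicate point is the atoms, and these are handled by the strict convention. On a singleton cell $\{x\}$, $B_{m,k}=\sigma((x,d])$ coincides exactly with $B(x)$. Meanwhile $\rho_C(\{x\})$ carries the full atom $K(x)^{R/Q}-K(x-)^{R/Q}$, including the left-boundary mass $K(c)^{R/Q}$ at $c$. So no diagonal input mass is ever added to the strict tail. Uniform convergence off singleton cells then comes from $\varepsilon(\mathcal P_m)\to0$.

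If $\mathfrak C$ is finite, everything above is finite. Here it is always finite, since $\sigma$ and $\nu$ are finite measures, so no extended-value issue arises.
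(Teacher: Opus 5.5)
Your proof is correct and follows the paper's argument essentially verbatim. In the convex regime you write $\mathfrak C_m$ as the supremum of the cellwise-constant product $K_m^+(x)^{1/Q}B_m^-(x)^{1/P'}$ and conclude from the uniform cumulative approximation. In the non-convex regime you use the cell-increment identity to write $\mathfrak C_m^R=\int_J B_m^-(x)^{R/P'}\,\dd\rho_C(x)$ against the fixed finite measure $\rho_C$, then pass to the limit by uniform convergence; your additional remarks on atoms and finiteness are consistent with the paper but not needed for the argument.
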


\begin{proof}
When \(P\le Q\),
\[
 \mathfrak C_m
 =
 \sup_{x\in J}
 K_m^+(x)^{1/Q}
 B_m^-(x)^{1/P'}.
\]
Uniform convergence from
Lemma~\ref{lem:appC-uniform-cumulative} proves the result.

Suppose that \(Q<P\). By
Lemma~\ref{lem:appC-Copson-cell-increments},
\[
 \begin{aligned}
 \mathfrak C_m^R
 &=
 \sum_{k=1}^{N_m}
 B_{m,k}^{R/P'}\rho_C(E_{m,k})
 \\
 &=
 \int_J
 B_m^-(x)^{R/P'}\,\dd\rho_C(x).
 \end{aligned}
\]
Since
\[
 B_m^-{}^{\,R/P'}
 \longrightarrow
 B^{R/P'}
\]
uniformly and \(\rho_C\) is finite, the integrals converge to
\[
 \int_J
 B(x)^{R/P'}\,
 \dd\!\left(K(x)^{R/Q}\right).
\]
This proves \eqref{eq:appC-Copson-characteristic-limit}.
\end{proof}

\subsection{The compact measure theorem}
\label{subsec:appC-compact-measure-conclusion}

\begin{corollary}[Compact measure Hardy theorem]
\label{cor:appC-compact-measure-Hardy}
The optimal constant
\[
 \mathcal H
 =
 \|H_\sigma:L^P(\sigma)\to L^Q(\nu)\|
\]
satisfies
\begin{equation}
 \mathcal H
 \asymp_{P,Q}
 \sup_{x\in J}
 M(x)^{1/P'}N(x)^{1/Q}
 \label{eq:appC-final-Hardy-convex}
\end{equation}
when \(P\le Q\), and
\begin{equation}
 \mathcal H
 \asymp_{P,Q}
 \left[
   \int_J
   N(x)^{R/Q}\,
   \dd\!\left(M(x)^{R/P'}\right)
 \right]^{1/R}
 \label{eq:appC-final-Hardy-nonconvex}
\end{equation}
when \(Q<P\).
\end{corollary}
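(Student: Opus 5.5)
The plan is to combine three earlier results: the finite-chain Hardy theorem (Theorem~\ref{thm:appA-finite-chain-hardy}), the exact block-chain correspondence (Lemma~\ref{lem:appC-block-chain-correspondence}), and the two convergence theorems, Theorem~\ref{thm:appC-operator-norm-convergence} for the operator norms and Theorem~\ref{thm:appC-Hardy-characteristic-convergence} for the characteristics. We work along the fixed adapted sequence \(\mathcal P_m\) from Lemma~\ref{lem:appC-adapted-partitions}.

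For each \(m\), the chain masses are \(\sigma_{m,k}\) and \(\nu_{m,k}\), where cells with \(\sigma_{m,k}=0\) are suppressed by the merging convention of Subsection~\ref{subsec:appA-notation}. The cumulative quantities of this chain are exactly \(M_{m,k}\) and \(N_{m,k}\) from \eqref{eq:appC-discrete-MN}. One point needs care when cells are suppressed. An initial zero-mass cell is discarded. A later zero-mass cell has its \(\nu\)-mass merged into the preceding cell. Under this merging the chain prefix masses \(M_k\) and tail masses \(N_k\) form a subsequence of \((M_{m,k})\) and \((N_{m,k})\), so the convex maximum is unchanged. In the non-convex sum, the increments \(M_{m,k}^{R/P'}-M_{m,k-1}^{R/P'}\) vanish on the suppressed cells, so the sum is also unchanged. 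In the discarded-initial case the corresponding term carries the factor \(M_{m,k}=0\) and contributes nothing. Hence Theorem~\ref{thm:appA-finite-chain-hardy} gives
\[
 c_{P,Q}\,\mathfrak H_m\le \mathcal H_m\le C_{P,Q}\,\mathfrak H_m ,
\]
with \(\mathfrak H_m\) as in \eqref{eq:appC-discrete-Hardy-convex} or \eqref{eq:appC-discrete-Hardy-nonconvex}. In the convex range one may take \(c_{P,Q}=1\). These constants depend only on \(P,Q\), by Remark~\ref{rem:appA-uniformity}.

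By Lemma~\ref{lem:appC-block-chain-correspondence}, \(\mathcal H_m\) is the norm of the block operator \(H_m^+\). By Theorem~\ref{thm:appC-operator-norm-convergence}, \(\mathcal H_m\to\mathcal H\). By Theorem~\ref{thm:appC-Hardy-characteristic-convergence}, \(\mathfrak H_m\to\mathfrak H\). Letting \(m\to\infty\) in the two-sided inequality gives \(c_{P,Q}\mathfrak H\le\mathcal H\le C_{P,Q}\mathfrak H\), which is \eqref{eq:appC-final-Hardy-convex} and \eqref{eq:appC-final-Hardy-nonconvex}. The left-boundary atom of \(\dd(M^{R/P'})\) is already built into \(\mathfrak H\) through Lemma~\ref{lem:appC-Hardy-cell-increments}, so it needs no separate treatment.

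The main obstacle is the extended-valued case. Since \(\sigma\) and \(\nu\) are finite measures on a compact interval, \(M\) and \(N\) are bounded and \(\mathfrak H<\infty\). The operator norm bound \(\|D_m\|\to0\) of Lemma~\ref{lem:appC-diagonal-estimate} also uses only finiteness. So every quantity in the limit is finite, and the limit passage is legitimate without extended arithmetic. I would state this explicitly to justify that the convergence theorems apply. I would also check that the merging convention does not interfere with the identification \(\rho_H(E_{m,k})=M_{m,k}^{R/P'}-M_{m,k-1}^{R/P'}\); this holds because zero-\(\sigma\) cells carry no \(\rho_H\)-mass.
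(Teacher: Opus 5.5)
Your proposal is correct and matches the paper's proof: apply the finite-chain Hardy theorem on each adapted partition chain, with constants independent of \(m\), and then pass to the limit using \(\mathcal H_m\to\mathcal H\) and \(\mathfrak H_m\to\mathfrak H\). Your bookkeeping for suppressed zero-\(\sigma\) cells and your finiteness remark spell out what the paper leaves implicit; in the convex case, "subsequence" alone is not quite enough, but a removed non-initial term is dominated by the retained term of the preceding positive-mass cell \(i_0\), since \(M_{m,j}=M_{m,i_0}\) and \(N_{m,j}\le N_{m,i_0}\).
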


\begin{proof}
Apply Theorem~\ref{thm:appA-finite-chain-hardy} to every partition
chain. Its comparison constants are independent of \(m\). Pass to the
limit using Theorems~\ref{thm:appC-operator-norm-convergence} and
\ref{thm:appC-Hardy-characteristic-convergence}.
\end{proof}

\begin{corollary}[Compact measure strict Copson theorem]
\label{cor:appC-compact-measure-Copson}
The optimal constant
\[
 \mathcal C
 =
 \|C_\sigma:L^P(\sigma)\to L^Q(\nu)\|
\]
satisfies
\begin{equation}
 \mathcal C
 \asymp_{P,Q}
 \sup_{x\in J}
 K(x)^{1/Q}B(x)^{1/P'}
 \label{eq:appC-final-Copson-convex}
\end{equation}
when \(P\le Q\), and
\begin{equation}
 \mathcal C
 \asymp_{P,Q}
 \left[
   \int_J
   B(x)^{R/P'}\,
   \dd\!\left(K(x)^{R/Q}\right)
 \right]^{1/R}
 \label{eq:appC-final-Copson-nonconvex}
\end{equation}
when \(Q<P\).
\end{corollary}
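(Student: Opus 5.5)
The statement is Corollary~\ref{cor:appC-compact-measure-Copson}. I will prove it in the same way as Corollary~\ref{cor:appC-compact-measure-Hardy}, replacing the Hardy chain by the strict Copson chain. Fix the adapted nested partitions $\mathcal P_m$ of Lemma~\ref{lem:appC-adapted-partitions}. For each $m$, I form the finite chain with input masses $\sigma_{m,k}$ and output masses $\nu_{m,k}$, deleting cells of zero $\sigma$-mass. I then apply Theorem~\ref{thm:appB-finite-chain-Copson} with unit Copson weight $\omega_i\equiv1$. For this chain the strict input tails are $B_k=\sum_{i>k}\sigma_{m,i}=B_{m,k}$ and the output heads are $K_k=K_{m,k}$. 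Theorem~\ref{thm:appB-finite-chain-Copson} then gives
\[
 c_{P,Q}\,\mathfrak C_m\le\mathcal C_m\le C_{P,Q}\,\mathfrak C_m
\]
in both regimes, with $\mathfrak C_m$ exactly as in \eqref{eq:appC-discrete-Copson-convex} or \eqref{eq:appC-discrete-Copson-nonconvex}. By Remark~\ref{rem:appB-uniformity}, the constants depend only on $(P,Q)$ and not on $m$ or $N_m$.

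Suppressing zero-mass cells needs a short check. A deleted cell contributes nothing to any $B_{m,k}$. Its output mass must be merged into a neighbouring active cell, and the merge has to preserve the Copson structure. I will instead keep these cells with zero mass, which Remark~\ref{rem:appB-zero-lambda} permits. Then the chain quantities coincide literally with \eqref{eq:appC-discrete-KB}. The Copson operator never uses the input coordinates of these cells, so the optimal constant is unaffected.

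For the limit, Lemma~\ref{lem:appC-block-chain-correspondence} identifies $\mathcal C_m$ with $\|C_m^-\|$. Theorem~\ref{thm:appC-operator-norm-convergence} then gives $\mathcal C_m\to\mathcal C$, and Theorem~\ref{thm:appC-Copson-characteristic-convergence} gives $\mathfrak C_m\to\mathfrak C$. Letting $m\to\infty$ in the uniform two-sided inequality yields $c_{P,Q}\,\mathfrak C\le\mathcal C\le C_{P,Q}\,\mathfrak C$. This is \eqref{eq:appC-final-Copson-convex} when $P\le Q$ and \eqref{eq:appC-final-Copson-nonconvex} when $Q<P$.

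The only delicate point is the extended sense, since all quantities here are finite. Indeed $\sigma$ and $\nu$ are finite, so $\mathfrak C<\infty$, and $\|C_\sigma\|\le\sigma(J)^{1/P'}\nu(J)^{1/Q}<\infty$. Passing to the limit in the inequalities is therefore legitimate, and no truncation is needed at this stage.
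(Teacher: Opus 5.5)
Your proposal is correct and follows the paper's proof: apply Theorem~\ref{thm:appB-finite-chain-Copson} with unit Copson weight on the adapted partition chains, with constants uniform in $m$. Then identify $\mathcal C_m$ with $\|C_m^-\|$ and pass to the limit via Theorems~\ref{thm:appC-operator-norm-convergence} and \ref{thm:appC-Copson-characteristic-convergence}. Your choice to retain zero-mass cells, so the chain quantities coincide literally with \eqref{eq:appC-discrete-KB}, and your finiteness check are both sound; they make explicit details the paper leaves implicit.
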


\begin{proof}
Apply Theorem~\ref{thm:appB-finite-chain-Copson} and pass to the limit
using Theorems~\ref{thm:appC-operator-norm-convergence} and
\ref{thm:appC-Copson-characteristic-convergence}.
\end{proof}

\subsection{Restoring the bounded Copson weight}
\label{subsec:appC-weighted-Copson}

Let \(\mu\) be a finite Borel measure on \(J\), and let
\[
 0\le\omega<\infty
\]
be bounded and Borel measurable. Define
\begin{equation}
 \lambda(E)
 :=
 \int_E\omega(t)^{P'}\,\dd\mu(t).
 \label{eq:appC-lambda-measure}
\end{equation}
Then
\[
 \lambda(J)<\infty.
\]

Consider
\begin{equation}
 C_{\mu,\omega}h(x)
 :=
 \int_{(x,d]}
 \omega(t)h(t)\,\dd\mu(t).
 \label{eq:appC-weighted-Copson}
\end{equation}

\begin{lemma}[Continuous weighted substitution]
\label{lem:appC-continuous-weighted-substitution}
The optimal norm of
\[
 C_{\mu,\omega}:L^P(\mu)\to L^Q(\nu)
\]
equals the optimal norm of
\[
 C_\lambda b(x)
 :=
 \int_{(x,d]}b(t)\,\dd\lambda(t)
\]
from \(L^P(\lambda)\) to \(L^Q(\nu)\).
\end{lemma}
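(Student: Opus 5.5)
The plan is to imitate the finite-chain substitution of Lemma~\ref{lem:appB-weighted-isometry} at the level of functions, and to prove both norm inequalities by exhibiting maps between the admissible inputs that preserve outputs exactly and do not increase input norms. Set $E_0:=\{\omega=0\}$ and $E_+:=\{\omega>0\}$. Since $C_{\mu,\omega}h$ is unaffected by the values of $h$ on $E_0$, replacing $h$ by $h\mathbf 1_{E_+}$ leaves the output unchanged and does not increase $\|h\|_{L^P(\mu)}$. So in computing the norm of $C_{\mu,\omega}$ it is enough to take $h$ vanishing on $E_0$. Also $\lambda(E_0)=0$, so $L^P(\lambda)$ only sees $E_+$.

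For the inequality $\|C_{\mu,\omega}\|\le\|C_\lambda\|$, take $h\ge0$ supported in $E_+$ and define $b:=h\,\omega^{-1/(P-1)}$ on $E_+$ and $b:=0$ on $E_0$. The pointwise identities $\omega^{P'}=\omega\cdot\omega^{1/(P-1)}$ and $P/(P-1)=P'$, already used in Lemma~\ref{lem:appB-weighted-isometry}, give
\[
 b^P\omega^{P'}=h^P,
 \qquad
 b\,\omega^{P'}=\omega h
\]
on $E_+$. Integrating the first against $\mu$ yields $\|b\|_{L^P(\lambda)}=\|h\|_{L^P(\mu)}$. Integrating the second over $(x,d]$ yields $C_\lambda b(x)=C_{\mu,\omega}h(x)$ for every $x$; this step uses that $d\lambda=\omega^{P'}d\mu$ is a genuine density relation, which holds because $\omega$ is bounded and Borel. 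Hence every test ratio for $C_{\mu,\omega}$ equals a test ratio for $C_\lambda$.

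For the reverse inequality, start from $b\ge0$ in $L^P(\lambda)$. Because $\lambda(E_0)=0$, changing $b$ on $E_0$ does not change $b$ as an element of $L^P(\lambda)$ or the values of $C_\lambda b$, so we may assume $b=0$ on $E_0$. Then $h:=b\,\omega^{1/(P-1)}$ reverses the substitution. The same identities show that the input norms are equal and the outputs coincide pointwise. Taking suprema in both directions gives equality of the two optimal norms in the extended sense, including the case where both are infinite.

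The one point that needs care is measurability and the null-set bookkeeping. The functions $\omega^{\pm1/(P-1)}$ are Borel on $E_+$, and the identification of $b$ only $\lambda$-a.e. must be compatible with the pointwise outputs; it is, since the outputs are integrals against $\lambda$. Beyond this, the argument is purely pointwise and exact, with no approximation required.
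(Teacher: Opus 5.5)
Your proposal is correct and follows essentially the same route as the paper: discard input values on $\{\omega=0\}$, substitute $b=h\,\omega^{-1/(P-1)}$ on $\{\omega>0\}$ so that $h^P\,\dd\mu=b^P\,\dd\lambda$ and $\omega h\,\dd\mu=b\,\dd\lambda$, and invert the substitution on the active set. One minor point: the density identity $\int b\,\dd\lambda=\int b\,\omega^{P'}\,\dd\mu$ needs only Borel measurability of $\omega$, not boundedness; boundedness serves only to make $\lambda$ finite for the subsequent Copson theorem.
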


\begin{proof}
Input values on the set \(\{\omega=0\}\) do not affect the output and
may be replaced by zero. On \(\{\omega>0\}\), put
\[
 b=h\,\omega^{-1/(P-1)}.
\]
Then
\[
 |h|^P\,\dd\mu
 =
 |b|^P\,\dd\lambda
\]
and
\[
 \omega h\,\dd\mu
 =
 b\,\dd\lambda.
\]
Thus both the input norms and the outputs agree exactly. The
substitution is reversible on the active set.
\end{proof}

Since
\[
 B_\omega(x)
 :=
 \int_{(x,d]}
 \omega(t)^{P'}\,\dd\mu(t)
 =
 \lambda((x,d]),
\]
Corollary~\ref{cor:appC-compact-measure-Copson} gives the bounded-weight
strict Copson criteria used in
\CompactMeasureHardyCopsonTheoremReference. Arbitrary
extended-valued Copson weights are obtained by the monotone truncation
argument of Appendix~\ref{app:extended-regularisation}.

\begin{remark}[Preservation of common atoms]
\label{rem:appC-common-atoms}
Every fixed atom of \(\sigma+\nu\) is eventually isolated as a
singleton partition cell. On such a cell, the block Hardy operator
includes the atom, while the block Copson operator excludes it.
Therefore the passage to the limit preserves the closed-Hardy and
strict-Copson allocation exactly.
No diagonal mass is lost or counted
twice.
\end{remark}

\begin{remark}[Role of the partition argument]
\label{rem:appC-role}
The partitions are used only to transfer uniform finite-chain estimates
to finite Borel measures. They are not discretising sequences attached
to the weights, and no anti-discretisation theorem is invoked. The
operator norms and the Stieltjes characteristics converge directly
along atom-preserving ordered partitions.
\end{remark}
\subsection{Directed locally finite extension}
\label{subsec:appC-directed-extension}

The compact theorem also has the natural auxiliary form on an arbitrary
interval.  Let \(I\subset\mathbb R\) have nonempty interior, let \(\mu\) and
\(\nu\) be locally finite nonnegative Borel measures on \(I\), and let
\(1<P,Q<\infty\).  Formal infinite endpoints are not points of \(I\) and
carry no atoms.  For \(h\ge0\), put
\begin{equation}
 H_\mu h(x)
 :=
 \int_{I\cap(-\infty,x]}h(t)\,\dd\mu(t).
 \label{eq:appC-global-measure-Hardy}
\end{equation}
For a Borel \(\omega:I\to[0,\infty]\), the strict Copson operator is defined
by simultaneous compact restriction and monotone truncation of \(\omega\).

For each \(J=[c,d]\Subset I\), form the compact characteristics
\(\mathfrak H_J\) and \(\mathfrak C_{\omega,J}\) from the restrictions of
\(\mu,\nu,\omega\) to \(J\), using the closed Hardy tail and strict Copson
tail described in the compact theorem.  Set
\begin{equation}
 \mathfrak H_I^{\mathrm{dir}}
 :=
 \sup_{J\Subset I}\mathfrak H_J,
 \qquad
 \mathfrak C_{\omega,I}^{\mathrm{dir}}
 :=
 \sup_{J\Subset I}\mathfrak C_{\omega,J}.
 \label{eq:appC-directed-measure-characteristics}
\end{equation}

\begin{corollary}[Directed measure-valued Hardy--Copson theorem]
\label{cor:appC-directed-measure-Hardy-Copson}
Under these assumptions,
\begin{equation}
 \|H_\mu:L^P(\mu)\to L^Q(\nu)\|
 \asymp_{P,Q}
 \mathfrak H_I^{\mathrm{dir}},
 \label{eq:appC-directed-Hardy-result}
\end{equation}
and
\begin{equation}
 \|C_{\mu,\omega}:L^P(\mu)\to L^Q(\nu)\|
 \asymp_{P,Q}
 \mathfrak C_{\omega,I}^{\mathrm{dir}}.
 \label{eq:appC-directed-Copson-result}
\end{equation}
The comparison constants depend only on \(P,Q\).
\end{corollary}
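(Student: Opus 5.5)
The plan is to derive the corollary from the compact theorem (Theorem~\ref{thm:compact-measure-hardy-copson}, in the form of Corollaries~\ref{cor:appC-compact-measure-Hardy} and \ref{cor:appC-compact-measure-Copson}) by a directed exhaustion argument parallel to Proposition~\ref{prop:directed-exhaustion}. For \(J=[c,d]\Subset I\), write \(\mathcal H_J\) and \(\mathcal C_J\) for the norms of the restricted operators \(H_{\mu|_J}\) and \(C_{\mu|_J,\omega|_J}\) from \(L^P(\mu|_J)\) to \(L^Q(\nu|_J)\). Here the Hardy operator is restarted at \(c\) and the Copson operator is truncated at \(d\). The compact theorem gives \(\mathcal H_J\asymp_{P,Q}\mathfrak H_J\) and \(\mathcal C_J\asymp_{P,Q}\mathfrak C_{\omega,J}\), with constants independent of \(J\). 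Hence it suffices to prove the two exact identities
\(\|H_\mu\|=\sup_{J\Subset I}\mathcal H_J\) and \(\|C_{\mu,\omega}\|=\sup_{J\Subset I}\mathcal C_J\), and then take suprema.

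\emph{Lower bounds.} Take \(h\) supported in \(J\) with \(\|h\|_{L^P(\mu|_J)}\le1\). For \(x\in J\),
\[
H_\mu h(x)=\int_{[c,x]}h\,\dd\mu ,\qquad C_{\mu,\omega}h(x)=\int_{(x,d]}\omega h\,\dd\mu .
\]
Restricting the output norm to \(J\) can only decrease it, so \(\mathcal H_J\le\|H_\mu\|\) and \(\mathcal C_J\le\|C_{\mu,\omega}\|\). In the Copson case this holds first for the truncations \(\omega\wedge k\), and then for \(\omega\) itself by the monotone limit \eqref{eq:copson-norm-truncation}.

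\emph{Upper bounds.} Fix \(h\ge0\) in \(L^P(\mu)\) and an exhaustion \(J_n=[c_n,d_n]\uparrow I\) as in \eqref{eq:compact-exhaustion}. Set \(h_n=h\mathbf 1_{J_n}\). Then \(\mathbf 1_{J_n}H_{\mu|_{J_n}}h_n\uparrow H_\mu h\) pointwise, since \(I\cap(-\infty,x]\) is the increasing union of the sets \([c_n,x]\). Similarly \(\mathbf 1_{J_n}C_{\mu|_{J_n},\omega\wedge n}h_n\uparrow C_{\mu,\omega}h\), since the sets \((x,d_n]\) increase to \(I\cap(x,\infty)\) and the truncation levels increase simultaneously. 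Monotone convergence in \(L^Q(\nu)\) then gives
\[
\|H_\mu h\|_{L^Q(\nu)}\le\sup_n\mathcal H_{J_n}\|h\|_{L^P(\mu)} .
\]
The Copson bound follows in the same way, using that \(\mathcal C_{J_n}\) computed with \(\omega\wedge n\) is at most \(\mathcal C_{J_n}\) computed with \(\omega\), by monotonicity in the weight. Combining the two directions yields the exact identities, and so the corollary.

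The main obstacle is consistency of the directed characteristic itself. In the compact theorem, the Stieltjes quantities on \(J\) are formed from the restricted measures, and the measure \(\dd(M_J^{R/P'})\) places the atom \(\mu([c,c])^{R/P'}\) at \(c\). This is exactly the restarted quantity, so no comparison between \(\mathfrak H_J\) for different \(J\) is ever needed. The argument uses only the uniformity of the compact constants and the exactness of the norm identities; no monotonicity of the characteristics in \(J\) is required (cf.\ Remark~\ref{rem:characteristics-not-monotone}). I would check carefully that local finiteness makes every restricted measure finite, which is the hypothesis of the compact theorem, and that unbounded \(\omega\) is handled entirely through the truncation limit, never through an infinite-valued Stieltjes density.
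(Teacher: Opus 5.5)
Your proposal is correct and takes essentially the same route as the paper. You obtain the exact identity \(\|T\|=\sup_{J\Subset I}\|T_J\|\) by restriction and by monotone convergence along a compact exhaustion, truncating the Copson weight simultaneously, and then apply the \(J\)-uniform constants of Theorem~\ref{thm:compact-measure-hardy-copson}. Your lower bound \(\mathcal H_J\le\|H_\mu\|\) for arbitrary \(J\Subset I\) makes explicit the monotonicity step that the paper compresses into the remark that every compact \(J\) lies in some \(J_m\).
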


\begin{proof}
Choose an increasing compact exhaustion \(J_m\Subset I\) with
\(\bigcup_mJ_m=I\).  For either operator \(T\), let
\[
 T_mh:=\mathbf 1_{J_m}T(\mathbf 1_{J_m}h),
\]
with the Copson weight truncated simultaneously when necessary.  For
nonnegative \(h\), \(T_mh\uparrow Th\), so monotone convergence gives
\begin{equation}
 \|T\|=\sup_m\|T_m\|.
 \label{eq:appC-directed-operator-exhaustion}
\end{equation}
Apply \CompactMeasureHardyCopsonTheoremReference, with constants independent of \(m\), and
take the supremum.  Every compact \(J\Subset I\) is contained in some
\(J_m\), so the directed characteristic is independent of the chosen
exhaustion.
\end{proof}

\section{Compact transfer of the linear Hardy theorem}
\label{app:compact-linear-transfer}

This appendix derives the compact linear Hardy theorem used in
\LinearHardySectionReference{} from the corresponding half-line
characterisation. The transfer is performed by an order-preserving
change of variables that sends the finite interval onto the positive
half-line. It preserves the Hardy operator, both weighted norms, the
output tail, and the profile Stieltjes measure, including the initial
atom in the case \(p=1\).

Throughout this appendix, let
\[
 J=[c,d],
 \qquad
 c<d,
\]
and define
\begin{equation}
 H_Jh(x)
 :=
 \int_c^x h(t)\,\dd t,
 \qquad x\in J.
 \label{eq:appD-compact-Hardy}
\end{equation}
We first assume that
\[
 0<v(x)<\infty
 \quad\text{for almost every }x\in J,
\]
and that
\[
 w\ge0,
 \qquad
 w\in L^1(J).
\]
After the exact transport is established, Subsection~\ref{subsec:appD-ceiling}
removes the finiteness assumption on \(v\) by a finite-ceiling bridge. Thus
the final compact theorem applies to the regular local weights of
\LinearHardySectionReference{}, including forbidden sets on which
\(v=+\infty\).

For \(0<q<\infty\), let \(L_{p,q}(v,w;J)\) denote the optimal constant
in
\begin{equation}
 \|H_Jh\|_{L^q(w;J)}
 \le
 L_{p,q}(v,w;J)
 \|h\|_{L^p(v;J)},
 \qquad h\ge0.
 \label{eq:appD-compact-linear-inequality}
\end{equation}

\subsection{Profiles and boundary completion}
\label{subsec:appD-profiles}

For \(1<p<\infty\), put
\begin{equation}
 \Phi_{p,v;J}(x)
 :=
 \left(
   \int_c^x v(t)^{1-p'}\,\dd t
 \right)^{1/p'},
 \qquad x\in J.
 \label{eq:appD-profile-finite-p}
\end{equation}
For \(p=1\), define the exact functional profile
\begin{equation}
 F_{v,J}(x)
 :=
 \operatorname*{ess\,sup}_{c<t<x}
 \frac1{v(t)},
 \qquad c<x\le d.
 \label{eq:appD-profile-p-one-interior}
\end{equation}
It is nondecreasing and left-continuous on \((c,d]\). Its
right-continuous Stieltjes completion is
\begin{equation}
 \Phi_{1,v;J}(x)
 :=
 \begin{cases}
 \displaystyle\lim_{y\downarrow c}F_{v,J}(y),&x=c,\\[2mm]
 \displaystyle\lim_{y\downarrow x}F_{v,J}(y),&c<x<d,\\[2mm]
 F_{v,J}(d),&x=d.
 \end{cases}
 \label{eq:appD-profile-p-one}
\end{equation}
In particular,
\begin{equation}
 \Phi_{1,v;J}(c)
 =
 \lim_{x\downarrow c}F_{v,J}(x).
 \label{eq:appD-p-one-boundary-value}
\end{equation}

For \(p=\infty\), put
\begin{equation}
 \Phi_{\infty,v;J}(x)
 :=
 \int_c^x\frac{\dd t}{v(t)}.
 \label{eq:appD-profile-infinity}
\end{equation}

The output tail is
\begin{equation}
 W_J(x)
 :=
 \int_x^d w(t)\,\dd t,
 \qquad x\in J.
 \label{eq:appD-output-tail}
\end{equation}

\begin{lemma}[Norm of the truncated integration functional]
\label{lem:appD-integration-functional}
Let \(c<x\le d\).

If \(1<p<\infty\), then
\begin{equation}
 \sup_{\|h\|_{L^p(v;J)}\le1}
 \int_c^xh(t)\,\dd t
 =
 \Phi_{p,v;J}(x).
 \label{eq:appD-functional-finite-p}
\end{equation}

If \(p=1\), then
\begin{equation}
 \sup_{\|h\|_{L^1(v;J)}\le1}
 \int_c^xh(t)\,\dd t
 =
 F_{v,J}(x).
 \label{eq:appD-functional-p-one}
\end{equation}

If \(p=\infty\), then
\begin{equation}
 \sup_{\|h\|_{L^\infty(v;J)}\le1}
 \int_c^xh(t)\,\dd t
 =
 \Phi_{\infty,v;J}(x).
 \label{eq:appD-functional-infinity}
\end{equation}
\end{lemma}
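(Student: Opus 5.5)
The plan is to treat the three exponent ranges separately. In each case I would first prove the upper bound pointwise for every admissible $h$, and then obtain the matching lower bound from an explicit family of test functions. Throughout, $v$ satisfies $0<v<\infty$ a.e. (the standing assumption of this appendix), so all reciprocal expressions are finite a.e. Fix $c<x\le d$ and write $\sigma:=1/v$.

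For $1<p<\infty$, the upper bound comes from Hölder's inequality on $(c,x)$:
\[
 \int_c^x h\,\dd t=\int_c^x \bigl(hv^{1/p}\bigr)v^{-1/p}\,\dd t\le\|h\|_{L^p(v;J)}\left(\int_c^x v^{-p'/p}\,\dd t\right)^{1/p'},
\]
and $-p'/p=1-p'$ identifies the last factor with $\Phi_{p,v;J}(x)$. For sharpness I would use the truncated extremisers $h_n=v^{1-p'}\mathbf 1_{E_n}$, where $E_n=\{t\in(c,x):v(t)^{1-p'}\le n\}$. Since $v^{1-p'}$ is finite a.e., $E_n\uparrow(c,x)$ up to a null set. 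A direct computation gives $\int_c^x h_n=\|h_n\|_{L^p(v)}^{p}=\int_{E_n}v^{1-p'}$, so
\[
 \frac{\int_c^x h_n\,\dd t}{\|h_n\|_{L^p(v;J)}}=\left(\int_{E_n}v^{1-p'}\,\dd t\right)^{1/p'}.
\]
By monotone convergence this tends to $\Phi_{p,v;J}(x)$, and this also covers the case where that value is $+\infty$. If $\int_{E_n}v^{1-p'}=0$ for every $n$, then $\Phi_{p,v;J}(x)=0$ and there is nothing to prove.

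For $p=1$, the upper bound is the pointwise inequality $h\le F_{v,J}(x)\,hv$, valid a.e. on $(c,x)$ by the definition of the essential supremum. For the lower bound I would take any $0<\lambda<F_{v,J}(x)$ and pick a set $E\subset(c,x)$ with $0<|E|$ on which $\sigma>\lambda$. Shrinking $E$ to $E\cap\{\sigma\le n\}$ keeps it of positive measure for large $n$ and makes it of finite $\sigma$-mass. The test function $h=\mathbf 1_E/(v|E|)$ then has $\|h\|_{L^1(v)}=1$ and $\int_c^x h=|E|^{-1}\int_E\sigma>\lambda$. Letting $\lambda\uparrow F_{v,J}(x)$ finishes the case, again including $F_{v,J}(x)=\infty$.

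For $p=\infty$, the bound $h\le N\sigma$ a.e., with $N=\|h\|_{L^\infty(v)}$, gives the upper estimate $\int_c^x h\le N\,\Phi_{\infty,v;J}(x)$. The function $\sigma$ itself has norm at most $1$ and attains equality, exactly as in Lemma~\ref{lem:linfty-integration-functional}. Here $\Phi_{\infty,v;J}(x)$ may be infinite if $1/v$ is not integrable; in that case I would use the bounded truncations $\sigma\wedge n$ and pass to the limit by monotone convergence.

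The only delicate point I expect is bookkeeping in extended values. The sharpness families must produce finite-norm inputs even when the profile is infinite, which is why I truncate before normalising. A second small issue is $x=c$: there both sides vanish for $p>1$, and for $p=1$ the statement is only claimed on $c<x$.
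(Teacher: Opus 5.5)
Your proof is correct and follows the same route as the paper. You use Hölder with truncated extremisers proportional to $v^{1-p'}\mathbf 1_{(c,x)}$ for $1<p<\infty$, the essential-supremum bound together with normalised $v^{-1}\mathbf 1_E$ on near-maximal sets for $p=1$, and $h\le N/v$ with the exact extremiser $1/v$ for $p=\infty$; your extra extended-value bookkeeping is sound, though at $p=\infty$ the truncation $\sigma\wedge n$ is unnecessary because $\sigma$ itself is admissible even when $\Phi_{\infty,v;J}(x)=\infty$.
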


\begin{proof}
For \(1<p<\infty\), Hölder's inequality gives
\[
 \int_c^xh(t)\,\dd t
 \le
 \|h\|_{L^p(v;J)}
 \left(
   \int_c^xv(t)^{1-p'}\,\dd t
 \right)^{1/p'}.
\]
The reverse inequality follows from the usual truncated extremising
sequence proportional to
\[
 v^{1-p'}\mathbf 1_{(c,x)}.
\]

For \(p=1\),
\[
 \int_c^xh(t)\,\dd t
 \le
 \left(
   \operatorname*{ess\,sup}_{c<t<x}\frac1{v(t)}
 \right)
 \int_c^xh(t)v(t)\,\dd t.
\]
The reverse inequality follows by choosing sets of positive measure on
which \(v^{-1}\) is arbitrarily close to its essential supremum.

For \(p=\infty\), the condition
\[
 \|h\|_{L^\infty(v;J)}\le1
\]
implies
\[
 h(t)\le\frac1{v(t)}
 \qquad\text{a.e.},
\]
and equality is obtained by taking \(h=v^{-1}\).
\end{proof}

\begin{lemma}[Left/right Stieltjes completion at \(p=1\)]
\label{lem:appD-p-one-completion}
Let \(Z:J\to[0,\infty)\) be continuous. Then
\begin{equation}
 \sup_{c<x\le d}F_{v,J}(x)Z(x)
 =
 \sup_{x\in J}\Phi_{1,v;J}(x)Z(x).
 \label{eq:appD-p-one-sup-completion}
\end{equation}
For every \(r>0\), let \(\lambda_F\) denote the left-continuous
Lebesgue--Stieltjes measure associated with \(F_{v,J}^r\), with initial
value zero immediately to the left of \(c\). Then
\begin{equation}
 \lambda_F
 =
 \dd\!\left(\Phi_{1,v;J}^{\,r}\right)
 \quad\text{as Borel measures on }J.
 \label{eq:appD-p-one-measure-completion}
\end{equation}
\end{lemma}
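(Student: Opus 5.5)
The statement is Lemma~\ref{lem:appD-p-one-completion}, which restates Lemma~\ref{lem:p-one-completion-invariance} in the appendix notation. I would prove both parts directly from two properties of \(F=F_{v,J}\): it is nondecreasing and left-continuous on \((c,d]\), and its completion \(\Phi=\Phi_{1,v;J}\) is its right-limit function, except at \(d\) where \(\Phi(d)=F(d)\). Before anything else I would check these properties. Monotonicity holds because the essential supremum is taken over growing sets \((c,x)\). Left-continuity holds because \((c,x)=\bigcup_{y\uparrow x}(c,y)\), and the essential supremum over an increasing union of open sets is the limit of the essential suprema. Consequently \(F\le\Phi\) on \((c,d]\).

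\textbf{Supremum identity.} The inequality \(\le\) in \eqref{eq:appD-p-one-sup-completion} follows at once from \(F\le\Phi\) and \(Z\ge0\). For the reverse inequality, fix \(x\in[c,d)\) and take \(y_n\downarrow x\) with \(y_n\in(c,d]\). Then \(F(y_n)\to\Phi(x)\) by definition, and \(Z(y_n)\to Z(x)\) by continuity, so \(\Phi(x)Z(x)\) is a limit of values of \(FZ\). At \(x=d\) the two values coincide. If \(\Phi(x)=\infty\), which is possible only without regularity, I would use the convention \(\infty\cdot0=0\) as in \eqref{eq:extended-product}. In that case the limiting argument still works when \(Z(x)>0\), and it is trivial when \(Z(x)=0\). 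Under the standing regularity \(v\ge\delta\) we have \(\Phi\le1/\delta\), so this issue does not arise.

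\textbf{Measure identity.} I would first define \(\lambda_F\) precisely. Extend \(G=F^r\) by \(0\) on \((-\infty,c]\) and by \(F(d)^r\) beyond \(d\). The left-continuous Stieltjes measure is then determined by \(\lambda_F([a,b))=G(b)-G(a)\), so that \(\lambda_F([c,b))=F(b)^r\) for \(c<b\le d\). From this I would compute closed initial intervals. For \(x<d\), continuity from above along \(b\downarrow x\) gives
\[
 \lambda_F([c,x])=\lim_{b\downarrow x}F(b)^r=\Phi(x)^r .
\]
At \(x=d\) the closed and half-open intervals differ by the singleton \(\{d\}\). The left-continuous measure places mass \(G(d^+)-G(d)=0\) there, because of the constant extension, so \(\lambda_F(J)=F(d)^r=\Phi(d)^r\). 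The measure \(d(\Phi^r)\) has the same values \(\Phi(x)^r\) on all \([c,x]\) by \eqref{eq:stieltjes-cumulative}. The family \(\{[c,x]\}\) is a \(\pi\)-system generating the Borel sets of \(J\), and both measures are finite with equal total mass. Dynkin's \(\pi\)–\(\lambda\) theorem therefore gives \eqref{eq:appD-p-one-measure-completion}. The case \(x=c\) is included: \(\lambda_F(\{c\})=\lim_{b\downarrow c}F(b)^r=\Phi(c)^r\), and this is exactly the boundary atom.

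\textbf{Main obstacle.} The only delicate point is the endpoint bookkeeping: making the convention for \(\lambda_F\) at \(c\) (value zero just to the left of \(c\)) and at \(d\) (no mass beyond \(F(d)\)) precise, so that the two closed-interval identities match. Everything else is the monotone-limit argument described above.
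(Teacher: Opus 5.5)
Your proposal is correct and follows essentially the same route as the paper: \(F_{v,J}\le\Phi_{1,v;J}\) together with right-approximation \(y_n\downarrow x\) gives the supremum identity, and continuity from above of \(\lambda_F([c,b))=F_{v,J}(b)^r\) yields \(\lambda_F([c,x])=\Phi_{1,v;J}(x)^r\) on all closed initial intervals, which determines the measure. Your Dynkin \(\pi\)--\(\lambda\) step, the explicit check that \(\lambda_F\) puts no mass at \(d\), and the remark that boundedness \(\Phi_{1,v;J}\le1/\delta\) rules out infinite values only spell out what the paper's shorter proof leaves implicit.
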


\begin{proof}
For the first identity, \(F_{v,J}\le\Phi_{1,v;J}\). Conversely, if \(x<d\)
and \(y_n\downarrow x\), then \(F_{v,J}(y_n)\to\Phi_{1,v;J}(x)\)
and \(Z(y_n)\to Z(x)\). The endpoint \(c\) is treated in the same way,
and at \(d\) the two profiles agree. For the second, the
left-continuous Stieltjes convention gives
\[
 \lambda_F([c,b))=F_{v,J}(b)^r,
 \qquad c<b\le d.
\]
If \(x<d\), continuity from above therefore yields
\[
 \lambda_F([c,x])
 =\lim_{b\downarrow x}F_{v,J}(b)^r
 =\Phi_{1,v;J}(x)^r.
\]
At \(x=d\) this is \(F_{v,J}(d)^r=\Phi_{1,v;J}(d)^r\). The
right-continuous Stieltjes measure generated by \(\Phi_{1,v;J}^r\) has the
same values on all closed initial intervals \([c,x]\), so the two Borel
measures coincide.
\end{proof}

\begin{remark}[Meaning of the \(p=1\) boundary value]
\label{rem:appD-p-one-boundary}
Although
\[
 H_Jh(c)=0,
\]
the number \(\Phi_{1,v;J}(c)\) records the limiting norm of the
integration functional as the upper endpoint decreases to \(c\). This
quantity is therefore relevant to functions concentrating immediately
to the right of \(c\). It is encoded as an initial Stieltjes atom rather
than as the value of the Hardy operator at the singleton \(\{c\}\).
\end{remark}

Whenever \(q<p<\infty\), define
\begin{equation}
 \frac1r
 :=
 \frac1q-\frac1p.
 \label{eq:appD-r-definition}
\end{equation}
This includes the endpoint \(p=1\), necessarily with \(0<q<1\).

For \(1\le p<\infty\), let
\[
 G_{p,v;J}(x):=\Phi_{p,v;J}(x)^r.
\]
The function \(G_{p,v;J}\) is extended by zero to the left of \(c\),
and
\[
 \dd G_{p,v;J}
\]
denotes the resulting Lebesgue--Stieltjes measure on \(J\). In
particular,
\begin{equation}
 \dd G_{p,v;J}(\{c\})
 =
 \Phi_{p,v;J}(c)^r.
 \label{eq:appD-left-boundary-atom}
\end{equation}
For \(1<p<\infty\), the right-hand side vanishes. For \(p=1\), it may
be positive.

\subsection{The interval-to-half-line transport}
\label{subsec:appD-transport}

Set
\[
 \ell:=d-c
\]
and define
\begin{equation}
 \psi(s)
 :=
 c+\ell\frac{s}{1+s},
 \qquad
 0<s<\infty.
 \label{eq:appD-psi}
\end{equation}
Then \(\psi\) is an increasing \(C^1\)-diffeomorphism from
\((0,\infty)\) onto \((c,d)\), with
\begin{equation}
 \psi'(s)
 =
 \frac{\ell}{(1+s)^2}.
 \label{eq:appD-psi-derivative}
\end{equation}
It extends continuously to the compactified half-line by
\[
 \psi(0)=c,
 \qquad
 \psi(\infty)=d.
\]

For \(1\le p<\infty\), define
\begin{equation}
 (\mathcal T_ph)(s)
 :=
 h(\psi(s))\psi'(s)
 \label{eq:appD-Tp}
\end{equation}
and the transported weights
\begin{equation}
 \widetilde v_p(s)
 :=
 v(\psi(s))\psi'(s)^{1-p},
 \label{eq:appD-transported-input-weight}
\end{equation}
\begin{equation}
 \widetilde w(s)
 :=
 w(\psi(s))\psi'(s).
 \label{eq:appD-transported-output-weight}
\end{equation}

For \(p=\infty\), retain
\[
 (\mathcal T_\infty h)(s)
 :=
 h(\psi(s))\psi'(s),
\]
but define
\begin{equation}
 \widetilde v_\infty(s)
 :=
 \frac{v(\psi(s))}{\psi'(s)}.
 \label{eq:appD-infinity-input-weight}
\end{equation}

Let
\begin{equation}
 H_+f(s)
 :=
 \int_0^sf(t)\,\dd t
 \label{eq:appD-half-line-Hardy}
\end{equation}
be the forward Hardy operator on the positive half-line.

\begin{lemma}[Exact transport identities]
\label{lem:appD-exact-transport}
For every \(h\ge0\) and \(s>0\),
\begin{equation}
 H_+(\mathcal T_ph)(s)
 =
 H_Jh(\psi(s)).
 \label{eq:appD-operator-transport}
\end{equation}

If \(1\le p<\infty\), then
\begin{equation}
 \|\mathcal T_ph\|_{L^p(\widetilde v_p;(0,\infty))}
 =
 \|h\|_{L^p(v;J)}.
 \label{eq:appD-input-isometry-finite}
\end{equation}
If \(p=\infty\), then
\begin{equation}
 \|\mathcal T_\infty h\|_{L^\infty(\widetilde v_\infty;(0,\infty))}
 =
 \|h\|_{L^\infty(v;J)}.
 \label{eq:appD-input-isometry-infinity}
\end{equation}
For every \(0<q<\infty\),
\begin{equation}
 \|H_+(\mathcal T_ph)\|_{L^q(\widetilde w;(0,\infty))}
 =
 \|H_Jh\|_{L^q(w;J)}.
 \label{eq:appD-output-isometry}
\end{equation}
Consequently, the compact and transported half-line optimal constants
are equal.
\end{lemma}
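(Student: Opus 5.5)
The plan is to verify the four identities by direct change of variables \(t=\psi(\sigma)\), using only that \(\psi\) is an increasing \(C^1\)-diffeomorphism of \((0,\infty)\) onto \((c,d)\) with \(\psi'>0\); the endpoints \(c,d\) have Lebesgue measure zero and may be ignored.

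\textbf{Operator identity.} For the first identity, write
\[
 H_+(\mathcal T_ph)(s)=\int_0^s h(\psi(\sigma))\psi'(\sigma)\,\dd\sigma .
\]
Substituting \(t=\psi(\sigma)\) turns this into \(\int_c^{\psi(s)}h(t)\,\dd t=H_Jh(\psi(s))\). This step is valid for nonnegative measurable \(h\), by the change-of-variables formula for monotone \(C^1\) maps, and it covers all \(p\) at once.

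\textbf{Input norms.} For \(1\le p<\infty\), compute pointwise:
\[
 (\mathcal T_ph)^p\widetilde v_p
 =h(\psi)^p\psi'^{\,p}\,v(\psi)\psi'^{\,1-p}
 =\bigl(h(\psi)^p v(\psi)\bigr)\psi'.
\]
Integrating in \(s\) and substituting gives \(\int_J h^pv\,\dd t\).

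At this point we must check the extended product \(\starprod\). Since \(0<\psi'<\infty\), multiplying by \(\psi'\) commutes with the rules \(0\starprod\infty=0\) and \(a\starprod\infty=\infty\) for \(a>0\). Hence the identity holds even on the sets \(\{v=\infty\}\) and \(\{h=0\}\), and not only where the weight is finite.

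For \(p=\infty\) the same cancellation holds pointwise:
\[
 (\mathcal T_\infty h)\starprod\widetilde v_\infty
 =\bigl(h(\psi)\psi'\bigr)\starprod\frac{v(\psi)}{\psi'}
 =h(\psi)\starprod v(\psi).
\]
Essential suprema are preserved because \(\psi\) and \(\psi^{-1}\) map Lebesgue-null sets to Lebesgue-null sets, being locally Lipschitz diffeomorphisms.

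\textbf{Output norms.} For the output identity, use the operator identity to write
\[
 \bigl(H_+(\mathcal T_ph)\bigr)^q\widetilde w
 =\bigl(H_Jh(\psi)\bigr)^q w(\psi)\psi'.
\]
Substituting again yields \(\int_J (H_Jh)^q w\,\dd t\). This argument needs no convexity in \(q\), so it covers \(0<q<1\).

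\textbf{Equality of the optimal constants.} The last claim follows once we observe that \(\mathcal T_p\) is a bijection between nonnegative measurable functions on \((c,d)\) and on \((0,\infty)\), with inverse \(f\mapsto f(\psi^{-1})/\psi'(\psi^{-1})\). Consequently the two families of test quotients coincide exactly. This also covers the zero-norm tests under the paper's convention, since zero norms and nonzero outputs correspond.

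The only genuinely delicate point is this bookkeeping with extended values and null sets: confirming that the factor \(\psi'\in(0,\infty)\) never creates or destroys a \(0\cdot\infty\) case, and that essential suprema are invariant. I expect no further obstacle.
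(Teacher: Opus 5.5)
Your proposal is correct and follows essentially the paper's argument: the paper proves all four identities by the same direct substitution \(t=\psi(\sigma)\), using the same pointwise cancellation of the powers of \(\psi'\). Your extra bookkeeping fills in details the paper leaves implicit. This includes the \(\starprod\) convention on \(\{v=\infty\}\), the invariance of essential suprema under \(\psi\) and \(\psi^{-1}\), and the explicit inverse of \(\mathcal T_p\), which justifies the final equality of optimal constants.
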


\begin{proof}
A change of variables gives
\[
 \begin{aligned}
 H_+(\mathcal T_ph)(s)
 &=
 \int_0^s
 h(\psi(t))\psi'(t)\,\dd t
 \\
 &=
 \int_c^{\psi(s)}h(x)\,\dd x
 =
 H_Jh(\psi(s)).
 \end{aligned}
\]
For \(p<\infty\),
\[
 \begin{aligned}
 &\int_0^\infty
 |\mathcal T_ph(s)|^p
 \widetilde v_p(s)\,\dd s
 \\
 &\quad=
 \int_0^\infty
 |h(\psi(s))|^p
 \psi'(s)^p
 v(\psi(s))
 \psi'(s)^{1-p}\,\dd s
 \\
 &\quad=
 \int_c^d|h(x)|^pv(x)\,\dd x.
 \end{aligned}
\]
The \(p=\infty\) identity follows from
\[
 |\mathcal T_\infty h(s)|
 \widetilde v_\infty(s)
 =
 |h(\psi(s))|v(\psi(s)).
\]
Finally,
\[
 \begin{aligned}
 &\int_0^\infty
 |H_+(\mathcal T_ph)(s)|^q
 \widetilde w(s)\,\dd s
 \\
 &\quad=
 \int_0^\infty
 |H_Jh(\psi(s))|^q
 w(\psi(s))\psi'(s)\,\dd s
 \\
 &\quad=
 \int_c^d|H_Jh(x)|^qw(x)\,\dd x.
 \end{aligned}
\]
\end{proof}

\subsection{Transport of tails and profiles}
\label{subsec:appD-profile-transport}

Let
\begin{equation}
 \widetilde W(s)
 :=
 \int_s^\infty\widetilde w(t)\,\dd t.
 \label{eq:appD-half-line-tail}
\end{equation}

\begin{lemma}[Tail and profile identities]
\label{lem:appD-tail-profile-identities}
For every \(s>0\),
\begin{equation}
 \widetilde W(s)
 =
 W_J(\psi(s)).
 \label{eq:appD-tail-identity}
\end{equation}

If \(1<p<\infty\), then
\begin{equation}
 \left(
   \int_0^s
   \widetilde v_p(t)^{1-p'}\,\dd t
 \right)^{1/p'}
 =
 \Phi_{p,v;J}(\psi(s)).
 \label{eq:appD-profile-transport-finite}
\end{equation}

If \(p=1\), then the exact left-continuous profiles satisfy
\begin{equation}
 \operatorname*{ess\,sup}_{0<t<s}
 \frac1{\widetilde v_1(t)}
 =
 F_{v,J}(\psi(s)),
 \label{eq:appD-profile-transport-one}
\end{equation}
and their right-continuous completions satisfy
\begin{equation}
 \widetilde\Phi_1(s)
 =
 \Phi_{1,v;J}(\psi(s)).
 \label{eq:appD-profile-completion-transport-one}
\end{equation}

If \(p=\infty\), then
\begin{equation}
 \int_0^s\frac{\dd t}{\widetilde v_\infty(t)}
 =
 \Phi_{\infty,v;J}(\psi(s)).
 \label{eq:appD-profile-transport-infinity}
\end{equation}
\end{lemma}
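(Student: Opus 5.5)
The plan is to verify each identity by the substitution $x=\psi(t)$, which carries $(0,s)$ onto $(c,\psi(s))$ and $(s,\infty)$ onto $(\psi(s),d)$, with $\dd x=\psi'(t)\,\dd t$. Since $\psi$ is an increasing $C^1$ diffeomorphism with $\psi'>0$, null sets correspond to null sets. This is the only measure-theoretic fact needed for the essential-supremum statements.

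For the tail identity \eqref{eq:appD-tail-identity}, write $\widetilde W(s)=\int_s^\infty w(\psi(t))\psi'(t)\,\dd t$ and substitute. Integrability of $w$ on $J$ makes both sides finite.

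For $1<p<\infty$, compute $\widetilde v_p^{\,1-p'}$. Using $(1-p)(1-p')=1$, we get
\[
\widetilde v_p(t)^{1-p'}=v(\psi(t))^{1-p'}\psi'(t).
\]
Hence $\int_0^s\widetilde v_p^{\,1-p'}\,\dd t=\int_c^{\psi(s)}v^{1-p'}\,\dd x$, and taking the $1/p'$ power gives \eqref{eq:appD-profile-transport-finite}. The convention $v^{1-p'}=0$ on $\{v=\infty\}$ is respected on both sides. For $p=\infty$, we have $1/\widetilde v_\infty=\psi'/v\circ\psi$, and the same substitution yields \eqref{eq:appD-profile-transport-infinity}.

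For $p=1$, we have $\widetilde v_1=v\circ\psi$, because $\psi'^{\,0}=1$. For any $\lambda$, the set $\{t\in(0,s):1/v(\psi(t))>\lambda\}$ is the preimage under $\psi$ of $\{x\in(c,\psi(s)):1/v(x)>\lambda\}$. The first set has positive measure if and only if the second does. This gives equality of the essential suprema, namely \eqref{eq:appD-profile-transport-one}.

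For the completions, the half-line right-continuous completion $\widetilde\Phi_1(s)$ is defined as $\lim_{s'\downarrow s}\widetilde F(s')$, including $s=0$. Continuity and monotonicity of $\psi$ give $\psi(s')\downarrow\psi(s)$, so
\[
\widetilde\Phi_1(s)=\lim_{y\downarrow\psi(s)}F_{v,J}(y)=\Phi_{1,v;J}(\psi(s)).
\]
At $s=0$ this reads $\widetilde\Phi_1(0)=\Phi_{1,v;J}(c)$, which matches \eqref{eq:appD-p-one-boundary-value} and records the initial atom.

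The only delicate point is this completion step. We must check that $\widetilde\Phi_1$ is indeed defined by right limits on the half-line, with the value at $0$ given by the limit from the right. We must also note that the point $d$ is not attained for finite $s$, so the separate convention at $x=d$ plays no role here. It is recovered only as $s\to\infty$, through $F_{v,J}(d)=\lim_{s\to\infty}F_{v,J}(\psi(s))$, which holds because $F_{v,J}$ is left-continuous.
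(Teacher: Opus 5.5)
Your proposal is correct and follows the paper's proof essentially step for step. It uses the substitution $x=\psi(t)$ for the tail and for the $1<p<\infty$ and $p=\infty$ profiles, via $(1-p)(1-p')=1$ and $1/\widetilde v_\infty=\psi'/(v\circ\psi)$; the fact that the increasing diffeomorphism $\psi$ preserves the essential supremum when $p=1$; and right limits together with continuity and monotonicity of $\psi$ for the completion. Your level-set argument for the essential supremum, and your remarks on $s=0$ and on $d$ not being attained, only make explicit details the paper leaves implicit.
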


\begin{proof}
The tail identity follows from
\[
 \begin{aligned}
 \widetilde W(s)
 &=
 \int_s^\infty
 w(\psi(t))\psi'(t)\,\dd t
 \\
 &=
 \int_{\psi(s)}^dw(x)\,\dd x.
 \end{aligned}
\]

For \(1<p<\infty\), observe that
\[
 (1-p)(1-p')=1.
\]
Hence
\[
 \widetilde v_p(t)^{1-p'}
 =
 v(\psi(t))^{1-p'}\psi'(t),
\]
and therefore
\[
 \int_0^s
 \widetilde v_p(t)^{1-p'}\,\dd t
 =
 \int_c^{\psi(s)}
 v(x)^{1-p'}\,\dd x.
\]

When \(p=1\),
\[
 \widetilde v_1(t)=v(\psi(t)),
\]
so the exact essential supremum is preserved by the increasing
diffeomorphism \(\psi\), proving
\eqref{eq:appD-profile-transport-one}. Taking right limits and using the
continuity and monotonicity of \(\psi\) gives
\eqref{eq:appD-profile-completion-transport-one}.

For \(p=\infty\),
\[
 \frac1{\widetilde v_\infty(t)}
 =
 \frac{\psi'(t)}{v(\psi(t))},
\]
and another change of variables completes the proof.
\end{proof}

\begin{lemma}[Transport of the Stieltjes measure]
\label{lem:appD-Stieltjes-transport}
Assume \(1\le p<\infty\) and \(q<p\), and let \(r\) be defined by
\eqref{eq:appD-r-definition}. Let
\[
 \widetilde\Phi_p(s)
 :=
 \Phi_{p,v;J}(\psi(s)),
 \qquad s>0,
\]
and set
\[
 \widetilde\Phi_p(0)
 :=
 \Phi_{p,v;J}(c).
\]
For \(p=1\), this is the right-continuous completion of the exact half-line
profile in \eqref{eq:appD-profile-transport-one}. By
Lemma~\ref{lem:appD-p-one-completion}, it generates exactly the
Lebesgue--Stieltjes measure used by the half-line theorem.
Then, for every nonnegative Borel function \(F\),
\begin{equation}
 \int_{[0,\infty)}
 F(s)\,
 \dd\!\left(\widetilde\Phi_p(s)^r\right)
 =
 \int_J
 F(\psi^{-1}(x))\,
 \dd\!\left(\Phi_{p,v;J}(x)^r\right),
 \label{eq:appD-Stieltjes-pushforward}
\end{equation}
with the natural interpretation at \(x=c\).

In particular,
\begin{equation}
 \begin{aligned}
 &\int_{[0,\infty)}
 \widetilde W(s)^{r/q}\,
 \dd\!\left(\widetilde\Phi_p(s)^r\right)
 \\
 &\qquad=
 \int_J
 W_J(x)^{r/q}\,
 \dd\!\left(\Phi_{p,v;J}(x)^r\right).
 \end{aligned}
 \label{eq:appD-characteristic-transport}
\end{equation}
\end{lemma}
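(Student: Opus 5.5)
The plan is to check the identity \eqref{eq:appD-Stieltjes-pushforward} first on closed initial intervals, pass to all Borel sets by uniqueness of measures, and finish with simple-function approximation. The key point is that \(\psi\), extended by \(\psi(0)=c\), is an increasing homeomorphism from \([0,\infty)\) onto \([c,d)\). Hence for every \(s\ge0\) the set \([0,s]\) is carried exactly onto \([c,\psi(s)]\).

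First compute both measures on initial segments. Let \(\tilde\rho:=\dd(\widetilde\Phi_p^r)\) on \([0,\infty)\), with the generator extended by zero to the left of \(0\). Then \(\tilde\rho([0,s])=\widetilde\Phi_p(s)^r\) for \(s\ge0\); this uses right-continuity of \(\widetilde\Phi_p\). For \(1<p<\infty\) right-continuity is automatic because the profile is continuous. For \(p=1\) it holds because \(\Phi_{1,v;J}\) is right-continuous and \(\psi\) is continuous and increasing; by \eqref{eq:appD-profile-completion-transport-one} this is also the completion of the exact half-line profile. Let \(\rho:=\dd(\Phi_{p,v;J}^r)\) on \(J\). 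By \eqref{eq:profile-measure-cumulative},
\[
 (\psi_\#\tilde\rho)([c,\psi(s)])=\tilde\rho([0,s])=\Phi_{p,v;J}(\psi(s))^r=\rho([c,\psi(s)]).
\]
So \(\psi_\#\tilde\rho\) and \(\rho\) agree on all intervals \([c,x]\) with \(c\le x<d\), including the atom at \(c\), since \(\tilde\rho(\{0\})=\Phi_{p,v;J}(c)^r=\rho(\{c\})\).

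Next, these closed initial intervals form a \(\pi\)-system generating the Borel sets of \([c,d)\), and both measures are finite on each of them. By the \(\pi\)--\(\lambda\) theorem, \(\psi_\#\tilde\rho=\rho\) on \([c,d)\). Simple functions and monotone convergence then give \eqref{eq:appD-Stieltjes-pushforward} for every nonnegative Borel \(F\), with \(\psi^{-1}(c)=0\) as the natural interpretation at \(x=c\).

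The main obstacle is the right endpoint \(d\), which is not in the image of \(\psi\). I must show \(\rho(\{d\})=0\), so that the identity holds on all of \(J\). For \(1<p<\infty\) the profile is continuous, so this is immediate. For \(p=1\), \(\Phi_{1,v;J}(d)=F_{v,J}(d)\), and \(F_{v,J}\) is left-continuous. Therefore \(\lim_{x\uparrow d}\Phi_{1,v;J}(x)=F_{v,J}(d)\), because \(F\le\Phi\le F(\cdot+)\) and the left limits coincide. So there is no atom at \(d\) in this case either.

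Finally, \eqref{eq:appD-characteristic-transport} follows by taking \(F=\widetilde W^{r/q}\) and using the tail identity \eqref{eq:appD-tail-identity}, \(\widetilde W(\psi^{-1}(x))=W_J(x)\). At \(x=c\) this uses \(\widetilde W(0)=W_J(c)\), which holds by the same change of variables.
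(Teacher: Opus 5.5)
Your proposal is correct and takes essentially the same route as the paper: both identify \(\dd(\Phi_{p,v;J}^r)\) as the pushforward of \(\dd(\widetilde\Phi_p^r)\) under the order-preserving map \(\psi\), with the atom at \(0\) carried to the atom at \(c\). Your version is more explicit than the paper's one-line appeal to the homeomorphism \([0,\infty]\to[c,d]\). You check agreement on closed initial segments and extend by a \(\pi\)--\(\lambda\) argument, and you verify that \(\dd(\Phi_{p,v;J}^r)(\{d\})=0\), using left-continuity of \(F_{v,J}\) when \(p=1\); this step is genuinely needed because the left-hand side integrates only over \([0,\infty)\), and the paper leaves it implicit.
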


\begin{proof}
The map \(\psi\) is an order-preserving homeomorphism between the
compactified intervals
\[
 [0,\infty]
 \quad\text{and}\quad
 [c,d].
\]
Moreover,
\[
 \widetilde\Phi_p^r
 =
 \Phi_{p,v;J}^r\circ\psi.
\]
The corresponding Lebesgue--Stieltjes measures are therefore related
by the pushforward under \(\psi\). The atom at \(0\) is sent to the atom
at \(c\), with
\[
 \dd\!\left(\widetilde\Phi_p^r\right)(\{0\})
 =
 \Phi_{p,v;J}(c)^r
 =
 \dd\!\left(\Phi_{p,v;J}^r\right)(\{c\}).
\]
This proves \eqref{eq:appD-Stieltjes-pushforward}. Equation
\eqref{eq:appD-characteristic-transport} follows from
Lemma~\ref{lem:appD-tail-profile-identities}.
\end{proof}

\subsection{Finite-ceiling bridge for forbidden input sets}
\label{subsec:appD-ceiling}

Let now \(v:J\to[\delta,\infty]\) be regular in the sense of
\LinearHardySectionReference{}, and put
\[
 E_\infty:=\{x\in J:v(x)=\infty\}.
\]
For \(N\ge\delta\), define the finite-ceiling replacement
\begin{equation}
 v^{[N]}(x)
 :=
 \begin{cases}
 v(x),&v(x)<\infty,\\
 N,&v(x)=\infty.
 \end{cases}
 \label{eq:appD-finite-ceiling-weight}
\end{equation}
Thus \(v^{[N]}\) is finite almost everywhere and
\(v^{[N]}\uparrow v\) in the extended sense.

\begin{lemma}[Finite-ceiling convergence]
\label{lem:appD-finite-ceiling}
Let \(1\le p<\infty\), \(0<q<\infty\), and \(w\in L^1(J)\), \(w\ge0\).
Write
\[
 L_N:=L_{p,q}(v^{[N]},w;J),
 \qquad
 L:=L_{p,q}(v,w;J).
\]
Then
\begin{equation}
 L_N\downarrow L.
 \label{eq:appD-ceiling-constant-limit}
\end{equation}
Moreover, the compact Hardy characteristics formed from \(v^{[N]}\)
converge to the corresponding characteristic formed from \(v\). The
convergence is uniform at the profile level, and in the non-convex branch
\begin{equation}
 \int_J W_J^{r/q}\,\dd\!\left(\Phi_{p,v^{[N]};J}^{\,r}\right)
 \longrightarrow
 \int_J W_J^{r/q}\,\dd\!\left(\Phi_{p,v;J}^{\,r}\right).
 \label{eq:appD-ceiling-characteristic-limit}
\end{equation}
\end{lemma}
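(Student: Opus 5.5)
I would establish the three assertions of Lemma~\ref{lem:appD-finite-ceiling} separately: monotonicity of \(L_N\), the upper bound \(\lim_N L_N\le L\) (the lower bound is immediate), and convergence of the characteristics.

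\textbf{Monotonicity.} Since \(v^{[N]}\le v^{[N']}\le v\) for \(N\le N'\), every input norm satisfies \(\|h\|_{L^p(v^{[N]};J)}\le\|h\|_{L^p(v;J)}\). Hence \(L_N\) is nonincreasing in \(N\) and \(L_N\ge L\), so \(\lim_N L_N\) exists and is at least \(L\).

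\textbf{Reverse inequality.} I would avoid passing limits through extremising sequences, since they are not uniformly controlled. Instead, I would compare \(L_N\) with \(L\) on the part of an input living in \(E_\infty\). Write \(h=h\mathbf 1_{J\setminus E_\infty}+h\mathbf 1_{E_\infty}=:h_0+h_1\). Then \(\|h_0\|_{L^p(v^{[N]})}=\|h_0\|_{L^p(v)}\), while \(\|h_1\|_{L^p(v^{[N]})}=N^{1/p}\|h_1\|_{L^p}\). The output of \(h_1\) is bounded by \(H_Jh_1\le\|h_1\|_{L^1}\le N^{-1}\|h_1\|_{L^1(v^{[N]})}\) when \(p=1\). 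For \(p>1\), Hölder's inequality on \(E_\infty\) gives \(H_Jh_1\le |E_\infty|^{1/p'}N^{-1/p}\|h_1\|_{L^p(v^{[N]})}\). Thus
\[
\|H_Jh_1\|_{L^q(w)}\le \varepsilon_N\|h\|_{L^p(v^{[N]})},\qquad \varepsilon_N:=\|w\|_{L^1}^{1/q}|E_\infty|^{1/p'}N^{-1/p}\to0 .
\]
Combining this with the (quasi-)triangle inequality in \(L^q\), which carries the constant \(2^{1/q-1}\) when \(q<1\), yields \(L_N\le c_q(L+\varepsilon_N)\). This gives only equivalence up to a constant, not exact convergence. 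To obtain exactness I would replace the triangle inequality by pointwise additivity: \(H_Jh=H_Jh_0+H_Jh_1\), with \(\|H_Jh_1\|_\infty\le\varepsilon_N'\|h\|\) and \(H_Jh_0\le L\)-controlled. Then monotone and dominated convergence in \(\int (a+b)^q w\) gives \(\|H_J(h_0+h_1)\|_{L^q(w)}\le \|H_Jh_0\|_{L^q(w)}+o(1)\) uniformly over the unit ball, since \(w\) is integrable and the perturbation is uniformly small in sup norm. This shows \(\lim_N L_N\le L\) exactly. If \(L=\infty\), the claim is trivial.

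\textbf{Characteristics.} For \(1<p<\infty\), \(v^{[N]\,1-p'}=v^{1-p'}+N^{1-p'}\mathbf 1_{E_\infty}\). Hence \(\Phi_{p,v^{[N]};J}^{p'}-\Phi_{p,v;J}^{p'}\le N^{1-p'}|E_\infty|\), which converges uniformly. For \(p=1\), \(1/v^{[N]}=\max\) contributions differ by at most \(1/N\), so \(F\) and its completion converge uniformly with error \(\le1/N\). For \(p=\infty\) the argument is analogous. In the convex branch, uniform convergence of bounded profiles against the bounded \(W_J^{1/q}\) gives convergence of suprema.

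\textbf{Main obstacle.} In the nonconvex branch, uniform convergence \(\Phi_N^r\to\Phi^r\) of nondecreasing functions does not by itself give convergence of \(\int W_J^{r/q}\,d\Phi_N^r\). I would integrate by parts using the endpoint-safe convention:
\[
\int_J W_J^{r/q}\,d(G)=\int_J G(x)\,d\bigl(-W_J^{r/q}\bigr)(x)+G(d)\,W_J(d)^{r/q},
\]
where the last term vanishes since \(W_J(d)=0\). Because \(W_J\) is continuous, nonincreasing, and bounded, \(-W_J^{r/q}\) generates a finite nonatomic measure. Uniform convergence \(G_N=\Phi_N^r\to\Phi^r\) then passes directly through this integral. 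The left atom at \(c\) is included automatically, since \(G\) takes the value \(\Phi(c)^r\) on \([c,x]\).
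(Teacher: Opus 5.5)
Your proposal follows the paper's proof essentially step for step. It uses the same splitting \(h=h\mathbf 1_{J\setminus E_\infty}+h\mathbf 1_{E_\infty}\) and the same H\"older sup-norm bound of order \(N^{-1/p}\) on the forbidden part. The uniform profile estimates also coincide: of size \(N^{1-p'}|E_\infty|\) for \(p>1\) and \(N^{-1}\) for \(p=1\). Finally, the non-convex branch uses the same Stieltjes integration by parts against the continuous monotone function \(W_J^{r/q}\), with \(W_J(d)=0\).

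The one soft spot is the case \(0<q<1\). You did not need to abandon the triangle-inequality route there. The \(q\)-subadditivity \(\|a+b\|_{L^q(w)}^q\le\|a\|_{L^q(w)}^q+\|b\|_{L^q(w)}^q\) gives \(L_N^q\le L^q+\varepsilon_N^q\) directly, with no multiplicative constant, and this is exactly what the paper uses. Your substitute appeal to monotone and dominated convergence does not by itself give uniformity over the unit ball. What does give it is the pointwise inequality \((a+b)^q\le a^q+b^q\), integrated against \(w\).
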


\begin{proof}
Because \(v^{[N]}\) increases with \(N\), \(L_N\) is nonincreasing and
\(L\le L_N\). Assume \(L<\infty\), since otherwise every \(L_N\) is
infinite. If \(\|h\|_{L^p(v^{[N]};J)}\le1\), decompose
\(h=h_0+h_\infty\) with \(h_\infty=h\mathbf1_{E_\infty}\). Then
\(\|h_0\|_{L^p(v;J)}\le1\), while Hölder's inequality gives
\[
 \sup_{x\in J}H_Jh_\infty(x)
 \le
 |J|^{1/p'}N^{-1/p},
\]
with \(|J|^{1/\infty}=1\) when \(p=1\). Hence, with
\[
 \varepsilon_N
 :=|J|^{1/p'}N^{-1/p}\|1\|_{L^q(w;J)},
\]
we have \(L_N\le L+\varepsilon_N\) for \(q\ge1\), and
\(L_N^q\le L^q+\varepsilon_N^q\) for \(0<q<1\). This proves
\eqref{eq:appD-ceiling-constant-limit}.

For \(1<p<\infty\),
\[
 0\le
 \int_c^x (v^{[N]})^{1-p'}-\int_c^x v^{1-p'}
 \le |J|N^{1-p'},
\]
uniformly in \(x\), so \(\Phi_{p,v^{[N]};J}\to\Phi_{p,v;J}\)
uniformly. For \(p=1\), the exact profiles satisfy
\[
 0\le F_{v^{[N]},J}(x)-F_{v,J}(x)\le N^{-1},
\]
and the same bound holds for their right-continuous completions. Thus the
convex suprema converge uniformly.

In the non-convex branch set
\(G_N=\Phi_{p,v^{[N]};J}^{r}\),
\(G=\Phi_{p,v;J}^{r}\), and \(Z=W_J^{r/q}\). Then
\(G_N\to G\) uniformly, while \(Z\) is continuous and of bounded variation.
Stieltjes integration by parts therefore gives
\[
 \left|
 \int_J Z\,\dd G_N-
 \int_J Z\,\dd G
 \right|
 \le
 \|G_N-G\|_\infty
 \bigl(Z(d)+\operatorname{Var}_J Z\bigr)
 \longrightarrow0,
\]
which proves \eqref{eq:appD-ceiling-characteristic-limit}.
\end{proof}

\subsection{The compact linear theorem}
\label{subsec:appD-compact-theorem}

We now apply the half-line Hardy characterisation to the transported
weights. Its comparison constants depend only on \(p\) and \(q\).

\begin{theorem}[Endpoint-safe compact linear Hardy theorem]
\label{thm:appD-compact-linear-Hardy}
Let
\[
 1\le p<\infty,
 \qquad
 0<q<\infty,
\]
let \(v:J\to[\delta,\infty]\) be regular for some \(\delta>0\), and let
\(w\ge0\) be integrable on \(J\).

If \(p\le q\), then
\begin{equation}
 L_{p,q}(v,w;J)
 \asymp_{p,q}
 \sup_{x\in J}
 \Phi_{p,v;J}(x)W_J(x)^{1/q}.
 \label{eq:appD-compact-upper-characteristic}
\end{equation}

If \(q<p<\infty\), let \(r\) be defined by
\eqref{eq:appD-r-definition}. Then
\begin{equation}
 L_{p,q}(v,w;J)
 \asymp_{p,q}
 \left[
   \int_J
   W_J(x)^{r/q}\,
   \dd\!\left(\Phi_{p,v;J}(x)^r\right)
 \right]^{1/r}.
 \label{eq:appD-compact-lower-characteristic}
\end{equation}

At the endpoint \(p=1>q\), the right-hand side of
\eqref{eq:appD-compact-lower-characteristic} contains the explicit
boundary contribution
\begin{equation}
 W_J(c)^{r/q}\Phi_{1,v;J}(c)^r.
 \label{eq:appD-p-one-boundary-contribution}
\end{equation}

All comparison constants are independent of \(J\), \(v\), and \(w\).
\end{theorem}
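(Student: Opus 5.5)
The proof will be a two-stage reduction: first the finite-weight case via exact transport to the half-line, then the forbidden-set case via the finite-ceiling bridge.

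\textbf{Stage 1: finite weights.} Assume first that \(0<v<\infty\) almost everywhere on \(J\), in addition to \(v\ge\delta\).

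\emph{Transport.} Lemma~\ref{lem:appD-exact-transport} shows that \(\mathcal T_p\) is an isometry between the input spaces and preserves the output functional. It is surjective onto nonnegative functions on \((0,\infty)\), since \(h=(f\circ\psi^{-1})/\psi'\circ\psi^{-1}\) inverts it. Hence
\[
 L_{p,q}(v,w;J)=\bigl\|H_+:L^p(\widetilde v_p)\to L^q(\widetilde w)\bigr\|.
\]

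\emph{Half-line characterisation.} Next I apply the half-line theorem \cite[Theorem~2.1]{GogatishviliPick2025} for \(1\le p<\infty\), \(0<q<\infty\), whose constants depend only on \(p,q\).

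In the convex branch the half-line criterion is \(\sup_{s>0}\widetilde\Phi(s)\widetilde W(s)^{1/q}\), where \(\widetilde\Phi\) is the exact profile of \(\widetilde v_p\). By Lemma~\ref{lem:appD-tail-profile-identities} this equals \(\sup_{c<x<d}\Phi_{p,v;J}(x)W_J(x)^{1/q}\), or the same expression with \(F_{v,J}\) when \(p=1\).

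The remaining endpoint points are handled as follows. At \(x=d\) we have \(W_J(d)=0\). At \(x=c\), continuity of \(W_J\) and right-continuity of \(\Phi\) let the supremum over \(J\) be approximated from the interior. For \(p=1\), Lemma~\ref{lem:appD-p-one-completion} with \(Z=W_J^{1/q}\) replaces \(F_{v,J}\) by \(\Phi_{1,v;J}\).

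In the non-convex branch the half-line criterion is \(\int_{[0,\infty)}\widetilde W^{r/q}\,d(\widetilde\Phi^r)\). Here the initial mass at \(0\) is the limiting profile value; for \(p=1\) this is exactly the completion at \(0\), by Lemma~\ref{lem:appD-p-one-completion}. Lemma~\ref{lem:appD-Stieltjes-transport} then transports this integral to \(\int_J W_J^{r/q}\,d(\Phi^r_{p,v;J})\). The pushforward of the atom at \(0\) to the atom at \(c\) yields the boundary term \eqref{eq:appD-p-one-boundary-contribution}.

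\textbf{Stage 2: forbidden sets.} For general regular \(v\), apply Stage 1 to the ceiling weights \(v^{[N]}\), which are finite almost everywhere and bounded below by \(\delta\). This gives \(c\,\mathfrak L_N\le L_N\le C\,\mathfrak L_N\), with \(\mathfrak L_N\) the characteristic for \(v^{[N]}\) and \(c,C\) independent of \(N\). Letting \(N\to\infty\), Lemma~\ref{lem:appD-finite-ceiling} gives \(L_N\to L\) and \(\mathfrak L_N\to\mathfrak L\), in both branches. This passes the two-sided estimate to \(v\), still with constants depending only on \(p,q\), and in particular independent of \(J\), \(v\) and \(w\).

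\textbf{Main obstacle.} The delicate step is matching the exact form of the half-line criterion, and its endpoint conventions, with the transported quantities. The half-line criterion uses the left-continuous \(p=1\) profile, possibly with the Stieltjes mass at \(0\) only implicit through a limiting value. It may also be stated with a density form or an integration-by-parts variant rather than \(\int \widetilde W^{r/q}\,d\widetilde\Phi^r\).

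I would first check that for \(p>1\) the transported measure has no atoms and the density form agrees, using Lemma~\ref{lem:classical-profile-densities}-type differentiation. For \(p=1\) I would verify directly, by a test function concentrating near \(0\), that the atom term is both necessary and sufficient. Concretely, a necessity test is \(h_n=n\mathbf 1_{(c,c+1/n)}/v\)-normalised, giving output comparable to \(\Phi_{1,v;J}(c)W_J(c)^{1/q}\). If the cited formulation differs, this gives an independent confirmation that the boundary atom is counted correctly.
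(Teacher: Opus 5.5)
Your proposal follows the paper's proof essentially step for step: exact transport to the half-line via Lemma~\ref{lem:appD-exact-transport}, then the cited half-line theorem, with Lemma~\ref{lem:appD-p-one-completion} converting the left-continuous $p=1$ profile in both the supremum and the Stieltjes measure, then Lemma~\ref{lem:appD-Stieltjes-transport} carrying the atom at $0$ to $c$, and finally the finite-ceiling Lemma~\ref{lem:appD-finite-ceiling} for forbidden sets. One caution concerns your optional side check: the normalised test $n\mathbf 1_{(c,c+1/n)}/v$ only reproduces the average of $1/v$ over $(c,c+1/n)$, which can stay far below $\Phi_{1,v;J}(c)$, so to isolate the atom you must instead concentrate on a positive-measure subset of $(c,c+1/n)$ where $1/v$ nearly attains its essential supremum, as in the proof of Lemma~\ref{lem:local-integration-functional}.
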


\begin{proof}
Assume first that \(v\) is finite almost everywhere. By
Lemma~\ref{lem:appD-exact-transport}, the optimal compact constant equals
the optimal half-line constant associated with
\(\widetilde v_p,\widetilde w\). The underlying Gogatishvili--Pick half-line input is
Theorem~2.1 and its proof, as cited in \LinearHardySectionReference{}. Its
necessity and sufficiency estimates yield comparison constants that depend
only on \((p,q)\).

Suppose first that \(p\le q\). For \(p>1\), the half-line profile is the
transported \(\widetilde\Phi_p\) above. For \(p=1\), the source theorem
uses the exact left-continuous profile in
\eqref{eq:appD-profile-transport-one}.
Lemma~\ref{lem:appD-p-one-completion} converts its supremum exactly to the
right-continuous completion. Lemma~\ref{lem:appD-tail-profile-identities}
then gives
\[
 L_{p,q}(v,w;J)
 \asymp_{p,q}
 \sup_{x\in J}\Phi_{p,v;J}(x)W_J(x)^{1/q},
\]
proving \eqref{eq:appD-compact-upper-characteristic}.

Assume now that \(q<p<\infty\). If \(p=1\), the Lebesgue--Stieltjes
measure in the half-line theorem is generated by the left-continuous exact
profile. Lemma~\ref{lem:appD-p-one-completion} identifies it with
\(\dd(\widetilde\Phi_1^r)\). For \(p>1\) no completion is needed. Hence the
half-line theorem gives
\[
 \begin{aligned}
 L_{p,q}(v,w;J)
 &\asymp_{p,q}
 \Bigg[
   \int_{[0,\infty)}
   \widetilde W(s)^{r/q}\,
   \dd\!\left(\widetilde\Phi_p(s)^r\right)
 \Bigg]^{1/r}.
 \end{aligned}
\]
Lemma~\ref{lem:appD-Stieltjes-transport} now yields
\eqref{eq:appD-compact-lower-characteristic}. At \(p=1\), the atom at the
origin is transported to
\[
 \dd\!\left(\Phi_{1,v;J}^r\right)(\{c\})
 =\Phi_{1,v;J}(c)^r,
\]
whose contribution is \eqref{eq:appD-p-one-boundary-contribution}.

Finally, if \(v\) may equal \(+\infty\), apply the finite-a.e. result to
\(v^{[N]}\). Lemma~\ref{lem:appD-finite-ceiling} gives convergence of both
the optimal constants and the corresponding characteristics. Passing to the
limit preserves the same comparison constants, so they depend only on
\((p,q)\) and are independent of \(J\), \(v\), \(w\), and \(N\).
\end{proof}

\begin{corollary}[The infinite-input branch]
\label{cor:appD-infinite-input}
Let \(0<q<\infty\). Then
\begin{equation}
 L_{\infty,q}(v,w;J)
 =
 \left[
   \int_J
   \Phi_{\infty,v;J}(x)^q
   w(x)\,\dd x
 \right]^{1/q}.
 \label{eq:appD-infinity-exact}
\end{equation}
\end{corollary}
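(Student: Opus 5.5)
The identity \eqref{eq:appD-infinity-exact} is exact, so I will prove matching upper and lower bounds directly. No transport to the half-line and no two-weight theorem is needed. The only tool is Lemma~\ref{lem:appD-integration-functional} in its \(p=\infty\) branch, together with the extremiser \(\sigma_v=1/v\) of Lemma~\ref{lem:linfty-integration-functional}. The regularity hypothesis \(v\ge\delta\) (so that \(1/v\le 1/\delta\) and \(\Phi_{\infty,v;J}\) is finite and continuous) is the standing assumption of this appendix. The reciprocal convention \eqref{eq:reciprocal-weight-convention} makes \(1/v=0\) on the forbidden set.

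\textbf{Upper bound.} Let \(N=\|h\|_{L^\infty(v;J)}<\infty\). Under the \(\starprod\) convention, \(h\le N/v\) almost everywhere, with \(h=0\) a.e.\ on \(\{v=\infty\}\). Integrating over \([c,x]\) gives the pointwise bound \(H_Jh(x)\le N\,\Phi_{\infty,v;J}(x)\) for every \(x\in J\), which is \eqref{eq:appD-functional-infinity} scaled by \(N\). Since \(t\mapsto t^q\) is increasing and \(w\ge0\), raising to the power \(q\) and integrating against \(w\) yields \(\|H_Jh\|_{L^q(w;J)}\le N\bigl[\int_J\Phi_{\infty,v;J}^q w\bigr]^{1/q}\). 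This holds for every \(0<q<\infty\), because only monotonicity of the power is used and no triangle inequality. If \(N=0\), then \(h=0\) a.e.\ off the set \(\{v=0\}\); but \(v\ge\delta>0\), so \(h=0\) a.e. Hence there is no zero-norm input with nonzero output, and no blow-up issue arises.

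\textbf{Lower bound.} Test with \(h=\sigma_v\). By \eqref{eq:linfty-extremiser-norm} its weighted norm is at most \(1\), and by \eqref{eq:linfty-extremiser-output}, \(H_J\sigma_v=\Phi_{\infty,v;J}\) identically. The output functional therefore equals the right-hand side of \eqref{eq:appD-infinity-exact}, giving \(L_{\infty,q}\ge\) that quantity. There are two degenerate cases. If \(\|\sigma_v\|=0\), then \(\sigma_v=0\) a.e., so \(\Phi_{\infty,v;J}\equiv0\) and both sides vanish. If the right-hand side is infinite, the test shows \(L_{\infty,q}=\infty\), consistent with the extended convention. Note that \(w\in L^1\) and \(\Phi\le |J|/\delta\) actually keep the right-hand side finite.

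The argument is essentially routine. The only point needing care is the bookkeeping on the forbidden set \(\{v=\infty\}\): one must check that the \(\starprod\) convention makes both the constraint \(h\le 1/v\) and the extremiser \(\sigma_v\) vanish there consistently. This is exactly the content of Lemma~\ref{lem:linfty-integration-functional}, so I expect no real obstacle. One could alternatively pass through the transport of Lemma~\ref{lem:appD-exact-transport} with \(\widetilde v_\infty\), but the direct argument is shorter and I would use it.
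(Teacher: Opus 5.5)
Your proof is correct and follows the paper's own argument: the pointwise bound \(H_Jh\le\|h\|_{L^\infty(v;J)}\,\Phi_{\infty,v;J}\) from the \(p=\infty\) integration functional is integrated against \(w\), and equality is attained by testing with \(h=1/v\). One small correction to your degenerate-case bookkeeping: the appendix's initial standing assumption is \(0<v<\infty\) a.e., not \(v\ge\delta\) (the \(v\ge\delta\) hypothesis enters only in the finite-ceiling subsection), but your argument goes through under either hypothesis.
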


\begin{proof}
By Lemma~\ref{lem:appD-integration-functional},
\[
 H_Jh(x)
 \le
 \Phi_{\infty,v;J}(x)
 \|h\|_{L^\infty(v;J)}.
\]
Integration against \(w\) gives the upper bound in
\eqref{eq:appD-infinity-exact}. Equality follows by taking
\[
 h=\frac1v,
\]
for which
\[
 H_Jh(x)=\Phi_{\infty,v;J}(x)
\]
almost everywhere.
\end{proof}

\begin{remark}[Uniformity under freezing]
\label{rem:appD-freezing-uniformity}
The constants in
Theorem~\ref{thm:appD-compact-linear-Hardy} depend only on the exponent
pair \((p,q)\). They are therefore uniform when the output weight \(w\)
is replaced by a frozen weight of the form
\[
 w_g(x)=H_Jg(x)^qu(x).
\]
This uniformity is essential in the proof of the compact bilinear
theorem.
\end{remark}

\begin{remark}[Logical scope of the transfer]
\label{rem:appD-transfer-scope}
The argument in this appendix transfers the established half-line
linear theorem to an attained compact interval without altering its
characteristic. At \(p=1\), the exact functional profile remains the
left-continuous function \(F_{v,J}\). Only its Stieltjes generator is
right-continuously completed, and Lemma~\ref{lem:appD-p-one-completion}
proves that this changes neither the convex supremum nor the non-convex
measure. Lemma~\ref{lem:appD-finite-ceiling} separately extends the result
to forbidden \(+\infty\)-weight sets without changing the exponent-only
comparison constants.
\end{remark}
\section{Regularisation and extended weights}
\label{app:extended-regularisation}

This appendix gives the operational interpretation of weights taking
values in \([0,\infty]\). It proves the monotone approximation of the
local bilinear norm, justifies the definition of the operational
characteristics, records the weighted \(L^\infty\) conventions, and
removes the boundedness assumption on the Copson weight imposed
temporarily in Appendices~\ref{app:copson-reversal} and
\ref{app:partition-approximation}.

Throughout this appendix, let
\[
 J=[c,d],
 \qquad
 0<q<\infty,
 \qquad
 1\le p_1,p_2\le\infty,
\]
and define
\[
 \mathcal B_J(f,g)(x)
 :=
 H_Jf(x)\starprod H_Jg(x),
 \qquad
 H_Jh(x)
 :=
 \int_c^x h(t)\,\dd t.
\]

\subsection{Extended weighted functionals}
\label{subsec:appE-extended-functionals}

For \(a,b\in[0,\infty]\), define the extended product
\(a\starprod b\) by the ordinary product whenever it is unambiguous,
together with the conventions
\begin{equation}
 0\starprod\infty
 =
 \infty\starprod0
 :=
 0.
 \label{eq:appE-star-product}
\end{equation}
Thus a function supported where the weight vanishes has zero weighted
cost, whereas a positive function on a set where the weight is infinite
has infinite weighted cost.

For \(0<s<\infty\), define
\begin{equation}
 \|h\|_{L^s(w;J)}
 :=
 \left(
   \int_J |h(x)|^s\starprod w(x)\,\dd x
 \right)^{1/s}.
 \label{eq:appE-finite-weighted-functional}
\end{equation}
When \(0<s<1\), this is understood as a quasi-norm.

For \(s=\infty\), define
\begin{equation}
 \|h\|_{L^\infty(w;J)}
 :=
 \operatorname*{ess\,sup}_{x\in J}
 \bigl(|h(x)|\starprod w(x)\bigr).
 \label{eq:appE-infinite-weighted-functional}
\end{equation}

The corresponding weighted spaces consist of measurable functions for
which these quantities are finite, modulo the natural zero-cost
identification induced by the weight.

\begin{lemma}[Truncation of the function]
\label{lem:appE-function-truncation}
Let
\[
 h_N:=\min\{|h|,N\}.
\]
Then, for every \(0<s\le\infty\),
\begin{equation}
 \|h_N\|_{L^s(w;J)}
 \uparrow
 \|h\|_{L^s(w;J)}
 \qquad
 \text{as }N\to\infty.
 \label{eq:appE-function-truncation-limit}
\end{equation}
\end{lemma}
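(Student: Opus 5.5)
The argument separates the finite case \(0<s<\infty\), which is monotone convergence applied to an integrand that increases pointwise, from the case \(s=\infty\), where an essential-supremum argument replaces the integral. Fix a measurable \(h\) and write \(h_N=\min\{|h|,N\}\). Since \(h_N\le h_{N+1}\le|h|\) pointwise, monotonicity of \(\starprod\) in its first argument shows that the sequence \(\|h_N\|_{L^s(w;J)}\) is nondecreasing and bounded above by \(\|h\|_{L^s(w;J)}\). Only the reverse inequality in the limit needs proof. Monotonicity of \(a\mapsto a\starprod b\) on \([0,\infty]\) is checked directly from \eqref{eq:extended-product}. If \(b=0\), the map is identically zero. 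If \(0<b<\infty\), it equals \(ab\) for finite \(a\) and \(\infty\) at \(a=\infty\). If \(b=\infty\), it takes the value \(0\) at \(a=0\) and \(\infty\) otherwise.

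\textbf{Finite \(s\).} For \(0<s<\infty\), we show pointwise that \(h_N(x)^s\starprod w(x)\uparrow |h(x)|^s\starprod w(x)\), and then apply the monotone convergence theorem. The case \(0<s<1\) needs no special treatment, since the limit is taken in the integral before the \(1/s\)-th root. The pointwise statement is the only delicate point, and it is handled by splitting into cases.
\begin{itemize}
\item If \(w(x)=0\), both sides vanish.
\item If \(0<w(x)<\infty\), the claim is ordinary monotone convergence of \(h_N(x)^s w(x)\), including the case \(|h(x)|=\infty\), where both sides become \(\infty\).
\item If \(w(x)=\infty\), then either \(h(x)=0\), in which case every term is \(0\), or \(h(x)\neq0\). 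In the latter case \(h_N(x)>0\) for every \(N\), so every term already equals \(\infty\).
\end{itemize}
This is exactly the place where the convention \(0\starprod\infty=0\) is compatible with truncation. Truncation never turns a zero value into a positive one, nor a positive value into zero.

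\textbf{Infinite \(s\).} For \(s=\infty\), put \(S=\|h\|_{L^\infty(w;J)}\) and let \(\lambda<S\) be arbitrary. The set \(E=\{|h|\starprod w>\lambda\}\) has positive measure. Write \(E_N=\{h_N\starprod w>\lambda\}\). By the pointwise monotone convergence established above, \(E_N\uparrow E\) up to the sets where the pointwise limit is attained from below. More precisely, for each \(x\in E\), the value \(h_N(x)\starprod w(x)\) increases to a quantity exceeding \(\lambda\), so \(x\in E_N\) for all large \(N\). Continuity of Lebesgue measure from below then gives \(|E_N|>0\) for some \(N\). For that \(N\) we obtain \(\|h_N\|_{L^\infty(w;J)}\ge\lambda\). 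Letting \(\lambda\uparrow S\) gives the claim, and the argument covers \(S=\infty\) as well.

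\textbf{Expected obstacle.} The only real obstacle is bookkeeping at \(w=\infty\) and \(|h|=\infty\), which is the pointwise case split in the finite-\(s\) step. Everything else is monotone convergence.
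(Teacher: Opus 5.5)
Your proof is correct and follows the same route as the paper. In both, \(h_N^s\starprod w\) (respectively \(h_N\starprod w\)) increases pointwise to its limit, and then monotone convergence handles \(0<s<\infty\) while essential suprema pass to the increasing limit for \(s=\infty\). You additionally supply two details the paper leaves implicit: the case check at \(w=\infty\) showing that truncation never changes whether the extended product vanishes, and a level-set proof that essential suprema pass to increasing limits.
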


\begin{proof}
For \(0<s<\infty\),
\[
 h_N^s\starprod w
 \uparrow
 |h|^s\starprod w
\]
pointwise under convention~\eqref{eq:appE-star-product}. The assertion
therefore follows from the monotone convergence theorem.

For \(s=\infty\), the functions
\[
 h_N\starprod w
\]
increase pointwise to
\[
 |h|\starprod w.
\]
The essential suprema of an increasing sequence converge to the
essential supremum of its pointwise limit.
\end{proof}

\begin{remark}[Zero-cost directions]
\label{rem:appE-zero-cost-directions}
Suppose that
\[
 \|f\|_{L^{p_1}(v_1;J)}=0
\]
and that, for some finite-norm function \(g\),
\[
 \mathcal B_J(f,g)\ne0
\]
on a set of positive \(u\)-measure. Then the bilinear operator norm is
infinite. This phenomenon is not removed by
\eqref{eq:appE-star-product}. It is detected by the regularised
constants introduced below.
\end{remark}

\subsection{Positive-floor continuity}
\label{subsec:appE-positive-floor}

For a weight \(v:J\to[0,\infty]\), define
\begin{equation}
 v_m:=v+\frac1m.
 \label{eq:appE-positive-floor-weight}
\end{equation}
Here
\[
 \infty+\frac1m=\infty.
\]
Thus \(v_m\) is strictly positive wherever \(v\) is finite, while an
infinite-weight region remains forbidden.

\begin{lemma}[Positive-floor convergence for bounded functions]
\label{lem:appE-positive-floor-convergence}
Let \(h\) be bounded and measurable.

If \(1\le p<\infty\), then
\begin{equation}
 \|h\|_{L^p(v_m;J)}
 \downarrow
 \|h\|_{L^p(v;J)}.
 \label{eq:appE-positive-floor-finite-p}
\end{equation}

If \(p=\infty\), then
\begin{equation}
 \|h\|_{L^\infty(v_m;J)}
 \downarrow
 \|h\|_{L^\infty(v;J)}.
 \label{eq:appE-positive-floor-infinity}
\end{equation}
\end{lemma}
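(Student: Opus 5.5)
The plan is to prove the two assertions separately, working pointwise on $J$ and then passing to integrals or essential suprema. Since $h$ is bounded and $v_m\downarrow v$ pointwise in the extended sense (with $\infty+1/m=\infty$), monotonicity of the weighted functionals in the weight gives at once that both sequences $\|h\|_{L^p(v_m;J)}$ and $\|h\|_{L^\infty(v_m;J)}$ are nonincreasing in $m$. They are also bounded below by the corresponding $v$-functional, because $|h|^p\starprod v_m\ge |h|^p\starprod v$ pointwise. Only convergence to the limit remains.

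For $1\le p<\infty$, split $J$ into $E_\infty=\{v=\infty\}$ and its complement.

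\emph{On $E_\infty$:} we have $v_m=v=\infty$, so the integrands $|h|^p\starprod v_m$ and $|h|^p\starprod v$ coincide for every $m$. Both equal $\infty$ where $h\ne0$ and $0$ where $h=0$.

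\emph{On $J\setminus E_\infty$:} by the definition of $\starprod$ the integrand is $|h|^p v + |h|^p/m$. This holds also where $v=0$, since there $|h|^p\starprod 0=0$ and $|h|^p\starprod m^{-1}=|h|^p/m$.

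Hence
\[
 \int_J |h|^p\starprod v_m
 =\int_J |h|^p\starprod v+\frac1m\int_{J\setminus E_\infty}|h|^p .
\]
The last integral is at most $\|h\|_{L^\infty(J)}^p|J|<\infty$, so the extra term tends to zero. This gives the convergence, including the case where the left side is $+\infty$ for every $m$: then both sides are $+\infty$, and the limit statement holds trivially in the extended sense. Taking $p$-th roots, which is continuous and monotone on $[0,\infty]$, gives \eqref{eq:appE-positive-floor-finite-p}.

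For $p=\infty$, use the same splitting. If $h\ne0$ on a positive-measure subset of $E_\infty$, then both essential suprema equal $+\infty$ for every $m$. Otherwise, off a null set, $|h|\starprod v_m=|h|\starprod v+|h|/m$. Taking essential suprema gives
\[
 \|h\|_{L^\infty(v;J)}\le\|h\|_{L^\infty(v_m;J)}\le\|h\|_{L^\infty(v;J)}+\frac{\|h\|_{L^\infty(J)}}m .
\]
Letting $m\to\infty$ yields \eqref{eq:appE-positive-floor-infinity}. This is the argument of Lemma~\ref{lem:positive-floor} with the finiteness hypothesis removed, since that case is handled by the dichotomy just described.

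The only delicate step is the pointwise bookkeeping of the extended product on $\{v=0\}$ and $\{v=\infty\}$. The identity $a\starprod(b+1/m)=a\starprod b+a/m$ holds for finite $b$, including $b=0$, but it fails on $\{v=\infty\}$. That is why $E_\infty$ is separated out first. Everything else is monotone convergence, with the error controlled by boundedness of $h$ and the finiteness of $|J|$.
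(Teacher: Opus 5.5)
Your proposal is correct and follows essentially the paper's own argument: you split $J$ into $\{v<\infty\}$ and $\{v=\infty\}$, use the identity $\|h\|_{L^p(v_m;J)}^p=\|h\|_{L^p(v;J)}^p+m^{-1}\int_{\{v<\infty\}}|h|^p$, and use the sandwich $\|h\|_{L^\infty(v;J)}\le\|h\|_{L^\infty(v_m;J)}\le\|h\|_{L^\infty(v;J)}+\|h\|_{L^\infty(J)}/m$. The only difference is that the paper proves the $p=\infty$ sandwich pointwise everywhere, since on $\{v=\infty\}$ the two products coincide and the upper bound is trivial, so your case dichotomy is unnecessary; also, $a\starprod(b+1/m)=a\starprod b+a/m$ does hold at $b=\infty$ for bounded $a$ under $\infty+x=\infty$, so your closing claim that it fails there is inaccurate, though it does not affect the proof.
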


\begin{proof}
Assume first that \(p<\infty\). On the set \(\{v<\infty\}\),
\[
 |h|^p\starprod v_m
 =
 |h|^p\starprod v+\frac{|h|^p}{m},
\]
whereas on \(\{v=\infty\}\) the two extended products agree. Hence
\begin{equation}
 \begin{aligned}
 \|h\|_{L^p(v_m;J)}^p
 &=
 \|h\|_{L^p(v;J)}^p
 \\
 &\quad+
 \frac1m
 \int_{\{v<\infty\}}|h(x)|^p\,\dd x,
 \end{aligned}
 \label{eq:appE-positive-floor-identity}
\end{equation}
with the usual interpretation when the first term is infinite. Since
\(h\) is bounded and \(J\) has finite length, the second term converges
to zero.

For \(p=\infty\), put
\[
 M:=\|h\|_{L^\infty(J)}.
\]
Pointwise,
\[
 |h|\starprod v
 \le
 |h|\starprod v_m
 \le
 |h|\starprod v+\frac{M}{m}.
\]
Taking essential suprema gives
\[
 \|h\|_{L^\infty(v;J)}
 \le
 \|h\|_{L^\infty(v_m;J)}
 \le
 \|h\|_{L^\infty(v;J)}+\frac{M}{m}.
\]
This proves \eqref{eq:appE-positive-floor-infinity}.
\end{proof}

\begin{remark}
\label{rem:appE-unbounded-positive-floor}
For an arbitrary unbounded function \(h\), direct convergence
\(\|h\|_{L^p(v_m;J)}\to\|h\|_{L^p(v;J)}\) need not hold. What is used below
is the iterated truncation identity
\[
 \|h\|_{L^p(v;J)}
 =
 \sup_{N\ge1}\lim_{m\to\infty}
 \|h\wedge N\|_{L^p(v_m;J)},
 \qquad 1\le p\le\infty,
\]
with extended values allowed. For each fixed \(N\), the inner limit follows
from Lemma~\ref{lem:appE-positive-floor-convergence}.
The outer supremum is
Lemma~\ref{lem:appE-function-truncation}. No unweighted integrability
assumption on \(h\) is required.
\end{remark}

\subsection{Simultaneous regularisation of the bilinear inequality}
\label{subsec:appE-simultaneous-regularisation}

Let
\[
 u,v_1,v_2:J\longrightarrow[0,\infty]
\]
be measurable. For \(m\ge1\), define
\begin{equation}
 u_m:=u\wedge m,
 \qquad
 v_{i,m}:=v_i+\frac1m,
 \qquad
 i=1,2.
 \label{eq:appE-simultaneous-weights}
\end{equation}

Define \(C_J\) as the infimum of all finite constants \(C\ge0\) such that
\begin{equation}
 \|\mathcal B_J(f,g)\|_{L^q(u;J)}
 \le
 C
 \|f\|_{L^{p_1}(v_1;J)}
 \|g\|_{L^{p_2}(v_2;J)}
 \label{eq:appE-original-bilinear-inequality}
\end{equation}
for all nonnegative \(f,g\) having finite input functionals, with
\(\inf\varnothing:=+\infty\).

Likewise, let \(C_{J,m}\) be the least constant in
\begin{equation}
 \|\mathcal B_J(f,g)\|_{L^q(u_m;J)}
 \le
 C_{J,m}
 \|f\|_{L^{p_1}(v_{1,m};J)}
 \|g\|_{L^{p_2}(v_{2,m};J)}.
 \label{eq:appE-regularised-bilinear-inequality}
\end{equation}

\begin{lemma}[Monotonicity of the regularised constants]
\label{lem:appE-monotone-constants}
For every \(m\ge1\),
\begin{equation}
 C_{J,m}\le C_{J,m+1}\le C_J.
 \label{eq:appE-constant-monotonicity}
\end{equation}
Consequently, the limit
\begin{equation}
 C_{J,\infty}
 :=
 \lim_{m\to\infty}C_{J,m}
 =
 \sup_{m\ge1}C_{J,m}
 \label{eq:appE-regularised-limit-constant}
\end{equation}
exists in \([0,\infty]\).
\end{lemma}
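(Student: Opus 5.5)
The plan is to compare the three constants through their admissible test pairs, exploiting the pointwise monotonicity of the regularised weights in \(m\). The key observation is that \(u_m=u\wedge m\) is nondecreasing in \(m\) and bounded by \(u\), while \(v_{i,m}=v_i+1/m\) is nonincreasing in \(m\) and bounded below by \(v_i\), with \(\infty+1/m=\infty\) throughout. I will first record the pointwise inequalities
\[
 a\starprod u_m\le a\starprod u_{m+1}\le a\starprod u,
 \qquad
 a\starprod v_i\le a\starprod v_{i,m+1}\le a\starprod v_{i,m},
\]
for every \(a\in[0,\infty]\). These follow directly from \eqref{eq:extended-product}, since \(\starprod\) is monotone in each argument on \([0,\infty]\). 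The convention \(0\starprod\infty=0\) is consistent with this monotonicity, because the value is \(0\) whenever \(a=0\), irrespective of the weight.

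Integrating these inequalities, or taking essential suprema when \(p_i=\infty\), gives the following. The output functional \(\|\mathcal B_J(f,g)\|_{L^q(u_m;J)}\) is nondecreasing in \(m\) and bounded by the \(u\)-functional. Each input functional \(\|f\|_{L^{p_1}(v_{1,m};J)}\) is nonincreasing in \(m\) and bounded below by the \(v_1\)-functional, and similarly for \(g\).

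For \(C_{J,m}\le C_{J,m+1}\), take any finite admissible constant \(C\) for level \(m+1\) and any pair \((f,g)\) with finite level-\(m\) input functionals. Those functionals dominate the level-\(m+1\) ones, so the pair is admissible at level \(m+1\). The chain
\[
 \|\mathcal B_J\|_{L^q(u_m)}\le\|\mathcal B_J\|_{L^q(u_{m+1})}\le C\|f\|_{v_{1,m+1}}\|g\|_{v_{2,m+1}}\le C\|f\|_{v_{1,m}}\|g\|_{v_{2,m}}
\]
then shows that \(C\) is admissible at level \(m\). Taking the infimum gives the first inequality, and if no finite constant exists at level \(m+1\) the inequality is trivial. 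The same argument with \((u,v_i)\) in place of the level-\(m+1\) weights gives \(C_{J,m}\le C_J\): finite level-\(m\) input functionals dominate the \(v_i\)-functionals, so the original inequality \eqref{eq:appE-original-bilinear-inequality} applies, and the \(u_m\)-output is bounded by the \(u\)-output.

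The only step needing care is the bookkeeping of extended values. A product such as \(C\cdot 0\) never involves \(\infty\), because admissible inputs have finite functionals and \(C\) is finite. The case \(C_J=\infty\) is vacuous. Once \eqref{eq:appE-constant-monotonicity} is established, \(C_{J,m}\) is a nondecreasing sequence in \([0,\infty]\), so its limit exists and equals its supremum, which gives \eqref{eq:appE-regularised-limit-constant}.
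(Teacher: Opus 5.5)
Your proposal is correct and follows essentially the same route as the paper: the pointwise orderings \(u_m\le u_{m+1}\le u\) and \(v_i\le v_{i,m+1}\le v_{i,m}\) transfer every admissible constant from level \(m+1\), and from the original problem, to level \(m\), and the limit then exists as the supremum of a nondecreasing sequence in \([0,\infty]\). Your explicit checks, that the extended product is monotone in each argument and that finite level-\(m\) input functionals force finiteness at level \(m+1\) and for the original weights (so the classes of test pairs are nested), spell out details the paper leaves implicit.
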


\begin{proof}
Since
\[
 u_m\le u_{m+1}
\]
and
\[
 v_{i,m}\ge v_{i,m+1},
\]
we have
\[
 \|\mathcal B_J(f,g)\|_{L^q(u_m;J)}
 \le
 \|\mathcal B_J(f,g)\|_{L^q(u_{m+1};J)}
\]
and
\[
 \|f\|_{L^{p_i}(v_{i,m+1};J)}
 \le
 \|f\|_{L^{p_i}(v_{i,m};J)}.
\]
Thus every admissible constant for level \(m+1\) is admissible for
level \(m\), which proves
\[
 C_{J,m}\le C_{J,m+1}.
\]

Moreover,
\[
 u_m\le u
 \qquad\text{and}\qquad
 v_i\le v_{i,m}.
\]
Every admissible constant for
\eqref{eq:appE-original-bilinear-inequality} is therefore admissible
for \eqref{eq:appE-regularised-bilinear-inequality}, and hence
\[
 C_{J,m}\le C_J.
\]
\end{proof}

\begin{theorem}[Recovery of the extended-weight norm]
\label{thm:appE-regularisation-limit}
One has
\begin{equation}
 C_{J,m}\uparrow C_J.
 \label{eq:appE-Cm-to-C}
\end{equation}
Equivalently,
\begin{equation}
 C_J=\sup_{m\ge1}C_{J,m}.
 \label{eq:appE-C-sup-Cm}
\end{equation}
\end{theorem}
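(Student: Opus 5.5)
By Lemma~\ref{lem:appE-monotone-constants} we already have \(C_{J,\infty}\le C_J\), so only the reverse inequality \(C_J\le C_{J,\infty}\) needs proof. We may assume \(C_{J,\infty}<\infty\). Fix nonnegative \(f,g\) with finite input functionals \(\|f\|_{L^{p_1}(v_1;J)}\) and \(\|g\|_{L^{p_2}(v_2;J)}\). The aim is
\[
 \|\mathcal B_J(f,g)\|_{L^q(u;J)}
 \le
 C_{J,\infty}\|f\|_{L^{p_1}(v_1;J)}\|g\|_{L^{p_2}(v_2;J)}.
\]
This shows that \(C_{J,\infty}\) is admissible, including in the zero-norm directions. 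Therefore \(C_J\le C_{J,\infty}\).

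\emph{Step 1: bounded inputs.} Set \(f_N=f\wedge N\) and \(g_N=g\wedge N\). These are bounded, and their original weighted norms are at most those of \(f,g\). In particular they are finite, so \(f_N\) and \(g_N\) vanish a.e.\ on \(\{v_1=\infty\}\) and \(\{v_2=\infty\}\) respectively. They therefore have finite regularised norms. For each \(m\), inequality \eqref{eq:appE-regularised-bilinear-inequality} together with \(C_{J,m}\le C_{J,\infty}\) gives
\[
 \|\mathcal B_J(f_N,g_N)\|_{L^q(u_m;J)}
 \le
 C_{J,\infty}\|f_N\|_{L^{p_1}(v_{1,m};J)}\|g_N\|_{L^{p_2}(v_{2,m};J)}.
\]
We now let \(m\to\infty\). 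On the left, \(u_m\uparrow u\) pointwise. Since \(H_Jf_N\) and \(H_Jg_N\) are finite, the integrand \(\mathcal B_J(f_N,g_N)^q u_m\) increases to \(\mathcal B_J(f_N,g_N)^q\starprod u\). Monotone convergence then gives convergence of the output functional, for every \(0<q<\infty\).

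On the right, Lemma~\ref{lem:appE-positive-floor-convergence} applies because \(f_N\) and \(g_N\) are bounded and \(J\) has finite length. It gives convergence to the original norms for \(p_i<\infty\) and for \(p_i=\infty\). This holds even when an original norm is zero: the floor term is \(O(m^{-1})\) and vanishes in the limit. Hence the target inequality holds for \((f_N,g_N)\).

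\emph{Step 2: removing the truncation.} We let \(N\to\infty\). The functions \(H_Jf_N\) and \(H_Jg_N\) increase pointwise to \(H_Jf\) and \(H_Jg\) by monotone convergence. The extended product \(\starprod\) is monotone and lower semicontinuous along increasing sequences in \([0,\infty]\). Checking the cases: if a limit is \(0\), the product is \(0\) throughout; otherwise the factors are eventually positive and ordinary limits apply. So \(\mathcal B_J(f_N,g_N)\uparrow\mathcal B_J(f,g)\). Monotone convergence gives convergence of the \(L^q(u)\) functional, and Lemma~\ref{lem:appE-function-truncation} gives \(\|f_N\|\uparrow\|f\|\) and \(\|g_N\|\uparrow\|g\|\). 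The right-hand sides are finite, so passing to the limit yields the target inequality.

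\emph{Main obstacle.} The delicate point is the zero-norm case, for instance \(\|f\|_{L^{p_1}(v_1;J)}=0\) with \(\mathcal B_J(f,g)\ne0\). The argument must produce a contradiction with \(C_{J,\infty}<\infty\) rather than divide by zero. The plan handles this by never forming quotients and working with the multiplicative inequality throughout. In that case the limit inequality forces \(\mathcal B_J(f,g)=0\) in \(L^q(u)\) whenever \(C_{J,\infty}<\infty\). Equivalently, if the output is nonzero, then \(C_{J,\infty}=\infty=C_J\), which is consistent with Remark~\ref{rem:appE-zero-cost-directions}.

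The other point needing care is that the bounded inputs in Step 1 have finite regularised norms. For this we use that the truncations vanish on \(\{v_i=\infty\}\) and that \(v_{i,m}=v_i+1/m\) is finite elsewhere. Then \(\int f_N^{p_1}\starprod v_{1,m}\le\int f_N^{p_1}\starprod v_1+N^{p_1}|J|/m<\infty\).
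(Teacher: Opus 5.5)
Your proof is correct and follows the paper's argument essentially step for step. You truncate to \(f_N,g_N\), pass \(m\to\infty\) at fixed \(N\) using monotone convergence on the output and positive-floor convergence on the bounded inputs, and then let \(N\to\infty\) via monotone convergence and Lemma~\ref{lem:appE-function-truncation}. Using \(C_{J,m}\le C_{J,\infty}\) in place of \(C_{J,m}\uparrow C_{J,\infty}\), and checking explicitly the zero-norm directions, the finiteness of the regularised input functionals, and the behaviour of \(\starprod\) under increasing limits, are only minor elaborations of details the paper leaves implicit.
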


\begin{proof}
By Lemma~\ref{lem:appE-monotone-constants},
\[
 C_{J,\infty}\le C_J.
\]
It remains to prove the reverse inequality. If
\(C_{J,\infty}=+\infty\), then
\(C_J\le C_{J,\infty}\) is automatic. Hence assume
\(C_{J,\infty}<\infty\).

Let \(f,g\ge0\) satisfy
\[
 \|f\|_{L^{p_1}(v_1;J)}<\infty,
 \qquad
 \|g\|_{L^{p_2}(v_2;J)}<\infty.
\]
For \(N\ge1\), set
\[
 f_N:=f\wedge N,
 \qquad
 g_N:=g\wedge N.
\]
The functions \(f_N\) and \(g_N\) are bounded, and their regularised
input functionals are finite.

For every \(m\),
\begin{equation}
 \begin{aligned}
 &\|\mathcal B_J(f_N,g_N)\|_{L^q(u_m;J)}
 \\
 &\qquad\le
 C_{J,m}
 \|f_N\|_{L^{p_1}(v_{1,m};J)}
 \|g_N\|_{L^{p_2}(v_{2,m};J)}.
 \end{aligned}
 \label{eq:appE-truncated-level-m}
\end{equation}
As \(m\to\infty\),
\[
 u_m\uparrow u,
\]
and therefore
\begin{equation}
 \|\mathcal B_J(f_N,g_N)\|_{L^q(u_m;J)}
 \uparrow
 \|\mathcal B_J(f_N,g_N)\|_{L^q(u;J)}.
 \label{eq:appE-output-m-limit}
\end{equation}
By Lemma~\ref{lem:appE-positive-floor-convergence},
\[
 \|f_N\|_{L^{p_1}(v_{1,m};J)}
 \downarrow
 \|f_N\|_{L^{p_1}(v_1;J)}
\]
and similarly for \(g_N\). Since
\[
 C_{J,m}\uparrow C_{J,\infty},
\]
passage to the limit in \eqref{eq:appE-truncated-level-m} gives
\begin{equation}
 \begin{aligned}
 &\|\mathcal B_J(f_N,g_N)\|_{L^q(u;J)}
 \\
 &\qquad\le
 C_{J,\infty}
 \|f_N\|_{L^{p_1}(v_1;J)}
 \|g_N\|_{L^{p_2}(v_2;J)}.
 \end{aligned}
 \label{eq:appE-truncated-original-weights}
\end{equation}

Now
\[
 H_Jf_N(x)\uparrow H_Jf(x),
 \qquad
 H_Jg_N(x)\uparrow H_Jg(x),
\]
and consequently
\[
 \mathcal B_J(f_N,g_N)(x)
 \uparrow
 \mathcal B_J(f,g)(x).
\]
The monotone convergence theorem gives
\[
 \|\mathcal B_J(f_N,g_N)\|_{L^q(u;J)}
 \uparrow
 \|\mathcal B_J(f,g)\|_{L^q(u;J)}.
\]
By Lemma~\ref{lem:appE-function-truncation},
\[
 \|f_N\|_{L^{p_1}(v_1;J)}
 \uparrow
 \|f\|_{L^{p_1}(v_1;J)}
\]
and similarly for \(g\). Passing to \(N\to\infty\) in
\eqref{eq:appE-truncated-original-weights} yields
\[
 \|\mathcal B_J(f,g)\|_{L^q(u;J)}
 \le
 C_{J,\infty}
 \|f\|_{L^{p_1}(v_1;J)}
 \|g\|_{L^{p_2}(v_2;J)}.
\]
Thus \(C_J\le C_{J,\infty}\), completing the proof.
\end{proof}

\begin{remark}[Why both truncations are required]
\label{rem:appE-simultaneous-necessity}
Truncating only the output weight does not detect zero-cost input
directions. Flooring only the input weights does not control an
unbounded output weight. The simultaneous choice
\[
 u_m=u\wedge m,
 \qquad
 v_{i,m}=v_i+\frac1m
\]
gives
\[
 C_{J,m}\uparrow C_J
\]
without requiring separate convergence assumptions on the associated
profiles or Stieltjes measures.
\end{remark}

\subsection{Operational characteristics}
\label{subsec:appE-operational-characteristics}

For each \(m\), form the local profiles, output tails and Stieltjes
measures from the regularised triple
\[
 (u_m,v_{1,m},v_{2,m}).
\]
Let
\[
 \mathfrak A_J^{(m)}
\]
denote the complete local characteristic prescribed by the relevant
parameter regime in \LocalCharacteristicsSectionReference. Thus
\(\mathfrak A_J^{(m)}\) denotes the entire regime expression, including
all of its summands.

\begin{definition}[Operational local characteristic]
\label{def:appE-operational-characteristic}
Define
\begin{equation}
 \mathfrak A_J^{\mathrm{op}}
 :=
 \sup_{m\ge1}\mathfrak A_J^{(m)}.
 \label{eq:appE-operational-characteristic}
\end{equation}
\end{definition}

\begin{theorem}[Operational extension principle]
\label{thm:appE-operational-extension}
Suppose that, for every \(m\),
\begin{equation}
 c_{p_1,p_2,q}\,
 \mathfrak A_J^{(m)}
 \le
 C_{J,m}
 \le
 C_{p_1,p_2,q}\,
 \mathfrak A_J^{(m)},
 \label{eq:appE-uniform-regular-level-comparison}
\end{equation}
where the comparison constants are independent of \(m\). Then
\begin{equation}
 c_{p_1,p_2,q}\,
 \mathfrak A_J^{\mathrm{op}}
 \le
 C_J
 \le
 C_{p_1,p_2,q}\,
 \mathfrak A_J^{\mathrm{op}}.
 \label{eq:appE-operational-comparison}
\end{equation}

If the regular-level formula is exact, then
\begin{equation}
 C_J=\mathfrak A_J^{\mathrm{op}}.
 \label{eq:appE-operational-exact}
\end{equation}
\end{theorem}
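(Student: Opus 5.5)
The plan is to take the supremum over \(m\) in the assumed levelwise comparison \eqref{eq:appE-uniform-regular-level-comparison}. Then Theorem~\ref{thm:appE-regularisation-limit} identifies \(\sup_m C_{J,m}\) with \(C_J\). The argument is essentially order-theoretic. The only care needed concerns extended values and the fact that the comparison constants do not depend on \(m\).

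First, fix \(m\) and read \eqref{eq:appE-uniform-regular-level-comparison} in \([0,\infty]\). Multiplication by the fixed positive finite constants \(c_{p_1,p_2,q}\) and \(C_{p_1,p_2,q}\) is order preserving on \([0,\infty]\) and commutes with suprema. Taking \(\sup_{m\ge1}\) of the left inequality therefore gives
\[
 c_{p_1,p_2,q}\,\mathfrak A_J^{\mathrm{op}}
 =\sup_m c_{p_1,p_2,q}\,\mathfrak A_J^{(m)}
 \le\sup_m C_{J,m}.
\]
In the same way, the right inequality gives \(\sup_m C_{J,m}\le C_{p_1,p_2,q}\,\mathfrak A_J^{\mathrm{op}}\). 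Here it matters that the constants are independent of \(m\). If they varied with \(m\), the supremum would not factor out.

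Second, replace \(\sup_m C_{J,m}\) by \(C_J\) using \eqref{eq:appE-C-sup-Cm} from Theorem~\ref{thm:appE-regularisation-limit}. That theorem already covers zero-cost directions and forbidden sets through bounded truncation and positive-floor convergence. This yields \eqref{eq:appE-operational-comparison}, including the cases where either side is \(+\infty\): if \(\mathfrak A_J^{\mathrm{op}}=\infty\), the lower bound forces \(C_J=\infty\), and conversely.

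Third, in the exact case \(C_{J,m}=\mathfrak A_J^{(m)}\) for every \(m\), so \(C_J=\sup_m C_{J,m}=\sup_m\mathfrak A_J^{(m)}=\mathfrak A_J^{\mathrm{op}}\), which is \eqref{eq:appE-operational-exact}.

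I do not expect a genuine obstacle here. The substantive limiting step, \(C_{J,m}\uparrow C_J\), has already been proved. The one point to state explicitly is that the supremum is taken over the complete regime expression \(\mathfrak A_J^{(m)}\), as in Definition~\ref{def:appE-operational-characteristic}. No monotonicity in \(m\) of \(\mathfrak A_J^{(m)}\) itself is needed.
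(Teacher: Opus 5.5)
Your proposal is correct and is essentially the paper's own proof: take the supremum over \(m\) in the levelwise comparison, using that the constants are independent of \(m\), and then identify \(\sup_m C_{J,m}\) with \(C_J\) via Theorem~\ref{thm:appE-regularisation-limit}. The exact case follows by the same substitution. Your added care about extended values in \([0,\infty]\) and about not needing monotonicity of \(\mathfrak A_J^{(m)}\) in \(m\) is accurate but not an extra ingredient.
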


\begin{proof}
Take the supremum over \(m\) in
\eqref{eq:appE-uniform-regular-level-comparison} and use
Theorem~\ref{thm:appE-regularisation-limit}.
\end{proof}

\begin{remark}[Complete regime expressions]
\label{rem:appE-complete-regime-expression}
Suppose that the regular-level condition has two nonnegative
components, so that
\[
 \mathfrak A_J^{(m)}
 =
 X_J^{(m)}+Y_J^{(m)}.
\]
The operational characteristic is
\[
 \sup_m
 \left(
   X_J^{(m)}+Y_J^{(m)}
 \right),
\]
not the termwise expression
\[
 \sup_mX_J^{(m)}+\sup_mY_J^{(m)}.
\]
The two expressions are nevertheless comparable, with
\begin{equation}
 \begin{aligned}
 \sup_m
 \left(
   X_J^{(m)}+Y_J^{(m)}
 \right)
 &\le
 \sup_mX_J^{(m)}
 +
 \sup_mY_J^{(m)}
 \\
 &\le
 2\sup_m
 \left(
   X_J^{(m)}+Y_J^{(m)}
 \right).
 \end{aligned}
 \label{eq:appE-sum-sup-comparison}
\end{equation}
The complete-expression definition is preferable because it preserves
the regime structure and does not combine components attained at
different regularisation levels.
\end{remark}

\begin{remark}[No profile-measure limit is required]
\label{rem:appE-no-profile-limit}
The proof of
Theorem~\ref{thm:appE-operational-extension} does not require
pointwise convergence of the profiles or weak convergence of their
Stieltjes measures. In particular, no assertion is made that
\[
 \dd\!\left(\Phi_{i,J}^{(m)}\right)^{r_i}
\]
converges to a formally defined measure associated with an
unregularised extended-valued profile. The extension is made entirely
at the level of the operator constants and the uniformly equivalent
regularised characteristics.
\end{remark}

\subsection{Infinite input exponents}
\label{subsec:appE-infinite-inputs}

When \(p_i=\infty\), positive-floor regularisation is interpreted
through
\[
 \|h\|_{L^\infty(v_{i,m};J)}
 =
 \operatorname*{ess\,sup}_{x\in J}
 \left(
   |h(x)|\starprod
   \left(v_i(x)+\frac1m\right)
 \right).
\]
Lemma~\ref{lem:appE-positive-floor-convergence} applies without change
to bounded truncations.

The corresponding integration profile is
\begin{equation}
 \Phi_{\infty,v_{i,m};J}(x)
 :=
 \int_c^x
 \frac{\dd t}{v_i(t)+m^{-1}},
 \label{eq:appE-regularised-infinity-profile}
\end{equation}
where
\[
 \frac1\infty:=0.
\]
Thus regions on which \(v_i=\infty\) contribute neither admissible
input mass nor profile growth.

\begin{proposition}[Exact doubly infinite operational formula]
\label{prop:appE-doubly-infinite}
Suppose that
\[
 p_1=p_2=\infty.
\]
Then
\begin{equation}
 \begin{aligned}
 C_J
 &=
 \sup_{m\ge1}
 \Bigg[
   \int_J
   \left(
     \Phi_{\infty,v_{1,m};J}(x)
     \Phi_{\infty,v_{2,m};J}(x)
   \right)^q
 \\
 &\hspace{34mm}\times
   u_m(x)\,\dd x
 \Bigg]^{1/q}.
 \end{aligned}
 \label{eq:appE-doubly-infinite-operational}
\end{equation}
\end{proposition}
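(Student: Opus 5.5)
The plan is to combine the regularisation limit of Theorem~\ref{thm:appE-regularisation-limit} with an exact evaluation of the regularised constant \(C_{J,m}\) at each fixed level \(m\), and then to take the supremum over \(m\). The exact evaluation is already available in the form of Corollary~\ref{cor:infinite-input-freezing}. What has to be checked is that the regularised triple \((u_m,v_{1,m},v_{2,m})\) satisfies its hypotheses, so that no comparison constants enter at any point.

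\textbf{Step 1: the levelwise identity.} Fix \(m\). Each \(v_{i,m}=v_i+1/m\) takes values in \([1/m,\infty]\), so Lemma~\ref{lem:linfty-integration-functional} applies with \(\delta=1/m\). For every \(h\) with \(\|h\|_{L^\infty(v_{i,m};J)}\le 1\) it gives
\[
H_Jh(x)\le\Phi_{\infty,v_{i,m};J}(x).
\]
Multiplying the two pointwise bounds, integrating against \(u_m\) and scaling by the input norms yields
\[
C_{J,m}\le\Big[\int_J(\Phi_{\infty,v_{1,m};J}\Phi_{\infty,v_{2,m};J})^q u_m\,dx\Big]^{1/q}.
\]
For the reverse inequality I would test with \(f=\sigma_{v_{1,m}}\) and \(g=\sigma_{v_{2,m}}\). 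These have weighted norm at most one by \eqref{eq:linfty-extremiser-norm} and attain the profiles exactly by \eqref{eq:linfty-extremiser-output}. One bookkeeping point needs care. The functions \(\sigma_{v_{i,m}}\) are bounded by \(m\) and the profiles are finite, so the \(\starprod\)-product equals the ordinary product. If an input norm happens to be zero, the test shows that the constant is infinite unless the output vanishes, and this is consistent with the formula. Combining the two bounds gives \(C_{J,m}=\mathfrak B^{(m)}_{\infty,\infty;J}\).

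\textbf{Step 2: removing the regularisation.} Theorem~\ref{thm:appE-regularisation-limit} gives \(C_{J,m}\uparrow C_J\), so taking \(\sup_m\) in Step 1 gives \eqref{eq:appE-doubly-infinite-operational}. Equivalently, this is the exact case \eqref{eq:appE-operational-exact} of Theorem~\ref{thm:appE-operational-extension}.

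The one point needing care is in the proof of Theorem~\ref{thm:appE-regularisation-limit} for \(p_i=\infty\). There the input functionals \(\|f_N\|_{L^\infty(v_{i,m})}\) must decrease to \(\|f_N\|_{L^\infty(v_i)}\), including when the latter is zero or when \(f_N\) charges \(\{v_i=\infty\}\). Lemma~\ref{lem:appE-positive-floor-convergence} covers bounded \(f_N\) in the \(L^\infty\) case: on \(\{v_i=\infty\}\) both products are equal, and elsewhere the error is at most \(N/m\). This step is therefore routine. The main thing to confirm is that zero-norm directions do not create \(\infty\cdot 0\) indeterminacies in the limit.

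If such a direction produces nonzero output, I would argue that \(C_J=\infty\) and that the right-hand side is also infinite. Testing at level \(m\) with bounded truncations gives input norms of order \(N/m\) and an output bounded below. Hence \(C_{J,m}\gtrsim m\to\infty\), the same blow-up mechanism as in Subsection~\ref{subsec:zero-cost-input-example}.
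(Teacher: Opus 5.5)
Your proposal is correct and follows the paper's own argument. You prove the levelwise identity \(C_{J,m}=\mathfrak B^{(m)}_{\infty,\infty;J}\), which is the content of Corollary~\ref{cor:infinite-input-freezing} at the regularised level (and you correctly check it applies with \(\delta=1/m\)), and then take the supremum using \(C_{J,m}\uparrow C_J\) from Theorem~\ref{thm:appE-regularisation-limit}; this is exactly the exact case of Theorem~\ref{thm:appE-operational-extension} that the paper invokes. Your closing paragraph on zero-norm directions is harmless but redundant: Theorem~\ref{thm:appE-regularisation-limit} already reduces to \(C_{J,\infty}<\infty\), every quantity in its limit passage is then finite, and so a zero-norm input is forced to produce zero output.
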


\begin{proof}
At every regularisation level, the doubly infinite branch is exact.
Apply Theorem~\ref{thm:appE-operational-extension}.
\end{proof}

The same argument applies to either one-infinite-input branch.
The
finite-input factor is characterised by the compact linear theorem,
whereas the infinite-input factor enters through its exact integration
profile. The comparison constants remain uniform in \(m\).

\subsection{Removal of the bounded Copson-weight assumption}
\label{subsec:appE-unbounded-Copson-weight}

Let \(\mu\) and \(\nu\) be finite nonnegative Borel measures on \(J\),
let
\[
 1<P,Q<\infty,
\]
and let
\[
 \omega:J\longrightarrow[0,\infty]
\]
be measurable. Define
\begin{equation}
 C_{\mu,\omega}h(x)
 :=
 \int_{(x,d]}
 \omega(t)h(t)\,\dd\mu(t).
 \label{eq:appE-general-weighted-Copson}
\end{equation}
For \(n\ge1\), put
\begin{equation}
 \omega_n:=\omega\wedge n.
 \label{eq:appE-Copson-weight-truncation}
\end{equation}
Let \(\mathcal C(\omega)\) and \(\mathcal C(\omega_n)\) denote the
corresponding operator norms from \(L^P(\mu)\) to \(L^Q(\nu)\).

\begin{theorem}[Monotone removal of the Copson truncation]
\label{thm:appE-Copson-truncation}
One has
\begin{equation}
 \mathcal C(\omega_n)
 \uparrow
 \mathcal C(\omega).
 \label{eq:appE-Copson-norm-limit}
\end{equation}
\end{theorem}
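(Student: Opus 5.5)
The proof will have two halves: a pointwise monotone-convergence argument for the outputs, and a supremum-interchange argument for the optimal constants. No structural information about the Copson characteristics is used.

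\textbf{Monotonicity and the upper bound.} Since $\omega_n\le\omega_{n+1}\le\omega$ pointwise, positivity gives
\[
 C_{\mu,\omega_n}h\le C_{\mu,\omega_{n+1}}h\le C_{\mu,\omega}h
\]
for every $h\ge0$. Taking $L^Q(\nu)$-norms and then suprema over the unit ball of $L^P(\mu)$, we obtain
\[
 \mathcal C(\omega_n)\le\mathcal C(\omega_{n+1})\le\mathcal C(\omega).
\]
Hence the limit $\mathcal C_\infty:=\lim_n\mathcal C(\omega_n)=\sup_n\mathcal C(\omega_n)$ exists in $[0,\infty]$ and satisfies $\mathcal C_\infty\le\mathcal C(\omega)$.

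\textbf{The reverse inequality.} If $\mathcal C_\infty=\infty$ there is nothing to prove, so assume $\mathcal C_\infty<\infty$ and fix $h\ge0$ with $\|h\|_{L^P(\mu)}\le1$.

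First we establish pointwise convergence of the outputs. For each $x\in J$ we have $\omega_n h\mathbf 1_{(x,d]}\uparrow\omega h\mathbf 1_{(x,d]}$ pointwise, with values in $[0,\infty]$. The monotone convergence theorem applied to $\mu$ on $(x,d]$ then gives
\[
 C_{\mu,\omega_n}h(x)\uparrow C_{\mu,\omega}h(x)\qquad\text{for every }x\in J,
\]
where the limit is allowed to equal $+\infty$. This is exactly the convention under which $C_{\mu,\omega}$ was defined for unbounded or infinite-valued $\omega$ in \eqref{eq:copson-truncation-limits}.

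Next, a second application of monotone convergence, now to $(C_{\mu,\omega_n}h)^Q$ with respect to $\nu$, yields
\[
 \|C_{\mu,\omega}h\|_{L^Q(\nu)}=\lim_n\|C_{\mu,\omega_n}h\|_{L^Q(\nu)}\le\mathcal C_\infty .
\]
Taking the supremum over all such $h$ gives $\mathcal C(\omega)\le\mathcal C_\infty$, which is the required reverse inequality.

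\textbf{Main obstacle.} The only delicate point is the extended-valued bookkeeping on the set $\{\omega=\infty\}$. On this set $\omega h$ is $+\infty$ wherever $h>0$ and $0$ where $h=0$ (the $\starprod$ convention), while $\omega_n h=nh$ there. The pointwise limit of $nh$ is consistent with this convention both where $h>0$ and where $h=0$, so monotone convergence applies without modification.

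It remains to check that no integrability hypothesis is hidden in the argument. Every quantity used above is a supremum or a monotone limit of nonnegative functions, so the argument goes through even when $\mathcal C(\omega)=\infty$ or when $C_{\mu,\omega}h$ is infinite on a set of positive $\nu$-measure.

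Finally, the same monotone argument applied to $B_{\omega_n}(x)\uparrow B_\omega(x)$, combined with monotone convergence in the Stieltjes integral, shows that the Copson characteristics $\mathfrak C_{\omega_n,J}$ increase to $\mathfrak C_{\omega,J}$. This is what allows the bounded-weight case of Theorem~\ref{thm:compact-measure-hardy-copson} to pass to arbitrary $\omega$ with unchanged constants.
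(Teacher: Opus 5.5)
Your proposal is correct and follows essentially the same route as the paper. You obtain monotonicity of $\mathcal C(\omega_n)$ from $\omega_n\uparrow\omega$, then apply monotone convergence twice (pointwise for $C_{\mu,\omega_n}h$, then in $L^Q(\nu)$) to pass the uniform bound $\sup_n\mathcal C(\omega_n)$ to $C_{\mu,\omega}$. Your extra remarks on the set $\{\omega=\infty\}$ and on the Copson characteristics are consistent with the paper's monotone-truncation convention and with Proposition~\ref{prop:appE-Copson-characteristic-limit}.
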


\begin{proof}
Since
\[
 \omega_n\uparrow\omega,
\]
we have, for every \(h\ge0\),
\[
 C_{\mu,\omega_n}h(x)
 \uparrow
 C_{\mu,\omega}h(x).
\]
It follows immediately that
\[
 \mathcal C(\omega_n)
 \le
 \mathcal C(\omega_{n+1})
 \le
 \mathcal C(\omega).
\]
Let
\[
 L:=\sup_n\mathcal C(\omega_n).
\]
For every \(h\ge0\),
\[
 \|C_{\mu,\omega_n}h\|_{L^Q(\nu)}
 \le
 L\|h\|_{L^P(\mu)}.
\]
Passing to the limit by monotone convergence gives
\[
 \|C_{\mu,\omega}h\|_{L^Q(\nu)}
 \le
 L\|h\|_{L^P(\mu)}.
\]
Hence
\[
 \mathcal C(\omega)\le L,
\]
which proves \eqref{eq:appE-Copson-norm-limit}.
\end{proof}

For \(x\in J\), define
\begin{equation}
 K(x):=\nu([c,x])
 \label{eq:appE-Copson-K}
\end{equation}
and
\begin{equation}
 B_\omega(x)
 :=
 \int_{(x,d]}
 \omega(t)^{P'}\,\dd\mu(t).
 \label{eq:appE-Copson-B}
\end{equation}
Likewise,
\begin{equation}
 B_{\omega_n}(x)
 :=
 \int_{(x,d]}
 \omega_n(t)^{P'}\,\dd\mu(t).
 \label{eq:appE-Copson-Bn}
\end{equation}
Then
\begin{equation}
 B_{\omega_n}(x)\uparrow B_\omega(x)
 \qquad
 \text{for every }x\in J.
 \label{eq:appE-Copson-tail-limit}
\end{equation}

\begin{proposition}[Convergence of the Copson characteristics]
\label{prop:appE-Copson-characteristic-limit}
If \(P\le Q\), then
\begin{equation}
 \begin{aligned}
 &\sup_{x\in J}
 K(x)^{1/Q}
 B_{\omega_n}(x)^{1/P'}
 \\
 &\qquad\uparrow
 \sup_{x\in J}
 K(x)^{1/Q}
 B_\omega(x)^{1/P'}.
 \end{aligned}
 \label{eq:appE-Copson-convex-limit}
\end{equation}

If \(Q<P\), define \(R\) by
\[
 \frac1R=\frac1Q-\frac1P.
\]
Then
\begin{equation}
 \begin{aligned}
 &\int_J
 B_{\omega_n}(x)^{R/P'}\,
 \dd\!\left(K(x)^{R/Q}\right)
 \\
 &\qquad\uparrow
 \int_J
 B_\omega(x)^{R/P'}\,
 \dd\!\left(K(x)^{R/Q}\right).
 \end{aligned}
 \label{eq:appE-Copson-nonconvex-limit}
\end{equation}
\end{proposition}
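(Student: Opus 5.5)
The convex part is a monotone convergence of suprema, and the non-convex part is monotone convergence under a fixed finite Stieltjes measure; the only care needed is at infinite values.

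\emph{Convex case \(P\le Q\).} Since \(\omega_n\uparrow\omega\) and \(t\mapsto t^{P'}\) is increasing, monotone convergence applied to \(\int_{(x,d]}\omega_n^{P'}\,\dd\mu\) gives \eqref{eq:appE-Copson-tail-limit} for every fixed \(x\in J\), with values in \([0,\infty]\). Hence, for every \(x\),
\[
 K(x)^{1/Q}B_{\omega_n}(x)^{1/P'}\uparrow K(x)^{1/Q}B_\omega(x)^{1/P'} .
\]
Here the products are read with the convention that \(K(x)=0\) gives the value \(0\). This is consistent, because then every term of the sequence is \(0\) as well. The sequence of suprema over \(x\) is nondecreasing and bounded above by the limiting supremum. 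Conversely, for any fixed \(x\), the \(n\)-th supremum dominates the \(n\)-th pointwise term, which increases to the limiting value at \(x\). Taking \(n\to\infty\) and then the supremum over \(x\) gives \eqref{eq:appE-Copson-convex-limit}. Interchanging two suprema needs no continuity or uniformity, so this step is purely order-theoretic.

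\emph{Non-convex case \(Q<P\).} The measure \(\rho_C=\dd(K^{R/Q})\) does not depend on \(n\). It is a finite positive Borel measure on \(J\), with total mass \(\nu(J)^{R/Q}\), and it includes the left-boundary atom \(K(c)^{R/Q}\delta_c\) as in Lemma~\ref{lem:appC-Copson-cell-increments}. The integrands \(B_{\omega_n}^{R/P'}\) are nonnegative and Borel measurable. Indeed, each \(B_{\omega_n}\) is nonincreasing and right-continuous in \(x\), since \(\omega_n\) is bounded and \(\mu\) is finite. The limit \(B_\omega\) is also nonincreasing, hence Borel, even where it is infinite. The integrands increase pointwise to \(B_\omega^{R/P'}\). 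The monotone convergence theorem with respect to \(\rho_C\) then yields \eqref{eq:appE-Copson-nonconvex-limit}, including the case where the limit is \(+\infty\).

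\emph{Main point of care.} The delicate issue is the extended-value bookkeeping: \(B_\omega\) may be infinite on an initial segment \([c,x_0)\) or \([c,x_0]\). The question is whether a factor \(K=0\), or zero \(\rho_C\)-mass there, can produce an indeterminate \(0\cdot\infty\). In the integral, the standard convention \(0\cdot\infty=0\) of Lebesgue integration applies, and monotone convergence holds verbatim. In the supremum, I would assign the value \(0\) whenever \(K(x)=0\), matching the \(\starprod\) convention. With that convention the pointwise monotone limit stays correct.

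Combined with Theorem~\ref{thm:appE-Copson-truncation} and the bounded-weight case of Theorem~\ref{thm:compact-measure-hardy-copson}, this convergence then transfers the criteria to arbitrary \(\omega\) with the same constants.
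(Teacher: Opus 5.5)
Your proposal is correct and follows the paper's proof. In the convex case you use pointwise monotone convergence of \(K^{1/Q}B_{\omega_n}^{1/P'}\) and interchange the two suprema, and in the non-convex case you apply the monotone convergence theorem against the fixed finite measure \(\dd(K^{R/Q})\); your extra bookkeeping on the measurability of the monotone tails and on the value \(0\) where \(K(x)=0\) only makes explicit what the paper leaves implicit.
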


\begin{proof}
In the case \(P\le Q\), the functions
\[
 K^{1/Q}B_{\omega_n}^{1/P'}
\]
increase pointwise to
\[
 K^{1/Q}B_\omega^{1/P'}.
\]
For an increasing family of nonnegative functions,
\[
 \sup_n\sup_xF_n(x)
 =
 \sup_x\sup_nF_n(x),
\]
which proves \eqref{eq:appE-Copson-convex-limit}.

If \(Q<P\), apply the monotone convergence theorem with respect to the
finite Lebesgue--Stieltjes measure
\[
 \dd\!\left(K^{R/Q}\right).
\]
\end{proof}

Combining
Theorem~\ref{thm:appE-Copson-truncation},
Proposition~\ref{prop:appE-Copson-characteristic-limit}, and
Corollary~\ref{cor:appC-compact-measure-Copson} removes the boundedness
assumption imposed temporarily in
Appendix~\ref{app:partition-approximation}.

\subsection{The global operational characteristic}
\label{subsec:appE-global-operational}

For an arbitrary interval \(I\), define
\begin{equation}
 \mathfrak A_I^{\mathrm{dir}}
 :=
 \sup_{J\Subset I}
 \mathfrak A_J^{\mathrm{op}}
 =
 \sup_{J\Subset I}
 \sup_{m\ge1}
 \mathfrak A_J^{(m)}.
 \label{eq:appE-global-operational-characteristic}
\end{equation}
Since
\[
 C_I=\sup_{J\Subset I}C_J
\]
and
\[
 C_J
 \asymp_{p_1,p_2,q}
 \mathfrak A_J^{\mathrm{op}}
\]
uniformly in \(J\), it follows that
\begin{equation}
 C_I
 \asymp_{p_1,p_2,q}
 \mathfrak A_I^{\mathrm{dir}}.
 \label{eq:appE-global-operational-comparison}
\end{equation}

The construction must therefore be performed in the order
\[
 \text{regularise the weights}
 \longrightarrow
 \text{form the complete local characteristic}
 \longrightarrow
 \sup_{m\ge1}
 \longrightarrow
 \sup_{J\Subset I}.
\]
This order preserves the endpoint atoms, the strict/closed tail
allocation, and the complete parameter-regime structure.

\section*{Use of generative AI tools}

Generative AI tools, including ChatGPT (OpenAI), were used as assistive tools during manuscript preparation for mathematical consistency checking, language refinement, literature organisation, and LaTeX preparation. The author independently reviewed the mathematical arguments, references, and final manuscript and takes full responsibility for all statements, proofs, and conclusions.

\bibliographystyle{plain-doi}
\bibliography{references}

\end{document}